\let\pa=\partial
\let\ve=\varepsilon
\let\pa=\partial
\def\no{\noindent}
\newcommand{\beq}{\begin{equation}}
\newcommand{\eeq}{\end{equation}}
\newcommand{\ben}{\begin{eqnarray}}
\newcommand{\een}{\end{eqnarray}}
\newcommand{\beno}{\begin{eqnarray*}}
\newcommand{\eeno}{\end{eqnarray*}}
\renewcommand{\theequation}{\thesection.\arabic{equation}}
\newtheorem{Theorem}{Theorem}[section]
\newtheorem{Definition}[Theorem]{Definition}
\newtheorem{Proposition}[Theorem]{Proposition}
\newtheorem{Lemma}[Theorem]{Lemma}
\newtheorem{Corollary}[Theorem]{Corollary}
\newtheorem{Remark}[Theorem]{Remark}
\newcommand{\ud}{\mathrm{d}}
\newcommand{\bl}{\mathbf{l}}
\newcommand{\mm}{\mathbf{m}}
\newcommand{\nn}{\mathbf{n}}
\newcommand{\BB}{\mathbf{B}}
\newcommand{\CC}{\mathbf{C}}
\newcommand{\DD}{\mathbf{D}}
\newcommand{\FF}{\mathbf{F}}
\newcommand{\EE}{\mathbf{E}}
\newcommand{\GG}{\mathbf{G}}
\newcommand{\HH}{\mathbf{H}}
\newcommand{\II}{\mathbf{I}}
\newcommand{\JJ}{\mathbf{J}}
\newcommand{\KK}{\mathbf{K}}
\newcommand{\LL}{\mathbf{L}}
\newcommand{\MM}{\mathbf{M}}
\newcommand{\NN}{\mathbf{N}}
\newcommand{\QQ}{\mathbf{Q}}
\newcommand{\PP}{\mathbf{P}}
\newcommand{\RR}{\mathbf{R}}
\newcommand{\TT}{\mathbf{T}}
\newcommand{\UU}{\mathbf{U}}
\newcommand{\VV}{\mathbf{V}}
\newcommand{\WW}{\mathbf{W}}
\newcommand{\A}{\mathbf{A}}
\newcommand{\B}{\mathbf{B}}
\newcommand{\Bp}{\mathbf{\Psi}}
\newcommand{\CB}{\mathcal{B}}
\newcommand{\CH}{\mathcal{H}}
\newcommand{\BM}{\mathbb{M}}
\newcommand{\BS}{\mathbb{S}}
\newcommand{\BA}{\mathbb{A}}
\newcommand{\BV}{\mathbb{V}}
\newcommand{\BR}{\mathbb{R}}
\newcommand{\BTe}{\mathbf{\Theta}}
\definecolor{grey}{rgb}{0.5,0.5,0.5}
\begin{document}
\title[Matrix-Valued Allen-Cahn]{Matrix-Valued Allen-Cahn Equation and the Keller-Rubinstein-Sternberg Problem}
%%\title[Sharp interface limit]{Sharp interface limit of a matrix-valued Allen-Cahn equation}

\author{Mingwen Fei}
\address{School of Mathematics and Statistics, Anhui Normal University, Wuhu 241002, China}
\email{mwfei@ahnu.edu.cn}

\author{Fanghua Lin}
\address{Courant Institute of Mathematical Science, New York University}
\email{linf@cims.nyu.edu}

\author{Wei Wang}
\address{Department of Mathematics, Zhejiang University, Hangzhou 310027, China}
\email{wangw07@zju.edu.cn}

\author{Zhifei Zhang}
\address{School of  Mathematical Sciences, Peking University, Beijing 100871, China}
\email{zfzhang@math.pku.edu.cn}

\renewcommand{\theequation}{\thesection.\arabic{equation}}
\setcounter{equation}{0}

%%%%%%%%%%%%%%%%%%%%%%%%%%%%%%%%%%%%%%%%%%%%%%

\begin{abstract}
In this paper, we consider the sharp interface limit of a matrix-valued Allen-Cahn equation, which takes the form:
\begin{align*}
  \partial_t\A=\Delta \A-\ve^{-2}(\A\A^{\mathrm{T}}\A-\A)\quad \text{with}\quad \A:\Omega\subset\mathbb{R}^m\to \mathbb{R}^{n\times n}.
\end{align*}
We show that the sharp interface system is a two-phases flow system: the interface evolves according to the motion by mean curvature; in the two bulk phase regions, the solution obeys the heat flow of harmonic maps with values in $O^+(n)$ and $O^-(n)$ (represent the sets of $n\times n$ orthogonal matrices with determinant $+1$ and $-1$ respectively); on the interface, the phase matrices in two sides satisfy a novel mixed boundary condition.

The above result provides a solution to the Keller-Rubinstein-Sternberg's (conjecture) problem in this ${O}(n)$ setting.
Our proof relies on two key ingredients. First, in order to construct the approximate solution by matched asymptotic expansions as the standard approach does not seem to work, we introduce the notion of quasi-minimal connecting orbits. They satisfy the usual leading order equations up to some small higher order terms. In addition, the linearized systems around these quasi-minimal orbits needs to be solvable up to some good remainders. These flexibilities are needed for the possible ``degenerations" or higher dimensional kernels for the linearized operators due to intriguing boundary conditions at the sharp interface. The second key point is to establish a spectral uniform lower bound estimate for the linearized operator around approximate solutions. To this end, we introduce additional decompositions to reduce the problem into the coercive estimates of several linearized operators for scalar functions and some singular product estimates which is accomplished by exploring special cancellation structures between eigenfunctions of these linearized operators.

\end{abstract}

\date{\today}
\maketitle

\numberwithin{equation}{section}

\tableofcontents

\section{Introduction}
\subsection{Background and related results}

The phase transition problem has drawn great interests in both analysis and applications.
% scalar cases
The simplest model for the phase transition is the scalar Allen-Cahn equation, which was introduced by Allen-Cahn \cite{AlCa} to model the motion of antiphase boundaries in crystalline solids. Let $u:\Omega\subseteq\mathbb{R}^m\to \mathbb{R}$, and $F(u)$ is a potential function with double well \big(ex. $F(u)=(u^2-1)^2/4$\big). The equation reads as follows
\begin{align}
\partial_tu=\Delta u-\frac{1}{\ve^2}F'(u).\label{allen cahn}
\end{align}
As $\ve\to 0$, the domain $\Omega$ will be separated into two regions $\Omega_\pm$, where $u\to \pm 1$ respectively. Moreover, the interface between these two regions evolves according to the mean curvature flow. This sharp interface limit has been rigorously justified in both static and dynamic cases by numerous authors via different methods, see \cite{BroKo2, Ch-jde92, dMS2, ESS, Ilm, Modi, ModiMo, Ster} for examples.

In \cite{RSK1,RSK2}, Rubinstein-Sternberg-Keller introduced a vector-valued system for fast reaction and slow diffusion:
\begin{align}\label{equation:RSK}
  \partial_tu=\Delta u-\frac{1}{\ve^2}\partial_uF(u),
\end{align}
where $u:\Omega\subseteq\mathbb{R}^m\to\mathbb{R}^n$ is a phase-indicator function, and the nonnegative, smooth potential function $F:\mathbb{R}^n\to\mathbb{R}$  vanishes exactly on two disjoint connected sub-manifolds in $\mathbb{R}^n$. By a formal asymptotic expansion, they found that, when $\ve\to 0$, the interface  moves by its mean
curvature, while away from the interface $u$ tends to the heat flow of harmonic maps into the sub-manifolds
(potential wells).

Rigorous analysis for the Keller-Rubinstein-Sternberg problem (\ref{equation:RSK}) are few; and each studies this general problem in a some what special setting. The problem has remained largely open including the $O(n)$-model.
In \cite{LPW}, Lin-Pan-Wang rigorously analyzed the asymptotic behaviour as $\ve\to0$ for the energy minimizing static solutions to a class of the Keller-Rubinstein-Sternberg problem. There is also a mixed Dirichlet-Neumann boundary condition for the phase field along the interface. A regularity theory for minimizing maps of the limit problem was obtained in \cite{LinWang2019}.  

For asymptotics of the gradient flow \eqref{equation:RSK}, Bronsard-Stoth \cite{BroSt} studied the radially symmetric case for a special $\mathbb{R}^2$-valued problem, and a Neumann-jump boundary condition for the limit system was derived. In \cite{FWZZ}, without the radially symmetric assumption, Fei-Wang-Zhang-Zhang rigorously justified the asymptotic limit for a physical $\mathbb{R}^{3\times3}$-valued model, which describes the isotropic-nematic phase transition for liquid crystals (the radially symmetric case is studied in \cite{MMS}). For the latter problem, we refer to \cite{LW, LL} for  more recent progresses. There are several interesting studies for related problems, see for examples, \cite{Bethuel, FT}.

In this paper, through a careful and systematic analysis of the matrix-valued Allen-Cahn equations, we obtain these intriguing boundary conditions for the phase field along the sharp interface. In fact, a much clear picture emerges. The boundary conditions for the phase field (and also for the limiting problem) are actually dictated by the geometric properties of the potential energy wells and the so-called minimal connecting orbits. The existence and geometrical properties of these minimal connecting orbits will show up also in deriving matched asymptotic expansions and the construction of higher order approximate solutions with drastic variations of analytical difficulties. It is something never exists in the scalar case or several vector-valued cases studied by various authors before. It also illustrate why there are different types of boundary conditions along the interface in earlier works \cite{LPW, LinWang2019, FWZZ, LL, LW}, for examples. Let us describe it roughly below. Some related discussions are in Section \ref{sec:inner1} of the paper.

Following \cite{RSK1, RSK2} and \cite{LPW}, we consider the energy functional for $\Omega\subset\mathbb{R}^m$:
\begin{align*}
  E_\ve(u)=\int_{\Omega}\Big(|\nabla u|^2+\frac{1}{\ve^2}F(u)\Big)\ud x,
\end{align*}
where the potential function $F(u)$ is smooth for $u\in\mathbb{R}^N$ and satisfies the properties that
\begin{align*}
  c_1d^2(u, \Sigma)\le F(u)\le C_2 d^2(u,\Sigma)
\end{align*}
for $u\in \Sigma_{\delta_0}$ (the $\delta_0$-neighborhood of $\Sigma$), and that $F(u)\ge c_3$ whenever $d(u,\Sigma)\ge \delta_0$. Here $\Sigma=\Sigma_+\cap\Sigma_-$ is the union of two disjoint, smooth, compact and connected submanifolds in $\mathbb{R}^N$ on which $F$ vanishes. For any two points $p_+, p_-$ in $\mathbb{R}^N$, one can define the distance with weight  $\sqrt{F/2}$:
\begin{align*}
  d_F(p^+,p^-)=\inf\Big\{\int_\mathbb{R}\Big(|\xi'(t)|^2+F(\xi)\Big)\ud t: \xi\in H^1(\mathbb{R}, \mathbb{R}^N),\xi(\pm\infty)=p_\pm\Big\},
\end{align*}
and let
\begin{align*}
 c_0^F  =\inf_{q^+\in\Sigma_+,q^-\in \Sigma_-}d_F(q^+,q^-).
\end{align*}

Under some very nature assumption, one has the following expansion for the energy $E_\ve(u)$ for the so-called well-prepared data $u$:
\begin{align*}
  E_\ve(u)=\frac{c_0^F}{\ve}\CH^{m-1}(\Gamma_u)+D(u)+O_\ve(1).
\end{align*}
Here $\Gamma_u$ is a sharp interface between two phases $\Sigma_+$ and $\Sigma_-$, and $D(u)$ is the Dirichlet energy of the map from $\Omega_\pm(u)$ (the sub-regions of $\Omega$ which are separated by $\Gamma_u$ into $\Sigma_\pm$).

If one considers energy minimizers (as in \cite{LPW}), then it is intuitively clear that $\Gamma_u$ will be area minimizing surface, and $u:\Omega_\pm(u)\to\Sigma_\pm$  will be energy minimizing maps. There will be also natural boundary conditions for $u$ as the two sides of the sharp interface.

The difficult point for the gradient flow for this type energy functional is that the sharp interface motion $\{\Gamma(t)\}$ and the heat flow of harmonic map in the bulk are in the same time scale (unlike the dynamics of Ginzburg-Landau vortices \cite{Lin}).
These two motions may be coupled in $O(\ve)$ order terms; and due to the interfacial energy is of order $O(\ve^{-1})$, it may lead to undesired $O(1)$ changes in motion of the interface.  This coupling occurs
through the intriguing boundary conditions for the maps at the sharp interfaces. As phases evolves in the bulk, the boundary values of $u_\pm$ at the sharp interface would change accordingly. The latter may alter the weights in calculating the weighted surface area of the sharp interface, that is the distance with weight $\sqrt{F/2}$ for points $u_\pm(x,t)$ from two sides of $\Gamma(t)$.
Of course, if the weighted distance between any pair of points $p_{\pm}\in\Sigma_\pm$ is
the same, then there would be no coupling effect and the weight for the sharp interface area will not change in the evolution. This is the case we shall call that $F(u)$ is {\it fully minimally paired} (see discussions in Section \ref{sec:inner1}).
From this point of view, \cite{LPW} studied a class of problem which is only partially minimally paired, but in a certain specific way. The $O(n)$-model we study in this paper has a great generality for situations of partially minimally paired  $F(u)$. We refer to the discussions in Section \ref{sec:inner1}.

This paper is the first in a series, and we shall concentrated on the study of the evolution of well-prepared initial data for the matrix-valued Allen-Cahn equations. The construction of approximate solutions in this paper is based on the matched asymptotic expansion method,  which has been widely used to study the sharp interface limit problem in the past decades \cite{dMS2, ABC,FWZZ,AL}. In particular, we find the approach developed by Alikakos-Bates-Chen \cite{ABC} to be useful. This method does not involve the local parametrization of the interface, thus avoiding many complicate calculations near the interface.

\subsection{Presentation of the problem and main results}
We consider a matrix-valued Allen-Cahn equation with a small parameter $\ve$  in a bounded domain $\Omega\subset\mathbb{R}^m$
with periodic boundary conditions, which was introduced in \cite{OW}:
\begin{align}\label{eq:main-intro}
\partial_t\A^{\ve}=\Delta \A^{\ve}-\ve^{-2}f(\A^{\ve}),
\end{align}
where
\begin{align}
f(\A^{\ve})=\A^{\ve}({\A}^{\ve})^{\mathrm{T}}\A^{\ve}-\A^{\ve}\label{intro:def-f}
\end{align}
 with ${(\A^\ve)^{{\mathrm{T}}}}$ denoting the transpose of $\A^{\ve}\in \mathbb{R}^{n\times n}\triangleq \mathbb{M}_n$.
 % which is  a real $n\times n$ matrix-valued field.
The system \eqref{eq:main-intro}, which was first introduced by \cite{OW}, could be viewed as the gradient flow for the energy functional
\begin{align}
\mathcal{E}(\A,\nabla \A)=\int_{\Omega}\bigg(\frac{1}{2}\|\nabla\A\|^2+\varepsilon^{-2}F(\A)\bigg)\ud x, \quad F(\A)=\frac{1}{4}\|{\A}^{\mathrm{T}}\A-\II\|^2,\label{intro:def-FullEnergy}
\end{align}
where $\II$ is the identity matrix.
In this paper, we are interested in the asymptotical behavior of solutions to the system \eqref{eq:main-intro} when $\ve\to 0$.

Note that $F(\A)$ attains its minimum at the orthogonal group $O(n)={O}^+(n) \cup{O}^-(n)$, where $O^\pm(n)$ denotes the
set of orthogonal matrices with determinant $\pm1$. Formally, one has that, in the limit of $\ve\to 0$, the domain $\Omega$ can be divided into two disjoint parts $\Omega_t^+$ and $\Omega_t^-$ with the property that
\begin{align*}
  \A^\ve(x,t)\to \A_\pm(x,t)\in{O}^\pm(n), \quad \text{ for $x\in \Omega_t^\pm$. }
\end{align*}
Then we need to determine  the evolution of the interface $\Gamma_t=\partial\Omega_t^+\cap\partial\Omega_t^-$, the evolution of $\A_\pm$ on $\Omega_t^\pm$, and the boundary conditions of $\A_\pm$ on $\Gamma$.
Indeed, the limit sharp interface model of the system \eqref{eq:main-intro} takes the form:
\begin{subequations}\label{SharpInterfaceSys}
\begin{align}\label{SharpInterfaceSys:HeatFlow}
&\partial_t \A_\pm=\Delta\A_\pm-\sum_{i=1}^m\partial_i\A_\pm \A_\pm^{\mathrm{T}}\partial_i\A_\pm \quad (\A_\pm\in {O}^\pm(n)),&\text{in}\  \Omega_t^{\pm},\\ \label{SharpInterfaceSys:MCF}
&V=\kappa,&\text{on}\ \Gamma_t,\\\label{SharpInterfaceSys:MiniPair}
&(\A_+, \A_-) \text{ is  a minimal pair},&\text{on}\ \Gamma_t,\\ \label{SharpInterfaceSys:Neumann}
&\frac{\partial \A_+}{\partial \nu}=\frac{\partial \A_-}{\partial \nu}, &\text{on}\ \Gamma_t.
\end{align}
\end{subequations}
Here $V$, $\kappa$, and $\nu$ are the normal velocity, the mean curvature and the unit outward normal vector of $\Gamma_t$ respectively.
The condition \eqref{SharpInterfaceSys:MiniPair} is equivalent to that for each $x\in\Gamma_t$, there exists $\nn(x,t)\in S^{n-1}$ such that
$\A_+= \A_-(\II-2\nn\nn)$.

The heat flow of harmonic maps (\ref{SharpInterfaceSys:HeatFlow})  and the mean curvature flow  (\ref{SharpInterfaceSys:MCF})  for the interface are rather natural,  which have been formally argued in \cite{RSK1, RSK2}.
The relation \eqref{SharpInterfaceSys:MiniPair} has been obtained in \cite{LPW} for minimizing equilibrium solutions.
The boundary condition (\ref{SharpInterfaceSys:Neumann}) is new and special for this matrix-valued Allen-Cahn equation on $O(n)$ due to the underlying geometric structure and properties of minimal connecting orbits (see also \cite{LPW} for different boundary conditions in another geometric setting). We shall  provide a derivation in Appendix \ref{App:BC}.

In the case of $n=2$, the minimal pair condition holds for any $(\A_-, \A_+)\in O^-(2)\times O^+(2)$. So,  the condition (\ref{SharpInterfaceSys:MiniPair}) is redundant. The Neumann-jump condition in this case (\ref{SharpInterfaceSys:Neumann})  is reduced to  the usual Neumann boundary condition for functions on both sides:
\begin{align*}
  \partial_\nu\A_-=\partial_\nu\A_+=0.
\end{align*}
Indeed, in this case, we can write
\begin{align*}
\A_+=\left(
        \begin{array}{cc}
          \cos \alpha_+ & \sin\alpha_+ \\
          -\sin\alpha_+ & \cos \alpha_+
        \end{array}
      \right),\quad \A_-=\left(
        \begin{array}{cc}
          \cos \alpha_- & \sin\alpha_- \\
          \sin\alpha_- & -\cos \alpha_-
        \end{array}
      \right)
\end{align*}
($\alpha_\pm$ are the rotation angles),  then \eqref{SharpInterfaceSys:Neumann}  becomes  to
\begin{align}\label{intro:BC-2d}
\partial_\nu  \alpha_+=\partial_\nu\alpha_-=0.
\end{align}
This boundary condition \eqref{intro:BC-2d} for $n=2$ has been observed in \cite{WOW}, in which the asymptotic dynamics of \eqref{eq:main-intro} under different time scales are studied.

The main goal of this paper is to provide a rigorous justification of the limit from
the regularized system (\ref{eq:main-intro}) to the sharp interface model  (\ref{SharpInterfaceSys}), which is so called the sharp interface limit. The proofs will follow the approach in de Mottoni-Schatzman \cite{dMS2} and Alikakos-Bates-Chen \cite{ABC}: we first construct an approximate solution $ \A^K$ solving \eqref{eq:main-intro} up to sufficiently high order small terms; and then we prove a spectral lower bound for the linearized operator around $\A^K$;  and finally we estimate the difference $\A^\ve-\A^K$.

Our main results are stated as follows. The first main result is concerned with the existence of approximate solutions, whose proof turns out to be most difficult for this article.

\begin{Theorem}\label{thm:main1}
Assume that $(\Gamma_t, \A_+, \A_-)$ is a smooth solution on $[0,T]$ to the sharp interface system \eqref{SharpInterfaceSys}. Then for any $K\ge 1$, there exists $\A^K$ such that
\begin{align}\label{mthm:1}
  \partial_t\A^K=\Delta \A^K-\ve^{-2}f(\A^K)+\mathfrak{R}^{K-1},
\end{align}
where $\mathfrak{R}^{K-1}\sim O(\ve^{K-1})$, and for any $(x,t)$ with $x\in\Omega_t^\pm$, $\|\A^K(x,t)-\A_\pm(x,t)\| \to 0$ as $\ve\to 0$.
\end{Theorem}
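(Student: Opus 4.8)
The plan is to construct $\A^K$ by the method of matched asymptotic expansions, gluing an outer expansion valid away from $\Gamma_t$ to an inner expansion valid in a tubular neighborhood of $\Gamma_t$. First I would introduce the signed distance function $d(x,t)$ to $\Gamma_t$ and the stretched variable $z=d/\ve$, and posit an outer ansatz $\A^{\mathrm{out}}=\sum_{k\ge 0}\ve^k \A_\pm^{(k)}(x,t)$ on $\Omega_t^\pm$ with $\A_\pm^{(0)}=\A_\pm$. Plugging into \eqref{eq:main-intro}: the $O(\ve^{-2})$ balance forces $f(\A_\pm^{(0)})=0$, i.e.\ $\A_\pm^{(0)}\in O^\pm(n)$; the $O(1)$ balance gives exactly the harmonic map heat flow \eqref{SharpInterfaceSys:HeatFlow}, which holds by hypothesis; and the higher orders $\ve^k$ ($k\ge 1$) yield linear parabolic equations for $\A_\pm^{(k)}$ with $Df(\A_\pm^{(0)})$ as the zeroth-order operator, solvable given boundary/initial data. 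Simultaneously I would set up the inner ansatz $\A^{\mathrm{in}}=\sum_{k\ge 0}\ve^k \B^{(k)}(z,x,t)$; the $O(\ve^{-2})$ equation becomes the ODE $\partial_{zz}\B^{(0)}=f(\B^{(0)})$, whose solution is the minimal connecting orbit joining $\A_-$ to $\A_+$ through the potential $F$ — here is where the minimal-pair condition \eqref{SharpInterfaceSys:MiniPair} and the explicit reflection structure $\A_+=\A_-(\II-2\nn\nn)$ enter, determining $\B^{(0)}$.

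The core of the induction is the inner hierarchy. At order $\ve^{k-2}$ one obtains a linear ODE in $z$ of the form $\mathcal{L}\B^{(k)} := -\partial_{zz}\B^{(k)} + Df(\B^{(0)})\B^{(k)} = G^{(k)}$, where $G^{(k)}$ is built from lower-order terms and involves $\partial_t d$, $\Delta d=$ (mean curvature), and tangential derivatives. The solvability of this ODE on $\mathbb{R}$ in $z$ requires orthogonality of $G^{(k)}$ to the kernel of the (self-adjoint) linearized operator $\mathcal{L}$; the kernel contains $\partial_z\B^{(0)}$ (from translation invariance) and, crucially, may contain additional elements coming from the symmetry group $O(n)$ acting on the connecting orbit. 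Imposing the first solvability condition at the appropriate order reproduces the mean curvature flow \eqref{SharpInterfaceSys:MCF} $V=\kappa$; the remaining solvability conditions are where the Neumann-jump condition \eqref{SharpInterfaceSys:Neumann} gets used. Then I would match: the inner solution as $z\to\pm\infty$ must agree, term by term in $\ve$, with the Taylor expansion of the outer solution in $d$ about $\Gamma_t$; this fixes the integration constants in each $\B^{(k)}$ and, reciprocally, supplies the boundary data for the outer equations. Finally, glue via a cutoff $\chi(d/\delta)$ (with $\ve\ll\delta\ll 1$) to define $\A^K$ globally, and verify by direct substitution that the residual $\mathfrak{R}^{K-1}$, coming from the first unmatched order plus cutoff errors (which are exponentially small in $\delta/\ve$ on the overlap, since $\partial_z\B^{(0)}$ decays exponentially), is $O(\ve^{K-1})$.

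The main obstacle, and the reason the authors flag this theorem as the hardest, is precisely the structure of the kernel of $\mathcal{L}$: unlike the scalar Allen-Cahn case where $\ker\mathcal{L}=\mathrm{span}\{\partial_z\B^{(0)}\}$ is one-dimensional and the Fredholm alternative is clean, here the $O(n)$ symmetry can produce a higher-dimensional (possibly order-dependent, ``degenerating'') kernel, so the naive solvability conditions may fail and the standard expansion breaks down. This is what forces the introduction of \emph{quasi-minimal connecting orbits}: instead of demanding $\partial_{zz}\B^{(0)}=f(\B^{(0)})$ and $\mathcal{L}\B^{(k)}=G^{(k)}$ exactly, one allows these to hold only up to controlled higher-order remainders, trading an exact but unsolvable hierarchy for an approximate but solvable one, with the accumulated errors absorbed into $\mathfrak{R}^{K-1}$. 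I would therefore organize the induction around these quasi-minimal orbits, carefully tracking at each step (i) which part of $G^{(k)}$ is orthogonal to the genuine kernel and can be inverted, (ii) which part must be pushed into the remainder, and (iii) that the remainder remains $O(\ve^{K-1})$ after summing $k=0,\dots,K$; the exponential localization of all inner profiles in $z$ is what makes the bookkeeping close. The convergence claim $\|\A^K(x,t)-\A_\pm(x,t)\|\to 0$ for $x\in\Omega_t^\pm$ is then immediate since such $x$ sits outside the $O(\delta)$ neighborhood of $\Gamma_t$ for $\ve$ small, where $\A^K=\A^{\mathrm{out}}=\A_\pm+O(\ve)$.
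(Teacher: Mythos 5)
Your overall scaffolding (outer/inner expansions, solvability conditions giving $V=\kappa$, matching, gluing with a cutoff) is the right skeleton, but your diagnosis of the central difficulty---and hence your fix---misses what actually obstructs the construction. The obstruction is not primarily that the kernel of the linearized operator on $\Gamma$ is higher-dimensional; it is that for $(x,t)\in\Gamma(\delta)\setminus\Gamma$ the smoothly extended pair $(\A_-(x,t),\A_+(x,t))$ is in general no longer a minimal pair (the potential here is only \emph{partially} minimally paired, cf.\ Lemma \ref{lem:mini-pair}), so the leading-order heteroclinic problem \eqref{eq:leading} has \emph{no solution at all} off the interface, and the linearized problems \eqref{eq:linear-leading} need not be solvable there either. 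Your prescription ``$\B^{(0)}=$ the minimal connecting orbit joining $\A_-$ to $\A_+$'' is therefore only meaningful on $\Gamma$ itself, whereas the inner hierarchy must be solved, as ODEs in $z$, for every $(x,t)$ in the tubular neighborhood; the standard matched expansion you describe cannot even be started there.

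Second, your proposed remedy---let the hierarchy hold only ``up to controlled higher-order remainders \dots absorbed into $\mathfrak{R}^{K-1}$''---does not close. The error produced by any approximate leading-order profile enters multiplied by $\ve^{-2}$ and is only quadratically small in $d_0$ (this quadratic vanishing is exactly what the Neumann condition \eqref{SharpInterfaceSys:Neumann} buys, via $\Phi_+-\Phi_-=O(d_0^2)$), not small in $\ve$; dumped into the final remainder it would be $O(1)$ near $\Gamma$, not $O(\ve^{K-1})$. The paper instead (i) factors the inner ansatz as $\A_I^\ve=\Phi\,\PP^\ve$ with the quasi-minimal connecting orbit \eqref{def:quasi-mini}, so that $\mathcal{L}_{\PP_0}$ is explicitly diagonalized into the five scalar operators \eqref{pre:decomp-Op-L} and inverted through Lemma \ref{lem:ODEsys}; (ii) rewrites the residual errors, which carry a factor $d_0$, using $d_0=d^\ve-\ve d_1-\ve^2 d_2-\cdots$ together with $z=d^\ve/\ve$, so they reappear as exponentially decaying sources in the \emph{next} orders of the hierarchy rather than in the remainder; and (iii) adds the modification terms $(d^\ve-\ve z)\widetilde\GG^\ve$ and $\HH^{\pm,\ve}\eta_M^{\pm}$ of \eqref{eq:modified}, which vanish on $\{d^\ve=\ve z\}$ and hence do not alter the PDE, precisely to restore the orthogonality conditions (O1)--(O4) and boundary conditions (B2)--(B4) off $\Gamma$ where they would otherwise fail; the solvability analysis on $\Gamma$ then simultaneously yields the mean curvature flow for $d_0$, the equations for $d_k$, and the jump conditions \eqref{inner:BC-U1pm}, \eqref{inner:BC-Ukpm} that supply boundary data for the outer antisymmetric parts $\VV_\pm^{(k)}$, interleaving the outer and inner inductions. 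Without these ingredients your induction has no mechanism for producing the inner profiles away from $\Gamma$, so the proposal as written has a genuine gap.
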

\begin{Remark}
The existence of smooth solutions to (\ref{SharpInterfaceSys}) is not a trivial issue due to the complicated  boundary conditions.
We will address it and some related issues in a forthcoming paper.  For a similar vector-valued two phase flow in a different geometric setting and hence with different boundary conditions, the well-posedness has been established by the second author and Wang \cite{LinWang2019}.
\end{Remark}
The second main result is the spectral lower bound for the linearized operator around the approximate solution $\A^K$.
Compared with the proof in the scalar case, the proof is much more involved, and several new ideas are introduced to obtain the desired conclusions.
\begin{Theorem}\label{thm:main2}
Assume that $\A^K(K\ge1)$ is an approximate solution constructed as in Theorem \ref{thm:main1}. Then there exists $C_0\ge 0$ such that, for any $\A\in H^1(\Omega)$ and $t\in[0,T]$ it holds
\begin{align}\label{mthm:2}
\int_\Omega \Big(\frac{1}{2}\|\nabla\A\|^2+\frac{1}{\ve^2}\CH_{\A^K}\A:\A\Big) \ud x\ge -C_0 \int_\Omega \|\A\|^2 \ud x,
\end{align}
where
\begin{align}\label{mthm:2-1}
  \CH_{\BB}\A=\BB\BB^{\mathrm{T}}\A+\A\BB^{\mathrm{T}}\BB+\BB\A^{\mathrm{T}}\BB-\A
\end{align}
is the linearized operator of $f$ defined in (\ref{intro:def-f}).
\end{Theorem}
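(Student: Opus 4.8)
The plan is to reduce the matrix-valued quadratic form to a collection of scalar spectral problems by exploiting the structure of the linearized operator $\CH_{\BB}$ together with the fact that $\A^K$ is, to leading order, a minimal connecting orbit interpolating between $O^+(n)$ and $O^-(n)$ across the interface. First I would record the pointwise algebra: for $\BB=\A^K$, write $\BB=\OO\,\Lambda\,\OO'$ in a suitable (slowly varying) frame adapted to the minimal orbit, so that along the normal direction to $\Gamma_t$ only one ``angle'' degree of freedom moves (the reflection $\II-2\nn\nn$), while the remaining directions stay on $O(n)$. In such a frame $\CH_{\A^K}$ block-diagonalizes: there is a ``tangential/rotational'' block where $\A$ is tangent to the orthogonal group and $\CH_{\A^K}$ acts essentially like the identity-plus-lower-order (these directions cost positive $\ve^{-2}$ energy and are harmless), and there is a ``critical'' block in the direction of the connecting orbit's tangent, where the potential Hessian degenerates and the $\ve^{-2}$ term can be negative — this is exactly the translation/zero mode of the one-dimensional profile. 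The contribution of the critical block is what must be controlled by the gradient term $\frac12\|\nabla\A\|^2$.

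Second, I would set up the standard one-dimensional reduction in the stretched normal variable $z=d(x,t)/\ve$. Freezing the tangential variables, the critical-block piece of \eqref{mthm:2} becomes, modulo $O(\ve)$ errors absorbed into $-C_0\int\|\A\|^2$, a one-dimensional functional $\int_{\BR}\big(|\phi'(z)|^2+W''(q(z))\phi(z)^2\big)\,dz$ where $q$ is the (quasi-)minimal connecting profile and $W''$ its linearized potential. The key scalar input is the coercivity of this 1D operator on the orthogonal complement of its ground state $q'$: by the theory of the scalar Allen–Cahn profile (the profile is monotone, hence $q'>0$ is the positive ground state with eigenvalue $0$), one has $\int_{\BR}\big(|\phi'|^2+W''(q)\phi^2\big)\,dz\ge c\int_{\BR}|\phi - \langle\phi\rangle q'/\|q'\|^2 \cdot q'|^2 \,dz$ or, more usefully in the localized setting, $\ge \frac12\int|\phi'|^2 - C\big(\int \phi\, q'\big)^2$. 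Integrating back over $\Gamma_t$, the dangerous term is $\big(\int_{\BR}\A(z,\cdot):q'(z)\,dz\big)^2$, and this is precisely where the argument in the scalar case is handled by observing $q'$ decays exponentially and the interfacial layer is thin; the surplus gradient energy $\frac12\|\nabla\A\|^2$ in the tangential-to-$\Gamma$ directions plus a Poincaré inequality on the tubular neighborhood dominates it.

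Third — and this is where I expect the main obstacle — the decomposition into ``tangential/rotational'' and ``critical'' blocks is not globally orthogonal and not uniform, because the minimal-pair condition \eqref{SharpInterfaceSys:MiniPair} allows the reflection vector $\nn(x,t)$ to vary along $\Gamma_t$, and (as the introduction stresses) the linearized operator around the quasi-minimal orbit may have a higher-dimensional or degenerating kernel. So the naive block-diagonalization leaves cross terms coupling the critical mode to several scalar modes, and one must show these cross terms are either sign-definite or small. Following the hint in the abstract, I would introduce an additional decomposition of $\A$ — splitting off not just the $q'$-direction but also the components along the Jacobi-type fields generated by differentiating the family of minimal orbits in $\nn$ — and reduce to (i) coercive estimates for a handful of explicit scalar linearized Allen–Cahn operators (each of the standard form $-\partial_z^2 + W''(q)$ with $W''$ depending on which orbit parameter is perturbed) and (ii) ``singular product estimates'' of the type $\big|\int f(z) g(z)\, \theta(z)\big| \le $ (small) $\cdot \|f\|\|g\|$, which close because the relevant eigenfunctions of the different scalar operators are either even or odd in $z$ and their products integrate against the profile derivative to zero at leading order — the special cancellation structure referenced in the abstract. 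Once all cross terms are shown to be $O(\ve)$-controlled or nonnegative, summing the scalar coercivity estimates and re-integrating over $\Omega$ (separating the interfacial tube from the bulk, where $\CH_{\A^K}$ is uniformly positive on the normal bundle and the bulk harmonic-map part contributes only bounded-below terms) yields \eqref{mthm:2} with an $\ve$-independent constant $C_0$.
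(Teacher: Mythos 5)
Your plan follows the headline strategy the paper itself announces (reduce to a 1D problem in the normal variable, diagonalize into scalar linearized operators, coercivity plus product estimates with cancellation), but two of its concrete steps would fail. First, the two-block picture is wrong: after conjugating by the orbit frame, the linearized operator does not split into one critical (translation) mode plus a block that ``costs positive $\ve^{-2}$ energy and is harmless.'' The correct decomposition is into five subspaces $\BV_1,\dots,\BV_5$ determined by $\nn$, with scalar potentials $\kappa_1(s),\dots,\kappa_5(s)$; the potentials $\kappa_2=2(1-s)(1-2s)$ and $\kappa_3=2s(2s-1)$ are negative on half the line and $\kappa_4=0$, so the operators $\mathcal{L}_2,\mathcal{L}_3,\mathcal{L}_4$ each carry their own zero modes ($s$, $1-s$, constants) beyond the translation mode $s'$. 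Consequently the only coercivity available there is a weak weighted estimate with constant $1/2$ (the paper's Lemma 7.3), and the genuinely hard part of the proof is exactly the cross terms among these marginal blocks: the $\BV_1$ cross terms are tamed only because the Neumann conditions $\partial_\nu\nn=0$ and $\partial_\nu\A_+=\partial_\nu\A_-$ force the weights to satisfy $|a(r)|\le C(|r|+\ve)$, and the $\BV_2$--$\BV_3$ cross terms need a special product estimate exploiting that their ground states are $s_\ve$ and $1-s_\ve$. Your proposal has no substitute for either ingredient, and your fallback of using ``surplus tangential gradient energy plus Poincar\'e on the tube'' is not available in the paper's reduction (and not needed): the inequality is proved line-by-line in the normal direction, discarding tangential derivatives.

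Second, the mechanism you offer for the singular $O(\ve^{-1})$ term coming from the first corrector ($\ve^{-1}\TT_f(\A_0,\A_1,\A):\A$, after writing $\A^K=\A_0+\ve\A_1+O(\ve^2)$) is mis-identified. It is not an even/odd parity cancellation of eigenfunctions integrated against the profile derivative; after decomposing $\TT_f(\PP_0,\PP_1,\cdot):\cdot$ explicitly, the singular weights that appear, such as $s_\ve(1-s_\ve)(2s_\ve-1)/\ve$, $s_\ve^2(3-4s_\ve)\theta_\ve/\ve$, $(1-s_\ve)^2(1-4s_\ve)\theta_\ve/\ve$, are exact $r$-derivatives of products of the relevant ground states (e.g.\ of $s_\ve(1-s_\ve)$, $s_\ve^3(1-s_\ve)$, $(1-s_\ve)^3 s_\ve$), so the $1/\ve$ is removed by integration by parts together with endpoint $L^\infty$ control on the finite interval; this structure is the quantitative face of the cubic null cancellation (Lemma 4.2), not parity. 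Related missing pieces: the conjugation $\BB=\Phi^{\mathrm{T}}\A$ that reduces the Hessian to the one of the trivial profile $\PP_0=\II-2s_\ve\nn\nn$ (and creates the frame cross term $\Phi^{\mathrm{T}}\partial_r\Phi$ that must be estimated), and the handling of the Jacobian and endpoint values when working on a finite interval rather than on $\BR$. Without these, the proposal does not close.
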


With the hand of Theorem \ref{thm:main1} and Theorem \ref{thm:main2},  the following nonlinear stability result follows easily via an energy method.

\begin{Theorem}\label{thm:main3}
Let $L=3([\frac{m}{2}]+2)$ and $K=L+1$. Assume that $\A^K$ is an approximate solution constructed as in Theorem \ref{thm:main1}, and  $\A^\ve$ is a solution to (\ref{eq:main-intro}) with initial data $\A^\ve(\cdot,0)$ satisfying
\begin{align*}
\bar{\mathcal{E}}(\A^\ve(\cdot,0)-\A^K(\cdot,0)) \le C_0 \ve^{2L},\quad
\end{align*}
where
\begin{align*}
\bar{\mathcal{E}}(\A)= \sum_{i=1}^{[\frac{m}{2}]+1}\ve^{6i}\int_\Omega\|\nabla^i\A\|^2 \ud x.
\end{align*}
Then there exists constants $\ve_0, C_1>0$ such that for $\forall\ve\le \ve_0$ it holds
\begin{align}
\bar{\mathcal{E}}(\A^\ve(\cdot,t)-\A^K(\cdot,t)) \le C_1 \ve^{2L}, \qquad \text{for }\forall t\in[0,T].
\end{align}
\end{Theorem}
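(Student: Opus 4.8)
The plan is to run a standard energy estimate for the error $\EE := \A^\ve - \A^K$, using Theorem \ref{thm:main2} as the coercivity input that controls the dangerous $\ve^{-2}$-scaled linear term, and a bootstrap/continuation argument to close the higher-order weighted energy $\bar{\mathcal E}$. First I would subtract the approximate equation \eqref{mthm:1} from the exact equation \eqref{eq:main-intro} to get a perturbation equation of the schematic form $\partial_t\EE = \Delta\EE - \ve^{-2}\big(\CH_{\A^K}\EE + \CN(\EE)\big) - \mathfrak R^{K-1}$, where $\CN(\EE)$ collects the quadratic and cubic terms in $\EE$ (with coefficients involving $\A^K$, which is bounded). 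Pairing with $\EE$ in $L^2$ and integrating by parts gives
\begin{align*}
\frac{1}{2}\frac{d}{dt}\int_\Omega\|\EE\|^2\,\ud x + \int_\Omega\Big(\|\nabla\EE\|^2 + \tfrac{1}{\ve^2}\CH_{\A^K}\EE:\EE\Big)\ud x = -\tfrac{1}{\ve^2}\int_\Omega \CN(\EE):\EE\,\ud x - \int_\Omega\mathfrak R^{K-1}:\EE\,\ud x,
\end{align*}
and Theorem \ref{thm:main2} bounds the left side below by (a multiple of) $\|\nabla\EE\|_{L^2}^2 - C_0\|\EE\|_{L^2}^2$. The forcing term is handled by Cauchy-Schwarz since $\|\mathfrak R^{K-1}\|_{L^2}\lesssim \ve^{K-1}=\ve^{L}$, which is consistent with the target bound $\bar{\mathcal E}\lesssim\ve^{2L}$.

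The heart of the matter is the cubic term $\ve^{-2}\int \CN(\EE):\EE$: the worst piece is $\ve^{-2}\int \|\EE\|^3$-type contributions (more precisely terms like $\ve^{-2}\EE\EE^{\mathrm T}\EE:\EE$ and mixed $\A^K$-$\EE$ cubic terms that are genuinely cubic in $\EE$), which must be absorbed by the good $\ve^{-2}$-coercive term and the $\|\nabla\EE\|_{L^2}^2$ gradient term. Here one uses Gagliardo-Nirenberg/Sobolev interpolation: $\|\EE\|_{L^3}^3\lesssim \|\EE\|_{L^2}^{3-m/2}\|\nabla\EE\|_{L^2}^{m/2}$ in dimension $m$, combined with the a priori smallness $\|\EE\|_{L^\infty}\lesssim \ve^{\sigma}$ for some $\sigma>0$ that one gets from the weighted higher-order energy $\bar{\mathcal E}$ via Sobolev embedding (this is exactly why $\bar{\mathcal E}$ carries $[\tfrac m2]+1$ derivatives with the $\ve^{6i}$ weights: the weights are tuned so that control of $\bar{\mathcal E}\lesssim\ve^{2L}$ yields $\|\EE\|_{L^\infty}\lesssim\ve^{2L-\text{(loss)}}$, and with $L=3([\tfrac m2]+2)$ the loss is beaten). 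The cubic term then comes with an extra factor of $\|\EE\|_{L^\infty}\ve^{-2}\lesssim \ve^{\sigma-2}$, which is $\ll 1$ once $\sigma>2$; tracking constants, this is where the precise choice $L=3([\tfrac m2]+2)$ is consumed, so the bookkeeping of powers of $\ve$ is the first real obstacle.

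Next I would differentiate the perturbation equation $i$ times, $1\le i\le [\tfrac m2]+1$, multiply by the weight $\ve^{6i}$, pair with $\nabla^i\EE$, and sum. Each such estimate produces a good term $\ve^{6i}\|\nabla^{i+1}\EE\|_{L^2}^2$ plus the critical linear term $\ve^{6i-2}\int \CH_{\A^K}\nabla^i\EE:\nabla^i\EE$ (applying Theorem \ref{thm:main2} to $\nabla^i\EE$, modulo commutator terms $[\nabla^i,\CH_{\A^K}]$ that involve derivatives of $\A^K$ — these derivatives of the approximate solution are $O(\ve^{-j})$, and the $\ve^{6i}$ weights are precisely designed to dominate these singular coefficients), plus nonlinear terms that are again estimated by interpolation and the $L^\infty$-smallness of $\EE$ and its low-order derivatives. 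Adding the base estimate and all the weighted higher-order estimates yields a differential inequality $\frac{d}{dt}\bar{\mathcal E}(\EE) \le C\bar{\mathcal E}(\EE) + C\ve^{2L}$ valid as long as the a priori bound $\bar{\mathcal E}(\EE)\le \delta$ holds for a suitable small $\delta$; Grönwall then gives $\bar{\mathcal E}(\EE)(t)\le (\bar{\mathcal E}(\EE)(0) + CT\ve^{2L})e^{CT} \le C_1\ve^{2L}$ on $[0,T]$. Finally, a continuation argument closes the loop: the set of times where $\bar{\mathcal E}(\EE)(t)\le C_1\ve^{2L}$ is nonempty (contains $0$ by hypothesis), closed, and open (for $\ve\le\ve_0$ small the Grönwall bound strictly improves $C_1$ to $C_1/2$ on a slightly larger interval), hence is all of $[0,T]$. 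The main obstacle, as indicated, is the interplay between the $\ve^{-2}$-singular cubic nonlinearity, the singular ($\ve$-negative-power) coefficients coming from derivatives of $\A^K$ in the commutators, and the weight powers $\ve^{6i}$ together with the exponent $L=3([\tfrac m2]+2)$ — verifying that every term closes with a strictly positive net power of $\ve$ (after using Theorem \ref{thm:main2} for the leading linear terms) is the delicate computational core; the rest is routine energy methods.
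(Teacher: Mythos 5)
Your proposal is correct and follows essentially the same route as the paper's Section 7: subtract the approximate equation, use Theorem \ref{thm:main2} to control the $\ve^{-2}$ linearized term, exploit the $\ve^{6i}$-weighted energy $\bar{\mathcal{E}}$ both to absorb the singular commutators $[\nabla^i,\CH_{\A^K}]$ and to extract $L^\infty$-smallness of the error via Sobolev embedding (so that the $\ve^{-2}$-scaled nonlinearity gains a positive power of $\ve$), and close with a Gr\"onwall/continuation argument. The only cosmetic difference is that the paper works with the rescaled error $\mathbf{\Psi}=\ve^{-L}(\A^\ve-\A^K)$, which turns your a priori smallness hypothesis into the bound $\ve^{L-2}\|\mathbf{\Psi}\|_{L^\infty}\le C\ve\,\bar{\mathcal{E}}(\mathbf{\Psi})^{1/2}$ and yields the differential inequality $\frac{\ud}{\ud t}\bar{\mathcal{E}}\le C(1+\bar{\mathcal{E}}+\ve^2\bar{\mathcal{E}}^2)$, but the mechanism is identical to yours.
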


\subsection{Main difficulties and key ideas}\label{subsec:intro-idea}

The proof are based on two key ingredients: construction of approximate solutions and the spectral lower bound estimate.
Compared with the scalar case (e.g. \cite{dMS2, ABC}) or the case that the potential function $F(u)$ is fully minimally paired (e.g. \cite{FWZZ}), there exist several serious difficulties, which we need to introduce various new arguments to overcome. Let us give a sketch here.

Assume $(\Gamma_t, \A_+, \A_-)$ is a smooth solution of \eqref{SharpInterfaceSys} on $[0,T]$. To proceed, let $d(x,\Gamma_t)$ be the signed distance from $x$ to $\Gamma_t$, and $\nu=\nabla d|_{\Gamma_t}$ be the unit outer normal of $\Omega_t^-$. We denote
\begin{align*}
 & \Gamma_t(\delta)=\{x\in\Omega: |d(x,\Gamma_t)|<\delta \},\qquad \Omega_t^\pm=\{x\in\Omega: d(x,\Gamma_t)\gtrless 0\}~~\text{ for }t\in[0,T];\\
 &   \Gamma(\delta)=\{(x,t)\in\Omega\times[0,T]: x\in\Gamma_t(\delta) \},\qquad Q_\pm =\{(x,t)\in\Omega\times[0,T]: x\in\Omega_t^\pm\}.
\end{align*}
We also write $\Gamma=\{(x,t)\in\Omega\times[0,T]:x\in\Gamma_t\}$ for simplicity.
\subsubsection{Construction of approximate solutions}

As in \cite{ABC}, we construct approximate solutions in the two regions: outer region $Q^\pm\backslash \Gamma(\delta/2)$ and inner region $\Gamma(\delta)$, separately.  In the outer region, we assume the solution of \eqref{eq:main-intro} has the expansion
\begin{align}%\label{outer expansion}
 \A^\ve(x,t) = \A_O^\ve(x,t):= \sum_{i=0}^{+\infty}\ve^i\A_\pm^{(i)}(x,t),
\end{align}
where $\A_\pm^{(i)}(x,t)$ are smooth functions defined on $ Q_\pm$.

Substituting the above expansion into (\ref{eq:main-intro}) and collecting the terms with same power of $\ve$, we can obtain the equations for $\A_\pm^{(i)}(x,t)$,  where the leading order equation for $\A_\pm^{(0)}(x,t)$ is actually the equation (\ref{SharpInterfaceSys:HeatFlow}).
The boundary/jump condition for $\A_\pm^{(i)}(x,t)$ will be determined to ensure the solvability of  expansions in inner regions (particularly on the interface $\Gamma_t$).
Moreover, we will use the value of $\A_\pm^{(i)}(x,t)(i > 0)$ not only in the domain $ Q_\pm$, but also in the domain $\Gamma(\delta)\backslash Q_\pm$. Hence, we have to extend $\A_\pm^{(i)}(x,t)$ from $Q_\pm$ to $Q_\pm\cup\Gamma(\delta)$.

\smallskip

In the region near the interface, we try to find functions for $(x,t)\in\Gamma(\delta)$
\begin{align}\label{intro:expan:A}
{\A}_{I}^{\ve}(z,x,t)&=\A_{I}^{(0)}(z,x,t)+\ve \A_{I}^{(1)}(z,x,t)+\ve^2 \A_{I}^{(k)}(z,x,t)+\cdots,\\
d^\ve(x,t)&=d_0(x,t)+\ve d_1(x,t)+\ve^2 d_2(x,t)+\cdots,\label{intro:expan:phi}
\end{align}
such that $d^\ve(x,t)$ is a signed distance function with respect to a surface $\Gamma^\ve$ and
\begin{align*}
  \widetilde\A_{I}^{\ve}(x,t)={\A}_{I}^{\ve}\big(\ve^{-1}{d^\ve(x,t)},x,t\big)
\end{align*}
solves the original equation (\ref{eq:main-intro}) in $\Gamma(\delta)$. Here $\A_{I}^{(k)}(z,x,t)$ and $d_k(x,t)$ ($k\ge 0$) are functions independent of $\ve$.  In addition, to match the boundary conditions, we require that
\begin{align}\label{intro:matching condition}
\lim_{z\to\pm\infty} |\partial_t^i\partial^j_x\partial^l_z (\A_{I}^{(k)}(z,x,t)- \A_{\pm}^{(k)}(x,t))|\le O(e^{-\alpha_0 z}),\quad \text{for } (x,t)\in\Gamma(\delta), \quad i,j,l\ge0.
\end{align}
Then the approximate solution on the whole domain $\Omega$ can be constructed by gluing two solutions $\A_O^\ve$ and $\widetilde\A_I^\ve$ in the overlapped region $\Gamma(\delta)\setminus\Gamma(\delta/2)$.

Once we substitute the inner expansion \eqref{intro:expan:A}  into \eqref{eq:main-intro},  the leading order ($O(\ve^{-2})$) system is an ODE system in $z$, which reads as
\begin{align}\label{intro:leading}
-\partial_z^2\A_{I}^{(0)}+f(\A_{I}^{(0)})=0,\quad \A_{I}^{(0)}(\pm\infty,x,t)=\A_\pm^{(0)}(x,t),
\end{align}
for given $(x,t)\in\Gamma(\delta)$.
However, as the discussion in Section \ref{subsec:minipair}, this equation has no solution
unless $\A_{+}^{\mathrm{T}}\A_-$ is symmetric. More importantly, only when $(\A_{+}, \A_-)$ is a minimal pair,
(\ref{intro:leading}) has a stable solution which takes the form
\begin{align}\label{intro:def:AI}
\BTe(\A_+, \A_-; z):=s(z)\A_++(1-s(z))\A_-,\qquad  s(z)=1-(1+e^{\sqrt{2}z})^{-1}.
\end{align}
Such a $\BTe(\A_+, \A_-; z)$ with $(\A_{+}, \A_-)$ being a minimal pair is called a minimal
connecting orbit (see Definition \ref{def:mini-connect-orbit} for precise definition).

The boundary condition (\ref{SharpInterfaceSys:MiniPair}) gives us that $(\A_{+}(x,t), \A_-(x,t))$
forms a minimal pair for $(x,t)\in\Gamma$. A key question is: \smallskip

{\it Whether $\A_\pm(x,t)$ can be smoothly extended to $\Gamma(\delta)$ such that
$(\A_{+}(x,t),$ $\A_-(x,t))$ remains to be a minimal pair for every $(x,t)\in\Gamma(\delta)$?}\smallskip

\noindent Unfortunately, for general solutions of the system \eqref{SharpInterfaceSys},
such an extension does not exist unless $n=2$.
Thus, for $(x,t)\in\Gamma(\delta)\setminus\Gamma$, one can not expect that $(\A_{+}(x,t), \A_-(x,t))$ is a minimal pair,
and consequently one can not find a solution to (\ref{intro:leading}) for all $(x,t)\in\Gamma(\delta)$. Moreover, the equations  of  $\A_I^{(k)}(z,x,t)$ ($k\ge 1$),
whose main part is the linearization of \eqref{intro:leading}, take the form
\begin{align}\label{intro:next}
 \mathcal{L}_{\A_I^{(0)}} \A_I^{(k)}:=-\partial_z^2\A_I^{(k)}+\CH_{\A_I^{(0)}} \A_I^{(k)}=\FF.
\end{align}
If $\A_I^{(0)}= \BTe(\A_+, \A_-; z)$ with $(\A_{+}, \A_-)$ being a minimal pair, the linearized system \eqref{intro:next} can be explicitly solved.
Otherwise, it may have no solutions. Therefore, for $(x,t)\in\Gamma(\delta)\setminus\Gamma$, the construction of $\A_I^{(k)}(z,x,t)$ ($k\ge 1$) can not proceed either. \smallskip

{\bf The solvability of (\ref{intro:leading}) and  (\ref{intro:next}) have thus become major obstacles for the construction of approximate solutions.} As a result, the traditional matched asymptotic expansion method does not work for our problem.
We remark that, in several sharp interface problems such as
the scalar Allen-Cahn problem \cite{dMS2} or the isotropic-nematic interface problem \cite{FWZZ},
such difficulties do not exist. It is because any pair of points from the two potential wells of $F$ is a minimal pair (that is, $F(u)$ is fully minimally paired, see Definition \ref{def:fullmini}), in such cases
any smooth extension will work, and the phase field equations on two sides of the sharp interfaces are decoupled.\smallskip

{\bf Now let us introduce new key ideas.}\smallskip

To solve (\ref{intro:leading}), a key idea is to construct a profile $\A_0(z,x,t)$ which fulfills
the boundary conditions and ``almost" satisfies (\ref{intro:leading}).
Here ``almost" means the remainder
\begin{align}
  \RR(z,x,t)=-\partial_z^2\A_0+f(\A_0)
\end{align}
is small and can be absorbed  into the equations of next orders by using the relation of $z=d^\ve/\ve$. Precisely, we require that
it has the form
\begin{align}\label{intro:requi-1}
\RR(z,x,t)=\GG(z,x,t)d_0=\ve\GG(z,x,t)(z-d_1-\ve d_2-\ve^2 d_3-\cdots),
\end{align}
with $\GG(z,x,t)$ being a bounded smooth function with an exponential decay in $z$.
Then it can be viewed as source terms for the  systems of next orders. Such a profile is not hard to construct, as a direct choice \eqref{intro:def:AI}
fulfills (\ref{intro:requi-1}) even if $(\A_-,\A_+)$ is not a minimal pair.

The requirement ensuring (\ref{intro:next}) to be solvable is much more restrictive.
Applying a similar idea as above, one may expect to find a minimal connecting orbit
$\widetilde{\BTe}(\widetilde\A_+, \widetilde\A_-; z)$ such that the difference
\begin{align*}
 \mathcal{L}_{\A_I^{(0)}} \A_I^{(k)}- \mathcal{L}_{\widetilde{\BTe}} \A_I^{(k)}
\end{align*}
can be absorbed into the  expansions of next order. However, once $\big(\A_I^{(0)}(-\infty),\A_I^{(0)}(+\infty)\big) $ is not a minimal pair,
it is hard, and in general impossible, to find such a $\widetilde{\BTe}$  so that the difference decays exponentially to zero in $z$.
Our idea is to introduce a profile
\begin{align*}
\BTe(z,x,t) =\Phi(z,x,t)\PP_0(z,x,t)\quad \text{with}\quad \PP_0(z,x,t)=\II-2s(z)\nn(x,t)\nn(x,t),
\end{align*}
such that
\begin{align*}
  \Phi(-\infty,x,t)=\A_-(x,t),\quad   \Phi(+\infty,x,t)=\A_+(x,t)(\II-2\nn(x,t)\nn(x,t)),\\
\Phi(z,x,t)\in{O}^-(n),\quad\Phi(z,x,t)\to  \Phi(\pm\infty,x,t) \text{ exponentially as }z\to\pm\infty.
\end{align*}
By a direct checking, one obtains that
\begin{align*}
\partial_z^2\BTe-f(\BTe)=  \partial_z^2\Phi\PP_0+2\partial_z\Phi\partial_z\PP_0.
\end{align*}
A crucial observation is that, if we write
\begin{align*}
 \A(z,x,t)= \Phi(z,x,t)\PP(z,x,t),
\end{align*}
then we have
\begin{align*}
  \mathcal{L}_{\BTe}\A=\Phi\mathcal{L}_{\PP_0}\PP-\partial_z^2\Phi\PP-2\partial_z\Phi\partial_z\PP.
\end{align*}
Since $\PP_0$ is a trivial minimal connecting orbit, $\mathcal{L}_{\PP_0}$ can be explicitly inverted by diagonalizing it into several differential operators $\mathcal{L}_i(1\le i\le 5)$ (see \eqref{pre:decomp-Op-L}) acting on scalar parameter functions (very much modulation); see Section \ref{subsec:pre:diagonal} for details.
Thus, $\mathcal{L}_{\BTe}$ is solvable up to some exponentially decaying terms, which is the key property that enables us to construct solutions for the expansions of next order. Such profiles $\BTe$ are called {\it quasi-minimal connecting orbits},  which play a crucial role throughout the whole construction of inner expansions.

The above procedure increases dramatically the complexity of the inner expansions,
and thus the process for solving the system in the expansion has to be carefully examined.
This will be accomplished in Section \ref{sec:construct} with a sketched illustration in Figure \ref{fig:expansion}.

\subsubsection{Spectral lower bound estimate}

Another key ingredient in our analysis is to prove a spectral lower bound estimate for the linearized operator $-\Delta +\CH_{\A^K}$,
which is clearly more difficult than the scalar case. In $\Gamma(\delta/2)$, our approximate solution takes
\begin{align*}
  &\A_I^K(x,t)=\Phi(\ve^{-1}d^K,x,t)\sum_{k=0}^{K} \ve^k \PP_k(\ve^{-1}d^K,x,t),
 \qquad \text{with}~  d^K(x,t)=\sum_{k=0}^{K} \ve^k d_k(x,t).
   \end{align*}

First of all, it suffices to prove the following inequality in the inner region  $\Gamma_t^K(\delta/4)$ (as it holds  on the outer region clearly):
\begin{align}
\int_{\Gamma_t^K(\delta/4)}\Big(\|\nabla\A\|^2 +\ve^{-2}\big(\mathcal{H}_{\A_0}\A:\A\big)
+\ve^{-1}\big(\TT_f(\A_0,\A_1,\A):\A\big)\Big)\ud x \geq-C\int_{\Gamma_t^K(\delta/4)}\|\A\|^2\ud x,
\label{spectral inequality-inner-intro}
\end{align}
where the trilinear form $\TT_f$ is defined by \eqref{def:BB} and $\A_i=(\Phi\PP_i)(\ve^{-1}d^K,x,t)$ for $i=0,1$.

We then introduce a coordinate transformation $x\mapsto (\sigma, r)$, which  is a diffeomorphism from $\Gamma_t^K(\delta/4)$ to $\Gamma_t^K\times (-\delta/4, \delta/4)$, and let $J$ be the Jacobian of the transformation, and let $\BB(x,t)=\Phi^{\mathrm{T}}\big(\ve^{-1}d^K(x,t),x,t\big)\A(x,t)$. Then \eqref{spectral inequality-inner-intro} is equivalent to
\begin{align}\label{ineq:one-dim-2-intro}
&\int_{-\frac{\delta}{4}}^{\frac{\delta}{4}}\Big(\|\partial_r \BB\|^2 +\ve^{-2}\mathcal{H}_{\PP_0}\BB:\BB
+\ve^{-1}\TT_f(\PP_0,\PP_1,\BB):\BB+2\Phi\partial_r \BB : \partial_r \Phi\BB \Big)J\ud r\nonumber\\
&\geq-C\int_{-\frac{\delta}{4}}^{\frac{\delta}{4}} \|\BB\|^2J\ud r.
\end{align}
Hence Theorem \ref{thm:main2} can be deduced from the following estimate for the cross term:
\begin{align}\label{ineq:1-dim-cross-intro}
&\int_{-\frac{\delta}{4}}^{\frac{\delta}{4}} \Phi\partial_r \BB : \partial_r \Phi\BB J\ud r
\le
\frac{1}{4}\int_{-\frac{\delta}{4}}^{\frac{\delta}{4}}\Big(\|\partial_r \BB\|^2 +\ve^{-2}\mathcal{H}_{\PP_0}\BB:\BB\Big)J\ud r+C\int_{-\frac{\delta}{4}}^{\frac{\delta}{4}} \|\BB\|^2J\ud r,
\end{align}
and the control of singular correction term of the next order:
\begin{align} \label{ineq:1-dim-nextorder-intro}
&\int_{-\frac{\delta}{4}}^{\frac{\delta}{4}}\ve^{-1}\TT_f(\PP_0,\PP_1,\BB):\BB J\ud r
\le
\frac{1}{4}\int_{-\frac{\delta}{4}}^{\frac{\delta}{4}}\Big(\|\partial_r \BB\|^2 +\ve^{-2}\mathcal{H}_{\PP_0}\BB:\BB\Big)J\ud r+C\int_{-\frac{\delta}{4}}^{\frac{\delta}{4}} \|\BB\|^2J\ud r.
\end{align}

The verifications of (1.23) and (1.24) are based on several ingredients including: decomposing into scalar inequalities which are  related to the scalar linearized operators $\{\mathcal{L}_i(1\le i\le5)\}$, coercive estimates and endpoints $L^\infty$-control for these operators, and some novel
 product estimates which rely on important special symmetric structures between their first eigenfunctions. These will be carried out in Section 7.

We remark that, in proofs of coercive estimates,
endpoints $L^\infty$-controls and product estimates, we have repeatedly applied elementary decompositions based on the eigenfunctions of scalar linearized operators $\{\mathcal{L}_i(1\le i\le5)\}$. This method gives new and elementary proofs for the spectral estimates of these operators, which do not rely on the maximum/comparison principle or the Harnack inequality, and might have their own interests.

\subsection{Notations}

\begin{itemize}
\item  For any two matrices $\A$ and $\B$, we denote $\A : \B=\A_{ij}\B_{ij}$ and $\A\perp \B$ means $\A : \B=0$. Let $\|\A\|^2=\A : \A$.

\item   For any two vectors $\mathbf{m}$ and $\mathbf{n}$, we use $\mathbf{m}\mathbf{n}$ to denote $\mathbf{m}\otimes\mathbf{n}$ when no ambiguity is possible.

\item  We use $\nn\A$ to denote the vector $(\nn_j\A_{ji})_{1\le i\le n}$, and $\A\nn$ to denote the vector $(\A_{ij}\nn_j)_{1\le i\le n}$. Then $\A\mm\nn$ is understood as $(\A_{ik}\mm_k\nn_j)_{1\le i, j\le n}$ and similarly $\mm\nn\A=(\mm_i\nn_k\A_{kj})_{1\le i, j\le n}$.

\item $\mathbb{M}_n$: the space of $n\times n$ matrices.
\item $\mathbb{S}_n$, $\mathbb{A}_n$: the spaces of symmetric and asymmetric $n\times n$ matrices.
\item ${O}(n), {O}^\pm(n):$ $n\times n$ orthogonal group, the set of $n\times n$ orthogonal matrices with determinant $\pm1$.
\item {$O(e^{-\alpha_0|z|})$: denotes the terms can be bounded by $C|z|^ke^{-\alpha_0|z|}$ for some $k\ge 0$ as $z\to\infty$.}
\end{itemize}
The following simple fact will be constantly used:
\begin{align}
\text{for }\A,\BB, \CC\in \mathbb{M}_n,\quad (\A\BB):\CC=\A:(\CC\BB^{\mathrm{T}})=\BB:(\A^{\mathrm{T}}\CC).\label{eq:ABC}
\end{align}

\medskip

\section{Outer expansion}\label{sec:outer}

\subsection{Formal outer expansion}

We perform {\it outer expansion} in $Q_\pm$ rather than $Q_\pm\setminus\Gamma(\delta/2)$
by using the Hilbert expansion method as in \cite{WZZ1, WZZ2}. Assume that
\begin{align}\label{out:expansion}
  \A^\ve(x,t) = \sum_{i=0}^{+\infty}\ve^i\A_\pm^{(i)}(x,t),\qquad \text{ for }(x,t)\in Q_\pm.
\end{align}
Substituting it into (\ref{eq:main-intro}), it is easy to find the leading order $O(\ve^{-2})$ equation reads as
\begin{align*}
\A_\pm^{(0)}(\A_\pm^{(0)})^{\mathrm{T}}\A_\pm^{(0)}=\A_\pm^{(0)},
\end{align*}
which is satisfied by taking
\begin{align}\label{out:sol-A0}
\A_\pm^{(0)}=\A_\pm\in {O}^\pm(n).
\end{align}

Now we assume that
\begin{align*}
    \A^\ve(x,t) = \A_\pm  \UU^\ve(x,t) = \A_\pm\sum_{i=0}^{+\infty}\ve^i\UU_\pm^{(i)}(x,t),\qquad \text{ for }(x,t)\in Q_\pm.
\end{align*}
Here $\UU_\pm^{(0)}=\II$.  A direct calculation yields that
\begin{align}\nonumber
\A_\pm^\mathrm{T}f(\A^{\ve})&= f(\UU^\ve)\\\label{out:expan-f}
&=\ve \big(\UU_\pm^{(1)} +(\UU_\pm^{(1)})^{\mathrm{T}}\big)+
\sum_{k\ge 1}\ve^{k+1} \Big(\big(\UU_\pm^{(k+1)} +(\UU_\pm^{(k+1)})^{\mathrm{T}}\big)+\BB_\pm^{(k)}+{\CC}_\pm^{(k-1)}\Big),%\label{outer nonlinear term}
\end{align}
where
\begin{align}
\label{def:Bpmk}
&\BB_\pm^{(k)}=\sum_{\{i,j, l\}=\{0,1,k\}}\UU_\pm^{(i)}\big(\UU_\pm^{(j)}\big)^{\mathrm{T}}\UU_\pm^{(l)},\\
&{\CC}_\pm^{(k-1)}=\sum_{\substack{i+j+l=k+1,\\0\le i,j,l\le k-1}}
\UU_\pm^{(i)}\big(\UU_\pm^{(j)}\big)^{\mathrm{T}}\UU_\pm^{(l)}.\label{def:Cpmk}
\end{align}
Here and in what follows we use the convention that $\UU_\pm^{(i)}=0$ for $i<0$. Note that
\begin{align}\label{out:def-C1}
  {\CC}_\pm^{(0)}=0,\quad {\CC}_\pm^{(1)}=\UU_\pm^{(1)}\big(\UU_\pm^{(1)}\big)^{\mathrm{T}}\UU_\pm^{(1)},
\end{align}
and ${\CC}_\pm^{(k-1)}$ only involves  $\UU_\pm^{(0)}$, $\UU_\pm^{(1)}$, ..., $\UU_\pm^{(k-1)}$.

On the other hand, we define the linear operator
\begin{align}\nonumber
  \mathcal{J}_\pm \Bp=&\A_\pm^\mathrm{T}\Big(\partial_t(\A_\pm\Bp) -\Delta(\A_\pm\Bp)\Big)\\\label{out:def-Op-J}
  =&\partial_t\Bp -\Delta \Bp +\A_\pm^\mathrm{T}(\partial_t\A_\pm -\Delta\A_\pm)\Bp
-2\A_\pm^\mathrm{T}\nabla\A_\pm\nabla\Bp.
\end{align}
Then we have
\begin{align}\label{out:expan-DA}
\A_\pm^\mathrm{T}(\partial_t\A^\ve -\Delta \A^\ve )= \mathcal{J}_\pm \UU^\ve.
\end{align}

Substituting \eqref{out:expan-f} and \eqref{out:expan-DA} into (\ref{eq:main-intro}), and then equating the $O(\ve^k)(k\geq-1)$ system yields that
\begin{align} \label{expan:out-pm1}
O(\ve^{-1}):&&\UU_\pm^{(1)} +\big(\UU_\pm^{(1)}\big)^{\mathrm{T}}&=0,\\
O(\ve^{k})(k\ge0):&&\UU_\pm^{(k+2)} +\big(\UU_\pm^{(k+2)}\big)^{\mathrm{T}}&=-\mathcal{J}_\pm\UU_\pm^{(k)}
-\BB_\pm^{(k+1)}-{\CC}_\pm^{(k)}.
\label{expan:out-pmk}
\end{align}

In the sequel, we will use the decomposition
\begin{align*}
\MM_\pm^{(k)}=\frac{1}{2}\big(\UU_{\pm}^{(k)}+(\UU_{\pm}^{(k)})^{\mathrm{T}}\big)\,\,\in\mathbb{S}_n,\quad
\VV_\pm^{(k)}=\frac{1}{2}\big(\UU_{\pm}^{(k)}-(\UU_{\pm}^{(k)})^{\mathrm{T}}\big)\,\,\in\mathbb{A}_n,
\end{align*}
and solve $\MM_\pm^{(k)}, \VV_\pm^{(k)}$ separately.

\subsection{The leading order equation}
It yields from (\ref{expan:out-pm1})  that
\begin{align}\label{out:sol-M1}
  \MM_\pm^{(1)}=0.
\end{align}

The equation (\ref{expan:out-pmk}) for $k=0$ gives us that
\begin{align*}
2\MM_\pm^{(2)}&=-\mathcal{J}_\pm\UU_\pm^{(0)}-\BB_\pm^{(1)}.\end{align*}
which leads to
\begin{align*}
&\mathcal{J}_\pm\UU_\pm^{(0)}+\BB_\pm^{(1)} \in \mathbb{S}_n.
\end{align*}
Since $ \MM_\pm^{(1)}=0$, we have that $\UU_\pm^{(1)}=\VV_\pm^{(1)}$ is antisymmetric, and thus
\begin{align*}
 \BB_\pm^{(1)}&=-(\VV_\pm^{(1)})^2  \in \mathbb{S}_n.
\end{align*}
 which implies
\begin{align}\label{equation:Apm0}
\mathcal{J}_\pm\UU_\pm^{(0)}\in \mathbb{S}_n.
\end{align}
As $\UU_\pm^{(0)}=\II$, we deduce from \eqref{out:expan-DA}  that
\begin{align*}
  \A_\pm^\mathrm{T}(\partial_t\A_\pm -\Delta\A_\pm)\in \mathbb{S}_n,
\end{align*}
which is actually equivalent to {\bf  the heat flow of harmonic maps to ${O}^\pm(n)$} given in \eqref{SharpInterfaceSys:HeatFlow}.

\subsection{The next order equations}
For general $k$, (\ref{expan:out-pmk}) can be equivalently written as
\begin{align}\label{eq:in-Ak}
& \mathcal{J}_\pm\UU_\pm^{(k)}
+\BB_\pm^{(k+1)}+{\CC}_\pm^{(k)}\in \mathbb{S}_n,\\ \label{eq:out-Ak}
&\MM_\pm^{(k+2)}=-\frac12 \Big(\mathcal{J}_\pm\UU_\pm^{(k)}+\BB_\pm^{(k+1)}+{\CC}_\pm^{(k)}\Big).
\end{align}
The second equation implies that $\MM_\pm^{(i)}$ is uniquely determined from $\A_\pm^{(j)}(0\le j\le i-1)$ for $i\ge 2$.

From the definition (\ref{def:Bpmk}) of $\BB_\pm^{(k)}$  and \eqref{out:sol-M1}, we have
\begin{align}\label{out:def-B1}
\BB_\pm^{(1)}&=\VV_\pm^{(1)}(\VV_\pm^{(1)})^{\mathrm{T}}=-(\VV_\pm^{(1)})^2\in \mathbb{S}_n,
\end{align}
and for  $k\ge 2$,
\begin{align}\label{out:def-Op-B}
  \BB_\pm^{(k)}
  &=\VV_\pm^{(1)}(\UU_\pm^{(k)})^{\mathrm{T}}+(\UU_\pm^{(k)})^{\mathrm{T}}\VV_\pm^{(1)}\triangleq \CB_\pm \UU_\pm^{(k)}.
\end{align}
One can directly verify that
\begin{align} \label{property:Bk}
  \CB_\pm \VV \in \mathbb{S}_n,\quad \text{ for }\VV\in\mathbb{A}_n.
\end{align}

Therefore, it follows from (\ref{eq:in-Ak}) and (\ref{property:Bk}) that
\begin{align*}
\mathcal{J}_\pm\UU_\pm^{(k)}
+{\CB}_\pm\MM_\pm^{(k+1)}+{\CC}_\pm^{(k)}\in \mathbb{S}_n.
\end{align*}
which further gives
\begin{align}\label{out:eq-JUk}
\mathcal{J}_\pm\UU_\pm^{(k)}
-\frac12{\CB}_\pm\Big(\mathcal{J}_\pm\UU_\pm^{(k-1)}+  \BB_\pm^{(k)}+{\CC}_\pm^{(k-1)}\Big)+{\CC}_\pm^{(k)}\in \mathbb{S}_n.
\end{align}
For $k\ge2$, once $\{\UU_\pm^{(i)}\}_{0\le i\le k-1}$ is determined, the above equation
 indeed gives a heat flow type evolution equation for $\VV_\pm^{(k)}$:
\begin{align}\label{eq:Vk}
\mathcal{J}_\pm\VV_\pm^{(k)}-\frac{1}{2}\CB_\pm^2 \VV_\pm^{(k)}+{\CC}_\pm^{(k)}
+\mathcal{J}_\pm\MM_\pm^{(k)}-\frac12{\CB}_\pm\Big(\mathcal{J}_\pm\UU_\pm^{(k-1)}+ \CB_\pm \MM_\pm^{(k)}+{\CC}_\pm^{(k-1)}\Big)\in \mathbb{S}_n,
\end{align}
 since $\MM_\pm^{(k)}$ is already given by \eqref{eq:out-Ak}.

In addition, when $k\ge 3$, the equation (\ref{eq:Vk}) is linear for $\VV_\pm^{(k)}$, since the coefficients in the operator $\CB_\pm$ (see \eqref{out:def-Op-B}) and ${\CC}_\pm^{(k)}$
depend only on $\UU_\pm^{(1)}$ and $\UU_\pm^{(2)}$.

For $k=1$ or $2$,  (\ref{out:eq-JUk}) or (\ref{eq:Vk}) seems to be nonlinear at a first glance.
However, by a careful checking, we could find that it is also a linear equation for $k=1$ or $2$.

Indeed, for $k=1$, from (\ref{out:def-B1}), (\ref{out:def-Op-B}) and (\ref{out:def-C1}) one has that
\begin{align*}
\CB_\pm \BB_\pm^{(1)}
&=\VV_\pm^{(1)}(\BB_\pm^{(1)})^{\mathrm{T}}+(\BB_\pm^{(1)})^{\mathrm{T}}\VV_\pm^{(1)}=-2(\VV_\pm^{(1)})^3=2\CC_\pm^{(1)}.
\end{align*}
Thus, the equation (\ref{out:eq-JUk}) for $k=1$ is reduced to
\begin{align}\label{eq:V1}
  \mathcal{J}_\pm\VV_\pm^{(1)}
-\frac12{\CB}_\pm\mathcal{J}_\pm\UU_\pm^{(0)}\in \mathbb{S}_n,
\end{align}
which is apparently a linear equation for  $\VV_\pm^{(1)}$.

For $k=2$, the only nonlinear (in $\VV_\pm^{(2)}$) terms are contained in ${\CC}_\pm^{(2)}$, which can be written as
\begin{align*}
&\UU_\pm^{(0)}\big(\UU_\pm^{(2)}\big)^{\mathrm{T}}\UU_\pm^{(2)}+\UU_\pm^{(2)}\big(\UU_\pm^{(0)}\big)^{\mathrm{T}}
\UU_\pm^{(2)}+\UU_\pm^{(2)}\big(\UU_\pm^{(2)}\big)^{\mathrm{T}}\UU_\pm^{(0)}\\
&=\big(\VV_\pm^{(2)}\big)^{\mathrm{T}}\VV_\pm^{(2)}+\VV_\pm^{(2)}\big(\VV_\pm^{(2)}\big)^{\mathrm{T}}+(\VV_\pm^{(2)})^2+\text{ linear terms }\\
&=\text{ symmetric terms }+\text{ linear terms }.
\end{align*}
Therefore, by eliminating symmetric terms, (\ref{eq:Vk}) for $k=2$ is indeed a linear equation of $\VV_\pm^{(2)}$.

Note that for each $\A_\pm^{(k)}$ or $\UU_\pm^{(k)}$, the symmetric part $\MM_\pm^{(k)}$ is solved explicitly from \eqref{eq:out-Ak}. Thus, we do not need boundary/jump conditions for $\MM_\pm^{(k)}$ on $\Gamma_t$.
While, the antisymmetric part $\VV_\pm^{(k)}$ is solved from a linear heat-flow type equation, thus
their boundary/jump conditions on $\Gamma_t$ are needed.
These conditions will be determined in the inner expansion to ensure that outer/inner expansions match each other in the overlap region.

Once $\A_\pm|_{Q_\pm}$, $\MM_\pm^{(k)}|_{Q_\pm}$, $\VV_\pm^{(k)}|_{Q_\pm}$ are determined, we extend them to $\Gamma(\delta)$, such that
\begin{align*}
\A_\pm:\Gamma(\delta)\cup Q_\pm\mapsto {O}^\pm(n),\quad \MM_\pm^{(k)}:\Gamma(\delta)\cup Q_\pm\mapsto \mathbb{S}_n, \quad \VV_\pm^{(k)}:\Gamma(\delta)\cup Q_\pm\mapsto \mathbb{A}_n
\end{align*}
are all smooth functions.
Then $\A_{\pm}^{(k)}(x,t)=\A_\pm(\MM_\pm^{(k)}+\VV_\pm^{(k)})$ are also smooth in $\Gamma(\delta)\cup Q_\pm$.

\medskip

 \section{Minimal pair and quasi-minimal connecting orbits}\label{sec:inner1}

 \subsection{Motivation}

The aim of inner expansion is to find a good approximation, up to any order of $\ve$,
to the exact solution in the region $\Gamma(\delta)$ near the interface. The main strategy used here is that, we try to find functions for $(x,t)\in\Gamma(\delta)$
\begin{align}\label{expan:A}
{\A}_{I}^{\ve}(z,x,t)&=\A_{I}^{(0)}(z,x,t)+\ve \A_{I}^{(1)}(z,x,t)+\ve^2 \A_{I}^{(k)}(z,x,t)+\cdots,\\
d^\ve(x,t)&=d_0(x,t)+\ve d_1(x,t)+\ve^2 d_2(x,t)+\cdots,\label{expan:phi}
\end{align}
such that $d^\ve(x,t)$ is a signed distance function with respect to a surface $\Gamma_t^\ve$ and
\begin{align*}
  \widetilde\A_{I}^{\ve}(x,t)={\A}_{I}^{\ve}\big(\ve^{-1}{d^\ve(x,t)},x,t\big)
\end{align*}
solves the original equation \eqref{eq:main-intro} in $\Gamma_t(\delta)$.  In addition, we require that, for some $\alpha_0>0$,
\begin{align}\label{matching condition}
 |\partial_t^i\partial^j_x\partial^l_z (\A_{I}^{(k)}(z,x,t)- \A_{\pm}^{(k)}(x,t))|= O(e^{-\alpha_0 |z|})\quad \text{ as }{z\to\pm\infty},\text{ for }(x,t)\in\Gamma(\delta).
\end{align}

Since we would like to approximate the sharp interface system (\ref{SharpInterfaceSys}), $d_0(x,t)$ is naturally taken as the signed distance function to $\Gamma_t$. Thus, $\nabla d_0\cdot\nabla$  on $\Gamma_t$ is the normal derivative $\partial_\nu$.
In addition, if $\ve$ is sufficiently small, $\Gamma_t^\ve$ should be a good approximation of $\Gamma_t$. As $d^\ve$ is a signed distance function, one has $|\nabla d^{\varepsilon}|^2=1$, which gives
\begin{align} \label{inner:dist-01}
&|\nabla d_0|^2=1,\quad \nabla d_0\cdot\nabla d_1=0,\\ \label{inner:dist-k}
&2\nabla d_0\cdot\nabla d_k=
-\sum\limits_{1\le j\le k-1}\nabla d_j\cdot\nabla d_{k-j},\quad k\geq2.
\end{align}

Substituting the expansions (\ref{expan:A})-(\ref{expan:phi}) into the following equation
\begin{align*}
\partial_t\A^{\ve}=\Delta\A^{\ve}-\ve^{-2}f(\A^{\ve}),
\end{align*}
we  find that, to eliminate the leading $O(\ve^{-2})$ order terms, $\A=\A_I^{(0)}$ should be a solution of
\begin{align}\label{eq:leading}
  \partial_{z}^2\A =&~f(\A),\qquad \A(\pm\infty)=\A_\pm(x,t).
\end{align}
In addition, the $O(\ve^{k-2})$ $(k\ge1)$ system gives that  $\A_I^{(k)}$ satisfies an equation with the form
\begin{align}\label{eq:linear-leading}
\mathcal{L}_{\A}  \A_I^{(k)}=\FF,\qquad \A_I^{(k)}(\pm\infty)=\A_\pm^{(k)}(x,t),
\end{align}
where $\FF$ contains lower order terms, and $\mathcal{L}_{\A}$ is the linearized operator of \eqref{eq:leading} around $\A$ defined by
\begin{align}
 \mathcal{L}_{\A}\Psi:= -\partial_z^2\Psi+\A\A^{\mathrm{T}}\Psi+\A\Psi^{\mathrm{T}}\A+\Psi\A^{\mathrm{T}}\A-\Psi.
\end{align}
Therefore, the existence of solutions to the systems \eqref{eq:leading} and \eqref{eq:linear-leading} are at the heart of the inner expansion.
As we will show in Section \ref{subsec:minipair} and Section \ref{subsec:pre:diagonal}, when $(\A_+, \A_-)$ is a minimal pair, or equivalently, $\A_+=\A_-(\II-2\nn\nn)$ for some $\nn\in{S}^{n-1}$,
one can directly find a solution $\A$ to \eqref{eq:leading} with \eqref{eq:linear-leading} solvable.\smallskip

The boundary condition (\ref{SharpInterfaceSys:MiniPair}) gives us that $(\A_{+}(x,t), \A_-(x,t))$
forms a minimal pair for $(x,t)\in\Gamma$. However, after a smooth extension, $(\A_{+}(x,t), \A_-(x,t))$ may not
be a minimal pair in general for $(x,t)\in\Gamma(\delta)$. {This causes the main obstacle to the construction of approximated solutions in the inner expansion}.
To overcome this difficulty, we construct a solution $\BTe$ which satisfies \eqref{eq:leading} up to some ``good" remainders, which decay exponentially fast in $z$-variable and vanish on $\Gamma$. More importantly, the corresponding $\mathcal{L}_\BTe$ is also solvable up to some ``good" remainders. Such a solution is called {\it quasi-minimal connecting orbit} (see  Section \ref{subsec:quasi-minimal}).

\subsection{Minimal pair and minimal connecting orbits}\label{subsec:minipair}
We start from a general nonnegative smooth potential function $F: \mathbb{R}^N\to\mathbb{R}_{\ge 0}$ which vanishes exactly on two disjoint, compact, connected, smooth
Riemannian submanifolds $\Sigma^\pm\subset\mathbb{R}^N$ without boundaries. A simple choice of such potential function $F(u)$ is giving by the square of the distance from $u$ to $\Sigma^+\cup\Sigma^-$, for $u$ near $\Sigma^+\cup\Sigma^-$, otherwise it could be a positive constant, see for example \cite{LPW}.
Giving two points $p_{\pm}\in \Sigma^\pm$, the solution of the following ODE
\begin{align}\label{equation:hetero}
\partial_z^2 u=\partial_uF,\qquad u(\pm\infty)=p_\pm,
\end{align}
describes the way of phase transition from the state $p_-$ to another state $p_+$. The existence of solutions to \eqref{equation:hetero}
is so called the {\bf heteroclinic connection problem}, which has been studied extensively for the case of $\Sigma_\pm=\{p_\pm\}$; see \cite{MS, ZS} and the references therein for examples.

\begin{Definition}\label{def:mini-connect-orbit}
A solution of \eqref{equation:hetero} is called a \emph{connecting orbit}, and $p_\pm$ are called its \emph{ends}.
\end{Definition}
In particular, we are interested in {minimal connecting orbits}, which is defined as follows.
\begin{Definition}
A solution of \eqref{equation:hetero} is called a \emph{minimal connecting orbit} \cite{LPW}, if it minimizes
the following energy
\begin{align}
\int_\BR  \frac{1}{2}|u'|^2+ F(u)\ud z, \qquad u(\pm\infty)\in\Sigma_\pm.
\end{align}
\end{Definition}
\begin{Remark}Let the trajectory of $u$ be defined as $\mathrm{Traj}(u)=\{u|-\infty <z<+\infty\}$.
By using an argument as in \cite{ZS} (see also discussions in \cite {LPW}), one can show that $u(z)$ is a minimal connecting orbit, only if:
\begin{enumerate}
  \item The closure of $\mathrm{Traj}(u)$ is a minimal geodesic curve with the weight $\sqrt{F/2}$ in $\mathbb{R}^N$;
  \item $\mathrm{Traj}(u)$ contains no other points in $\Sigma_-\cup\Sigma_+$.
\end{enumerate}
 Conversely, any minimal geodesic curve connecting $(p_-,p_+)\in\Sigma_-\times\Sigma_+ $ which contains no other points in $\Sigma_-\cup\Sigma_+$, is
trajectory of a minimal connecting orbit.
\end{Remark}

With some mild assumptions on the potential function $F$, the existences of minimal connecting orbits can be proved by variational methods
as in \cite{LPW, MS, ZS}. However, it is well-known that, for a general given pair $(p_-, p_+)\in \Sigma_-\times\Sigma_+$,
a minimal connecting orbit (and even a connecting orbit) with ends $p_\pm$ may not exist.
\begin{Definition}
 A pair $(p_-,p_+)\in\Sigma_-\times\Sigma_+$ connected by a minimal connecting orbit is called a \emph{minimal pair}.
\end{Definition}
\begin{Definition}\label{def:fullmini}
If any pair $(p_-,p_+)\in\Sigma_-\times\Sigma_+$ is a minimal pair, we say that $F$ is \emph{fully minimally paired}. Otherwise, i.e., if
there exists $(p_-,p_+)\in\Sigma_-\times\Sigma_+$ which is not a minimal pair, we say that $F$ is \emph{partially minimally paired}.
\end{Definition}

Most of classic models studied previously are fully minimally paired. For examples, for the scalar Allen-Cahn energy $F=\frac14(1-u^2)^2:\mathbb{R}\to\mathbb{R}_{\ge 0}$,
$\Sigma_{\pm}=\{\pm1\}$. Obviously, $F$ is fully minimally paired.
 For the isotropic-nematic phase transition problem in liquid crystals \cite{FWZZ, LW, LL}, the energy $F:\mathbb{Q}\to\mathbb{R}$ ($\mathbb{Q}$ denotes the space of $3\times 3$ symmetric trace free matrices) takes the form:
 \begin{align*}
   F(\QQ)=\frac{a}{2}|\QQ|^2-\frac{b}{3}\mathrm{tr} \QQ^3+\frac{c}{4}|\QQ|^4,\quad  a,b,c>0,\, b^2=27ac.
   \end{align*}
One has
$$\Sigma_{-}=\{0\}, \quad \Sigma_+=\Big\{s_+\Big(\nn\nn-\frac13\II\Big): \nn\in{S}^2, s_+=\frac{b+\sqrt{b^2-24ac}}{4c}\Big\}.$$
As $(0,\QQ_*)$ is a minimal pair for  $\forall~ \QQ_*\in\Sigma_+$, $F$ is fully minimally paired.

 Another geometric example is that $\Sigma_{\pm}$ are the linked spheres $S^k, S^l$ in $S^{k+l+1}$:
 \begin{align*}
 F(u)=\Big((|u_1|^2-1)^2+|u_2|^2\Big)\Big(|u_1|^2+(|u_2|^2-1)^2\Big),\qquad u=(u_1, u_2)\in \mathbb{R}^{k+1}\times\mathbb{R}^{l+1}.
 \end{align*}
Clearly, this problem is fully minimally paired.
 On the other hand, the class of $F(u)$ considered in \cite{LPW} is at the exactly other extremum. In the latter case, there are two compact, smooth sub-manifolds $M_{\pm}$ of $\Sigma_{\pm}$ respectively, such that there is a smooth diffeomorphism between points $p_-,p_+$ in $M_{\pm}$ so that the corresponding points $p_-,p_+$ form a minimal pair.

For the problem (\ref{eq:main-intro}) considered in this paper, $F=\frac{1}{4}\|\A\A^{\mathrm{T}}-\II\|^2$, and the equation \eqref{equation:hetero} becomes
\begin{align}\label{equation:ODE-0}
&\partial_{z}^2 \A = \A\A^{\mathrm{T}}\A-\A ,\qquad \A(\pm\infty)= \A_{\pm}\in {O}^\pm(n),
\end{align}
which is the Euler-Lagrange equation to the one dimensional energy functional:
\begin{align}
\int_\BR  \frac{1}{2}\|\partial_z\A\|^2+ \frac{1}{4}\|\A\A^{\mathrm{T}}-\II\|^2\ud z.
\end{align}
In this case, we have the following characterization of minimal pairs, which implies that this problem is partially minimally paired unless $n=2$.
\begin{Lemma}\label{lem:mini-pair}
A pair $(\A_-, \A_+)\in{O}^-(n)\times{O}^+(n)$ is  a \emph{minimal pair}, if and only if
\begin{align*}
\|\A_--\A_+\|=\min\big\{ \|\A-\B\|: (\A, \BB)\in{O}^-(n)\times{O}^+(n)\big\}.
\end{align*}
or equivalently, there exists $\nn\in{S}^{n-1}$ such that $\A_-=\A_+(\II-2\nn\nn).$
\end{Lemma}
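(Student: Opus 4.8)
The plan is to characterize minimal pairs through a sequence of reductions, going from the minimality of the connecting orbit to the minimality of the geodesic trajectory, then to a Euclidean distance problem in $\mathbb{M}_n$, and finally to explicit linear algebra on $O(n)$. First I would observe that for the specific potential $F(\A)=\frac14\|\A\A^{\mathrm{T}}-\II\|^2$ the two wells $\Sigma^\pm = O^\pm(n)$ are smooth compact submanifolds of $\mathbb{M}_n$, and the weighted length functional $\int_\BR \tfrac12\|\A'\|^2 + F(\A)\,\ud z$ is (after the usual reparametrization argument) comparable to the weighted geodesic length $\int \sqrt{F/2}\,|\ud\A|$ along the trajectory, with equality achieved precisely by connecting orbits. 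So a pair $(\A_-,\A_+)$ is a minimal pair iff there is a weighted-geodesic in $\mathbb{M}_n$ from $\A_-$ to $\A_+$ realizing the minimal weighted distance $c_0^F = \inf_{\A\in O^-, \BB\in O^+} d_F(\A,\BB)$, and whose interior avoids $O(n)$.

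Next I would compute $d_F$ and $c_0^F$ explicitly. Because $F$ depends on $\A$ only through $\A\A^{\mathrm{T}}-\II$, and because we may always homotope a path toward the segment while controlling the weighted length, the key computation is along straight segments: for $\A_-\in O^-(n)$, $\A_+\in O^+(n)$, parametrize $\A(t) = (1-t)\A_- + t\A_+$ and evaluate $\int_0^1 |F(\A(t))|^{1/2}\,\|\A_+-\A_-\|\,\ud t$. Using $\A_\pm\A_\pm^{\mathrm{T}} = \II$, a direct expansion of $\A(t)\A(t)^{\mathrm{T}}-\II$ in terms of the symmetric matrix $\A_-\A_+^{\mathrm{T}}$ shows the segment actually stays in the zero set of $F$ only in degenerate cases, but more importantly gives a clean lower bound: the integrand is controlled below by $c\,\|\A_+-\A_-\|^2$ up to the geometry of the eigenvalues of $\A_-^{\mathrm{T}}\A_+ \in O(n)$. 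I would then diagonalize $\A_-^{\mathrm{T}}\A_+$ into $2\times 2$ rotation blocks plus $\pm 1$ eigenvalues (its determinant is $-1$, so there is an odd number of $-1$ eigenvalues counted appropriately), reducing the whole problem to the scalar/planar case; in each block the minimal weighted connection is understood, and one sees the global minimum of $d_F$ over $O^-(n)\times O^+(n)$ is attained exactly when $\A_-^{\mathrm{T}}\A_+$ has a single nontrivial eigenvalue $-1$ and all others $+1$, i.e. $\A_-^{\mathrm{T}}\A_+ = \II - 2\nn\nn$ for some $\nn\in S^{n-1}$, equivalently $\|\A_--\A_+\|^2 = 4$ which is the minimum of $\|\A-\BB\|^2$ over $O^-(n)\times O^+(n)$.

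To finish, I would argue both implications. If $\A_- = \A_+(\II - 2\nn\nn)$, then $\A_+ = \A_-(\II-2\mm\mm)$ with $\mm = \A_+^{\mathrm{T}}\nn$ (using $(\II-2\nn\nn)^{\mathrm{T}} = \II-2\nn\nn$ and orthogonality), and the path $\A_-(\II - 2s(z)\mm\mm) = \BTe(\A_+,\A_-;z)$ from \eqref{intro:def:AI} is an explicit connecting orbit whose trajectory is a weighted minimal geodesic meeting $O(n)$ only at its endpoints — this exhibits $(\A_-,\A_+)$ as a minimal pair, and the cost equals $c_0^F$. Conversely, if $(\A_-,\A_+)$ is a minimal pair then its connecting-orbit energy equals $c_0^F$, which by the block computation forces $\A_-^{\mathrm{T}}\A_+$ to have exactly one $-1$ eigenvalue, i.e. the stated form. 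The equivalence with the Euclidean-distance minimality is then immediate from $\|\A_- - \A_+\|^2 = 2n - 2\,\tr(\A_-^{\mathrm{T}}\A_+)$ together with the fact that $\tr$ of an element of $O^-(n)$ — here $\A_-^{\mathrm{T}}\A_+$ lies in $O^-(n)$ — is at most $n-2$, with equality iff it is $\II - 2\nn\nn$. The main obstacle I expect is the rigorous comparison between the ODE-energy of a connecting orbit and the weighted geodesic length, and in particular ruling out non-segment geodesics that could dip below the naive segment estimate; handling this cleanly requires the block-diagonalization to decouple $O(n)$ into elementary planar and scalar pieces where the weighted-geodesic problem is classical, plus the observation that any trajectory touching $O(n)$ in its interior can be shortened, contradicting minimality.
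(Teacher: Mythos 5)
Your skeleton is right in places — the passage from minimal connecting orbits to weighted geodesics, the explicit orbit $s(z)\A_++(1-s(z))\A_-$ for the sufficiency direction, and the trace identity $\|\A_--\A_+\|^2=2n-2\,\mathrm{Tr}(\A_-^{\mathrm{T}}\A_+)$ with $\mathrm{Tr}\le n-2$ on $O^-(n)$ (equality iff $\A_-^{\mathrm{T}}\A_+=\II-2\nn\nn$) — but the core analytic step is missing, and the tools you propose in its place do not supply it. The variational problem is over arbitrary paths in the ambient space $\mathbb{M}_n$, so block-diagonalizing the endpoint matrix $\A_-^{\mathrm{T}}\A_+$ does not decouple anything: a competitor path has no reason to respect that block structure (or even to stay near $O(n)$-compatible forms), so "in each block the minimal weighted connection is understood" gives no lower bound on the infimum, and likewise "homotope a path toward the segment while controlling the weighted length" is asserted without any monotonicity mechanism. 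Worse, your pivot claim — that the global minimum of $d_F$ over $O^-(n)\times O^{+}(n)$ is attained exactly at reflection pairs — is false as stated: since $F$ vanishes on the wells, one can slide along $O^{\pm}(n)$ at asymptotically zero cost, so $d_F(\A_-,\A_+)=c_0^F$ for \emph{every} pair. What distinguishes minimal pairs is attainment by a genuine heteroclinic orbit, whose trajectory cannot meet $O(n)$ in its interior (by the Hamiltonian identity $\frac12\|\A'\|^2=F(\A)$, touching the wells with the matching kinetic energy forces the orbit to be constant). Your proposal never isolates this, and even your "if" direction needs a lower bound against all competitors to conclude the explicit orbit is \emph{minimal}, not just a connecting orbit.

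The missing idea, which is how the paper argues, is a pointwise comparison between $F$ and the Euclidean distance to the wells: $F(\A)\ge c\,\rho^2(2-\rho)^2$ with $\rho=\mathrm{dist}(\A,O(n))$ (Lemma \ref{lem:appendix}), with equality along $\II-\rho\,\nn\nn$. Since $\rho$ is $1$-Lipschitz and the Euclidean gap between $O^-(n)$ and $O^+(n)$ equals $2$, any path joining the wells must reach $\rho\ge1$, so its weighted length is bounded below by the one-dimensional integral $2\int_0^1\rho(2-\rho)\,\ud\rho$ (up to the normalizing constant), and the equality analysis — as in \cite[Theorem 2.1]{LPW} — forces the trajectory of a minimal connecting orbit to be a straight segment $\{\tau\A_++(1-\tau)\A_-\}$ with $\|\A_--\A_+\|=2$; the trace identity then yields $\A_-=\A_+(\II-2\nn\nn)$, and the same bound shows the explicit segment orbits achieve the minimum. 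Without this (or an equivalent) quantitative lower bound valid for arbitrary competitors, evaluating the weighted length only along segments and per eigenblock does not prove either implication.
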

\begin{proof}
See Appendix \ref{App:proof-lem-minipair}.
\end{proof}
Let
\begin{align}\label{equation:s}
  s(z)=1-(1+e^{\sqrt{2}z})^{-1},
\end{align}
which solves
\begin{align}\label{eq:s}
  s''=2s(1-s)(1-2s)\quad \text{for }z\in\mathbb{R};\quad s(+\infty)=1,~ s(-\infty)=0.
\end{align}
Apparently, $s(z)-s(\pm\infty), \partial_z^ks=O(e^{-\sqrt{2}|z|})$ as $z\to\pm\infty$. In the sequel, we will choose $\alpha_0\in(0,\sqrt{2})$.
\begin{Lemma}\label{lem:minipath}
All minimal connecting orbits are given by
\begin{align}\label{def:AI}
  \BTe_0(\A_+, \A_-; z):=s(z)\A_++(1-s(z))\A_-,
\end{align}
with $(\A_+, \A_-)$ being a minimal pair and $s(z)$ defined in \eqref{equation:s}.
\end{Lemma}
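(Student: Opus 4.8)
The plan is to prove Lemma \ref{lem:minipath} by combining the variational characterization of minimal connecting orbits (recorded in the Remark after Definition \ref{def:mini-connect-orbit}) with the explicit characterization of minimal pairs in Lemma \ref{lem:mini-pair}, namely that $(\A_-,\A_+)$ is a minimal pair if and only if $\A_- = \A_+(\II - 2\nn\nn)$ for some $\nn \in S^{n-1}$, equivalently $\|\A_- - \A_+\|$ equals the distance between $O^-(n)$ and $O^+(n)$. The strategy is to reduce the matrix ODE \eqref{equation:ODE-0} to a family of decoupled scalar problems by exploiting the structure of a minimal pair.

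First I would fix a minimal pair $(\A_-,\A_+)$ and write $\A_- = \A_+(\II - 2\nn\nn)$. Setting $\A(z) = \A_+ \BP(z)$ turns \eqref{equation:ODE-0} into $\partial_z^2 \BP = \BP\BP^{\mathrm{T}}\BP - \BP$ with $\BP(-\infty) = \II - 2\nn\nn$ and $\BP(+\infty) = \II$, since $\A_+$ is orthogonal and $f$ is equivariant under left multiplication by $O(n)$. Choosing an orthonormal basis with $\nn$ as the first vector, one expects the minimizing orbit to stay in the one-parameter family $\BP(z) = \II - 2\sigma(z)\,\nn\nn$ for a scalar profile $\sigma$; substituting this ansatz, each entry off the $\nn\nn$-block is constant, and the $\nn\nn$-component gives exactly the scalar equation $\sigma'' = 2\sigma(1-\sigma)(1-2\sigma)$ with $\sigma(-\infty)=1$, $\sigma(+\infty)=0$. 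Comparing with \eqref{eq:s}, we get $\sigma(z) = 1 - s(z)$ (up to the translation ambiguity inherent in autonomous ODEs, which I would normalize away or absorb into the statement), hence $\A(z) = \A_+(\II - 2(1-s(z))\nn\nn) = \A_+\bigl(s(z)\II + (1-s(z))(\II - 2\nn\nn)\bigr) = s(z)\A_+ + (1-s(z))\A_-$, which is precisely $\BTe_0(\A_+,\A_-;z)$. One then checks this is genuinely energy-minimizing: its trajectory is the straight segment from $\A_-$ to $\A_+$ in $\mathbb{M}_n$, and along it $\frac14\|\A\A^{\mathrm{T}} - \II\|^2$ reduces to a scalar double-well, so the one-dimensional energy equals $\sqrt{2}\,\|\A_+ - \A_-\|^2 \cdot(\text{const})$, which by Lemma \ref{lem:mini-pair} is the minimal possible value of the endpoint separation; invoking the geodesic characterization in the Remark confirms minimality.

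For the converse — that \emph{every} minimal connecting orbit has this form — I would argue via the same Remark: if $u(z)$ is a minimal connecting orbit with ends $(\A_-,\A_+)$, then $(\A_-,\A_+)$ must be a minimal pair (else no minimal orbit exists), and the closure of $\mathrm{Traj}(u)$ is a minimal geodesic for the weight $\sqrt{F/2}$ joining $\A_-$ to $\A_+$. Because $F$ vanishes to second order on $O(n)$ and is comparable to $d^2(\cdot, O(n))$, and because the straight segment $[\A_-,\A_+]$ already realizes the minimal weighted length for a minimal pair (its endpoints are at minimal Euclidean distance and the segment's interior stays away from $O(n)$ except at the ends), any competing geodesic has weighted length at least that of the segment; a standard convexity/strict-minimality argument for the weighted length functional then forces the geodesic to coincide with the segment, so $\mathrm{Traj}(u) \subset [\A_-,\A_+]$, and reparametrizing by the ODE gives $u = \BTe_0(\A_+,\A_-;\cdot)$ up to translation.

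The main obstacle I anticipate is the converse direction, specifically ruling out minimal geodesics that leave the straight segment. The weighted-length functional with weight $\sqrt{F/2}$ is only as regular as $F$ allows, and $F$ is not globally convex, so one cannot appeal to a naive convexity argument on all of $\mathbb{M}_n$; the work is in localizing near the segment, using that $F \gtrsim d^2(\cdot, O(n))$ to confine minimizers to a tube, and then exploiting the explicit quadratic behavior of $F$ transverse to $O(n)$ together with the minimal-pair hypothesis to conclude the segment is the unique minimizer. An alternative, cleaner route — which I would pursue if the geometric argument gets unwieldy — is to show directly that the second-order ODE \eqref{equation:ODE-0} with minimal-pair boundary data has, among finite-energy solutions, only the solutions in the family $\BTe_0$: one block-diagonalizes using the $\nn\nn$ projection, shows the off-block components must be constant (any bounded solution of $\partial_z^2 v = $ (linearization with positive-definite part) decaying at $\pm\infty$ is forced, by an energy identity, to match at both ends and hence be constant once one checks the endpoints agree), and is left with the scalar equation whose only heteroclinic is $s$. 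That reduction is the technical heart of the proof and is where I would concentrate the effort.
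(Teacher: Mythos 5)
Your forward computation is fine and matches what the paper does implicitly: once one knows the trajectory is the straight segment joining a nearest pair, writing $\A=\A_+\BP$, making the ansatz $\BP=\II-2\sigma(z)\nn\nn$ and invoking \eqref{eq:s} recovers exactly $\BTe_0(\A_+,\A_-;z)$. The genuine gap is that you never prove the one fact the lemma actually turns on: that the trajectory of \emph{any} minimal connecting orbit is the straight segment between a pair $(\A_-,\A_+)$ at minimal Euclidean distance, equivalently that this segment is the (unique) minimizer of the weighted length $\int\sqrt{F/2}\,\|\gamma'\|$ among curves joining $O^-(n)$ to $O^+(n)$. In your first paragraph you simply assert that the segment ``already realizes the minimal weighted length,'' and in the converse you appeal to ``a standard convexity/strict-minimality argument,'' which you then concede cannot work as stated because $F$ is not convex; you explicitly defer this step as ``the technical heart.'' Since both the minimality of $\BTe_0$ and the classification of all minimizers rest on exactly this step, the proposal as written does not prove the lemma. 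The paper closes it in Appendix \ref{App:proof-lem-minipair}: the pointwise bound $F(\A)\ge F(\II-\rho\,\nn\otimes\nn)$ with $\rho=\rho(\A,O(n))$ (Lemma \ref{lem:appendix}), combined with the energy-equals-weighted-length reduction and the comparison/projection argument of \cite[Theorem 2.1]{LPW}, forces $\mathrm{Traj}(\A)$ to be a segment $\{\tau\A_++(1-\tau)\A_-:\|\A_--\A_+\|=2\}$, after which the scalar reduction gives $s(z)$. Some quantitative input of this type (a lower bound on $F$ in terms of the distance to $O(n)$, used to compare an arbitrary competitor with the one-dimensional profile) is indispensable and is absent from your argument.

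Your proposed fallback, classifying \emph{all} finite-energy solutions of \eqref{equation:ODE-0} with minimal-pair boundary data, would not repair this. It aims at a statement stronger than needed (the lemma only concerns energy minimizers, and it is not clear that non-minimal connecting orbits with the same ends are excluded), and its key step is unjustified: the components of $\BP$ in the decomposition of Section \ref{subsec:pre:diagonal} are coupled through the cubic nonlinearity, so the off-$\nn\nn$ blocks do not solve a decoupled linear equation with positive-definite potential, and the ``energy identity forces them to be constant'' claim does not follow by linearizing. The diagonalization of the paper applies to the linearized operator $\mathcal{L}_{\PP_0}$, not to the full nonlinear ODE. By contrast, the variational route only needs to control minimizers, for which the weighted-length identity plus the pointwise lower bound on $F$ suffices — that is the missing idea you should supply.
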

\begin{Remark}
We remark that, for any pair $(\A_-,\A_+)\in{O}^-(n)\times{O}^+(n) $ with $\A_+^\mathrm{T}\A_-$ symmetric,
$\BTe_0(\A_+, \A_-; z)$ defined in (\ref{def:AI}) is a solution to (\ref{equation:ODE-0}), i.e., a connecting orbit.
However, only if $\A_+^\mathrm{T}\A_-=\II-2\nn\nn$ for some $\nn\in{S}^{n-1}$, $\BTe_0(\A_+, \A_-; z)$ is minimal connecting orbit. In this example, the dimension of ${O}(n)$ is $ n(n-1)/2$, its symmetric group is
$S^{n-1}$. Thus for every point $p_+$ in $O^+(n)$ there is an embedded  $S^{n-1}$ in $O^-(n)$ which is minimum (and equal) distance to $p_+$. The similar statement is also true in the other way for points in $O^-(n)$. This is a rather interesting (and also typical at least locally) partially minimally paired situation. It leads to many mixed type boundary conditions.
\end{Remark}

\subsection{Quasi-minimal connecting orbit}\label{subsec:quasi-minimal}
As discussed in the beginning of this section, in general, for $(x,t)\in\Gamma(\delta)\setminus\Gamma$, one can not expect that
 $(\A_-(x,t), \A_+(x,t))$  is a minimal pair. Thus, the solution to (\ref{eq:leading}) may not exist. To this end, we construct a profile
$\BTe$ which approximately satisfies (\ref{eq:leading}) for $(x,t)\in\Gamma(\delta)$.

We assume that
there exists a smooth vector field $\nn(x,t):\Gamma\to S^{n-1}$ such that $\A_-=\A_+(\II-2\nn\nn)$ on $\Gamma$ (in general this assumption may be not true and this issue will be discussed in Remark \ref{Remark:lift}).
Then we extend $\nn(x,t)$ to be a smooth ${S}^{n-1}$-valued vector field in $\Gamma(\delta)$ with $\partial_\nu\nn(x,t)=0$ on $\Gamma$.
Define smooth orthogonal matrices
\begin{align*}
\Phi_-(x,t)=\A_-(x,t),\quad \Phi_+(x,t)=\A_+(x,t)(\II-2\nn(x,t)\nn(x,t)),\quad \text{ for }(x,t)\in\Gamma(\delta).
\end{align*}
It holds that $ \Phi_+(x,t)=\Phi_-(x,t)$ on the interface $\Gamma$.
Moreover, as $\partial_\nu\nn(x,t)=0$ on $\Gamma$, the boundary condition (\ref{SharpInterfaceSys:Neumann})
ensures that
\begin{align*}
 \partial_\nu\Phi_+(x,t)=\partial_\nu\Phi_-(x,t)\quad \text{ for }(x,t)\in\Gamma,
\end{align*}
which implies that
\begin{align*}
  \|\Phi_+(x,t)-\Phi_-(x,t)\|\le C d_0^2(x,t) \quad \text{ for }(x,t)\in\Gamma(\delta).
\end{align*}
This quadratical vanishing property near the interface is very important.

\begin{figure}
	\centering
	\begin{tikzpicture}
\node  at (1.5,1.5) {\small ${Q}_+$};

\node  at (-1.5,1.5) {\small ${Q}_-$};

	\draw [dotted] (0,0) to  (0,3);
{	\draw  (0,0) to [in=150,out=0,looseness=0.75] (3, -0.6);}
	\draw [thick](0,0) to [in=30,out=180,looseness=0.75] (-3, -0.6);
	\draw[dashed] (0,0) to [in=-150,out=0,looseness=0.75] (3, 0.6);
	\draw[dashed] (0,0) to [in=-30,out=180,looseness=0.75] (-3, 0.6);

	\draw [thick](0,3) to [in=-150,out=0,looseness=0.75] (3, 3+0.6);
	\draw (0,3) to [in=-30,out=180,looseness=0.75] (-3, 3+0.6);

	\node  at (-1.5,-0.6) {\small $\A_-(x,t)$};
	\node  at (1.5,3-0.2) {\small $\A_+(x,t)$};
	\node  at (-1.5,0.6) {\small $\A_+(x,t)(\II-2\nn\nn)$};

	\draw (-2.2, -0.29) to [in=-100,out=100,looseness=0.75] (-2.2, 0.29);
	\node  at (-3.3,0) {\small $\Phi(z,x,t)$};
\draw[->](-2.65,0)--(-2.25,0);

    \end{tikzpicture}
	\caption{Illustration of $\Phi(z,x,t)$ connecting $\A_-(x,t)$ and $\A_+(x,t)(\II-2\nn(x,t)\nn(x,t))$ }
	\label{fig:orbit}
\end{figure}

Let $\overline{\Phi}(x,t; \tau)(0\le \tau\le 1)$ be a geodesic on ${O}^-(n)$ connecting $\Phi_-(x,t)$ and $\Phi_+(x,t)$:
\begin{align*}
  \overline{\Phi}(x,t;0)=\Phi_-(x,t),\quad  \overline{\Phi}(x,t;1)=\Phi_+(x,t),\quad  \|\partial_\tau\overline{\Phi}(x,t;\tau)\| = \text{const. for }0\le\tau\le 1.
\end{align*}
Then we reparameterize the geodesic as
\begin{align}\label{def:Phi}
   \Phi(z, x,t)= \overline{\Phi}(x,t;\bar\eta(z)),%\quad s(z)=(1+\tanh(\sqrt{2}z))/2.
\end{align}
where $\bar\eta(z)$ is a monotonic increasing function which tends to $0$ (or $1$) exponentially
fast as $z\to -\infty$ (or $+\infty$). In particular, we can choose $\bar\eta(z)=s(z)$.
Apparently, one has that for $k\ge0$
\begin{align}\label{decay:Phi}
\|\partial_z^k(\Phi(z, x,t)-\Phi_\pm(x,t))\|& =O(e^{-\alpha_0|z|}d_0^2(x,t)), \text{ as }z\to \pm\infty,\,\, d_0\to 0.
\end{align}

We define
\begin{align}\label{def:quasi-mini}
\BTe(z,x,t)= \Phi(z,x,t)\PP_0(z,x,t)\quad\text{with }\quad \PP_0= (\II-2s(z)\nn(x,t)\nn(x,t)).
\end{align}
Thus, one has
\begin{align*}
f(\BTe)=\BTe\BTe^{\mathrm{T}}\BTe-\BTe=4s(s-1)(1-2s)\Phi(z,x,t)\nn\nn,
\end{align*}
which gives
 \begin{align*}
  \partial_z^2\BTe-f(\BTe)=\partial_z^2\Phi(z,x,t)\PP_0+2\partial_z\Phi(z,x,t)\partial_z\PP_0.
\end{align*}
Moreover, the linearized operator  around $\BTe$ can be written as
\begin{align*}
  \mathcal{L}_{\BTe}\A=\Phi(z,x,t)\mathcal{L}_{\PP_0}\PP+\partial_z^2\Phi(z,x,t)\PP+2\partial_z\Phi(z,x,t)\partial_z\PP
\end{align*}
for $\A=\Phi\PP$.

Due to the construction of $\Phi$,  $\partial_z^2\Phi(z,x,t), \partial_z\Phi(z,x,t)$ are of order $O(e^{-\alpha_0|z|}d_0^2(x,t))$.
Therefore, $\BTe$  satisfies \eqref{eq:leading} up to some terms which decay exponentially fast in $z$-variable and quadratically vanish on $\Gamma$. More importantly, as $\Phi\in{O}^-(n)$ is invertible and $\mathcal{L}_{\PP_0}$ is solvable (see Section \ref{subsec:pre:diagonal} below),  $\mathcal{L}_\BTe$ is also solvable up to some small remainders. These crucial properties  enable us to modify the original equation and solve the expanding systems exactly.\smallskip

The profile $\BTe$ defined in \eqref{def:quasi-mini} is called a {\it quasi-minimal connecting orbit}, and we will use it as the leading order approximation in the inner region.

\medskip

\section{Diagonalization of the linearized operator}\label{subsec:pre:diagonal}

To solve the linearized ODE system
\begin{align}\nonumber
 \mathcal{L}_{\PP_0}\PP:= -\partial_z^2\PP+\PP_0\PP_0^{\mathrm{T}}\PP+\PP_0\PP^{\mathrm{T}}\PP_0+\PP\PP_0^{\mathrm{T}}\PP_0-\PP=\FF.
\end{align}
with $\PP_0(z)=\II-2s(z)\nn\nn$ and $\nn\in S^2$,
we need to make a diagonalization to $\mathcal{L}_{\PP_0}$.
Here and in what follows, we simply write  $ \mathcal{L}_{\PP_0}$ as $\mathcal{L}$ when no ambiguity will be caused.

\subsection{An orthogonal  decomposition  of $\mathbb{M}_n$}

We introduce
\begin{equation}\label{def:subspace}
  \begin{split}
&\BV_1=\big\{\lambda \nn\nn\big|~ \lambda\in\mathbb{R}\big\},\\
& \BV_2=\big\{(\nn\bl+\bl\nn)\big|~\bl\cdot\nn=0\big\},\quad
\BV_3=\big\{(\nn\bl-\bl\nn)\big|~\bl\cdot\nn=0\big\},
\\&\BV_4=\mathrm{span}~\big\{(\bl\mm-\mm\bl)\big|~\bl\cdot\mm=\bl\cdot\nn=\mm\cdot\nn=0\big\},
\\&\BV_5=\mathrm{span}~\big\{\bl\bl,(\bl\mm+\mm\bl)\big|~\bl\cdot\mm=\bl\cdot\nn=\mm\cdot\nn=0\big\}.
 \end{split}
\end{equation}
Clearly, $\BM_n=\oplus_{i=1}^5\BV_i$, $\BA_n=\BV_3\oplus\BV_4$, $\BS_n=\BV_1\oplus\BV_2\oplus \BV_5$  and
$$\dim (\BV_1,~\BV_2,~\BV_3,~\BV_4,~\BV_5)=\Big(1,~ n-1,~ n-1, ~\frac12(n-1)(n-2),~ \frac12n(n-1)\Big).$$
Moreover, if $(\A_-,\A_+)$ is a minimal pair with $\A_-=\A_+(\II-2\nn\nn)$, then
\begin{align*}
\A_+\BV_i=\A_-\BV_i(i=1,4,5),\quad \A_+\BV_2=\A_-\BV_3,\quad \A_+\BV_3=\A_-\BV_2.
\end{align*}

Let  $\mathcal{P}_i: \mathbb{M}_n\to \BV_i$ ($1\le i\le 5$) be the projection operators. Then one has
\begin{align}\label{def:projection}
  \begin{split}
&\mathcal{P}_1\A=\nn\nn\A\nn\nn=\nn\nn(\A:\nn\nn), \\
&\mathcal{P}_2\A=\frac12\big[\nn\nn(\A+\A^\mathrm{T})(\II-\nn\nn)+(\II-\nn\nn)(\A+\A^\mathrm{T})\nn\nn\big],\\ &\mathcal{P}_3\A=\frac12\big[\nn\nn(\A-\A^\mathrm{T})(\II-\nn\nn)+(\II-\nn\nn)(\A-\A^\mathrm{T})\nn\nn\big],\\
&\mathcal{P}_4\A=\frac12(\II-\nn\nn)\big(\A-\A^\mathrm{T}\big)(\II-\nn\nn),\\
&\mathcal{P}_5\A=\frac12(\II-\nn\nn)\big(\A+\A^\mathrm{T}\big)(\II-\nn\nn).
\end{split}
\end{align}
One can directly check from \eqref{def:projection} that
\begin{align}\label{Append:comm-P2}
\begin{split}
(\II-2\nn\nn)  \mathcal{P}_2\A=&-\mathcal{P}_3\big((\II-2\nn\nn)\A\big),\\
(\II-2\nn\nn)  \mathcal{P}_3\A=&-\mathcal{P}_2\big((\II-2\nn\nn)\A\big),\\
 \mathcal{P}_4((\II-2\nn\nn)\A)=&~ (\II-2\nn\nn)  \mathcal{P}_4\A=\mathcal{P}_4\A.
 \end{split}
\end{align}
These projection operators play important roles throughout this paper. We remark that for our later use, $\nn$  and the corresponding decompositions (\ref{def:subspace}) may depend on $(x,t)$.

\subsection{Diagonalization}
Now we solve the ODE system
\begin{align}\label{eq:ODEsys}
 \mathcal{L}\PP(z)=\mathbf{F}(z).
\end{align}
{A crucial observation} is that the system \eqref{eq:ODEsys} can be  diagonalized into several scalar ODEs via the above orthogonal decomposition of  $\mathbb{M}_n$.

We denote
\begin{align*}
\UU_i(z) =\mathcal{P}_i\PP(z),\quad \VV_i(z) =\mathcal{P}_i\FF(z)\quad\text{ for } 1\le i\le 5.
\end{align*}
From the fact
\begin{align*}
  \PP_0\PP_0^{\mathrm{T}}=  \PP_0^{\mathrm{T}}\PP_0=\II-4s(1-s)\nn\nn
\end{align*}
and \eqref{def:projection}, we deduce that
\begin{align*}
  \PP_0\PP_0^{\mathrm{T}}\PP+\PP\PP_0^{\mathrm{T}}\PP_0&=2\PP-4s(1-s)(\nn\nn\PP+\PP\nn\nn)\\
 & =2\PP-8s(1-s)\nn\nn(\PP:\nn\nn)-4s(1-s)((\II-\nn\nn)\PP\nn\nn+\nn\nn\PP(\II-\nn\nn))\\
 &= 2\PP-8s(1-s)\UU_1-4s(1-s)(\UU_2+\UU_3),\\
  \PP_0\PP^{\mathrm{T}}\PP_0&=(\II-2s\nn\nn)\PP^{\mathrm{T}}(\II-2s\nn\nn)\\
 &= \PP^{\mathrm{T}}+4s^2\UU_1-4s\UU_1-2s(\UU_2-\UU_3).
\end{align*}
Using the fact that $\UU_i^{\mathrm{T}}=\UU_i(i=1,2,5)$ and $\UU_j^{\mathrm{T}}=-\UU_j(j=3,4)$,  we find
\begin{align*}
\mathcal{P}_i \mathcal{L}\PP=\mathcal{L}_i   \mathcal{P}_i \PP\quad\text{ for } 1\le i\le 5,
\end{align*}
where
\begin{align}\label{pre:decomp-Op-L}
\begin{split}
\mathcal{L}_iu =&-\partial_z^2u+\kappa_i(s)u,\\
 \text{ with }&\kappa_1(s)=2(1-6s+6s^2),\quad \kappa_2(s)=2(1-s)(1-2s),\\
\qquad & \kappa_3(s)=2s(2s-1),\quad \kappa_4(s)=0,\quad \kappa_5(s)=2.
\end{split}
\end{align}
Thus the system (\ref{eq:ODEsys}) can be reduced to
\begin{align}\label{eq:ODEsy-decompose}
 \mathcal{L}_i\UU_i=\VV_i \quad\text{ for } 1\le i\le 5.
\end{align}

From \eqref{eq:s}, it is easy to see that
\begin{align*}
  2(1-6s+6s^2)=\frac{s'''}{s'},\quad 2-6s+4s^2=\frac{s''}{s},\qquad 4s^2-2s=\frac{(1-s)''}{1-s}.
\end{align*}
Thus, we obtain
\begin{align*}
 &\mathcal{L}_1u=-\frac{1}{s'}\partial_z \Big((s')^2\partial_z\Big(\frac{u}{s'}\Big)\Big),\\
 &\mathcal{L}_2u=-\frac{1}{s}\partial_z \Big(s^2\partial_z\Big(\frac{u}{s}\Big)\Big),\\
  &\mathcal{L}_3u=-\frac{1}{1-s}\partial_z \Big((1-s)^2\partial_z\Big(\frac{u}{1-s}\Big)\Big).
\end{align*}

We define
\begin{align*}
\mathrm{Null}~\mathcal{L}=\mathrm{span}\big\{  s'\EE_1,~s\EE_2,~(1-s)\EE_3,~ \EE_4: \text{for }\forall \text{ constant } \EE_i\in\mathbb{V}_i \big\}.
\end{align*}
Then we have
\begin{Lemma}
$\mathcal{L}\Bp=0$ for $\forall~\Bp\in\mathrm{Null}~\mathcal{L}$.
\end{Lemma}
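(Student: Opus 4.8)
The statement claims that every element of $\mathrm{Null}\,\mathcal{L}$ lies in the kernel of $\mathcal{L}$. Since $\mathrm{Null}\,\mathcal{L}$ is by definition the span of four families of profiles, $s'\EE_1$, $s\EE_2$, $(1-s)\EE_3$ and $\EE_4$ (with $\EE_i$ a \emph{constant} matrix in $\mathbb{V}_i$), and $\mathcal{L}$ is linear, it suffices to check that $\mathcal{L}$ annihilates each of these four building blocks separately. The plan is to exploit the diagonalization already established: for any $\Bp$ one has $\mathcal{P}_i\mathcal{L}\Bp=\mathcal{L}_i\mathcal{P}_i\Bp$, where $\mathcal{L}_i u=-\partial_z^2 u+\kappa_i(s)u$ acts on the scalar coefficient. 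So if $\Bp=\phi(z)\EE_i$ with $\EE_i$ a constant element of $\mathbb{V}_i$, then $\mathcal{P}_j\Bp=0$ for $j\neq i$ and $\mathcal{P}_i\Bp=\Bp$, hence $\mathcal{L}\Bp=\mathcal{L}_i\Bp=(-\phi''+\kappa_i(s)\phi)\EE_i$. Thus the whole claim reduces to the four scalar identities
\begin{align*}
\mathcal{L}_1 s'=0,\qquad \mathcal{L}_2 s=0,\qquad \mathcal{L}_3(1-s)=0,\qquad \mathcal{L}_4 1=0.
\end{align*}

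The first three of these are immediate from the factorized forms of $\mathcal{L}_1,\mathcal{L}_2,\mathcal{L}_3$ recorded just above the statement: since
\begin{align*}
\mathcal{L}_1 u=-\frac{1}{s'}\partial_z\!\Big((s')^2\partial_z\Big(\frac{u}{s'}\Big)\Big),\qquad
\mathcal{L}_2 u=-\frac{1}{s}\partial_z\!\Big(s^2\partial_z\Big(\frac{u}{s}\Big)\Big),\qquad
\mathcal{L}_3 u=-\frac{1}{1-s}\partial_z\!\Big((1-s)^2\partial_z\Big(\frac{u}{1-s}\Big)\Big),
\end{align*}
plugging in $u=s'$, $u=s$, $u=1-s$ respectively makes the inner bracket $\partial_z(1)=0$, so each vanishes. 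For $\mathcal{L}_4$ one has $\kappa_4(s)\equiv 0$, so $\mathcal{L}_4 u=-\partial_z^2 u$ and $\mathcal{L}_4 1=0$ trivially. Alternatively, and slightly more self-contained, one can verify $-\phi''+\kappa_i(s)\phi=0$ directly: from \eqref{eq:s} one reads off $2(1-6s+6s^2)=s'''/s'$, $2-6s+4s^2=s''/s$, and $4s^2-2s=(1-s)''/(1-s)$, which are precisely $\kappa_1,\kappa_2,\kappa_3$ (note $\kappa_2(s)=2(1-s)(1-2s)=2-6s+4s^2$ and $\kappa_3(s)=2s(2s-1)=4s^2-2s$), so $-\partial_z^2(s')+\kappa_1 s'=-s'''+ (s'''/s')s'=0$, and likewise for $s$ and $1-s$.

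The only genuinely non-routine point — and it is a bookkeeping point rather than a real obstacle — is making sure the projection-intertwining relation $\mathcal{P}_i\mathcal{L}\Bp=\mathcal{L}_i\mathcal{P}_i\Bp$ is legitimately applied to profiles of the form $\phi(z)\EE_i$ with $\EE_i$ \emph{constant in $z$} but where the subspaces $\mathbb{V}_i$ (and the projections $\mathcal{P}_i$) were defined using a fixed unit vector $\nn$; here $\nn$ is a constant (the $z$-independent vector entering $\PP_0(z)=\II-2s(z)\nn\nn$), so $\EE_i\in\mathbb{V}_i$ makes sense and $\mathcal{P}_j\EE_i=\delta_{ij}\EE_i$, and the identity goes through with $u(z)=\phi(z)$. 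One should also note that $\mathcal{L}$ as written acts on matrix-valued functions with coefficient function $\PP_0(z)=\II-2s(z)\nn\nn$, exactly the setting in which the diagonalization \eqref{pre:decomp-Op-L} was derived, so no extension is needed. Assembling: by linearity $\mathcal{L}$ annihilates any linear combination of the four families, i.e. $\mathcal{L}\Bp=0$ for all $\Bp\in\mathrm{Null}\,\mathcal{L}$, which is the claim. $\blacksquare$
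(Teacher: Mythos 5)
Your proof is correct and follows exactly the route the paper intends: the lemma is stated right after the diagonalization $\mathcal{P}_i\mathcal{L}=\mathcal{L}_i\mathcal{P}_i$ and the factorized forms of $\mathcal{L}_1,\mathcal{L}_2,\mathcal{L}_3$, from which the four scalar identities $\mathcal{L}_1 s'=\mathcal{L}_2 s=\mathcal{L}_3(1-s)=\mathcal{L}_4 1=0$ are immediate, and the paper leaves it at that. Your write-up just makes this reduction and verification explicit, so there is nothing to add.
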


The following lemma on the solvability of $\mathcal{L}$ will be repeatedly used in the inner expansion.
\begin{Lemma}\label{lem:ODEsys}
Assume that $\FF_\pm=\lim_{z\to\pm\infty}\FF(z)$ is bounded and $\FF$ satisfies the following boundary conditions with $\alpha_0>0$:
\begin{align*}
&\text{(B1)}: ~\FF-\FF_\pm=O(e^{-\alpha_0|z|}),\qquad \text{as }z\to \pm\infty;\\
&\text{(B2)}: ~\mathcal{P}_2\FF_+=0;\quad\text{(B3)}: \mathcal{P}_3\FF_-=0;\quad \text{(B4)}: \mathcal{P}_4\FF_\pm=0,
\end{align*}
and orthogonal conditions:
\begin{align*}
&\text{(O1)}: \int_\mathbb{R} s'\FF:\EE_1\ud z =0,~\forall~ \EE_1\in \mathbb{V}_1;\quad
\text{(O2)}: \int_\mathbb{R} s\FF:\EE_2\ud z =0,~\forall~ \EE_2\in \mathbb{V}_2;\\
&\text{(O3)}:  \int_\mathbb{R} (1-s)\FF:\EE_3\ud z=0,~\forall ~\EE_3 \in \mathbb{V}_3;\quad
\text{(O4)}:   \int_\mathbb{R} \FF:\EE_4\ud z =0,~\forall~ \EE_4\in\mathbb{V}_4.
\end{align*}
 Then there exists a unique $\overline{\QQ}\in\mathbb{V}_4$,  such that \eqref{eq:ODEsys} has a unique solution $\PP^*(z)$ satisfying
\begin{align*}
&\text{(B1)}: ~\PP^*-\PP_\pm=O(e^{-\alpha_0 |z|}),\qquad \text{as }z\to \pm\infty,\\
&\text{(B2)}: ~\mathcal{P}_2\PP_+=0,\quad\text{(B3)}: \mathcal{P}_3\PP_-=0,\quad \text{(B4)}: \mathcal{P}_4\PP_-=\mathcal{P}_4\PP_+-\overline{\QQ}=0.
\end{align*}
\end{Lemma}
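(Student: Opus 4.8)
The plan is to diagonalize the whole problem via the projections $\mathcal{P}_i$, reducing Lemma~\ref{lem:ODEsys} to five independent scalar Sturm–Liouville problems $\mathcal{L}_i\UU_i=\VV_i$ on $\mathbb{R}$, one for each block $\mathbb{V}_i$. Since $\mathcal{P}_i\mathcal{L}\PP=\mathcal{L}_i\mathcal{P}_i\PP$ (already derived above), a function $\PP^*$ solves \eqref{eq:ODEsys} iff each component $\UU_i=\mathcal{P}_i\PP^*$ solves $\mathcal{L}_i\UU_i=\VV_i$, and the exponential/boundary/orthogonality hypotheses (B1)–(B4), (O1)–(O4) split accordingly. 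So the real content is: for each $i$, analyze the scalar equation $-u''+\kappa_i(s(z))u=g(z)$ with $g-g_\pm=O(e^{-\alpha_0|z|})$, decide exactly which decaying boundary data are attainable, and see what solvability obstruction (if any) appears.

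**Next I would** treat the five operators case by case using the explicit factored forms. For $\mathcal{L}_4=-\partial_z^2$: the homogeneous solutions are $1$ and $z$; a bounded particular solution with $g\to g_\pm$ forces $g_\pm=0$ (that is hypothesis (B4) on $\FF$), and even then the bounded solution has two free constants $u(\pm\infty)$; the solvability/uniqueness condition is $\int_\mathbb{R} g\,\ud z=0$ (hypothesis (O4)), which pins down the jump $u(+\infty)-u(-\infty)$; the remaining freedom is the single additive constant, i.e. the common value—this is exactly the source of the free element $\overline{\QQ}\in\mathbb{V}_4$, and imposing $\mathcal{P}_4\PP_-=\mathcal{P}_4\PP_+-\overline{\QQ}=0$ is what selects it uniquely. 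For $\mathcal{L}_1$, $\mathcal{L}_2$, $\mathcal{L}_3$ I would use $\mathcal{L}_1 u=-\tfrac{1}{s'}\partial_z\big((s')^2\partial_z(u/s')\big)$ and its analogues: the substitution $u=s'v$ (resp.\ $u=sv$, $u=(1-s)v$) turns the equation into $-\partial_z\big(w^2 v'\big)=w g$ with $w=s',s,1-s$, which integrates twice. Here $s'=O(e^{-\sqrt2|z|})$ at both ends while $s\to 1$ at $+\infty$ and $s-1=O(e^{-\sqrt2 z})$ at $-\infty$ (and symmetrically for $1-s$); tracking which weight blows up or vanishes at each end shows which decaying data are admissible (giving (B2) on $\mathcal{P}_2\FF_+$ and (B3) on $\mathcal{P}_3\FF_-$) and produces one orthogonality relation per operator, namely (O1), (O2), (O3) — the $\mathcal{L}^2$-pairing against the decaying kernel elements $s'\EE_1$, $s\EE_2$, $(1-s)\EE_3$. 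The operator $\mathcal{L}_5=-\partial_z^2+2$ has no bounded homogeneous solutions, so it is uniquely solvable for any admissible $g$ with no obstruction and automatically decaying behavior matching $g_\pm/2$ at $\pm\infty$; no extra condition is needed. Finally I would recombine: $\PP^*=\sum_i\UU_i$, check the claimed decay rate $O(e^{-\alpha_0|z|})$ comes from the Green's-function convolutions (using $\alpha_0<\sqrt2$ so that the homogeneous decay dominates), and verify uniqueness by noting any two solutions differ by an element of $\mathrm{Null}~\mathcal{L}$ satisfying (B1)–(B4), which forces it to vanish.

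**The main obstacle** will be the careful bookkeeping of admissible boundary behavior for $\mathcal{L}_1,\mathcal{L}_2,\mathcal{L}_3$: because the weights $s',s,1-s$ degenerate at one or both infinities, not every pair of limits $(\PP_-,\PP_+)$ with $\FF\to\FF_\pm$ is realizable by a solution that stays within $O(e^{-\alpha_0|z|})$ of its limits — one must show that the precise conditions (B2), (B3) together with the integral conditions (O1)–(O3) are both necessary and sufficient, and that after imposing them the solution is forced (no residual kernel) except in the $\mathbb{V}_4$ block. A secondary subtlety is making the decay estimates uniform and compatible across blocks so that the assembled $\PP^*$ genuinely satisfies (B1) with a single $\alpha_0$; this is where the requirement $\alpha_0\in(0,\sqrt2)$ is used. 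Once the scalar analysis is pinned down, the matrix statement follows mechanically from $\mathbb{M}_n=\oplus_{i=1}^5\mathbb{V}_i$ and the intertwining $\mathcal{P}_i\mathcal{L}=\mathcal{L}_i\mathcal{P}_i$.
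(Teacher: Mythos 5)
Your proposal follows essentially the same route as the paper: the block diagonalization $\mathcal{P}_i\mathcal{L}=\mathcal{L}_i\mathcal{P}_i$ is already in place, and the paper's one-line proof simply invokes the scalar solvability results of Lemmas \ref{lem:s1}--\ref{lem:s5}, which are exactly the case-by-case integrations of the factored operators that you outline (with the $\mathbb{V}_4$ block producing the unique $\overline{\QQ}$ and (O1)--(O4), (B2)--(B4) arising as you describe). The only caveat concerns your closing uniqueness remark: the decaying kernel element $s'\EE_1$ satisfies (B1)--(B4), so uniqueness in the $\mathbb{V}_1$ block actually requires an additional normalization (Lemma \ref{lem:s1} fixes the value at $z=0$), a point which the paper's own statement glosses over in the same way.
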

\begin{Remark}
  The conditions (B1)-(B4) ensure that the integrals in (O1)-(O4) are finite.
\end{Remark}
\begin{proof}
The results can be deduced from Lemmas \ref{lem:s1}-\ref{lem:s5}.
\end{proof}
\begin{Remark}
As a corollary, we deduce that for any given
 \begin{align*}
 (\QQ_2,\QQ_3,\QQ_4)\in \mathbb{V}_2\times \mathbb{V}_3\times \mathbb{V}_4,
 \end{align*}
the ODE system \eqref{eq:ODEsys} has a unique solution  $\PP$ with properties:
\begin{align*}
&\text{(B1)}: ~\PP-\PP_\pm=O(e^{-\alpha_0 |z|}),\qquad \text{as }z\to \pm\infty,\\
&\text{(B2)}: ~\mathcal{P}_2\PP_+=\QQ_2,\quad\text{(B3)}: \mathcal{P}_3\PP_-=\QQ_3,\quad \text{(B4)}: \mathcal{P}_4\PP_-=\mathcal{P}_4\PP_+-\overline{\QQ}=\QQ_4.
\end{align*}
\end{Remark}

\subsection{Cubic-null cancellation}{}

For $\A_1, \A_2, \A_3\in\mathbb{M}_n$, we define the trilinear form
\begin{align}\label{def:BB}
\TT_f(\A_1,\A_2,\A_3)=(\A_1\A_2^{\mathrm{T}}+\A_2\A_1^{\mathrm{T}})\A_3
+\A_3(\A_1^{\mathrm{T}}\A_2+\A_2^{\mathrm{T}}\A_1)+(\A_1\A_3^{\mathrm{T}}\A_2+\A_2\A_3^{\mathrm{T}}\A_1).
\end{align}
Then $\TT_f(\A_1,\A_2,\A_3):\A_4$ keeps the same if we exchange any $\A_i$ and $\A_j$ ($1\le i, j\le 4$). The following cancellation relation plays an important role in closing the expansion system of each order.

\begin{Lemma}\label{lem:inner-null}
For $\QQ_1, \QQ_2, \QQ_3\in \mathrm{Null}~\mathcal{L}$, we have
\begin{align*}
\int_\BR\TT_f(\PP_0,\QQ_1,\QQ_2):\QQ_3\ud z=0,
\end{align*}
where the integral is understood as $\lim_{R\to +\infty}\int_{-R}^R(\cdot)\ud z$ if necessary.
\end{Lemma}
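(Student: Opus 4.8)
The plan is to exploit the explicit structure of $\mathrm{Null}\,\mathcal{L}$ together with the diagonalization of $\mathcal{L}_{\PP_0}$ into the scalar operators $\mathcal{L}_i$. Recall that every element of $\mathrm{Null}\,\mathcal{L}$ is a linear combination of $s'\EE_1$, $s\EE_2$, $(1-s)\EE_3$, and $\EE_4$, with $\EE_i$ a \emph{constant} matrix in $\mathbb{V}_i$. Since $\TT_f(\PP_0,\QQ_1,\QQ_2):\QQ_3$ is multilinear and symmetric in $\QQ_1,\QQ_2,\QQ_3$, it suffices to verify the identity when each $\QQ_j$ ranges over these four basis types; so there are essentially $\binom{4+3-1}{3}=20$ cases to treat, most of which will vanish for cheap reasons. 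First I would compute $\TT_f(\PP_0,\EE,\EE')$ for constant $\EE\in\mathbb{V}_i$, $\EE'\in\mathbb{V}_j$, using $\PP_0=\II-2s\nn\nn$, the projection formulas \eqref{def:projection}, and the commutation relations \eqref{Append:comm-P2}. Because $\PP_0$ depends on $z$ only through the scalar $s(z)$, each such $\TT_f(\PP_0,\EE,\EE')$ is a matrix of the form $P(s)\,\MM_{ij}$ for an explicit low-degree polynomial $P$ and a fixed matrix $\MM_{ij}$ built from $\nn,\EE,\EE'$; multiplying by the third null element and by its scalar prefactor and taking $\A:\B$ reduces the whole integrand to $\bigl(\text{polynomial in }s\bigr)\cdot(\text{scalar prefactors})\cdot(\MM_{ij}:\MM_k)$.

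The key step is then a collection of one-dimensional integral identities: after the reduction, each surviving case becomes a statement of the form $\int_\BR g(s(z))\,w(z)\,\ud z=0$, where $w$ is one of $s'$, $s$, $(1-s)$, $1$ (or products thereof coming from the three prefactors) and $g$ is an explicit polynomial in $s$. Using the ODE $s''=2s(1-s)(1-2s)$ from \eqref{eq:s}, together with $s'=\sqrt{2}\,s(1-s)$ (so that $s'$, $s$, $1-s$ are all algebraically related) and $s(\pm\infty)\in\{0,1\}$, I would integrate by parts / change variables $u=s(z)$ to turn each such integral into an elementary polynomial integral over $[0,1]$, or recognize the integrand as a total $z$-derivative of something decaying at $\pm\infty$. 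The cubic-null structure — that $\mathcal{L}_i(s'\EE_1)=\mathcal{L}_i(s\EE_2)=\cdots=0$ — is precisely what forces the relevant polynomial identities; concretely, the Wronskian-type relations among $s'$, $s$, $1-s$ and the fact that $\kappa_1,\kappa_2,\kappa_3$ are the Schrödinger potentials with these ground states guarantees the needed cancellations. A cleaner route, which I would try first, is to observe that $\TT_f(\PP_0,\QQ_1,\QQ_2)$ is (up to sign) the Fréchet derivative of $\|\PP_0\PP_0^\mathrm{T}-\II\|^2$-type energy in directions $\QQ_1,\QQ_2$ evaluated along the minimal connecting orbit, so that $\int_\BR \TT_f(\PP_0,\QQ_1,\QQ_2):\QQ_3\,\ud z$ is a third derivative of the one-dimensional energy along a family of connecting orbits through $\BTe_0$; since $\BTe_0$ is a critical point and the null directions are tangent to the manifold of connecting orbits, a symmetry/translation argument (differentiating the identity $\mathcal{L}_{\PP_0}\QQ=0$ and pairing) kills the integral.

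The main obstacle I anticipate is the case where all three null elements come from the "massive" or oscillatory slots — in particular combinations involving $\EE_2\in\mathbb{V}_2$ and $\EE_3\in\mathbb{V}_3$, since under $\PP_0$ the spaces $\mathbb{V}_2$ and $\mathbb{V}_3$ get mixed (by \eqref{Append:comm-P2}, $(\II-2\nn\nn)\mathcal{P}_2 = -\mathcal{P}_3(\II-2\nn\nn)$), so the polynomial $g(s)$ there is genuinely of higher degree and the vanishing is not for dimensional reasons. For those I expect to need the exact coefficients: writing out $\TT_f(\PP_0, s\EE_2, (1-s)\EE_3)$ and checking that the $z$-integral against $\EE_4$ (or against $s\EE_2'$, etc.) reduces to an integral like $\int_\BR s^2(1-s)^2(1-2s)\,\ud z$, which vanishes by the antisymmetry of $1-2s$ about the center of the kink, or to $\int_\BR s'\cdot(\text{even polynomial in }s-\tfrac12)\cdot(1-2s)\,\ud z = 0$ for the same parity reason. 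Collecting these parity-and-change-of-variable computations, together with the routine vanishing of all cases in which $\EE_4\in\mathbb{V}_4$ pairs trivially (since $\TT_f(\PP_0,\cdot,\cdot)$ preserves the block structure and $\mathbb{V}_4$ is $\PP_0$-invariant with $\PP_0$ acting as $+\II$, killing the $s$-dependence), completes the proof. I would organize the write-up as: (i) reduce to constant generators by multilinearity; (ii) tabulate $\TT_f(\PP_0,\EE_i,\EE_j)$; (iii) dispatch the easy cases by block/parity; (iv) finish the three or four nontrivial cases by the substitution $u=s(z)$ and the identity $s'=\sqrt2\,s(1-s)$.
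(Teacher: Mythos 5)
Your plan is essentially the paper's own proof: the paper likewise evaluates $\TT_f(\PP_0,\cdot,\cdot):\cdot$ explicitly on the null-space generators (after a polarization reduction to $\QQ_2=\QQ_3$ and the tabulation in Lemma \ref{lem:bilinear}) and concludes from exactly the elementary identities you anticipate, namely $\int_\BR s(1-s)(1-2s)\,\ud z=0$ by the parity of $1-2s$ and $\int_\BR s^2(3-4s)s'\,\ud z=\int_\BR(1-s)^2(1-4s)s'\,\ud z=0$ by the substitution $u=s(z)$. Your computational route is therefore correct and equivalent; only minor details differ (e.g.\ the pairing of $\EE_4$ with $s\EE_2$ and $(1-s)\EE_3$ is not killed by block structure but by the same parity integral, which you in fact treat in your ``obstacle'' paragraph), while the alternative third-variation-of-energy argument you sketch is not what the paper does and is left undeveloped.
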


\begin{proof}
 For the convenience, the left hand side  is denoted by $\mathfrak{I}(\PP_0,\QQ_1,\QQ_2,\QQ_3)$. It suffices to prove the case of $\QQ_2=\QQ_3=\BB$, since we have by \eqref{eq:ABC}  that
$$2\mathfrak{I}(\PP_0,\QQ_1,\QQ_2,\QQ_3)=\mathfrak{I}(\PP_0,\QQ_1,\QQ_2+\QQ_3,\QQ_2+\QQ_3)-\mathfrak{I}(\PP_0,\QQ_1,\QQ_2,\QQ_2)-\mathfrak{I}(\PP_0,\QQ_1,\QQ_3,\QQ_3).$$

Assuming $\BB(z)=\sum_{i=1}^4\BB_i(z)$ with $\BB_i(z)\in\mathbb{V}_i$.  When $\QQ_1=\lambda s'\nn\nn$, we have
\begin{align*}
\mathfrak{I}(\PP_0,\QQ_1,\BB,\BB)&=\lambda\int_\BR\Big(2s'(1-2s)(|\BB\nn|^2+|\nn\BB|^2)+2s'|\BB^2:\nn\nn|^2-4ss'|\BB:\nn\nn|^2 \Big)\ud z.
\end{align*}
It is easy to check that $(\BB_i\BB_j):\nn\nn=0$ for different $i,j$, and $\BB_4\nn=\nn\BB_4=0$. Thus,
\begin{align*}
\mathfrak{I}(\PP_0,\QQ_1,\BB,\BB)
&=\lambda\sum_{i=1}^3\int_\BR\Big(2s'(1-2s)\big[|\BB_i\nn|^2+|\nn\BB_i|^2\big]+2s'|\BB_i^2:\nn\nn|^2-4ss'|\BB_i:\nn\nn|^2 \Big)\ud z\\
&=\lambda\sum_{i=1}^3\mathfrak{I}(\PP_0,\QQ_1,\BB_i,\BB_i).
\end{align*}
For $\BB_2=s(\nn\bl+\bl\nn)$ with $\bl\bot\nn$, we have
\begin{align*}
\mathfrak{I}(\PP_0,\QQ_1,\BB_2,\BB_2)=\lambda\int_\BR\Big(4s'(1-2s)s^2+2s's^2\Big)|\bl|^2\ud z=\lambda\int_\BR (s^3(1-s))'|\bl|^2\ud z=0.
\end{align*}
Similarly, we can prove that $\mathfrak{I}(\PP_0,\QQ_1,\BB_3,\BB_3)=0$ for $\BB_3=(1-s)(\nn\bl-\bl\nn)$ with $\bl\bot\nn$.
For $\BB_1=\mu s'\nn\nn$, we have
\begin{align*}
\mathfrak{I}(\PP_0,\QQ_1,\BB_1,\BB_1)=\lambda\mu^2\int_\BR 6(s')^3(1-2s)\ud z=0.
\end{align*}
Therefore, $\mathfrak{I}(\PP_0,\QQ_1,\BB,\BB)=0$ for $\QQ_1=\lambda s'\nn\nn$ and $\BB\in\mathrm{Null}~\mathcal{L}$.

For $\QQ_1=s\EE_2+(1-s)\EE_3+\EE_4$ with $\EE_i\in\mathbb{V}_i$, by direct calculations (see Lemma \ref{lem:bilinear}), we get
\begin{align}\nonumber
\mathfrak{I}(\PP_0,\QQ_1,\BB,\BB) = \int_{\BR}\Big\{&~2s\EE_2:\big[(2s-1)(\BB_3\BB_4+\BB_4\BB_3)+(3-4s)(\BB_1\BB_2+\BB_2\BB_1)\big]\\ \nonumber
&+2(1-s)\EE_3:\big[(1-4s)(\BB_1\BB_3+\BB_3\BB_1)+(1-2s)(\BB_2\BB_4+\BB_4\BB_2)\big]\\
&+2(1-2s)\EE_4:(\BB_2\BB_3+\BB_3\BB_2) \Big\}\ud z.\nonumber
\end{align}
By taking $\BB_1=\lambda_1 s'\EE_1, \BB_2=\lambda_2 s \EE_2, \BB_3=\lambda_3 (1-s)\EE_3, \BB_4=\lambda_4\EE_4$, we can show that all the integrals vanish due to the following fact:
\begin{align*}
\int_\BR s(2s-1)(1-s)\ud z=
\int_\BR 2s^2(3-4s)s'\ud z=
\int_\BR 2(1-s)^2(1-4s)s'\ud z=0.
\end{align*}

The proof is completed.
\end{proof}

\medskip

 \section{Inner expansion}
\label{sec:inner-expan}

\subsection{Formal inner expansion}

Formally, we write the inner expansion as
\begin{align*}
\A_{I}^{\ve}(z,x,t)=\Phi(z,x,t)\PP^\ve(z,x,t)=\Phi(z,x,t)\Big(\PP_0(z,x,t)+\ve \PP_1(z,x,t)+\ve^2 \PP_2(z,x,t)+\cdots \Big),
\end{align*}
where $\PP_0(z,x,t)=\II-2s(z)\nn(x,t)\nn(x,t)$ and $\Phi$ is given by \eqref{def:Phi}. We should keep in mind that $\PP_k$ has to satisfy the matching conditions for $(x,t)\in\Gamma(\delta)$:
\begin{align}\label{bc-Pk}
\Phi_\pm\PP_k(\pm\infty, x,t)=\A_\pm^{(k)},\quad \text{or equivalently}\quad \PP_k(\pm\infty, x,t)=\PP_0^\pm\UU_\pm^{(k)}.
\end{align}

Let $\widetilde\PP^\ve(x,t) =\PP^\ve(d^\ve/\ve,x,t)$ and $\widetilde\Phi(x,t) =\Phi(d^\ve/\ve,x,t)$. Then we have
\begin{align*}%\label{eq:modified}
\partial_t\widetilde\A_{I}^{\ve}-\Delta\widetilde\A_{I}^{\ve}+\ve^{-2}f(\widetilde\A_{I}^{\ve})
=\widetilde\Phi(\partial_t\widetilde\PP^\ve-\Delta \widetilde\PP^\ve+\ve^{-2}f(\widetilde\PP^{\ve}))+(\partial_t-\Delta)\widetilde\Phi\widetilde\PP^\ve-2\nabla\widetilde\Phi\nabla\widetilde\PP^\ve.
\end{align*}
A direct calculation shows that
\begin{multline}\label{eq:fullP}
\Big\{\ve^{-2}\big[-\partial_z^2\PP^\ve+f(\PP^{\ve})-\Phi^{\mathrm{T}}\partial_z^2 \Phi\PP^\ve-2\Phi^{\mathrm{T}}\partial_z\Phi\partial_z\PP^\ve\big]
\\
+\ve^{-1}\big[(\partial_td^\ve-\Delta d^\ve)(\partial_z\PP^\ve+\Phi^{\mathrm{T}}\partial_z\Phi)-2\Phi^{\mathrm{T}}\nabla d^\ve\nabla\partial_z(\Phi\PP^\ve)\big]\\
+\Phi^{\mathrm{T}}(\partial_t-\Delta)(\Phi\PP^\ve)\Big\}\Big|_{z=d^{\ve}/\ve}=0.
\end{multline}
As in \cite{ABC}, we will regard $z$ as an independent variable and $(x,t)\in\Gamma(\delta)$ as parameters. Then we will solve a series of ODE systems
with respect to $z$ for $\PP_k(z,x,t)$ $(k=0, 1,2,\cdots)$.
Note that we can add any terms vanishing on $\{d^\ve =\ve z\}$  on the left hand side, which does not change the equation (\ref{eq:fullP}).

Let $\widetilde\GG^\ve(z,x,t)$ and $\HH^{\pm,\ve}(x,t)$ be matrix-valued functions to be determined later. We choose a fixed smooth and nonnegative function
$\eta(z)$ satisfying: $\eta(z)=0$ if $z\le -1$, $\eta(z)=1$ if $z\ge 1$, $\eta'(z)\ge 0$. Let
\begin{align}\label{def:constant-a}
a_0=\int_\BR (s')^2\ud z,\qquad (a_1, a_2, a_3)=\int_{\BR}\eta'(s', s, 1-s)\ud z.
\end{align}
Consider the following modified system
\begin{multline}\label{eq:modified}
\ve^{-2}\Big\{-\partial_z^2\PP^\ve+f(\PP^{\ve})-\Phi^{\mathrm{T}}\partial_z^2 \Phi\PP^\ve-2\Phi^{\mathrm{T}}\partial_z\Phi\partial_z\PP^\ve+(d^\ve-\ve z)\widetilde\GG^\ve\Big\}
\\
+\ve^{-1}\Big\{(\partial_td^\ve-\Delta d^\ve)(\partial_z\PP^\ve+\Phi^{\mathrm{T}}\partial_z\Phi)-2\Phi^{\mathrm{T}}\big(\nabla d^\ve\nabla\partial_z(\Phi\PP^\ve))\Big\}
\\+\Phi^{\mathrm{T}}(\partial_t-\Delta)(\Phi\PP^\ve)+\HH^{+,\ve}(x,t)\eta_M^+(z)+\HH^{-,\ve}(x,t)\eta_M^-(z)=0
\end{multline}
for any $(z,x,t)\in \mathbb{R}\times\Gamma(\delta)$.  Following the idea in \cite{ABC}, we choose $\eta^\pm_M(z)=\eta(-M\pm z)$ with
$$M=|d_1|_{C^0(\Gamma_t)}+2.$$ Then one has $\eta(-M\pm d^\ve(x,t)/\ve)=0$ for $(x,t)\in\Gamma(\delta)\cap {Q}_\mp$.
$\HH^{\pm,\ve}(x,t)$ will be chosen as in (\ref{def:Hve}) with \eqref{vanish:Hk+} and \eqref{vanish:Hk-}, which imply  $\HH^{\pm,\ve}(x,t)=0$ for $(x,t)\in \Gamma(\delta)\cap {Q}_\pm$. Thus,  one has
\begin{align*}
  \HH^{+,\ve}(x,t)\eta_M^+(z)+\HH^{-,\ve}(x,t)\eta_M^-(z)\Big|_{z=d^\ve(x,t)/\ve}=0,\quad \text{ for any }(x,t)\in\Gamma(\delta).
\end{align*}
As a result, all the modified terms vanish on $\{d^\ve =\ve z\}$, and will not change the system (\ref{eq:fullP}).

From the definition of $\Phi$, we can deduce that $\partial_z(\Phi^{\mathrm{T}}\partial_\nu\Phi)=0$ on $\Gamma$, which gives
\begin{align}\label{inner:proper-PhidPhi}
\Phi^{\mathrm{T}}\partial_\nu\Phi=\Phi_-^{\mathrm{T}}\partial_\nu\Phi_-\triangleq\overline\WW\in\BV_4.
\end{align}
Indeed, as $\Phi_+-\Phi_-=O(d_0^2)$, we have $\partial_z\Phi=O(d_0^2)$ and  $\partial_z(\Phi^{\mathrm{T}}\partial_\nu\Phi)=\Phi^{\mathrm{T}}\partial_z\partial_\nu\Phi=\partial_\nu(\Phi^{\mathrm{T}}\partial_z\Phi)=0$ on $\Gamma$.
See Appendix \ref{App:BC} for the proof of $\overline\WW\in\BV_4$.

Moreover, since it holds $ \Phi^{\mathrm{T}}\partial_z^2\Phi =0$ and $\Phi^{\mathrm{T}}\partial_z\Phi =0$ on $\Gamma$,
we can assume that
\begin{align}\label{inner:def-Phi12}
  \Phi^{\mathrm{T}}\partial_z^2\Phi =d_0(x,t)\Phi_1(z,x,t),\quad 2\Phi^{\mathrm{T}}\partial_z\Phi=d_0(x,t)\Phi_2(z,x,t),
\end{align}
where on $\Gamma$ the definitions are interpreted as
\begin{align*}
  \Phi_1(z,x,t)=\lim_{d_0\to 0}d_0^{-1}\Phi^{\mathrm{T}}\partial_z^2\Phi=\partial_\nu(\Phi^{\mathrm{T}}\partial_z^2\Phi),\quad
  \Phi_2(z,x,t)=\lim_{d_0\to 0}2d_0^{-1}\Phi^{\mathrm{T}}\partial_z\Phi=2\partial_\nu(\Phi^{\mathrm{T}}\partial_z\Phi).
\end{align*}
It is direct to check that
\begin{align}\label{decay:Phi12}
  \Phi_1(z,x,t),\,\,\Phi_2(z,x,t)=O(e^{-\alpha_0|z|})d_0(x,t).
\end{align}
Before going to next steps,  let us recall from \eqref{decay:Phi} and \eqref{decay:Phi12} that
\begin{align}\label{property:vanish}
  \partial_z\Phi=O(d_0^2), \quad\,\, \Phi_1,~\Phi_2, ~ \nabla d_0\cdot\nabla\partial_z\Phi,~\nabla d_0\cdot\nabla\nn=O(d_0)\quad \text{ for }(x,t)\in\Gamma(\delta),
\end{align}
which will be repeatedly used in the sequel.

Now we take
\begin{align}
  \widetilde{\GG}^\ve(z,x,t) &= (\Phi_1\PP^\ve+\Phi_2\partial_z\PP^\ve)+ \GG^\ve(z,x,t),\\
& \text{with } \GG(z,x,t)=\sum_{k\ge 1}\ve^k\Big(\GG_{k}(x,t)\eta'(z)+\LL_k(x,t)\eta''(z)\Big),\\
 \HH^{\pm,\ve}(x,t)&=\sum_{k\ge 0}\ve^k\HH^\pm_{k}(x,t). \label{def:Hve}
\end{align}
At each $(x,t)\in\Gamma(\delta)$, we will choose
\begin{align*}
  \GG_{k}(x,t)\in\mathbb{V}_1\oplus\mathbb{V}_2\oplus\mathbb{V}_3\oplus\mathbb{V}_4,\quad  \LL_{k}(x,t)\in\mathbb{V}_4,\quad
   \HH_k^+\in\mathbb{V}_2\oplus\mathbb{V}_4,\quad
   \HH^{-}_k\in\mathbb{V}_3\oplus\mathbb{V}_4,
\end{align*}
which will be precisely defined later.
\begin{Remark}
The role of  $(\Phi_1\PP^\ve+\Phi_2\partial_z\PP^\ve)$ in $\widetilde\GG^\ve$ is to leave the small error terms into next orders.
Otherwise, the obtained ODE systems are too complicate to solve.
The term $\GG_k$ is used to ensure the orthogonal conditions (O1)-(O4) of $\FF_k$ on $\Gamma_t(\delta)\setminus\Gamma_t$,
while $\HH_k^\pm$ is used to ensure the boundary conditions
(B2)-(B4), and $\LL_k$ is introduced to characterize the variation of ``normal derivative" of $\mathcal{P}_4\PP_k$.
  \end{Remark}

Now we substitute the expansion into (\ref{eq:modified}) and collect the terms of the same order.\smallskip

\no$\bullet$ The $O(\ve^{-2})$ system takes the form
\begin{align}\label{expand:P0}
-\partial_z^2\PP_0+\PP_0\PP_0^{\mathrm{T}}\PP_0-\PP_0=0,
\end{align}
which is satisfied by
\begin{align}\label{solution:P0}
\PP_0(z,x,t)= \II-2s(z)\nn(x,t)\nn(x,t).
\end{align}

\no$\bullet$ The $O(\ve^{-1})$ system reads as
\begin{align}\nonumber
&-\partial_z^2\PP_1+\PP_0\PP_0^{\mathrm{T}}\PP_1+\PP_0\PP_1^{\mathrm{T}}\PP_0+\PP_1\PP_0^{\mathrm{T}}\PP_0-\PP_1\\ \nonumber
&= -(\partial_td_0-\Delta d_0)(\Phi^{\mathrm{T}}\partial_z\Phi\PP_0+\partial_z\PP_0)
+2\Phi^{\mathrm{T}}\nabla d_0\cdot\nabla_x\partial_z(\Phi\PP_0)\\ \nonumber
&\quad\,-(\Phi_1\PP_0+\Phi_2\partial_z\PP_0)(d_1-z)+\GG_1\eta' d_0+\LL_1\eta'' d_0\\\label{expand:P1}
&\triangleq  \FF_1+\GG_1\eta' d_0+\LL_1\eta'' d_0.
\end{align}

\no$\bullet$ The $O(1)$ system can be written as
\begin{align}\nonumber
&-\partial_z^2\PP_2+\PP_0\PP_0^{\mathrm{T}}\PP_2+\PP_0\PP_2^{\mathrm{T}}\PP_0+\PP_2\PP_0^{\mathrm{T}}\PP_0-\PP_2\\\nonumber
=&-(\partial_td_0-\Delta d_0)(\Phi^{\mathrm{T}}\partial_z\Phi\PP_1+\partial_z\PP_1)
+2\Phi^{\mathrm{T}}\nabla d_0\cdot\nabla_x\partial_z(\Phi\PP_1)\\\nonumber
&-(\partial_td_1-\Delta d_1)(\Phi^{\mathrm{T}}\partial_z\Phi\PP_0+\partial_z\PP_0)+2\Phi^{\mathrm{T}}\nabla d_1\cdot\nabla_x\partial_z(\Phi\PP_0)\\\nonumber
&-\Phi^{\mathrm{T}}(\partial_t-\Delta)(\Phi\PP_{0})-(\PP_0\PP_1^{\mathrm{T}}\PP_1
+\PP_1\PP_0^{\mathrm{T}}\PP_1+\PP_1\PP_1^{\mathrm{T}}\PP_0)+\HH_0^+\eta_M^++\HH_0^-\eta_M^-\\  \nonumber
&-(\Phi_1\PP_1+\Phi_2\partial_z\PP_1)(d_1-z)-(\Phi_1\PP_0+\Phi_2\partial_z\PP_0)d_2\\ \nonumber
&+(\GG_1\eta'+\LL_1\eta'')(d_1-z)+(\GG_2\eta'+\LL_2\eta'')d_0\\\label{expand:P2}
\triangleq &\FF_2+(\GG_2\eta'+\LL_2\eta'')d_0.
\end{align}

\no$\bullet$ The $O(\ve^{k-2})$ ($k\ge3$) system takes the form
\begin{align}\nonumber
&-\partial_z^2\PP_k+\PP_0\PP_0^{\mathrm{T}}\PP_k+\PP_0\PP_k^{\mathrm{T}}\PP_0+\PP_k\PP_0^{\mathrm{T}}\PP_0-\PP_k\\ \nonumber
=& -(\partial_td_0-\Delta d_0)(\Phi^{\mathrm{T}}\partial_z\Phi\PP_{k-1}+\partial_z\PP_{k-1})
+2\Phi^{\mathrm{T}}\nabla d_0\cdot\nabla_x\partial_z(\Phi\PP_{k-1})\\ \nonumber
&-(\partial_td_{k-1}-\Delta d_{k-1})(\Phi^{\mathrm{T}}\partial_z\Phi\PP_{0}+\partial_z\PP_{0})
+2\Phi^{\mathrm{T}}\nabla d_{k-1}\cdot\nabla_x\partial_z(\Phi\PP_0)\\ \nonumber
&-\sum_{\substack{i+j=k-1\\ 1\le i,j\le k-2}}(\partial_td_i-\Delta d_i)(\Phi^{\mathrm{T}}\partial_z\Phi\PP_j+\partial_z\PP_j)+2\Phi^{\mathrm{T}}\sum_{\substack{i+j=k-1\\ 1\le i,j\le k-2}}
\nabla d_i\cdot\nabla_x\partial_z(\Phi\PP_j)\\ \nonumber
&-\Phi^{\mathrm{T}}(\partial_t-\Delta)(\Phi\PP_{k-2})-\sum_{\substack{i+j+l=k\\ 0\le i,j,l\le k-1}}\PP_i\PP_j^{\mathrm{T}}\PP_l+\HH_{k-2}^+\eta_M^++\HH_{k-2}^-\eta_M^-\\ \nonumber
&-\sum_{\substack{i+j=k\\ 1\le i,j\le k-1}}(\Phi_1\PP_i+\Phi_2\partial_z\PP_i)(d_j-\delta_j^1z)+(\Phi_1\PP_0+\Phi_2\partial_z\PP_0)d_k\\ \nonumber
&+(\GG_1\eta'+\LL_1\eta'')d_{k-1}
+\sum_{\substack{i+j=k\\ 1\le i,j\le k-1}}(\GG_i\eta'+\LL_i\eta'')(d_j-\delta_j^1z)+(\GG_{k}\eta'+\LL_{k}\eta'')d_0\\\label{expand:Pk}
\triangleq &~\FF_k+(\GG_{k}\eta'+\LL_{k}\eta'')d_0.
\end{align}

\subsection{Determination of  $\HH_k^\pm$ and $\GG_k$}
The equation \eqref{expand:P1}-(\ref{expand:Pk}) can be written as
\begin{align}%\label{inner:eq-Pk}
\mathcal{L}( \PP_k+\LL_{k}d_0\eta)=\FF_k+\GG_k\eta'd_0.
\end{align}
Now we determine $\HH_k^\pm$ and $\GG_k$ in order to ensure that the system for the inner expansion is solvable.
Let $\PP_{k}^\pm(x,t)=\PP_k(\pm\infty,x,t)$. Firstly, we choose
\begin{align}\label{solution:Hk+}
   \HH_{k-2}^+=-(\mathcal{P}_2+\mathcal{P}_4)\Big(\Phi_+^{\mathrm{T}}(\partial_t-\Delta)(\Phi_+\PP_{k-2}^+)
   +\sum_{\substack{i+j+l=k\\ 0\le i,j,l\le k-1}}\PP_i^+(\PP_j^{+})^{\mathrm{T}}\PP_l^+\Big),\\
   \HH_{k-2}^-=-(\mathcal{P}_3+\mathcal{P}_4)\Big(\Phi_-^{\mathrm{T}}(\partial_t-\Delta)(\Phi_-\PP_{k-2}^-)
   +\sum_{\substack{i+j+l=k\\ 0\le i,j,l\le k-1}}\PP_i^-(\PP_j^{-})^{\mathrm{T}}\PP_l^-\Big),\label{solution:Hk-}
\end{align}
for all $k\ge2$.
Thus, we have
$$\mathcal{P}_2\lim_{z\to +\infty}\FF_k=\mathcal{P}_3\lim_{z\to -\infty}\FF_k=\mathcal{P}_4\lim_{z\to \pm\infty}\FF_k =0,$$
which are necessary for solvability of $\PP_k$; see (B2)-(B4) in Lemma \ref{lem:ODEsys}.

From \eqref{bc-Pk}, one has $\Phi_+\PP_{k}^+=\A_+^{(k)}=\A_+\UU_{+}^{(k)}$ and $\Phi_+^{\mathrm{T}}\A_+=\II-2\nn\nn$. Thus, we get
\begin{align}\nonumber
   \HH_{k-2}^+
=&-(\mathcal{P}_2+\mathcal{P}_4)\Big\{(\II-2\nn\nn)\Big(\A_+^{\mathrm{T}}(\partial_t-\Delta)(\A_+\UU_{+}^{(k-2)})
   +\sum_{\substack{i+j+l=k\\ 0\le i,j,l\le k-1}}\UU_{+}^{(i)}(\UU_{+}^{(j)})^{\mathrm{T}}\UU_{+}^{(l)}\Big)\Big\},\\ \nonumber
=&~(\II-2\nn\nn)(\mathcal{P}_3+\mathcal{P}_4)\Big(\mathcal{J}_+\UU_{+}^{(k-2)}+\BB_+^{(k-1)}+{\CC}_+^{(k-2)}\Big)\\\label{vanish:Hk+}
=&~0,\qquad \text{for } x\in{Q}_+\cap\Gamma_t(\delta).
 \end{align}
Similarly, one has
\begin{align}\label{vanish:Hk-}
   \HH_{k-2}^-=&~0,\qquad \text{for } x\in{Q}_-\cap\Gamma_t(\delta).
 \end{align}
 In particular, we have
\begin{align}\label{cancel:Hk-on-interface}
  \HH_{k-2}^\pm=0\text{ on }\Gamma_t,\quad \text{ for }k\ge 2.
\end{align}

From (\ref{expand:P1})-(\ref{expand:Pk}), we know that
\begin{align}\label{inner:def-Fk}
\FF_k=\widetilde{\FF}_k(\Phi, \{d_i, \PP_i, \GG_i,\LL_i:{i\le k-1}\})+(\Phi_1\PP_0+\Phi_2\partial_z\PP_0)d_k,
\end{align}
where $\widetilde{\FF}_k$ is an explicitly given function, and the last term vanishes on $\Gamma$.

Once we have determined $\FF_k$ on $\Gamma_t(\delta)$ by \eqref{inner:def-Fk} with $\FF_k|_{\Gamma_t}$ satisfying (O1)-(O4),
 $\GG_{k}$ can be uniquely defined as
\begin{eqnarray}
\GG_k=\left \{
\begin {array}{ll}\label{solution:Gk}\vspace{2mm}
-d_0^{-1}\int_\BR \big(a_1^{-1}s'\mathcal{P}_1+a_2^{-1}s\mathcal{P}_2+a_3^{-1}(1-s)\mathcal{P}_3+\mathcal{P}_4\big)\FF_k \ud z, \quad (x,t)\in\Gamma(\delta)\setminus\Gamma,\\
-\partial_\nu\int_\BR \big(a_1^{-1}s'\mathcal{P}_1+a_2^{-1}s\mathcal{P}_2+a_3^{-1}(1-s)\mathcal{P}_3+\mathcal{P}_4\big)\FF_k \ud z,\quad (x,t)\in\Gamma.
\end{array}
\right.
\end{eqnarray}
Here $a_i(i=1,2,3)$ are constants defined in \eqref{def:constant-a}.
One can directly check  that $\FF_k+\GG_k\eta'd_0$ satisfies (B1)-(B4) and (O1)-(O4) in $\Gamma_t(\delta)$.
Moreover, we have
\begin{Lemma}\label{lem:Gknn} $(\GG_k:\nn\nn)|_{\Gamma_t} $ is independent of $d_k$.
\end{Lemma}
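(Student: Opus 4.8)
The plan is to work from the $(x,t)\in\Gamma$ branch of the defining formula \eqref{solution:Gk} for $\GG_k$ and to track the single place where $d_k$ enters it, namely the term $(\Phi_1\PP_0+\Phi_2\partial_z\PP_0)\,d_k$ in the decomposition \eqref{inner:def-Fk} of $\FF_k$. First I would contract everything with $\nn\nn$. Since $\nn\nn\in\BV_1$ and the decomposition $\BM_n=\oplus_{i=1}^{5}\BV_i$ is orthogonal, the $\mathcal{P}_2,\mathcal{P}_3,\mathcal{P}_4$ pieces die ($(\mathcal{P}_j\FF_k):\nn\nn=0$ for $j=2,3,4$) and $(\mathcal{P}_1\FF_k):\nn\nn=\FF_k:\nn\nn$, because $\mathcal{P}_1$ is the self-adjoint projection onto $\BV_1$ with $\mathcal{P}_1(\nn\nn)=\nn\nn$; moreover $\partial_\nu\nn=0$ on $\Gamma_t$ lets me commute $\partial_\nu$ past the contraction with $\nn\nn$. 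This yields
\begin{align*}
\big(\GG_k:\nn\nn\big)\big|_{\Gamma_t}=-a_1^{-1}\,\partial_\nu\Big[\int_{\BR}s'\,\big(\FF_k:\nn\nn\big)\,\ud z\Big]\Big|_{\Gamma_t},
\end{align*}
where differentiation under the integral sign is legitimate thanks to the exponential decay in $z$ of the integrands.

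Next I would separate off the $d_k$-dependence. By \eqref{inner:def-Fk}, $\FF_k:\nn\nn=\widetilde{\FF}_k:\nn\nn+g\,d_k$, where $\widetilde{\FF}_k$ is built only from the data $\{d_i,\PP_i,\GG_i,\LL_i:i\le k-1\}$ and so does not involve $d_k$, and $g(z,x,t):=(\Phi_1\PP_0+\Phi_2\partial_z\PP_0):\nn\nn$ likewise does not involve $d_k$. Since $d_k=d_k(x,t)$ carries no $z$-dependence, the part of the displayed quantity that depends on $d_k$ is $-a_1^{-1}\partial_\nu\!\big[d_k\int_{\BR}s'g\,\ud z\big]$, which on $\Gamma_t$ equals $-a_1^{-1}(\partial_\nu d_k)\int_{\BR}s'g\,\ud z-a_1^{-1}d_k\int_{\BR}s'(\partial_\nu g)\,\ud z$. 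Hence the lemma reduces to showing that $z\mapsto g(z,\cdot,t)$ vanishes to \emph{second} order along $\Gamma_t$ (uniformly, with exponential decay in $z$): then $\int_{\BR}s'g\,\ud z$ and $\int_{\BR}s'(\partial_\nu g)\,\ud z$ both vanish on $\Gamma_t$, so $\big(\GG_k:\nn\nn\big)|_{\Gamma_t}=-a_1^{-1}\partial_\nu[\int_{\BR}s'(\widetilde{\FF}_k:\nn\nn)\,\ud z]|_{\Gamma_t}$ is expressed through $\widetilde{\FF}_k$ alone.

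I expect this quadratic vanishing of $g$ to be the crux. The bounds $\Phi_1,\Phi_2=O(e^{-\alpha_0|z|}d_0)$ of \eqref{decay:Phi12} give only $g=O(d_0)$ — enough to annihilate $\int_{\BR}s'g\,\ud z$ on $\Gamma_t$ but not its normal derivative — so one more power of $d_0$ must be extracted from the orthogonal-group constraint together with the quadratic decay $\partial_z\Phi=O(e^{-\alpha_0|z|}d_0^2)$ of \eqref{decay:Phi}. Concretely, since $\PP_0=\II-2s\,\nn\nn$ and $\partial_z\PP_0=-2s'\,\nn\nn$ are symmetric, \eqref{eq:ABC} yields $g=(1-2s)\big(\Phi_1:\nn\nn\big)-2s'\big(\Phi_2:\nn\nn\big)$; differentiating $\Phi^{\mathrm{T}}\Phi=\II$ once gives $\Phi^{\mathrm{T}}\partial_z\Phi\in\BA_n$, so $\Phi_2=2d_0^{-1}\Phi^{\mathrm{T}}\partial_z\Phi$ is antisymmetric and $\Phi_2:\nn\nn=0$ identically; differentiating twice gives $\Phi^{\mathrm{T}}\partial_z^2\Phi+(\Phi^{\mathrm{T}}\partial_z^2\Phi)^{\mathrm{T}}=-2(\partial_z\Phi)^{\mathrm{T}}\partial_z\Phi$, and contracting with the symmetric matrix $\nn\nn$ gives $\Phi^{\mathrm{T}}\partial_z^2\Phi:\nn\nn=-\,|(\partial_z\Phi)\nn|^2=O(e^{-2\alpha_0|z|}d_0^4)$, whence $\Phi_1:\nn\nn=d_0^{-1}\big(\Phi^{\mathrm{T}}\partial_z^2\Phi:\nn\nn\big)=O(e^{-2\alpha_0|z|}d_0^3)$. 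Combining, $g=O(e^{-\alpha_0|z|}d_0^2)$, which closes the argument. The remaining ingredients — the commutations with $\partial_\nu$, differentiation under the integral sign, and the algebraic identities from $\Phi^{\mathrm{T}}\Phi=\II$ — are routine.
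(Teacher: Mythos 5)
Your proposal is correct and follows essentially the paper's own route: both reduce the claim to the quadratic vanishing (in $d_0$) of $(\Phi_1\PP_0+\Phi_2\partial_z\PP_0):\nn\nn$, kill the $\Phi_2$-term by antisymmetry of $\Phi^{\mathrm{T}}\partial_z\Phi$, and extract the extra power of $d_0$ in the $\Phi_1$-term from $\Phi^{\mathrm{T}}\Phi=\II$ together with $\partial_z\Phi=O(e^{-\alpha_0|z|}d_0^2)$ (your symmetrized identity is just the paper's decomposition $\Phi^{\mathrm{T}}\partial_z^2\Phi=\partial_z(\Phi^{\mathrm{T}}\partial_z\Phi)-\partial_z\Phi^{\mathrm{T}}\partial_z\Phi$ in a different guise). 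The only difference is that you spell out the reduction step (the contraction with $\nn\nn$ annihilating the $\mathcal{P}_2,\mathcal{P}_3,\mathcal{P}_4$ pieces and the Leibniz computation for $\partial_\nu$ on $\Gamma$ using $\partial_\nu\nn=0$), which the paper leaves implicit in citing \eqref{inner:def-Fk} and \eqref{solution:Gk}.
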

\begin{proof} From \eqref{inner:def-Fk} and \eqref{solution:Gk}, it suffices to prove that
\begin{align}\label{cancel:Gknn}
\lim_{d_0\to 0}  d_0^{-1}(\Phi_1\PP_0+\Phi_2\partial_z\PP_0):\nn\nn=0.
\end{align}
We recall the definition \eqref{inner:def-Phi12} for $\Phi_1$ and $\Phi_2$. Obviously, $\Phi^{\mathrm{T}}\partial_z\Phi$ is antisymmetric in $\Gamma_t(\delta)$,
and then so do $\Phi_2$ and $\lim_{d_0\to0}d_0^{-1}\Phi_2$. Therefore,
\begin{align*}
  \Phi_2\partial_z\PP_0:\nn\nn= -2s'\Phi_2:\nn\nn=0\qquad \text{ in }\Gamma_t(\delta).
\end{align*}
Moreover, we have
\begin{align*}
\Phi^{\mathrm{T}}\partial_z^2\Phi=\partial_z(\Phi^{\mathrm{T}}\partial_z\Phi)-\partial_z\Phi^{\mathrm{T}}\partial_z\Phi.
\end{align*}
The first term is antisymmetric, and the second term is of order  $O(d_0^4)$. This proves \eqref{cancel:Gknn}.
\end{proof}

We can also obtain the explicit expression for $\GG_k:\nn\nn$ on $\Gamma$ from \eqref{solution:Gk}:
\begin{align}\label{inner:eq-Gknn}
\GG_k:\nn\nn =&- a_1^{-1}\partial_\nu\int_\BR s'\widetilde\FF_k:\nn\nn \ud z,
\end{align}
which only depends on $\Phi$ and $\{d_i, \PP_i, \GG_i,\LL_i:{i\le k-1}\}$. In particular, $(\GG_1:\nn\nn)|_{\Gamma_t}$ only relies on $\Phi, d_0,\PP_0$.
We will use these facts for the  derivation of  the equations of $d_1$ and $d_k(k\ge 2)$.

\medskip

\section{Solving the systems for outer/inner expansion}\label{sec:construct}

In this section, we first solve the systems for the outer/inner expansions, and then construct the approximate solutions by the gluing method.

Recall that $\PP_k$ for $k\ge 1$ solves
\begin{align}\label{inner:eq-Pk}
\mathcal{L}( \PP_k+\LL_{k}d_0\eta)=\FF_k+\GG_k\eta'd_0,
\end{align}
together with the boundary condition
\begin{align}\label{innerbc-Pk}
\PP_k(\pm\infty, x,t)=\PP_0^\pm\UU_\pm^{(k)}.
\end{align}
The solution $\PP_k$ of \eqref{inner:eq-Pk}  can be written as
\begin{align}\label{solution:Pk}
  \PP_k=s(z)\PP_{k,2}(x,t)+(1-s(z))\PP_{k,3}(x,t)+\PP_{k,4}(x,t)-\LL_{k}(x,t)d_0(x,t)\eta(z)+\PP_k^*(z,x,t),
\end{align}
where  $\PP_{k,i}(x,t)\in\mathbb{V}_i$ and  $\PP_k^*(z,x,t)$ is uniquely solved from
\begin{align}\label{solution:Pkstar}
  \begin{split}
&\mathcal{L}\PP_k^*(z,x,t)=\FF_k+\GG_k\eta'd_0,\\
&  \mathcal{P}_2\PP_k^*(+\infty, x,t)= \mathcal{P}_3\PP_k^*(-\infty, x,t)= \mathcal{P}_4\PP_k^*(-\infty, x,t)=0
 \end{split}
 \end{align}
with the help of Lemma \ref{lem:ODEsys}.

Due to the matching condition (\ref{innerbc-Pk}), one has for $(x,t)\in\Gamma(\delta)$ that
 \begin{align}\label{jump:Pk4-00}
\PP_{k,4}(x,t)&=\mathcal{P}_4 \PP_k(-\infty, x,t) =\mathcal{P}_4\Big( \PP_k(+\infty, x,t)-\PP_k^*(+\infty, x,t)\Big)+\LL_{k}d_0,
\end{align}
and
\begin{align}\label{jump:Pk2}
\PP_{k,2}(x,t)&=\mathcal{P}_2\PP_k(+\infty,x,t) =\mathcal{P}_2 (\II-2\nn\nn)\UU_+^{(k)}=-(\II-2\nn\nn)\mathcal{P}_3 \UU_+^{(k)}=-(\II-2\nn\nn)\mathcal{P}_3 \VV_+^{(k)}, \\
 \PP_{k,3}(x,t)&=\mathcal{P}_3\PP_k(-\infty,x,t) =\mathcal{P}_3 \UU_-^{(k)}=\mathcal{P}_3 \VV_-^{(k)}.\label{jump:Pk3}
 \end{align}
 Here we have used the relation $ \mathcal{P}_2\big( (\II-2\nn\nn)\A\big)=-(\II-2\nn\nn)\mathcal{P}_3 \A$  in \eqref{Append:comm-P2}.
Moreover, as
\begin{align*}
  \mathcal{P}_4 \PP_k(-\infty)=&~\mathcal{P}_4 \UU_-^{(k)}=\mathcal{P}_4\VV_-^{(k)},\\
\mathcal{P}_4 \PP_k(+\infty)=&~\mathcal{P}_4 \Big((\II-2\nn\nn)\UU_+^{(k)}\Big)=\mathcal{P}_4\UU_+^{(k)}=\mathcal{P}_4\VV_+^{(k)},
\end{align*}
we infer from \eqref{jump:Pk4-00} that
\begin{align}\label{jump:Pk4}
\mathcal{P}_4 \VV_-^{(k)}&=\mathcal{P}_4\Big(\VV_+^{(k)}-\PP_k^*(+\infty, x,t)\Big),\quad \text{for }(x,t)\in\Gamma,\\
\partial_\nu\mathcal{P}_4 \VV_-^{(k)}&=\partial_\nu\mathcal{P}_4\Big(\VV_+^{(k)}-\PP_k^*(+\infty, x,t)\Big)+\LL_k, \quad \text{for }(x,t)\in\Gamma.\label{jump:Pk4-d}
\end{align}
This gives two boundary conditions on $\Gamma$ for $\VV_-^{(k)}$ and $\VV_+^{(k)}$. The determination of $\LL_k$
and other boundary conditions for $\VV_-^{(k)}$ and $\VV_+^{(k)}$ will be derived from conditions (O2)-(O4) for $\FF_{k+1}$,
which will be explained in Subsection \ref{sec:deriv-innerBC-k}.

\subsection{Solving the equation of $\PP_1$}\label{subsec:p1}
Since $\partial_z\Phi, \partial_z\PP_0, \Phi_1, \Phi_2, \eta',\eta''$ decay to 0 exponentially as $z\to\pm\infty$,
the boundary condition (B1)-(B4) for $\FF_1$ are automatically satisfied.

Firstly, one has $\partial_z\Phi,~\Phi_1,~\Phi_2, ~ \partial_\nu\partial_z\Phi,~\partial_\nu\nn=0$ on $\Gamma$.
In addition, it holds that
\begin{align*}
  2\Phi^{\mathrm{T}}\nabla d_0\cdot\nabla_x\partial_z(\Phi\PP_0)=2\Phi^{\mathrm{T}}\partial_\nu\Phi\partial_z\PP_0=-4s'\overline\WW\nn\nn=0,
\end{align*}
as $\overline\WW\in\mathbb{V}_4$. Thus, we get
\begin{align}
  \FF_1=-(\partial_td_0-\Delta d_0)\partial_z\PP_0\qquad \text{ on }\Gamma_t.
\end{align}
Therefore, the orthogonal condition (O1) for $\FF_1$ on $\Gamma$ gives the equation of $d_0$:
\begin{align}\label{inner:eq-d0}
 a_0(\partial_td_0-\Delta d_0)=0,
\end{align}
which means that {\bf $\Gamma_t$ evolves according to the mean curvature flow}. This in turn gives that $\FF_1=0$ on $\Gamma$.
Then the orthogonal conditions (O2)-(O4) for $\FF_1$ on $\Gamma$ are automatically satisfied.

It is worthy to notice that, here we verified the conditions (O1)-(O4) for $\FF_1|_{\Gamma_t}$ from the boundary conditions (\ref{SharpInterfaceSys:MiniPair})-(\ref{SharpInterfaceSys:MCF}) of the sharp limit system. On the contrary,
the boundary conditions can be derived from (O1)-(O4) for $\FF_1|_{\Gamma_t}$
which is necessary for the solvability of $\PP_1\big|_{\Gamma_t}$. See Appendix \ref{sec:deriv-BC} for the proof.

Now we have $\FF_1=0$ on $\Gamma$. So we can define $\FF_1$ and $\GG_1$ in $\Gamma(\delta)$
by (\ref{expand:P1}) and (\ref{solution:Gk}) respectively. Note that $d_1$ is not determined yet.
However, a remarkable consequence of Lemma \ref{lem:Gknn} is that,
as we will see in (\ref{inner:eq-d1}), $d_1$ satisfies an equation only depending on the zeroth order terms $(\Phi, d_0, \nn)$.
Therefore, we can write
\begin{align}\label{solution:P1}
  \PP_1=s(z)\PP_{1,2}(x,t)+(1-s(z))\PP_{1,3}(x,t)+\PP_{1,4}(x,t)-\LL_1(x,t)d_0(x,t)\eta(z)+\PP_1^*(z,x,t),
\end{align}
with $\PP_1^*(z,x,t)$ uniquely determined by \eqref{solution:Pkstar}. Note that $\PP_1^*(z,x,t)$ depends only on the zeroth order terms $(\Phi, d_0, \nn)$. Moreover, $\PP_1^*=0$ on $\Gamma$.

\subsection{Solving the equation of  $\PP_2$}
We can directly check that the right hand side exponentially tends to its values at $\pm\infty$.
Again, we yield from (\ref{property:vanish}) and (\ref{cancel:Hk-on-interface}) that on $\Gamma$,
\begin{align}\nonumber
 \FF_2&=2\Phi^{\mathrm{T}}\nabla d_0\cdot\nabla_x\partial_z(\Phi\PP_1)-(\partial_td_1-\Delta d_1)\partial_z\PP_0
 +2\Phi^{\mathrm{T}}\nabla d_1\cdot\nabla_x\partial_z(\Phi\PP_0)\\
&\quad -\Phi^{\mathrm{T}}(\partial_t-\Delta_x)(\Phi\PP_{0})-(\PP_0\PP_1^{\mathrm{T}}\PP_1+\PP_1\PP_0^{\mathrm{T}}\PP_1+\PP_1\PP_1^{\mathrm{T}}\PP_0)
+(\GG_1\eta'+\LL_1\eta'')(d_1-z).
\end{align}
\subsubsection{The equation of $d_1$}
The orthogonal condition (O1) on $\Gamma$ gives us that
\begin{align*}
0=  \int_\mathbb{R} \FF_2:\partial_z\PP_0 \ud z.
\end{align*}
The above equation gives an evolution equation for $d_1$ on $\Gamma$. Surprisingly, it is independent of the choice of $\PP_1$.
As we mentioned above, this in turn gives a closed solution (\ref{solution:Gk}) ($k=1$) for $\GG_1$.

Firstly, due to \eqref{property:vanish}, we have $\partial_z\Phi=0, \partial_\nu\partial_z\Phi=0 $ on $\Gamma$, and thus
\begin{align*}
  2\Phi^{\mathrm{T}}\nabla d_0\cdot\nabla_x\partial_z(\Phi\PP_1)=2\Phi^{\mathrm{T}}\partial_\nu\Phi\partial_z\PP_1+2\partial_z(\partial_\nu\PP_1).
\end{align*}
Again, as $\Phi^{\mathrm{T}}\partial_\nu\Phi=\overline\WW\in\mathbb{V}_4$, one has
$\Phi^{\mathrm{T}}\partial_\nu\Phi\partial_z\PP_1:\partial_z\PP_0=0$. From (\ref{solution:P1}), one has $\partial_z\partial_\nu\PP_1:\nn\nn=0$ and thus $\partial_z(\partial_\nu\PP_1):\partial_z\PP_0=0$. Therefore, we obtain
\begin{align}\label{cancel:d1-1}
  2\Phi^{\mathrm{T}}\nabla d_0\cdot\nabla_x\partial_z(\Phi\PP_1):\partial_z\PP_0=0.
\end{align}
By Lemma \ref{lem:inner-null}, we have
\begin{align}\label{cancel:d1-2}
\int_\mathbb{R}  (\PP_0\PP_1^{\mathrm{T}}\PP_1+\PP_1\PP_0^{\mathrm{T}}\PP_1+\PP_1\PP_1^{\mathrm{T}}\PP_0):\partial_z\PP_0\ud z=0.
\end{align}
Since $\partial_z\Phi$ vanishes on $\Gamma$ quadratically in $d_0$ and $\Phi^{\mathrm{T}}\nabla d_1\cdot\nabla_x \Phi$ is antisymmetric, we have
\begin{align}\nonumber
2\Phi^{\mathrm{T}}\nabla d_1\cdot\nabla_x\partial_z(\Phi\PP_0) & =2\Phi^{\mathrm{T}}\nabla d_1\cdot\nabla_x \Phi\partial_z\PP_0+2\nabla d_1\cdot\nabla_x \partial_z\PP_0\\
&=2s'\Phi^{\mathrm{T}}\nabla d_1\cdot\nabla_x \Phi\nn\nn+2s'\nabla d_1\cdot\nabla_x(\nn\nn)\label{cancel:d1-3}
\end{align}
which is orthogonal to $\nn\nn$.
Therefore, combining the above conclusions  we obtain that
\begin{align}\label{inner:eq-d1}
  4a_0(\partial_t d_1-\Delta d_1)-\int_\BR (d_1-z)s'\GG_1:\nn\nn \ud z= \mathfrak{F}(\Phi, d_0, \nn)\qquad \text{ on }\Gamma_t,
\end{align}
which is linear due to (\ref{inner:eq-Gknn}) with $k=1$. Then we can extend $d_1$ from $\Gamma$ to $\Gamma_t(\delta)$ uniquely by $\nabla d_0\cdot\nabla d_1=0$.

\subsubsection{Determining $\FF_1, \GG_1, \PP_1^*$ in $\Gamma(\delta)$}
After $d_1$ is determined in $\Gamma(\delta)$, we define $\FF_1, \GG_1$ in $\Gamma(\delta)$ as in (\ref{expand:P1}) and (\ref{solution:Gk}).
Then $\PP_1^*$ can be determined from \eqref{solution:Pkstar}.

\subsubsection{The equations for $\PP_{1,i}(2\le i\le 4), \LL_1$ on $\Gamma$}
The conditions (O2)-(O4) for  $\FF_2|_{\Gamma}$ give us that
\begin{align}\label{inner:O-P1}
\int_\BR  \Big(2\Phi^{\mathrm{T}}\partial_\nu\Phi\partial_z\PP_1+2\partial_z(\partial_\nu\PP_1)-\Phi^{\mathrm{T}}(\partial_t-\Delta_x)(\Phi\PP_{0})
+(\GG_1\eta'+\LL_1\eta'')(d_1-z)\Big):\QQ(z,x,t) \ud z=0,
\end{align}
for $\QQ(z,x,t)=s\EE_2, (1-s)\EE_3, \EE_4$.
Here we have used the fact that $\PP_1|_{\Gamma_t}\in~\mathrm{Null}~\mathcal{L}$ and the cubic cancellation relation Lemma \ref{lem:inner-null}.

By \eqref{inner:proper-PhidPhi},  $\Phi^{\mathrm{T}}\partial_\nu\Phi$ is independent of $z$. Thus, we have
\begin{align}\label{cancel:dzP1:E4}
  \int_\BR 2\Phi^{\mathrm{T}}\partial_\nu\Phi\partial_z\PP_1:\EE_4 \ud z=2\Phi^{\mathrm{T}}\partial_\nu\Phi\int_\BR \partial_z\PP_1:\EE_4 \ud z=0.
\end{align}
Noting that $\partial_\nu\nn=0$ on $\Gamma$, we have $\partial_\nu \PP_{1,2}(x,t), \partial_\nu \PP_{1,3}(x,t)\bot\EE_4$. Thus
\begin{align}\label{cancel:P1:E4}
 \int_\BR  \Big(2\partial_z(\partial_\nu\PP_1)+\LL_1\eta''(d_1-z)\Big):\EE_4 \ud z
= \int_\BR  \Big(\LL_1\eta''(d_1-z)-2\LL_1\eta'\Big):\EE_4 \ud z
=-\LL_1:\EE_4.
\end{align}
Therefore, \eqref{inner:O-P1} with $\QQ(z,x,t)=\EE_4$ gives us that
\begin{align}\label{jump:P1-L1}
\LL_1=-\int_\BR \mathcal{P}_4 \Big(\Phi^{\mathrm{T}}(\partial_t-\Delta_x)(\Phi\PP_{0})
-\GG_1\eta'(d_1-z)\Big)\ud z.
\end{align}
Recalling \eqref{equation:Apm0}, we get
$\lim_{z\to\pm\infty}\Phi^{\mathrm{T}}(\partial_t-\Delta_x)(\Phi\PP_{0})=\A_-^{\mathrm{T}}(\partial_t-\Delta_x)\A_\pm \bot \mathbb{V}_4$, which gives
\begin{align}\label{decay-P4-1}
  \mathcal{P}_4 \Big(\Phi^{\mathrm{T}}(\partial_t-\Delta_x)(\Phi\PP_{0}) \Big)\to 0 ~~(\text{exponentially} ),\text{ as }z\to\pm\infty.
\end{align}
Thus, the integral in right side of \eqref{jump:P1-L1} is well defined.

Since
\begin{align*}
  \partial_z\partial_\nu\PP_1:s\EE_2=s'\partial_\nu\PP_{1,2}:s\EE_2,\quad \Phi^{\mathrm{T}}\partial_\nu\Phi\PP_{1,2}:\EE_2=0,%\Phi^{\mathrm{T}}\partial_\nu\Phi\PP_{1,4}
\end{align*}
we have
\begin{align} \nonumber
0= & \int_\BR  \Big(2\Phi^{\mathrm{T}}\partial_\nu\Phi\partial_z\PP_1+2\partial_z(\partial_\nu\PP_1)-\Phi^{\mathrm{T}}(\partial_t-\Delta_x)(\Phi\PP_{0})
+(\GG_1\eta'+\LL_1\eta'')(d_1-z)\Big):s\EE_2 \ud z\\\nonumber
=&\int_\BR  \Big(-\overline\WW\PP_{1,3}(s^2)'+(s^2)'\partial_\nu\PP_{1,2}-s\Phi^{\mathrm{T}}(\partial_t-\Delta_x)(\Phi\PP_{0})
+\GG_1\eta'(d_1-z)s\Big):\EE_2 \ud z,
\end{align}
which gives
\begin{align}
\label{jump:P12}
\partial_\nu\PP_{1,2}-\mathcal{P}_2(\overline\WW\PP_{1,3})
+\int_\BR  s\mathcal{P}_2\Big(-\Phi^{\mathrm{T}}(\partial_t-\Delta_x)(\Phi\PP_{0})+\GG_1\eta'(d_1-z)\Big) \ud z=0.
\end{align}
Similarly, we have on $\Gamma$ that
\begin{align}\label{jump:P13}
\partial_\nu\PP_{1,3}-\mathcal{P}_3(\overline\WW\PP_{1,2})
+\int_\BR  (1-s)\mathcal{P}_3\Big(-\Phi^{\mathrm{T}}(\partial_t-\Delta_x)(\Phi\PP_{0})+\GG_1\eta'(d_1-z)\Big) \ud z=0.
\end{align}
Similar to (\ref{decay-P4-1}), we have
\begin{align}\nonumber%\label{decay-P2-1}
 & \mathcal{P}_2 \Big(\Phi^{\mathrm{T}}(\partial_t-\Delta_x)(\Phi\PP_{0}) \Big)\to 0 ~~(\text{exponentially} ),\text{ as }z\to\infty,\\
 & \mathcal{P}_3 \Big(\Phi^{\mathrm{T}}(\partial_t-\Delta_x)(\Phi\PP_{0}) \Big)\to 0 ~~(\text{exponentially} ),\text{ as }z\to-\infty, \nonumber
  %  \label{decay-P3-1}
\end{align}
which imply that the integrals in right side of (\ref{jump:P12}) and (\ref{jump:P13}) are well defined.

\subsubsection{Solving $\VV_\pm^{(1)}$ in ${Q}_\pm$} Using $[\partial_\nu, \II-2\nn\nn]=0$,
the equations \eqref{jump:Pk2}-\eqref{jump:Pk4-d}, \eqref{jump:P1-L1}, (\ref{jump:P12})-\eqref{jump:P13} together shows that for $(x,t)\in\Gamma$
\begin{align}\label{inner:BC-U1pm}
  \begin{split}
  \mathcal{P}_4 \VV_-^{(1)}&=\mathcal{P}_4\VV_+^{(1)},\\
\partial_\nu(\mathcal{P}_4 \VV_-^{(1)})&=\partial_\nu\mathcal{P}_4\VV_+^{(1)}+\LL_1,\\
\partial_\nu(\mathcal{P}_3 \VV_+^{(1)})-\mathcal{P}_3\big(\overline\WW\mathcal{P}_3 \VV_-^{(1)}\big)
&=(\II-2\nn\nn)\int_\BR  s\mathcal{P}_2\Big(-\Phi^{\mathrm{T}}(\partial_t-\Delta_x)(\Phi\PP_{0})+\GG_1\eta'(d_1-z)\Big) \ud z.\\
\partial_\nu(\mathcal{P}_3 \VV_-^{(1)})+\mathcal{P}_3\big(\overline\WW\mathcal{P}_3 \VV_+^{(1)}\big)
&=\int_\BR  (1-s)\mathcal{P}_3\Big(\Phi^{\mathrm{T}}(\partial_t-\Delta_x)(\Phi\PP_{0})-\GG_1\eta'(d_1-z)\Big) \ud z,
 \end{split}
 \end{align}
which provide boundary conditions for $\VV_+^{(1)}$ and $\VV_-^{(1)}$ on $\Gamma$. As $\VV_\pm^{(1)}=(\mathcal{P}_3+\mathcal{P}_4)\VV_\pm^{(1)}\in \mathbb{A}_n$,
these conditions are complete. So $\VV_+^{(1)}|_{{Q}_+}$ and $\VV_-^{(1)}|_{{Q}_-}$ can be solved from  a linear parabolic system.

\subsubsection{Determining $\PP_1$ in $\Gamma(\delta)$}\label{subsec:determine-P1}
We extend $\VV_+^{(1)}$ and $\VV_-^{(1)}$ to be smooth antisymmetric matrix-valued functions on $\{(x,t)|(x,t)\in\Gamma(\delta)\}$,
and let
\begin{align}\label{inner:def-P12}
&\PP_{1,2}(x,t)=-(\II-2\nn\nn)\mathcal{P}_3 \VV_+^{(1)}, \\ \label{inner:def-P13}
 &\PP_{1,3}(x,t)=\mathcal{P}_3 \VV_-^{(1)},\\\label{inner:def-L1}
 &\LL_1(x,t)=d_0^{-1}\mathcal{P}_4\big( \VV_-^{(1)}-\VV_+^{(1)}+\PP_1^*(+\infty, x,t)\big),\\\label{inner:def-P14}
& \PP_{1,4}(x,t)=\mathcal{P}_4 \VV_-^{(1)} =\mathcal{P}_4 \big(\VV_+^{(1)}-\PP_1^*(+\infty, x,t)\big)+\LL_{1}d_0.
 \end{align}
In addition, we define $\PP_1(z,x,t)$ as in \eqref{solution:P1}. Apparently, $\PP_1(z,x,t)$ is smooth in $(x,t)$
and satisfies the matching conditions \eqref{bc-Pk}.

\subsubsection{Determining $\FF_2, \GG_2, \PP_2^*$ in $\Gamma(\delta)$}
$\FF_2$ is determined by \eqref{inner:def-Fk} with $k=2$, and $\GG_2$ is determined by \eqref{solution:Gk}
which is derived from the orthogonal condition (O1)-(O4) for $\FF_2$.
Thus, we can solve $\PP_2^*(z,x,t)$ from \eqref{solution:Pkstar} and
write $\PP_2(z,x,t)$ as in \eqref{solution:Pk} with $k=2$. We remark that
$\PP_2^*(z,x,t)$ depends only on $(\Phi, d_0, \nn, \PP_1, d_1)$.

\begin{figure}\small
\begin{tikzpicture}[man/.style={rectangle,draw,fill=blue!20},
  woman/.style={rectangle,draw,fill=red!20,rounded corners=.8ex},
 VIP/.style={rectangle,very thick, draw}, fill=white]
\draw [dotted](-3,0.25) to (-3,-1*5.62);
\draw [dotted](-3,-7.1) to (-3,-1*13.62);
\draw [dotted](-3,-15.1) to (-3,-1*17.3);

\node (Out) at (0,0) {Inner expansion};
\node (Inn) at (-6,0) {Outer expansion};

\node[man] (A0) at (-0.4,-1) {$\Phi(z), \nn, \PP_0(z), d_0 $ in ${\Gamma(\delta)}$};
\node[man] (A0-out) at (-6,-1) {$\A_\pm^{(0)} $ in ${Q}_\pm$};

\node[man] (F1) at (-0.4,-1*2)  {$\FF_1(z), \PP_1^*(z), \GG_1:\nn\nn$ on $\Gamma$};
\node[man] (d11) at (-0.4,-1*3)  {$d_1$ on $\Gamma$};
\node[man] (d12) at (3,-1*3)  {$d_1$ in $\Gamma(\delta)$};
\node[man] (F12) at (-0.4,-1*4)  {$\FF_1(z)$, $\GG_1$ in $\Gamma(\delta)$};
\node[man] (P1star) at (-0.4,-1*5)  {$\PP_1^*(z)$ in $\Gamma(\delta)$};

\node[VIP] (BC1) at (1,-1*6-0.3)  {Boundary condition for $\VV_\pm^{(1)}$ on $\Gamma$: \eqref{inner:BC-U1pm}};
\node[man] (P1i) at (-0.2,-1*7-0.6)   {$\PP_{1,i}(z), \LL_1, \PP_1(z)$ in $\Gamma(\delta)$};
\node[woman] at (3.9,-1*7-0.6)  {\eqref{inner:def-P12}-\eqref{inner:def-P14},\eqref{solution:P1}};

\node[woman] at (4.1,-1*2) {(\ref{inner:def-Fk}), \eqref{solution:Pkstar}, (\ref{inner:eq-Gknn}) on $\Gamma$};
\node[woman] at (-2,-1*3)  {(\ref{inner:eq-d1})};
\node[woman] at (5,-1*3)  {(\ref{inner:dist-01})};
\node[woman] at (3,-1*4)  {(\ref{inner:def-Fk}) and (\ref{solution:Gk})};
\node[woman] at (1.8,-1*5)  {\eqref{solution:Pkstar}};

% out part
\node[man] (M1) at (-7,-1*5)  {$\MM_\pm^{(1)}=0$ in ${Q}_\pm$};
\node[man] (V1) at (-4.8,-1*7-0.6)  {$\VV_\pm^{(1)}$ in ${Q}_\pm$};
\node[woman] at (-4.8,-1*5)  {(\ref{out:sol-M1})};
\node[VIP] (Eq1) at (-6,-1*6-0.3)  {Equations for $\VV_\pm^{(1)}|_{{Q}_\pm}$: \eqref{eq:V1}};
\node  (plus1) at (-3,-1*6-0.3)  {+};

\node (stepk) at (2.9,-1*8-1)  {\underline{$\quad$ $\{d_i,~\PP_i, ~\LL_i, ~\GG_i| i\le k-1 \}$ are solved}};

\node (stepk) at (-7.2,-1*8-1)  {\underline{$\{\MM_\pm^{(i)},\VV_\pm^{(i)}|i\le k-1 \}$ are solved$\quad$ }};

% inner part
\node[man] (F2) at (-0.4,-1*8-2)  {$\FF_k(z), \PP_k^*(z), \GG_k:\nn\nn$ on $\Gamma$};
\node[man] (d21) at (-0.4,-1*9-2)  {$d_k$ on $\Gamma$};
\node[man] (d22) at (3,-1*9-2)  {$d_k$ in $\Gamma(\delta)$};
\node[man] (F22) at (-0.4,-1*10-2)  {$\FF_k(z)$, $\GG_k$ in $\Gamma(\delta)$};
\node[man] (P2star) at (-0.4,-1*11-2)  {$\PP_k^*(z)$ in $\Gamma(\delta)$};
\node[VIP] (BCk) at (1,-1*12-2.3)  {Boundary conditions for $\VV_\pm^{(k)}$ on $\Gamma$: \eqref{inner:BC-Ukpm}};
\node[man] (P2i) at (-0.2,-1*13-2.6)  {$\PP_{k,i}(z), \LL_k, \PP_k(z)$ in $\Gamma(\delta)$};
\node[woman] at (3.9,-1*13-2.6)  {\eqref{inner:def-Pk2}-\eqref{inner:def-Pk4},\eqref{solution:Pk}};

\node[woman] at (3,-1*10-2)  {(\ref{inner:def-Fk}) and (\ref{solution:Gk})};
\node[woman] at (-2,-1*9-2)  {(\ref{solution:dk})};
\node[woman] at (5,-1*9-2)  {(\ref{solution:dk:extend})};
\node[woman] at (1.8,-1*11-2)  {\eqref{solution:Pkstar}};
\node[woman] at (4.1,-1*8-2)  {(\ref{inner:def-Fk}), \eqref{solution:Pkstar}, (\ref{inner:eq-Gknn}) on $\Gamma$};

% out part
\node[man] (Mk) at (-7,-1*11-2)  {$\MM_\pm^{(k)}$ in ${Q}_\pm$};
\node[man] (Vk) at (-4.8,-1*13-2.6)  {$\VV_\pm^{(k)}$ in ${Q}_\pm$};
\node[VIP] (Eqk) at (-6,-1*13-1.3)  {Equations for $\VV_\pm^{(k)}|_{{Q}_\pm}$: \eqref{eq:Vk}};
\node (plusk) at (-3,-1*13-1.3)  {+};
\node[woman] at (-5,-1*11-2)  {(\ref{eq:out-Ak})};

\node (stepk) at (3,-1*13-3.5)  {\underline{$\qquad$ $d_k,~\PP_k, ~\LL_k, ~\GG_k$ are solved}};
\node (stepk1) at (-7,-1*13-3.5)    {\underline{$\MM_\pm^{(k)},\VV_\pm^{(k)}$ are solved$\quad$}};

\draw [->](A0) to  (F1);
\draw [->](F1) to (d11);
\draw [->](d11) to (d12);
\draw [->](d12.south west) to (F12);
\draw [->](F12) to (P1star);
\draw [->](V1) to (P1i);

\draw [->](A0-out) to  (M1);

\draw [->](M1) to (Eq1);
\draw [->](P1star) to (BC1);
\draw [->](V1) to (P1i);
\draw [->](plus1) to [in=0,out=-90,looseness=0.75] (V1.north east);

%\draw [->](d02.south west) to (F1);
\draw [->](F2) to (d21);
\draw [->](d21) to (d22);
\draw [->](d22.south west) to (F22);
\draw [->](F22) to (P2star);

\draw [->] (-6,-1*10) to (Mk);
\draw [->](Mk) to (Eqk);
\draw [->](P2star) to (BCk);
\draw [->](Vk) to (P2i);
\draw [->](plusk) to [in=0,out=-90,looseness=0.75] (Vk.north east);
\draw [very thick, dotted](0, -1*8.2) to (0, -8.82);
\draw [very thick, dotted](-4.8, -1*8.2) to (-4.8, -8.82);

\draw [very thick, dotted](0, -1*16.7) to (0, -1*17.3);
\draw [very thick, dotted](-4.8, -1*16.7) to (-4.8, -1*17.3);
\end{tikzpicture}
\caption{The whole procedure to solve the outer and inner expansion systems}
\label{fig:expansion}
\end{figure}

\subsection{Solving the equation of  $\PP_k$ ($k\ge 3$)} Repeating the above procedure, we can solve $(\PP_k, d_k)$ step by step.
Assume that in $\Gamma(\delta)$
\begin{align*}
\{d_i,~\PP_i, ~\LL_i, ~\GG_i|0\le i\le k-1 \},
\end{align*}
has been known. Then we  solve $d_{k}, \PP_{k}, \LL_{k}, \GG_{k}$ in $\Gamma(\delta)$.

Firstly, we can obtain $\FF_k|_{\Gamma_t}$ and $\PP_{{k}}^*|_{\Gamma_t}$ from \eqref{inner:def-Fk} and \eqref{solution:Pkstar} respectively.
Note that, we can also determine $(\GG_{{k}}:\nn)|_{\Gamma_t}$ from \eqref{inner:eq-Gknn}. Note that all these terms are independent of $d_k$.

On $\Gamma$, from \eqref{expand:Pk}, we can write
\begin{align*}
\FF_{k+1}=&-(\partial_td_{{k}}-\Delta d_{{k}})\partial_z\PP_{0}+2\Phi^{\mathrm{T}}\nabla d_0\cdot\nabla_x\partial_z(\Phi\PP_{k})
+2\Phi^{\mathrm{T}}\nabla d_{{k}}\cdot\nabla_x\partial_z(\Phi\PP_0)\\ \nonumber
&-\sum_{\substack{i+j={k}\\ 1\le i,j\le {k-1}}}(\partial_td_i-\Delta d_i)\partial_z\PP_j+2\Phi^{\mathrm{T}}\sum_{\substack{i+j={k}\\ 1\le i,j\le {k-1}}}
\nabla d_i\cdot\nabla_x\partial_z(\Phi\PP_j)\\ \nonumber
&-\Phi^{\mathrm{T}}(\partial_t-\Delta)(\Phi\PP_{{k-1}})-\sum_{\substack{i+j+l=k+1\\ 0\le i,j,l\le {k}}}\PP_i\PP_j^{\mathrm{T}}\PP_l+2\Phi^{\mathrm{T}}\nabla d_0\cdot\nabla_x\partial_z(\Phi\PP_{{k}})\\ \nonumber
&+(\GG_1\eta'+\LL_1\eta'')d_{{k}}+(\GG_{{k}}\eta'+\LL_{{k}}\eta'')(d_1-z)
+\sum_{i=2}^{{k-1}}(\GG_{i}\eta'+\LL_{i}\eta'')d_{{k+1}-i}.
\end{align*}
\subsubsection{The equation for $d_{{k}}$} We use the condition (O1) for $\FF_{k+1}$.
Let $\PP_{{k}}^\top=\PP_{{k}}-\PP_{{k}}^*$.
Similar to the derivation of \eqref{cancel:d1-1}, we can obtain
\begin{align*}
  2\Phi^{\mathrm{T}}\nabla d_0\cdot\nabla_x\partial_z(\Phi\PP_{{k}}^\top):\partial_z\PP_0=0.
\end{align*}
Moreover, similar to \eqref{cancel:d1-2} and \eqref{cancel:d1-3}, we can obtain
\begin{align*}
&\int_\mathbb{R}  \Big\{\big(\PP_0\PP_1^{\mathrm{T}}+\PP_1\PP_0^{\mathrm{T}}\big)\PP_{{k}}^\top+\PP_{{k}}^\top\big(\PP_0^{\mathrm{T}}\PP_1+\PP_1^{\mathrm{T}}\PP_0\big)
+\PP_0\PP_{{k}}^{\top,\mathrm{T}}\PP_1+\PP_1\PP_{{k}}^{\top,\mathrm{T}}\PP_0\Big\}:\partial_z\PP_0\ud z=0,\\
&2\Phi^{\mathrm{T}}\nabla d_{{k}}\cdot\nabla_x\partial_z(\Phi\PP_0) :\nn\nn
=2s'\big(\Phi^{\mathrm{T}}\nabla d_{{k}}\cdot\nabla_x \Phi\nn\nn+\nabla d_{{k}}\cdot\nabla_x(\nn\nn)\big):\nn\nn=0.
\end{align*}
Therefore, the equation for $d_{{k}}$ on $\Gamma$ reads as
\begin{align}\nonumber
&4a_0(\partial_td_{{k}}-\Delta d_{{k}})+d_{{k}}\int_\BR\GG_1:\nn\nn\eta's'\ud z=\int_\BR\Big\{2\Phi^{\mathrm{T}}\nabla d_0\cdot\nabla_x\partial_z(\Phi\PP^*_{{k}})\\ \nonumber
&-\sum_{\substack{i+j={k}\\ 1\le i,j\le {k-1}}}(\partial_td_i-\Delta d_i)\partial_z\PP_j+2\Phi^{\mathrm{T}}\sum_{\substack{i+j={k}\\ 1\le i,j\le {k-1}}}
\nabla d_i\cdot\nabla_x\partial_z(\Phi\PP_j)-\Phi^{\mathrm{T}}(\partial_t-\Delta)(\Phi\PP_{{k-1}})\\ \label{solution:dk}
&-\sum_{\substack{i+j+l=k+1\\ 0\le i,j,l\le {k}}}\PP_i\PP_j^{\mathrm{T}}\PP_l+\GG_{{k}}\eta'(d_1-z)+\sum_{i=2}^{{k-1}}s'\GG_{i}\eta'd_{{k}+1-i}\Big\}:\nn\nn s'\ud z.
\end{align}
Note that the right hand side is independent of $\PP_k$, and from Lemma \ref{lem:Gknn} or (\ref{inner:eq-Gknn}), it is also independent of $d_{{k}}$.

After $d_{{k}}|_{\Gamma}$ is determined,  $d_{{k}}$ can be extended to $\Gamma(\delta)$ by the following ODE
\begin{align} \label{solution:dk:extend}
 2 \nabla d_0\cdot\nabla d_{{k}}+\sum_{i=1}^{{k-1}}\nabla d_i\cdot\nabla d_{{k}-i}=0.
\end{align}

\subsubsection{Determining $\FF_k, \GG_k, \PP_k^*$ in $\Gamma(\delta)$} $\FF_{{k}}$, $\GG_{{k}}$ and $\PP_{{k}}^*$ in $\Gamma(\delta)$ can be determined by (\ref{inner:def-Fk}), \eqref{solution:Gk} and \eqref{solution:Pkstar} accordingly.

\subsubsection{The equation for $\PP_{{k},i}(2\le i\le 4), \LL_{{k}}$ on $\Gamma$} \label{sec:deriv-innerBC-k}
We use the condition (O2)-(O4) for $\FF_{k+1}$ on $\Gamma$. For $\QQ(z,x,t)=s\EE_2, (1-s)\EE_3, \EE_4$ with $\EE_i\in\mathbb{V}_i$, we have
\begin{align*}
&\int_\BR\Big\{2\Phi^{\mathrm{T}}\partial_\nu\Phi\partial_z\PP_{{k}}^\top+2\partial_z(\partial_\nu\PP_{{k}}^\top)
+\LL_{{k}}\eta''(d_1-z)\Big\}:\QQ(z,x,t)\ud z \\
&\quad+\int_\BR\Big\{-\sum_{i+j={k}, i\ge 1}(\partial_td_i-\Delta d_i)\partial_z\PP_j+2\Phi^{\mathrm{T}}\sum_{i+j={k}, i\ge 1}
\nabla d_i\cdot\nabla_x\partial_z(\Phi\PP_j)\\ \nonumber
&\qquad+2\Phi^{\mathrm{T}}\nabla d_0\cdot\nabla_x\partial_z(\Phi\PP^*_{{k}})-\Phi^{\mathrm{T}}(\partial_t-\Delta)(\Phi\PP_{{k-1}})
-\sum_{\substack{i+j+l=k+1\\ 0\le i,j,l\le {k}}}\PP_i\PP_j^{\mathrm{T}}\PP_l
\\ \nonumber
&\qquad+(\GG_1\eta'+\LL_1\eta'')d_{{k}}+\GG_{{k}}\eta'(d_1-z)
+\sum_{i=2}^{{k-1}}(\GG_{i}\eta'+\LL_{i}\eta'')d_{{k}+1-i}\Big\}:\QQ(z,x,t) \ud z=0.
\end{align*}
We denote the last three lines by $\int_\BR \TT_{{k}}(z,x,t):\QQ(z,x,t) \ud z$, in which all terms are known functions.

Taking $\QQ=\EE_4$ and using the same argument as in (\ref{cancel:dzP1:E4})-(\ref{cancel:P1:E4}), one has
\begin{align*}
&\int_\BR\Big\{2\Phi^{\mathrm{T}}\partial_\nu\Phi\partial_z\PP_{{k}}^\top+2\partial_z(\partial_\nu\PP_{{k}}^\top)
+\LL_{{k}}\eta''(d_1-z)\Big\}:\EE_4\ud z =-\LL_{{k}}:\EE_4.
\end{align*}
Thus, we have
\begin{align}\label{jump:Pk:4}
\partial_\nu\mathcal{P}_4\PP_{{k}}^+-\partial_\nu\mathcal{P}_4\PP_{{k}}^- =- \LL_{{k}}=\mathcal{P}_4\int_\BR \TT_{{k}}(z,x,t)\ud z.
\end{align}
Similarly, we have
\begin{align} \label{jump:Pk:2}
&\partial_\nu\PP_{{k},2}-\mathcal{P}_2(\bar\WW\PP_{{k},3})
+\mathcal{P}_2\int_\BR  s(z)\TT_{{k}}(z,x,t) \ud z=0,\\
\label{jump:Pk:3}
&\partial_\nu\PP_{{k},3}-\mathcal{P}_3(\bar\WW\PP_{{k},2})
+\mathcal{P}_3\int_\BR  (1-s(z))\TT_{{k}}(z,x,t) \ud z=0.
\end{align}

\subsubsection{Solving $\VV_+^{(k)}$ and $\VV_-^{(k)}$ in ${Q}_\pm$}
The equations \eqref{jump:Pk2}-\eqref{jump:Pk4-d}, \eqref{jump:Pk:4}-\eqref{jump:Pk:3} together give that
\begin{align}\label{inner:BC-Ukpm}
  \begin{split}
\mathcal{P}_4\VV_+^{(k)}-  \mathcal{P}_4 \VV_-^{(k)}&=\mathcal{P}_4\PP_k^*(+\infty, x,t),\\
\partial_\nu(\mathcal{P}_4\VV_+^{(k)})-\partial_\nu(\mathcal{P}_4 \VV_-^{(k)})&=\mathcal{P}_4\int_\BR \TT_{{k}}(z,x,t)\ud z, \\
\partial_\nu(\mathcal{P}_3 \VV_+^{(k)})-\mathcal{P}_3\Big(\bar\WW\mathcal{P}_3 \VV_-^{(k)}\Big)
&=(\II-2\nn\nn)\int_\BR s\mathcal{P}_2\TT_{{k}}(z,x,t) \ud z.\\
\partial_\nu(\mathcal{P}_3 \VV_-^{(k)})+\mathcal{P}_3\Big(\bar\WW\mathcal{P}_3 \VV_+^{(k)}\Big)
&=-\int_\BR  (1-s)\mathcal{P}_3\TT_{{k}}(z,x,t) \ud z,
 \end{split}
 \end{align}
which offer complete boundary conditions for $\VV_+^{(k)}$ and $\VV_-^{(k)}$ on $\Gamma$. Thus combining \eqref{eq:Vk} we can solve
$\VV_+^{(k)}|_{{Q}_+}$ and $\VV_-^{(k)}|_{{Q}_-}$.

\subsubsection{Determining $\PP_{k}$ in $\Gamma(\delta)$}
Afterwards, as in Subsection \ref{subsec:determine-P1}, we extend $\VV_+^{(k)}$ and $\VV_-^{(k)}$ to be smooth antisymmetric matrix-valued functions in $\Gamma(\delta)$,
and let
\begin{align}\label{inner:def-Pk2}
&\PP_{k,2}(x,t)=-(\II-2\nn\nn)\mathcal{P}_3 \VV_+^{(k)}, \\ \label{inner:def-Pk3}
 &\PP_{k,3}(x,t)=\mathcal{P}_3 \VV_-^{(k)},\\\label{inner:def-Lk}
& \LL_k(x,t)=d_0^{-1}(\mathcal{P}_4 \VV_-^{(k)}-\mathcal{P}_4 \VV_+^{(k)}),\\\label{inner:def-Pk4}
 &\PP_{k,4}(x,t)=\mathcal{P}_4 \VV_-^{(k)} =\mathcal{P}_4 \Big(\VV_+^{(k)}-\PP_k^*(+\infty, x,t)\Big)+\LL_{k}d_0.
 \end{align}
Moreover, now we can define $\PP_k(z,x,t)$ as in \eqref{solution:P1}. Obviously, $\PP_k(z,x,t)$ is smooth in $(x,t)$
and the matching conditions \eqref{bc-Pk} are satisfied.

As a result, we have solved $d_{k}, \PP_{k}, \LL_{k}, \GG_{k}$ in $\Gamma(\delta)$.
Therefore, by repeating the above steps, the expansion system can be solved from an induction argument.
Note that, in each step, we only need to solve a linear system whose well-posedness can be shown directly. The whole procedure
is illustrated in Figure \ref{fig:expansion}.

\begin{Remark}\label{Remark:lift}
  The minimal paired condition (\ref{SharpInterfaceSys:MiniPair}) and Lemma \ref{lem:mini-pair} give us that there exists a smooth map
$\NN(x,t):\Gamma\to\{\nn\nn: \nn\in S^{n-1}\}$ such that $\A_-(x,t)=\A_+(x,t)(\II-2\NN(x,t))$ for all $(x,t)\in\Gamma$.
For given $(x_0,t_0)\in \Gamma$ and any neighbourhood $U\subset \Gamma$ of $(x_0,t_0)$,
there exists a smooth vector field $\nn(x,t):U\to S^{n-1}$ such that $\NN(x,t)=(\nn\nn)(x,t)$.
We have assumed that such a lifting map, which keeps the regularity, exists
globally in $\Gamma$. This assumption is made just for simplicity and clarity of presentations, as our previous analysis does not rely on the particular choice
of $\nn$ or $-\nn$. For examples, the condition $\partial_\nu\nn=0$ can be replaced by $\partial_\nu\NN=0$ without causing any obstacles in our arguments, and the decomposition and projections in \eqref{def:subspace} and \eqref{def:projection} indeed depend only on $\NN=\nn\nn$.

\end{Remark}

\subsection{Proof of Theorem \ref{thm:main1}}
Now we glue the outer expansion and inner expansion to obtain the approximate solutions in the whole region $\Omega$.
Let
\begin{align*}
 \A_O^K(x,t)&=\sum_{k=0}^{K} \ve^k \big(\A_+^{(k)}\chi_{{Q}_+}+\A_-^{(k)}\chi_{{Q}_-}\big)\,\, \text{ for } (x,t)\in{Q}_\pm.
\end{align*}
Then it holds that for $(x,t)\in Q_\pm$,
\begin{align*}
(\partial_t-\Delta) \A_O^K-\ve^{-2}f(\A_O^K)=
&\sum_{k=0}^{K-2}\ve^{k}\Big\{(\partial_t-\Delta)\A_\pm^{(k)}-\sum_{i+j+l=k+2}
\A_\pm^{(i)}(\A_\pm^{(j)})^{\mathrm{T}}\A_\pm^{(l)}\Big\}+O(\ve^{K-1})\\
=&O(\ve^{K-1}).
\end{align*}

For $(x,t)\in\Gamma(\delta)$, we define:
\begin{align*}
 & d^K(x,t)=\sum_{k=0}^{K} \ve^k d_k(x,t),\\
  &\A_I^K(x,t)=\Phi(\ve^{-1}d^K,x,t)\sum_{k=0}^{K} \ve^k \PP_k(\ve^{-1}d^K,x,t),\\
  &\GG^K=\sum_{k=1}^{K}\ve^k \GG_k(x,t),\quad
  \LL^K=\sum_{k=1}^{K}\ve^k \LL_k(x,t),\quad
  \HH^{\pm,K}=\sum_{k=0}^{K-2}\ve^k \HH^{\pm}_k(x,t),
\end{align*}
with $d_k, \Phi, \PP_k, \GG_k, \LL_k,\HH^{\pm}_k$ defined in Sections \ref{sec:inner-expan}-\ref{sec:construct}. Then
\begin{align*}
  |\nabla d^K|^2=1+\sum_{1\le i, j\le K, i+j\ge K+1} \ve^{i+j}\nabla d_i\nabla d_j=1+O(\ve^{K+1}),
\end{align*}
and for $(x,t)\in\Gamma(\delta)$, we have
\begin{align*}
&(\partial_t-\Delta) \A_I^K-\ve^{-2}f(\A_I^K)\\
&=\Big\{\ve^{-2}\Phi\big(-\partial_z^2\PP^K+f(\PP^{K})-\Phi^{\mathrm{T}}\partial_z^2 \Phi\PP^K
-2\Phi^{\mathrm{T}}\partial_z\Phi\partial_z\PP^K\big)\\
&\quad+\ve^{-1}\Phi\big[(\partial_td^K-\Delta d^K)(\partial_z\PP^K+\Phi^{\mathrm{T}}\partial_z\Phi)-2\Phi^{\mathrm{T}}\nabla d^K\nabla\partial_z(\Phi\PP^K)\big]\\
&\quad+(\partial_t-\Delta)(\Phi\PP^K)\Big\}\Big|_{z=\ve^{-1}d^{K}}+O(\ve^{K-1})\\
&=\Big\{\ve^{-2}\Phi\big[-\partial_z^2\PP^K+f(\PP^{K})+(d^K-\ve z-d_0)(\Phi_1\PP^K+\Phi_2\partial_z\PP^K)+(d^K-\ve z)(\GG^K\eta'+\LL^K\eta'')\big]\\
&\quad+\ve^{-1}\Phi\big[(\partial_td^K-\Delta d^K)(\partial_z\PP^K+\Phi^{\mathrm{T}}\partial_z\Phi)-2\Phi^{\mathrm{T}}\nabla d^K\nabla\partial_z(\Phi\PP^K)\big]\\
&\quad+(\partial_t-\Delta)(\Phi\PP^K)+\Phi\HH^{+,K}(x,t)\eta_M^+(z)+\Phi\HH^{-,K}(x,t)\eta_M^-(z)\Big\}\Big|_{z=\ve^{-1}d^{K}}+O(\ve^{K-1})\\
&=O(\ve^{K-1}).
\end{align*}
Here in the last equality, we used the expansion systems (\ref{expand:P0})-(\ref{expand:Pk}), which imply that all $O(\ve^{k})(k\le K-2)$ terms are cancelled by each others.
Moreover, for $(x,t)\in \Gamma(\delta)$, due to the matching condition (\ref{matching condition}), we have
\begin{align}\label{match}
|\partial_t^i\partial^j_x\partial^l_z  (\A_I^K- \A_O^K)|\le C e^{-\alpha_0|d^K(x,t)|/\ve } \le C e^{-\alpha_0|d_0(x,t)|/\ve }.
\end{align}

Therefore, if we define
\begin{align*}
    \A^K(x,t)&= \big\{1-\tilde{\chi}\big(d_0(x,t)\delta^{-1}\big)\big\}\A_O^K(x,t)+\tilde{\chi}\big(d_0(x,t)\delta^{-1}\big) \A_I^K(x,t),
\end{align*}
where $\tilde{\chi}$ be a smooth nonnegative function satisfying $\mathrm{supp}~\tilde{\chi}\subset(-1,1)$ and $\tilde{\chi}(z)=1$ for $|z|\le 1/2$, then
it holds  that
\begin{align*}
(\partial_t-\Delta) \A^K-\ve^{-2}f(\A^K)=\mathfrak{R}^{K-1}\sim O(\ve^{K-1}),
\end{align*}
in the whole domain $\Omega$. Moreover, we have $\partial_x^i\mathfrak{R}^{K-1}=O(\ve^{K-1-i}).$
This finishes the proof of Theorem \ref{thm:main1}.

\section{Spectral lower bound estimate for the linearized operator}\label{sec:spectral-estimate}
This section is devoted to proving Theorem \ref{thm:main2}.
The proof is accomplished by five  steps, which are done in Subsections \ref{subsec:decompose}-\ref{subsec:estimate-correction} respectively:

\smallskip

{\it Step 1: reduce to 1-D interval}. Firstly, we restrict the inequality into the region near the interface(see (\ref{spectral inequality-inner})), as it holds on regions away from the interface by direct estimation.
  Then we introduce two transformations, one for coordinates and the other for matrix fields,  to reduce the problem into an inequality on an one dimensional interval, which can be divided into two estimates for cross terms and correction terms respectively; see (\ref{ineq:1-dim-cross}) and (\ref{ineq:1-dim-nextorder}).
\smallskip

{\it Step 2: decompose into scalar inequalities}. Motivated by the diagolization of $\mathcal{L}_{\PP_0}$, we use a bases decomposition  to reduce a matrix-valued problem into several estimates related to the scalar linearized operators $\mathcal{L}_i$ defined in \eqref{pre:decomp-Op-L}; see (\ref{ineq:1-dim-cross-final-2}) and (\ref{ineq:nextorder-decom}).
\smallskip

{\it Step 3: coercive estimates and endpoints $L^\infty$-control}. We develop the coercive estimates for the scalar linearized operators $\mathcal{L}_i$(Lemmas \ref{lem:lower-bound-23}, \ref{lem:lower-bound-1} and \ref{lem:lower-bound-1-out}), $L^\infty$-control at endpoints (Lemmas \ref{lem:Linf} and \ref{lem:Linf-endpoint}).

%, and several refined estimates for the product involving different bases functions (Lemmas \ref{lem:cross-estimate-23}, %\ref{lem:cross-estimate-34}-\ref{lem:cross-estimate-13}). These estimates play the key role in controlling the cross terms and corrections terms;

\smallskip

{\it Step 4: estimate the cross terms}. The cross terms involving $\mathcal{L}_4$ or $\mathcal{L}_5$ can be controlled directly as we have strong coercive estimates for these two operators; see Proposition \ref{prop:cross-45}. However, the coercive estimates for $\mathcal{L}_1$-$\mathcal{L}_3$ are relatively weak, thus the same method could not apply to control the corresponding cross terms, which becomes somewhat technical.

Motivated by \cite{FWZZ}, we observe that the weights in all cross terms
involving $\mathcal{L}_1$ are indeed small by using the homogeneous Neumann boundary condition of $\nn$.
This is the key that it enables us to remove the singularity of cross terms involving $\mathcal{L}_1$; see Lemma \ref{lem:E1-smallness}
and Proposition \ref{prop:cross-1}.

The estimates for the cross terms involving $\mathcal{L}_2$ and $\mathcal{L}_3$ are much more
involved, since neither we have strong coercive  estimates, nor the weights are small.
We accomplish it by a  product estimate (see Proposition \ref{prop:cross-23}), which is proved by applying a symmetric structure for the eigenfunctions of $\mathcal{L}_2$ and $\mathcal{L}_3$ (see Lemma \ref{lem:cross-estimate-23}).
\smallskip

{\it Step 5: estimate the singular correction terms}. We explicitly decompose the   singular correction terms. Then
the inequality is reduced to  some new  product estimates similar to Lemma  \ref{lem:cross-estimate-23}. The proof of these product estimate
also rely on the important symmetric structures between the first eigenfunctions
of $\mathcal{L}_i(1\le i\le 4)$; see Lemmas \ref{lem:cross-estimate-34}-\ref{lem:cross-estimate-13}. \bigskip

We remark again that, for estimates in Steps 3-5, we have repeatedly applied elementary decompositions based on the eigenfunctions of scalar linearized operators $\{\mathcal{L}_i(1\le i\le5)\}$. This method provides new and elementary proofs for the spectral estimates of the operators $\mathcal{L}_i$.

\subsection{Reduction to inequalities on an interval}

First of all,  we choose $\ve$ small enough such that
\begin{align*}
 \Gamma_t(\delta/8) \subset \Gamma_t^K(\delta/4):=\{x: |d^K(x,t)|<\delta/4 \} \subset \Gamma_t(\delta/2),\quad \text{for }t\in[0, T].
\end{align*}
For $x\in\Omega_t^\pm$, $\A^{(0)}\in \mathbb{O}_n$ and $\A^{(1)}\in \A_+\mathbb{A}_n $ or $\A_-\mathbb{A}_n$. We decompose
\begin{align*}
\A=\KK+\JJ\qquad\text{with }\KK\in\A_\pm \mathbb{A}_n, ~\JJ\in\A_\pm\mathbb{S}_n .
\end{align*}
Then we have
\begin{align*}
\mathcal{H}_{\A^{(0)}}\A :\A&~=\mathcal{H}_{\A^{(0)}}\JJ:\JJ =2|\JJ|^2,\\
\TT_f(\A^{(0)},\A^{(1)},\A):\A&~=\TT_f(\A^{(0)},\A^{(1)},\JJ):\JJ+2\TT_f(\A^{(0)},\A^{(1)},\JJ):\KK,
\end{align*}
where $\TT_f$ is defined in \eqref{def:BB}.
Thus, for $x\in\Omega_t^\pm$, one can find a constant $C$ such that
\begin{align*}
\ve^{-2}\mathcal{H}_{\A^{(0)}}\A:\A +\ve^{-1}\TT_f(\A^{(0)},\A^{(1)},\A) :\A \ge -C|\KK|^2\ge -C|\A|^2.
\end{align*}
In addition, in $\Omega\setminus\Gamma_t^K(\delta/4)$, due to the matching condition \eqref{match}, we have
\begin{align*}
  \A^K-\A^{(0)}-\ve \A^{(1)}=\tilde{\chi}\big(d(x,t)\delta^{-1}\big)(\A_I^K-\A_O^K)+O(\ve^2)=O(e^{-\frac{\alpha_0\delta}{4\ve}})+O(\ve^2)=O(\ve^2).
\end{align*}
Thus, in $\Omega\setminus\Gamma_t^K(\delta/4)$, one has that
\begin{align*}
 \ve^{-2}\mathcal{H}_{\A^K}\A:\A = \ve^{-2}\mathcal{H}_{\A^{(0)}}\A:\A +\ve^{-1}\TT_f(\A^{(0)},\A^{(1)},\A) :\A+O(\ve^2 )\|\A\|^2 \ge -C\|\A\|^2.
\end{align*}

For $x\in \Gamma_t^K(\delta/4)$, we let
\begin{align*}
\A_0=\big(\Phi\PP_0\big)(\ve^{-1}d^K(x,t),x,t),\qquad \A_1=\big(\Phi\PP_1\big)(\ve^{-1}d^K(x,t),x,t).
\end{align*}
Then
\begin{align*}
  \A^K= \A_I^K=\A_0+\ve \A_1+O(\ve^2).
\end{align*}
Therefore, it suffices to prove that
\begin{align}
\int_{\Gamma_t^K(\delta/4)}\Big(\|\nabla\A\|^2 +\ve^{-2}\big(\mathcal{H}_{\A_0}\A:\A\big)
+\ve^{-1}\big(\TT_f(\A_0,\A_1,\A):\A\big)\Big)\ud x \geq-C\int_{\Gamma_t^K(\delta/4)}\|\A\|^2\ud x.
\label{spectral inequality-inner}
\end{align}

For each $t\in[0,T]$, we introduce the curved coordinate $(\sigma, r)$ in $\Gamma_t^K(\delta/4)$ with $\{x(\sigma, 0)\} =\Gamma_t^K$ and
\begin{align*}
\partial_rx(\sigma,r)=\frac{\nabla d^K}{|\nabla d^K|^2}\circ(x(\sigma, r), t)
\end{align*}
Then $\frac{\ud}{\ud r}( d^K(x(\sigma, r))-r)=0$. As $d^K(x(\sigma, 0))=0$, we have
\begin{align*}
  d^K(x(\sigma, r), t)=r.
\end{align*}
The coordinate transformation $x\mapsto (\sigma, r)$ is a diffeomorphism from $\Gamma_t^K(\delta/4)$ to $\Gamma_t^K\times [-\delta/4, \delta/4]$.
Let $J(\sigma, r)=\det(\frac{\partial x(\sigma, r)}{\partial(\sigma, r)})$ be the Jacobian of the transformation. Then
\begin{align*}
  J|_{r=0} =1,~ J(\sigma, r)=1+O(r),~\ud x=J\ud\sigma\ud r,~ \text{ and } \partial_rf=\partial_rx\cdot\nabla f=\frac{\nabla d^K\cdot \nabla f}{|\nabla d^K|^2}.
\end{align*}
Therefore, as $|\nabla d^K|^2=1+O(\ve^{K+1})$, we have
\begin{align}\label{project-deriv}
  |\nabla f|^2\ge \Big|\frac{\nabla d^K}{|\nabla d^K|}\cdot \nabla f\Big|^2\ge (\partial_r f)^2+O(\ve^{K+1})|\nabla f|^2.
\end{align}

The inequality (\ref{spectral inequality-inner}) is equivalent to
\begin{align}\nonumber\label{ineq:coordi-trans}
&\int_{\Gamma_t^K}\ud\sigma\int_{-\frac{\delta}{4}}^{\frac{\delta}{4}}\Big(\|\nabla\A\|^2 +\ve^{-2}\big(\mathcal{H}_{\A_0}\A:\A\big)
+\ve^{-1}\big(\TT_f(\A_0,\A_1,\A):\A\big)\Big)J(\sigma, r)\ud r \\
&\geq-C\int_{\Gamma_t^K}\ud\sigma\int_{-\frac{\delta}{4}}^{\frac{\delta}{4}} \|\A\|^2J(\sigma, r)\ud r.
\end{align}
Using \eqref{project-deriv}, it suffices to prove that for each $\sigma\in \Gamma_t^K$,
\begin{align}\label{ineq:one-dim}
\int_{-\frac{\delta}{4}}^{\frac{\delta}{4}}\Big(\|\partial_r\A\|^2 +\ve^{-2}\big(\mathcal{H}_{\A_0}\A:\A\big)
+\ve^{-1}\big(\TT_f(\A_0,\A_1,\A):\A\big)\Big)J\ud r
\geq-C\int_{-\frac{\delta}{4}}^{\frac{\delta}{4}} \|\A\|^2J\ud r.
\end{align}

Let $\BB(x,t)=\Phi^{\mathrm{T}}\big(\ve^{-1}d^K(x,t),x,t\big)\A(x,t)$ or $\A(x,t)=\Phi\big(\ve^{-1}d^K(x,t),x,t\big)\BB(x,t)$. Then
\begin{align*}
&\|\partial_r \A\|^2=\|\partial_r \BB\|^2+\|\partial_r \Phi \BB\|^2+2\Phi\partial_r \BB : \partial_r \Phi\BB,\\
&\mathcal{H}_{\A_0}\A:\A=\mathcal{H}_{\PP_0}\BB:\BB,\\
&\TT_f(\A_0,\A_1,\A):\A=\TT_f(\PP_0,\PP_1,\BB):\BB
\end{align*}
Therefore, (\ref{ineq:one-dim}) is reduced to
\begin{multline}\label{ineq:one-dim-2}
\int_{-\frac{\delta}{4}}^{\frac{\delta}{4}}\Big(\|\partial_r \BB\|^2 +\ve^{-2}\mathcal{H}_{\PP_0}\BB:\BB
+\ve^{-1}\TT_f(\PP_0,\PP_1,\BB):\BB+2\Phi\partial_r \BB : \partial_r \Phi\BB \Big)J\ud r\\
\geq-C\int_{-\frac{\delta}{4}}^{\frac{\delta}{4}} \|\BB\|^2J\ud r,
\end{multline}
which can be concluded from the following two inequalities:
\begin{align}\label{ineq:1-dim-cross}
&\int_{-\frac{\delta}{4}}^{\frac{\delta}{4}} \Phi\partial_r \BB : \partial_r \Phi\BB J\ud r
\le
\frac{1}{4}\int_{-\frac{\delta}{4}}^{\frac{\delta}{4}}\Big(\|\partial_r \BB\|^2 +\ve^{-2}\mathcal{H}_{\PP_0}\BB:\BB\Big)J\ud r+C\int_{-\frac{\delta}{4}}^{\frac{\delta}{4}} \|\BB\|^2J\ud r,\\ \label{ineq:1-dim-nextorder}
&\int_{-\frac{\delta}{4}}^{\frac{\delta}{4}}\ve^{-1}\TT_f(\PP_0,\PP_1,\BB):\BB J\ud r
\le
\frac{1}{4}\int_{-\frac{\delta}{4}}^{\frac{\delta}{4}}\Big(\|\partial_r \BB\|^2 +\ve^{-2}\mathcal{H}_{\PP_0}\BB:\BB\Big)J\ud r+C\int_{-\frac{\delta}{4}}^{\frac{\delta}{4}} \|\BB\|^2J\ud r.
\end{align}
The left hand sides of (\ref{ineq:1-dim-cross}) and (\ref{ineq:1-dim-nextorder}) are called cross terms and
correction terms of next order respectively, which will
be estimated in the following two subsections separately.

In the sequel, without loss of generality, we will assume $\delta/4=1$ to simplify the notations.

\subsection{Reduction to the inequalities for scalar functions} \label{subsec:decompose}
Recall that $\BV_i$ is a finite dimensional space which only  depends on $\nn(x,t)$. So for given $\sigma\in\Gamma_t^K$, we can choose
$\{\EE_{\alpha}:~\alpha\in\Lambda_i\}$ be a set of complete orthogonal bases of $\BV_i$ which are smooth in $r$.
Let $\Lambda=\cup_{i=1}^5\Lambda_i$. Then we can write
\begin{align*}%\label{esti:decomp}
  \BB=\sum_{\alpha\in\Lambda} p_\alpha \EE_\alpha.
\end{align*}
As $\PP_0=(\II-2s_\ve(r)\nn\nn)$ with $s_\ve(\cdot)=s((\cdot)/\ve)$, a direct calculation(see \eqref{pre:decomp-Op-L}) leads to
\begin{align}\label{ident:bulk-decomp}
  \mathcal{H}_{\PP_0}\BB:\BB=\sum_{i=1}^5 \sum_{\alpha\in \Lambda_i} \kappa_i(s_\ve(r)) p_\alpha^2,
\end{align}
where $\kappa_i$ are defined in \eqref{pre:decomp-Op-L}.
Moreover,
\begin{align}\nonumber
  \|\partial_r\BB\|^2&=\|\sum_{\alpha\in\Lambda} \partial_r p_\alpha \EE_\alpha+p_\alpha\partial_r\EE_\alpha\|^2\\ \nonumber
  &\ge \sum_{\alpha} |\partial_r p_\alpha |^2+2\sum_{\alpha\neq \beta}\partial_r p_\alpha p_\beta\EE_\alpha:\partial_r\EE_\beta\\
  &= \sum_{\alpha} |\partial_r p_\alpha |^2+\sum_{\alpha\neq \beta}(\partial_r p_\alpha p_\beta-p_{\alpha}\partial_r p_\beta)\EE_\alpha:\partial_r\EE_\beta.   \label{cross-1}
\end{align}
Then we obtain
\begin{align}\nonumber
&\int_{-\frac{\delta}{4}}^{\frac{\delta}{4}}\Big(\|\partial_r \BB\|^2 +\ve^{-2}\mathcal{H}_{\PP_0}\BB:\BB\Big)J\ud r+C\int_{-\frac{\delta}{4}}^{\frac{\delta}{4}} \|\BB\|^2J\ud r\\
&\ge \int_{-1}^{1}\sum_{i=1}^5 \sum_{\alpha\in \Lambda_i}\Big(|\partial_r p_\alpha|^2 + \frac{1}{\ve^2}\kappa_i(s_\ve(r)) p_\alpha^2\Big)J\ud r+C\int_{-1}^{1}\sum_{\alpha }|p_\alpha|^2J\ud r\nonumber\\
&\quad +\int_{-1}^{1}\sum_{\alpha\neq\beta}
(\partial_r p_\alpha p_\beta-p_\alpha\partial_r p_\beta )\EE_\alpha:\partial_r\EE_\beta J\ud r.\label{ineq:decompL}
\end{align}
Since for $\alpha\in \Lambda_i(i=1,2,5), \beta\in\Lambda_j(j=3,4)$, one has $\EE_\alpha\in\BS_n,\partial_r\EE_\beta\in\BA_n$ which yields that
$\EE_\alpha:\partial_r\EE_\beta=0$. Thus, we only have to consider the case
$\alpha,\beta\in \Lambda_1\cup\Lambda_2\cup\Lambda_5$ or $\alpha,\beta\in \Lambda_3\cup\Lambda_4$.\smallskip

Now we remove $J$ via the endpoint estimates established in Lemmas \ref{lem:Linf} and \ref{lem:Linf-endpoint}. Let $q_\alpha=J^{\frac{1}{2}}p_\alpha$. Then
\begin{align*}
 \int_{-1}^{1} |\partial_r p_\alpha|^2 J\ud r
 &=\int_{-1}^{1} |\partial_r(J^{-\frac{1}{2}} q_\alpha)|^2 J\ud r
 =\int_{-1}^{1} |\partial_rq_\alpha|^2 +|\partial_r(J^{-\frac{1}{2}})|^2J q_\alpha^2+2
 \partial_r(J^{-\frac{1}{2}}) q_\alpha J^{\frac{1}{2}} \partial_rq_\alpha\ud r\\
 &=\int_{-1}^{1} |\partial_rq_\alpha|^2
 +\Big(|\partial_r(J^{-\frac{1}{2}})|^2J-\partial_r(\partial_rJ^{-\frac{1}{2}} J^{\frac{1}{2}})\Big) q_\alpha^2\ud r
 + \partial_rJ^{-\frac{1}{2}} J^{\frac{1}{2}} q_\alpha^2\Big|_{-1}^{1}\\
&\ge \int_{-1}^{1} |\partial_rq_\alpha|^2\ud r
-\nu_0 \int_{-1}^{1} \Big(|\partial_rq_\alpha|^2+\frac{1}{\ve^2}\kappa_i(s_\ve)q_\alpha^2 \Big)\ud r
-C\int_{-1}^{1} q_\alpha^2\ud r.
\end{align*}
Here we have used  Lemmas \ref{lem:Linf} and \ref{lem:Linf-endpoint} in the last inequality to control $|q_\alpha(\pm 1)|$.
Thus, we get
\begin{align}\nonumber
&\int_{-1}^{1}\sum_{i=1}^5 \sum_{\alpha\in \Lambda_i}\Big(|\partial_r p_\alpha|^2 + \frac{1}{\ve^2}\kappa_i(s_\ve(r)) p_\alpha^2\Big)J\ud r+C\int_{-1}^{1}\sum_{\alpha }|p_\alpha|^2J\ud r\\ \label{ineq:removeJ}
&\ge (1-\nu_0)\int_{-1}^{1}\sum_{i=1}^5 \sum_{\alpha\in \Lambda_i}\Big(|\partial_r q_\alpha|^2 + \frac{1}{\ve^2}\kappa_i(s_\ve(r)) q_\alpha^2\Big)\ud r+C\int_{-1}^{1}\sum_{\alpha }|q_\alpha|^2\ud r.
\end{align}
%Combining (\ref{ineq:decompL}) and (\ref{ineq:removeJ}), we get

Let
\begin{align}\label{def:W}
\WW=  \Phi^{\mathrm{T}}\partial_r \Phi
\end{align}
which is antisymmetric. Notice that
\begin{multline}\label{cross-2}
    \Phi\partial_r \BB : \partial_r \Phi\BB J=\partial_r \BB:\WW \BB  J=\sum_{\alpha,\beta}  J p_\beta(\partial_r p_\alpha\EE_\alpha+p_\alpha\partial_r \EE_\alpha):\WW \EE_\beta\\
  =\frac12\sum_{\alpha\neq\beta}  J(\partial_r p_\alpha p_\beta-p_\alpha\partial_r p_\beta )\EE_\alpha:\WW \EE_\beta+
  \sum_{\alpha,\beta} J p_\alpha p_\beta\partial_r \EE_\alpha:\WW\EE_\beta.
\end{multline}
as well as that
\begin{align*}
& \int_{-1}^{1}  \sum_{\alpha,\beta}p_\alpha p_\beta\partial_r \EE_\alpha:\WW\EE_\beta J\ud r
 \le C \int_{-1}^{1}  \sum_{\alpha }|p_\alpha|^2 J\ud r=C\int_{-1}^{1} \sum_{\alpha }|q_\alpha|^2\ud r.
\end{align*}
Then combining (\ref{ineq:decompL}), (\ref{ineq:removeJ}), (\ref{cross-2}) and
\begin{align*}
 (\partial_r p_\alpha p_\beta-p_\alpha\partial_r p_\beta )J=\partial_r (q_\alpha J^{-\frac12}) q_\beta J^{\frac12}-q_\alpha\partial_r (q_\beta J^{-\frac12})J^{\frac12}
  =  \partial_r q_\alpha q_\beta-q_\alpha\partial_r q_\beta,
\end{align*}
the inequality (\ref{ineq:1-dim-cross})  can be deduced from
\begin{align}\nonumber
&\int_{-1}^{1}\sum_{\alpha\neq\beta}
(\partial_r q_\alpha q_\beta-q_\alpha\partial_r q_\beta )a(r)\ud r
 \\
&\qquad\le
\frac{1}{4}\int_{-1}^{1}\sum_{i=1}^5 \sum_{\alpha\in \Lambda_i}\Big(|\partial_r q_\alpha|^2 + \frac{1}{\ve^2}\kappa_i(s_\ve(r)) q_\alpha^2\Big)\ud r+C\int_{-1}^{1}\sum_{\alpha }|q_\alpha|^2\ud r,\label{ineq:1-dim-cross-final-2}
\end{align}
for $a(r)= \EE_\alpha:\partial_r\EE_\beta$ or $\EE_\alpha:\WW \EE_\beta$.

To obtain (\ref{ineq:1-dim-nextorder}), we introduce
\begin{align}
  \tilde{\BB}=J^{1/2}\BB=\sum_{\alpha\in\Lambda} q_\alpha \EE_\alpha. \label{esti:decomp-new}
\end{align}
 Then (\ref{ineq:1-dim-nextorder}) can be deduced from  (\ref{ineq:1-dim-cross-final-2})  and
\begin{align} \nonumber
&\int_{-\frac{\delta}{4}}^{\frac{\delta}{4}}\ve^{-1}\TT_f(\PP_0,\PP_1,\tilde\BB):\tilde\BB \ud r
 \\
&\qquad\le
\frac{1}{4}\int_{-1}^{1}\sum_{i=1}^5 \sum_{\alpha\in \Lambda_i}\Big(|\partial_r q_\alpha|^2 + \frac{1}{\ve^2}\kappa_i(s_\ve(r)) q_\alpha^2\Big)\ud r+C\int_{-1}^{1}\sum_{\alpha }|q_\alpha|^2\ud r.\label{ineq:nextorder-decom}
\end{align}

\subsection{Coercive estimates for $\mathcal{L}_i$ ($1\le i\le 3$)}\label{subsec:spec:CoerEsti}

\subsubsection{Spectral lower bound estimates}

Let us introduce some lemmas on the coercivity of $\mathcal{L}_i$ ($1\le i\le 3$).
\begin{Lemma}\label{lem:lower-bound-23}
Let $q_2=s_\ve \bar{q}_2$ and $q_3=(1-s_\ve)\bar{q}_3$. Then for any $\nu_0>0$, there exists $C_0(\nu_0)>0$ such that
\begin{align}\label{ineq:lower-q2}\nonumber
&\int_{-1}^1\Big((\partial_rq_2)^2+\varepsilon^{-2}\kappa_2(s_\ve)q_2^2\Big)\ud r\\
&\quad\ge\Big(\frac12+O\big(\ve^{-1}e^{-\frac{\sqrt{2}}{\ve}}\big)-\nu_0\Big)\int_{-1}^1s_\ve^2
\big(\partial_r\bar{q}_2\big)^2\ud r-O\big(e^{-\frac{\sqrt{2}}{\ve}}\big)C_0(\nu_0)\int_{-1}^1s_\ve^2\bar{q}_2^2\ud r,\\\nonumber
&\int_{-1}^1\Big((\partial_rq_3)^2+\varepsilon^{-2}\kappa_3(s_\ve)q_3^2\Big)\ud r\\
&\quad\ge
\Big(\frac12+O\big(\ve^{-1}e^{-\frac{\sqrt{2}}{\ve}}\big)-\nu_0\Big)\int_{-1}^1(1-s_\ve)^2
\big(\partial_r\bar{q}_3\big)^2\ud r-O\big(e^{-\frac{\sqrt{2}}{\ve}}\big)C_0(\nu_0)\int_{-1}^1(1-s_\ve)^2\bar{q}_3^2\ud r.\label{ineq:lower-q3}
\end{align}
\end{Lemma}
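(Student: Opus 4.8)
The statement to prove, Lemma~\ref{lem:lower-bound-23}, asserts that the quadratic form associated to $\mathcal{L}_2$ (resp. $\mathcal{L}_3$) on $[-1,1]$, when the test function is written in the gauged form $q_2 = s_\ve \bar q_2$ (resp. $q_3 = (1-s_\ve)\bar q_3$), controls from below a multiple of the weighted Dirichlet energy $\int s_\ve^2 (\partial_r\bar q_2)^2$, up to an arbitrarily small loss $\nu_0$ and an exponentially small error.

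\textbf{Approach.} The plan is to exploit the factorization of $\mathcal{L}_i$ recorded in the excerpt: since $\kappa_2(s) = s''/s$ (in the $z$-variable, with $s'' = 2s(1-s)(1-2s)$), we have the identity
\begin{align*}
\mathcal{L}_2 u = -\frac{1}{s}\partial_z\Big(s^2\partial_z\big(\tfrac{u}{s}\big)\Big),
\end{align*}
and after rescaling to the $r$-variable ($s_\ve(r) = s(r/\ve)$, $\partial_r = \ve^{-1}\partial_z$), the corresponding statement is $\kappa_2(s_\ve)/\ve^2 = (\partial_r^2 s_\ve)/s_\ve$. Substituting $q_2 = s_\ve\bar q_2$ and integrating by parts, I would compute
\begin{align*}
\int_{-1}^1\Big((\partial_r q_2)^2 + \ve^{-2}\kappa_2(s_\ve)q_2^2\Big)\ud r
= \int_{-1}^1 s_\ve^2(\partial_r\bar q_2)^2\ud r + \big[\text{boundary terms at }r=\pm 1\big].
\end{align*}
The interior contribution is \emph{exactly} the weighted Dirichlet energy (this is the point of the gauge choice — the zeroth-order potential is annihilated), so the only thing to control is the boundary term, which is of the form $s_\ve\partial_r s_\ve\, \bar q_2^2\big|_{-1}^{1}$ (or similar, depending on how the integration by parts is organized). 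The analogous computation for $\mathcal{L}_3$ uses $\kappa_3(s) = (1-s)''/(1-s)$ and the substitution $q_3 = (1-s_\ve)\bar q_3$.

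\textbf{Key steps, in order.} First, record the exact algebraic identity $\ve^{-2}\kappa_2(s_\ve) s_\ve = \partial_r^2 s_\ve$ (and the $\mathcal{L}_3$ analogue), which follows from \eqref{eq:s}. Second, plug $q_2 = s_\ve\bar q_2$ into the quadratic form, expand $(\partial_r q_2)^2 = (\partial_r s_\ve)^2\bar q_2^2 + 2 s_\ve\partial_r s_\ve\,\bar q_2\partial_r\bar q_2 + s_\ve^2(\partial_r\bar q_2)^2$, and observe that the first two terms combine with $\ve^{-2}\kappa_2(s_\ve) s_\ve^2\bar q_2^2 = \partial_r^2 s_\ve\, s_\ve\bar q_2^2$ to form a total derivative $\partial_r\big(s_\ve\partial_r s_\ve\,\bar q_2^2\big)$ minus the cross term — carrying this out gives the identity displayed above with an explicit boundary term. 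Third, estimate the boundary term: at $r=\pm 1$ one has $s_\ve(1) = 1 - O(e^{-\sqrt 2/\ve})$, $s_\ve(-1) = O(e^{-\sqrt 2/\ve})$, $\partial_r s_\ve(\pm 1) = O(\ve^{-1}e^{-\sqrt 2/\ve})$, so the boundary term is bounded by $O(\ve^{-1}e^{-\sqrt 2/\ve})\big(\bar q_2(1)^2 + \bar q_2(-1)^2\big)$; then control $\bar q_2(\pm 1)^2$ by an interpolation/trace inequality $\bar q_2(\pm 1)^2 \le \nu_0\int_{-1}^1 s_\ve^2(\partial_r\bar q_2)^2 + C_0(\nu_0)\int_{-1}^1 s_\ve^2\bar q_2^2$ (valid because $s_\ve$ is bounded above and below away from zero on a neighborhood of $r=1$; near $r=-1$ one uses that $s_\ve$ is exponentially small there, which actually helps). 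Fourth, collect terms to land on exactly the claimed form, tracking the $\nu_0$ and the $O(e^{-\sqrt2/\ve})C_0(\nu_0)$ factor.

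\textbf{Main obstacle.} The routine part is the integration-by-parts identity; the delicate point is the boundary-term bookkeeping — specifically, producing the trace estimate $\bar q_2(\pm1)^2 \lesssim \nu_0\int s_\ve^2(\partial_r\bar q_2)^2 + C_0(\nu_0)\int s_\ve^2\bar q_2^2$ with the \emph{weighted} Dirichlet energy on the right, rather than the unweighted one, and making sure the resulting constant interacts correctly with the $O(e^{-\sqrt2/\ve})$ prefactor so that the final error is genuinely exponentially small (not merely $o(1)$). Near $r = -1$, where $s_\ve$ degenerates, one must be slightly careful: the weight $s_\ve^2$ is exponentially small there, so the trace of $\bar q_2$ at $-1$ cannot be controlled by $\int s_\ve^2\bar q_2^2$ alone without help — but the boundary coefficient $s_\ve(-1)\partial_r s_\ve(-1)$ is $O(\ve^{-1}e^{-2\sqrt2/\ve})$, doubly exponentially small, which absorbs any polynomial-in-$\ve$ blow-up from a crude trace bound. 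This asymmetry between the two endpoints is the one genuinely non-mechanical feature of the argument. I would also double check that the sign of the boundary term is harmless or can be absorbed; if it has the bad sign, the trace estimate above is exactly what is needed to dominate it.
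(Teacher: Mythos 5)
Your integration-by-parts identity is exactly the paper's starting point (and you correctly note that at $r=+1$ the term $s_\ve\partial_r s_\ve\,\bar q_2^2$ has a favorable sign, so only $r=-1$ matters), but the way you dispose of the $r=-1$ boundary term contains a genuine error, and it is precisely the step where the constant $\tfrac12$ in the lemma is born. The trace inequality you invoke, $\bar q_2(-1)^2\le \nu_0\int_{-1}^1 s_\ve^2(\partial_r\bar q_2)^2+C_0(\nu_0)\int_{-1}^1 s_\ve^2\bar q_2^2$, is false uniformly in $\ve$: testing with a bump supported in $[-1,-1+\ve]$ with $\bar q_2(-1)=1$ shows the best constant in such a weighted trace bound grows like $\ve\,e^{2\sqrt2/\ve}$, i.e.\ \emph{exponentially}, not polynomially as your fallback argument assumes. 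Multiplying this against the boundary coefficient $(s_\ve\partial_r s_\ve)(-1)=\tfrac{\sqrt2}{\ve}e^{-2\sqrt2/\ve}(1+o(1))$ the exponentials cancel and you are left with an $O(1)$ — not exponentially small — multiple of the weighted Dirichlet energy. Done sharply this multiple is exactly $\tfrac12$ (take $\bar q_2$ minimizing $\int_{-1}^0 s_\ve^2(\partial_r\bar q_2)^2$ with $\bar q_2(0)=0$, $\bar q_2(-1)=A$: the boundary term is asymptotically half the Dirichlet term), which is why the lemma only claims the coefficient $\tfrac12-\nu_0$ and why the remark states $\tfrac12$ is optimal; your sketch, which treats the boundary contribution as exponentially negligible, would yield a coefficient near $1$ and hence a false statement. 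Done crudely (unweighted trace plus converting to weighted norms at cost $\sup s_\ve^{-2}\sim e^{2\sqrt2/\ve}$) it yields an $O(\ve^{-1})$ coefficient, which is useless.

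The paper's mechanism for the delicate endpoint is worth internalizing: write $\bar q_2(-1)=\bar q_2(0)-\int_{-1}^0\partial_r\bar q_2\,\ud r$; control $\bar q_2(0)$ by Gagliardo--Nirenberg on $[0,1]$, where $s_\ve\sim1$ so weighted and unweighted norms are comparable (this produces the harmless $O(e^{-\sqrt2/\ve})C_0(\nu_0)\int s_\ve^2\bar q_2^2$ term); and estimate the integral by the Cauchy--Schwarz inequality with the exact weight, $\big|\int_{-1}^0\partial_r\bar q_2\,\ud r\big|^2\le\big(\int_{-1}^0 s_\ve^2(\partial_r\bar q_2)^2\ud r\big)\big(\int_{-1}^0 s_\ve^{-2}\ud r\big)$, so that the product $(s_\ve\partial_r s_\ve)(-1)\cdot\int_{-1}^0 s_\ve^{-2}\ud r=\tfrac12\big(1+O(\ve^{-1}e^{-\sqrt2/\ve})\big)$ delivers the sharp loss of exactly one half. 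Without this exact cancellation your argument cannot close.
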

\begin{Remark}
  The constant $1/2$ on the right hand side is optimal.
\end{Remark}

\begin{proof}

Using the fact that $s''=s\kappa_2(s)$,  we arrive at
\begin{align}
&\int_{-1}^1\bigg((\partial_rq_2)^2+\varepsilon^{-2}\kappa_2(s_\ve)\big(q_2\big)^2\bigg)\ud r\nonumber\\\nonumber
&=\int_{-1}^1\Big(\big[\partial_rs_\ve\bar{q}_2+s_\ve\partial_r\bar{q}_2\big]^2+\varepsilon^{-2}\kappa_2(s_\ve)s_\ve^2\bar{q}_2^2\Big)\ud r\\\nonumber
&=s_\ve\partial_rs_\ve\bar{q}_2^2\Big|_{-1}^1+\int_{-1}^1s_\ve^2(\partial_r\bar{q}_2)^2\ud r\\
&\ge -(s_\ve\partial_rs_\ve\bar{q}_2^2)(-1)+\int_{-1}^1s_\ve^2(\partial_r\bar{q}_2)^2\ud r.\nonumber%\label{M3est-5}
\end{align}
As $s_\ve(r)=1/(1+e^{-\sqrt{2}r/\ve})$, one can directly get
\begin{align*}
(s_\ve \partial_rs_\ve)(-1)&=\ve^{-1}\sqrt{2}s_\ve^2(1-s_\ve)(-1)=\frac{\sqrt{2}}{\ve}e^{-2\sqrt{2}/\ve}\Big( 1+O\big(e^{-{\sqrt{2}}/{\ve}}\big)\Big),\\
\int_{-1}^0s_\ve^{-2}\ud r&=\int_{-1}^0(1+e^{-\sqrt{2}r/\ve})^2\ud r
=\frac{\ve e^{2\sqrt{2}/\ve} }{2\sqrt{2}}\Big( 1+O\big(\ve^{-1}e^{-{\sqrt{2}}/{\ve}}\big)\Big).
\end{align*}
Moreover, from the Gagliardo-Nirenberg inequality, we have
\begin{align*}
\bar{q}_2(0)^2\leq~&\frac{\nu_1^2}{2} \int_0^1\big(\partial_r\bar{q}_2\big)^2\ud r
+C(\nu_1)\int_0^1\bar{q}_2^2\ud r\\
\leq ~&2{\nu_1^2} \int_0^1s_\ve^2\big(\partial_r\bar{q}_2\big)^2\ud r+C(\nu_1)\int_0^1s_\ve^2\bar{q}_2^2\ud r.
\end{align*}
Thus choosing $\nu_1=\nu_0/2$ and $\ve$ sufficiently small, we obtain
\begin{align*}
&(s_\ve\partial_rs_\ve)(-1)|\bar{q}_2(-1)|^2\\
&\le  (s_\ve\partial_rs_\ve)(-1)\Big( |\bar{q}_2(0)|+\Big|\int_{-1}^0\partial_r\bar{q}_2\ud r\Big|\Big)^2 \\
&\le  (s_\ve\partial_rs_\ve)(-1)\Big( (1+\nu_1^{-1}) |\bar{q}_2(0)|^2+(1+\nu_1)\Big|\int_{-1}^0(s_\ve\partial_r\bar{q}_2)^2\ud r\Big|\Big|\int_{-1}^0s_\ve^{-2}\ud r\Big|\Big)\\
&\le O\big(e^{-\frac{\sqrt{2}}{\ve}}\big)\Big(3\nu_1 \int_0^1s_\ve^2\big(\partial_r\bar{q}_2\big)^2\ud r+C(\nu_1)\int_0^1s_\ve^2\bar{q}_2^2\ud r\Big)
+\Big(\frac{1+\nu_1}{2}+O\big(\ve^{-1}e^{-\frac{\sqrt{2}}{\ve}}\big)\Big)\int_{-1}^0(s_\ve\partial_r\bar{q}_2)^2\ud r\\
&\le \Big(\frac12+\nu_0+O\big(\ve^{-1}e^{-\frac{\sqrt{2}}{\ve}}\big)\Big)\int_{-1}^1s_\ve^2
\big(\partial_r\bar{q}_2\big)^2\ud r+O\big(e^{-\frac{\sqrt{2}}{\ve}}\big)C_1(\nu_0)\int_{-1}^1s_\ve^2\bar{q}_2^2\ud r.
\end{align*}
Then (\ref{ineq:lower-q2}) follows immediately. The inequality (\ref{ineq:lower-q3}) can be proved in a similar way.
\end{proof}

We define
\begin{align*}
  \theta_\ve(r)=(s')\Big(\frac{r}{\ve}\Big)=\sqrt{2}\big(s(1-s)\big)\Big(\frac{r}{\ve}\Big)=\ve\partial_rs_\ve.
\end{align*}
\begin{Lemma}\label{lem:lower-bound-1}
Let  $q_1=\theta_\ve\bar{q}_1$.
For any $\nu_0>0$, there exists $C_0(\nu_0)>0$ such that for $\ve$ sufficiently small
\begin{align}\nonumber
&\int_{-1}^1\Big((\partial_rq_1)^2+\varepsilon^{-2}\kappa_1(s_\ve)q_1^2\Big)\ud r\\
&\ge
\Big(\frac12+O\big(\ve^{-1}e^{-\frac{\sqrt{2}}{\ve}}\big)-\nu_0\Big)\int_{-1}^1\theta_\ve^2
\big(\partial_r\bar{q}_1\big)^2\ud r-O\big(\ve^{-2}e^{-\frac{2\sqrt{2}}{\ve}}\big)
C_0(\nu_0)\int_{-1}^1\theta_\ve^2\bar{q}_1^2\ud r.\label{ineq:lower-q1}
\end{align}
\end{Lemma}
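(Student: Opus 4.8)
The proof will follow the modulation-and-integration-by-parts scheme already used for Lemma \ref{lem:lower-bound-23}, but the geometry of the weight forces extra care. The starting point is the identity $\kappa_1(s)=2(1-6s+6s^2)=s'''/s'$, which together with \eqref{eq:s} shows that $\theta_\ve(r)=s'(r/\ve)=\ve\,\partial_rs_\ve$ solves $-\partial_r^2\theta_\ve+\ve^{-2}\kappa_1(s_\ve)\theta_\ve=0$ on $\BR$; that is, $\theta_\ve$ is a positive ``ground state'' of the scalar operator associated with $\mathcal{L}_1$. First I would substitute $q_1=\theta_\ve\bar q_1$ into the left-hand side of \eqref{ineq:lower-q1} and integrate by parts, exactly as in the proof of Lemma \ref{lem:lower-bound-23}, using $\theta_\ve''=\ve^{-2}\kappa_1(s_\ve)\theta_\ve$ to cancel the zeroth-order term, obtaining the exact identity
\[
\int_{-1}^1\Big((\partial_rq_1)^2+\ve^{-2}\kappa_1(s_\ve)q_1^2\Big)\ud r
=\big(\theta_\ve\,\partial_r\theta_\ve\,\bar q_1^2\big)\Big|_{-1}^{1}+\int_{-1}^1\theta_\ve^2(\partial_r\bar q_1)^2\,\ud r .
\]
Everything then reduces to showing that the boundary term is bounded below by $-\big(\tfrac12+\nu_0+o(1)\big)\int_{-1}^1\theta_\ve^2(\partial_r\bar q_1)^2\,\ud r$ up to a small $L^2$-type error.

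\textbf{The endpoint estimate.} Unlike the weight $s_\ve$ appearing in $\mathcal{L}_2$ (which is bounded below by $1/2$ on $[0,1]$), the weight $\theta_\ve$ decays like $e^{-\sqrt2/\ve}$ at \emph{both} endpoints $r=\pm1$, so the weighted Dirichlet energy $\int\theta_\ve^2(\partial_r\bar q_1)^2$ degenerates near $\pm1$ and cannot directly control $\bar q_1(\pm1)$. To treat $r=1$ (the case $r=-1$ being symmetric), I would write $\bar q_1(1)=\bar q_1(0)+\int_0^1\partial_r\bar q_1\,\ud r$ and, by Cauchy--Schwarz against the weight $\theta_\ve$,
\[
\bar q_1(1)^2\le (1+\nu_1^{-1})\bar q_1(0)^2+(1+\nu_1)\Big(\int_0^1\theta_\ve^{-2}\,\ud r\Big)\Big(\int_0^1\theta_\ve^2(\partial_r\bar q_1)^2\,\ud r\Big).
\]
The delicate point is balancing the super-exponentially small boundary prefactor against the super-exponentially large quantity $\int_0^1\theta_\ve^{-2}\,\ud r$. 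Using $s'=\sqrt2\,s(1-s)$ and $s''=2(1-2s)s(1-s)$, one computes as $\ve\to0$
\[
(\theta_\ve\,\partial_r\theta_\ve)(1)=-2\sqrt2\,\ve^{-1}e^{-2\sqrt2/\ve}\big(1+o(1)\big),\qquad
\int_0^1\theta_\ve^{-2}\,\ud r=\frac{\ve}{4\sqrt2}\,e^{2\sqrt2/\ve}\big(1+o(1)\big),
\]
so the exponents cancel exactly and $\big|(\theta_\ve\partial_r\theta_\ve)(1)\big|\cdot\int_0^1\theta_\ve^{-2}\,\ud r=\tfrac12+o(1)$ --- this is precisely what produces, and renders optimal, the constant $1/2$. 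The remaining term $\bar q_1(0)^2$ is controlled by a \emph{rescaled} Gagliardo--Nirenberg inequality on $[0,\ve]$, where $\theta_\ve=s'(\cdot/\ve)$ is bounded below by a fixed positive constant: after the substitution $z=r/\ve$ one gets $\bar q_1(0)^2\le C\nu_1\ve\int_0^1\theta_\ve^2(\partial_r\bar q_1)^2\,\ud r+C(\nu_1)\ve^{-1}\int_0^1\theta_\ve^2\bar q_1^2\,\ud r$, so that $\big|(\theta_\ve\partial_r\theta_\ve)(1)\big|\,\bar q_1(0)^2$ contributes only an $O(\ve^{-2}e^{-2\sqrt2/\ve})$-small $L^2$ error, matching the stated remainder.

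\textbf{Conclusion and the main obstacle.} Summing the $r=\pm1$ contributions and choosing $\nu_1$ comparable to $\nu_0$, the boundary term is bounded below by $-\big(\tfrac12+\nu_0+O(\ve^{-1}e^{-\sqrt2/\ve})\big)\int_{-1}^1\theta_\ve^2(\partial_r\bar q_1)^2\,\ud r-O(\ve^{-2}e^{-2\sqrt2/\ve})C_0(\nu_0)\int_{-1}^1\theta_\ve^2\bar q_1^2\,\ud r$, and inserting this into the identity above yields \eqref{ineq:lower-q1}. \emph{The main obstacle} is exactly the endpoint analysis: because the modulation weight for $\mathcal{L}_1$ vanishes exponentially at \emph{both} ends of the interval, one cannot localize away from a single endpoint as was possible for $\mathcal{L}_2$, and one must carry the asymptotics of $(\theta_\ve\partial_r\theta_\ve)(\pm1)$ and of $\int_0^1\theta_\ve^{-2}\,\ud r$ to leading order accurately enough to see that their product is exactly $1/2$; a secondary, milder point is getting the correct powers of $\ve$ in the remainder, which is what forces the rescaled rather than fixed-interval Gagliardo--Nirenberg estimate at the center $r=0$.
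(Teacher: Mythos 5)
Your proposal is correct and follows essentially the same route as the paper's proof: the same substitution $q_1=\theta_\ve\bar q_1$ and integration by parts using $\partial_r^2\theta_\ve=\ve^{-2}\kappa_1(s_\ve)\theta_\ve$, the same endpoint analysis in which $|(\theta_\ve\partial_r\theta_\ve)(\pm1)|\cdot\int_0^1\theta_\ve^{-2}\,\ud r\to\tfrac12$ produces the sharp constant, and the same rescaled Gagliardo--Nirenberg control of $\bar q_1(0)^2$ yielding the $O(\ve^{-2}e^{-2\sqrt2/\ve})$ remainder. No gaps.
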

\begin{Remark}
The inequality (\ref{ineq:lower-q1}) gives another version of the first eigenvalue estimate for $\mathcal{L}_1$, which has been proved in \cite{Chen}.
Note that here no boundary condition is needed.
\end{Remark}
\begin{proof}
Direct calculations yield that
\begin{align}
&\int_{-1}^1\bigg((\partial_rq_1)^2+\varepsilon^{-2}\kappa_1(s_\ve)\big(q_1\big)^2\bigg)\ud r\nonumber\\\nonumber
&=\int_{-1}^1\Big(\big[\partial_r\theta_\ve\bar{q}_1+\theta_\ve\partial_r\bar{q}_1\big]^2+\varepsilon^{-2}\kappa_1(s_\ve)\theta_\ve^2\bar{q}_1^2\Big)\ud r\\\nonumber
&=\theta_\ve\partial_r\theta_\ve\bar{q}_1^2\Big|_{-1}^1+\int_{-1}^1\theta_\ve^2(\partial_r\bar{q}_1)^2\ud r.
\end{align}
One can directly get
\begin{align*}
&|(\theta_\ve \partial_r\theta_\ve)(\pm 1)|=\ve^{-1}2\sqrt{2}s_\ve^2(1-s_\ve)^2|1-2s_\ve|
=\frac{2\sqrt{2}}{\ve}e^{-2\sqrt{2}/\ve}\Big( 1+O\big(e^{-{\sqrt{2}}/{\ve}}\big)\Big),\\
&\int_{0}^1\theta_\ve^{-2}\ud r=\frac{1}{2}\int_{0}^1e^{2\sqrt{2}r/\ve}\ud r\Big(1+O\big(e^{-{\sqrt{2}}/{\ve}}\big)\Big)
=\frac{\ve e^{2\sqrt{2}/\ve} }{4\sqrt{2}}\Big(1+O\big(\ve^{-1}e^{-{\sqrt{2}}/{\ve}}\big)\Big).
\end{align*}
Moreover, from the Gagliardo-Nirenberg inequality and a scaling argument, we have
\begin{align*}
|\bar{q}_1(0)|^2\leq~&\frac{\nu_1^2}{100}\ve\int_{-\ve}^{\ve}\big(\partial_r\bar{q}_1\big)^2\ud r
+C(\nu_1) \frac{1}{\ve}\int_{-\ve}^{\ve}\bar{q}_1^2\ud r\\
\leq ~&2{\nu_1^2} \ve\int_{-\ve}^{\ve}\theta_\ve^2\big(\partial_r\bar{q}_1\big)^2\ud r+\ve^{-1}C(\nu_1)\int_{-\ve}^{\ve}\theta_\ve^2\bar{q}_1^2\ud r.
\end{align*}
Thus choosing $\nu_1=\nu_0/2$ and $\ve$ sufficiently small, we obtain
\begin{align}\nonumber
&|(\theta_\ve\partial_r\theta_\ve)(1)||\bar{q}_1(1)|^2\\ \nonumber
&\le  |(\theta_\ve\partial_r\theta_\ve)(1)|\Big( |\bar{q}_1(0)|+\Big|\int_{0}^1\partial_r\bar{q}_1\ud r\Big|\Big)^2 \\ \nonumber
&\le  |(\theta_\ve\partial_r\theta_\ve)(1)|\Big( (1+\nu_1^{-1}) |\bar{q}_1(0)|^2+(1+\nu_1)\Big[\int_0^1(\theta_\ve\partial_r\bar{q}_1)^2\ud r\Big]\int_0^1\theta_\ve^{-2}\ud r\Big)\\ \nonumber
&\le O\big({\ve^{-1}}e^{-2\sqrt{2}/\ve}\big)\Big(3\nu_1 \ve \int_{-\ve}^{\ve}\theta_\ve^2\big(\partial_r\bar{q}_1\big)^2\ud r
+\ve^{-1}C(\nu_1)\int_{-\ve}^{\ve}\theta_\ve^2\bar{q}_1^2\ud r\Big)\\ \nonumber
&\qquad+\Big(\frac{1+\nu_1}{2}+O\big(\ve^{-1}e^{-\frac{\sqrt{2}}{\ve}}\big)\Big)\int_0^1(\theta_\ve\partial_r\bar{q}_1)^2\ud r\\ \label{ineq:endpoint-1}
&\le \Big(\frac12+\nu_0+O\big(\ve^{-1}e^{-\frac{\sqrt{2}}{\ve}}\big)\Big)\int_{-1}^1\theta_\ve^2
\big(\partial_r\bar{q}_1\big)^2\ud r+O\big(\ve^{-2}e^{-\frac{2\sqrt{2}}{\ve}}\big)C_1(\nu_0)\int_{-1}^1\theta_\ve^2\bar{q}_1^2\ud r.
\end{align}
The same estimate holds for the value at $-1$. Thus, (\ref{ineq:lower-q1}) follows.
\end{proof}

\smallskip

The following lemma gives a new proof for the second eigenvalue estimate of $\mathcal{L}_1$.
\begin{Lemma}\label{lem:lower-bound-1-out}
  Let  $q_1=\mu\theta_\ve+\hat{q}_1$ with $\int_{-1}^1\theta_\ve\hat{q}_1\ud r=0$.
  Then for any $\nu_0>0$, there exists $C_0(\nu_0)>0$ such that for $\ve$ sufficiently small
\begin{align}%\nonumber
&\int_{-1}^1\Big((\partial_rq_1)^2+\varepsilon^{-2}\kappa_1(s_\ve)q_1^2\Big)\ud r+O\big(\ve^{-2}e^{-\frac{2\sqrt{2}}{\ve}}\big)
\int_{-1}^1{q}_1^2\ud r\ge \frac{C}{\ve^2}\int_{-1}^1\hat{q}_1^2\ud r.
\label{ineq:lower-q1-out}
\end{align}
\end{Lemma}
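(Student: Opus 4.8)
The plan is to remove the near-zero mode by the ``ground state transform'' already used in the proof of Lemma~\ref{lem:lower-bound-1}, and then to reduce the claim to a weighted Poincar\'e inequality on $(-1,1)$ with the sharp $\ve^{-2}$ gain. First I would write $q_1=\theta_\ve\bar q_1$ (legitimate since $\theta_\ve=s'(\cdot/\ve)>0$). Because $\theta_\ve$ is an \emph{exact} zero mode of the rescaled operator, $-\partial_r^2\theta_\ve+\ve^{-2}\kappa_1(s_\ve)\theta_\ve=0$ (this is just $\kappa_1(s)=s'''/s'$ after rescaling), integration by parts gives the exact identity
\[
\int_{-1}^1\Big((\partial_rq_1)^2+\ve^{-2}\kappa_1(s_\ve)q_1^2\Big)\ud r
=\theta_\ve\partial_r\theta_\ve\,\bar q_1^2\Big|_{-1}^{1}+\int_{-1}^1\theta_\ve^2(\partial_r\bar q_1)^2\ud r .
\]
The boundary term is handled exactly as in Lemma~\ref{lem:lower-bound-1}: $|\theta_\ve\partial_r\theta_\ve(\pm1)|=O(\ve^{-1}e^{-2\sqrt2/\ve})$; bounding $|\bar q_1(0)|$ by Gagliardo--Nirenberg on $(-\ve,\ve)$, writing $\bar q_1(\pm1)=\bar q_1(0)+\int_0^{\pm1}\partial_r\bar q_1$ and using $\big(\int_0^{1}\partial_r\bar q_1\big)^2\le\big(\int_0^1\theta_\ve^{-2}\big)\big(\int_0^1\theta_\ve^2(\partial_r\bar q_1)^2\big)$ with $\int_0^1\theta_\ve^{-2}=\tfrac{\ve e^{2\sqrt2/\ve}}{4\sqrt2}(1+o(1))$, one finds the boundary term is at most $\big(\tfrac12+\nu_0+o(1)\big)\int_{-1}^1\theta_\ve^2(\partial_r\bar q_1)^2\ud r+O(\ve^{-2}e^{-2\sqrt2/\ve})\int_{-1}^1 q_1^2\ud r$ (note $\int\theta_\ve^2\bar q_1^2=\int q_1^2$, and the $\mu$-part of $\bar q_1^2$ at the endpoints contributes $O(\ve^{-1}e^{-2\sqrt2/\ve})\mu^2=O(\ve^{-2}e^{-2\sqrt2/\ve})\int q_1^2$). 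Hence, moving the exponentially small term to the left,
\[
\int_{-1}^1\Big((\partial_rq_1)^2+\ve^{-2}\kappa_1(s_\ve)q_1^2\Big)\ud r+O\big(\ve^{-2}e^{-\frac{2\sqrt2}{\ve}}\big)\int_{-1}^1 q_1^2\ud r
\ \ge\ \tfrac14\int_{-1}^1\theta_\ve^2(\partial_r\bar q_1)^2\ud r
\]
for $\ve$ small and $\nu_0$ fixed small.

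Next I would translate the orthogonality. With $q_1=\theta_\ve\bar q_1=\mu\theta_\ve+\hat q_1$, the condition $\int_{-1}^1\theta_\ve\hat q_1\ud r=0$ says exactly that $\mu=\big(\int_{-1}^1\theta_\ve^2\bar q_1\big)\big/\big(\int_{-1}^1\theta_\ve^2\big)$ is the $\theta_\ve^2$-weighted mean of $\bar q_1$, and $\hat q_1=\theta_\ve(\bar q_1-\mu)$, so $\int_{-1}^1\hat q_1^2=\int_{-1}^1\theta_\ve^2(\bar q_1-\mu)^2$. Since $\partial_r\bar q_1=\partial_r(\bar q_1-\mu)$, the lemma is reduced to the weighted Poincar\'e inequality
\[
\int_{-1}^1\theta_\ve^2(\partial_r g)^2\ud r\ \ge\ \frac{C}{\ve^2}\int_{-1}^1\theta_\ve^2 g^2\ud r,\qquad \int_{-1}^1\theta_\ve^2 g\,\ud r=0,
\]
with $C>0$ independent of $\ve$, applied to $g=\bar q_1-\mu$. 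For this I would rescale $r=\ve y$, $g(r)=h(r/\ve)$, turning it into the $\ve$-free statement: for $R=1/\ve$ and $h$ with $\int_{-R}^{R}(s')^2 h\,\ud y=0$,
\[
\int_{-R}^{R}(s')^2(h')^2\ud y\ \ge\ C\int_{-R}^{R}(s')^2 h^2\ud y
\]
with $C$ independent of $R$. This is the mean-zero (Neumann) weighted Poincar\'e inequality for the measure $(s')^2\ud y$; since $(s')^2=2s^2(1-s)^2$ is even its median is $0$, and the one-dimensional Muckenhoupt-type criterion with splitting at $0$ gives an optimal constant comparable to $\sup_{x>0}\big(\int_x^{R}(s')^2\big)\big(\int_0^x(s')^{-2}\big)$; using $(s')^2(y)\sim 2e^{-2\sqrt2|y|}$ one checks this supremum is $O(1)$ uniformly in $R$ (the extreme regime $x\to R$ contributing $\sim\tfrac1{\sqrt2}\cdot\tfrac1{4\sqrt2}$). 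Equivalently, this uniform bound is the non-degeneracy as $R\to\infty$ of the spectral gap of $\mathcal{L}_1$ on $L^2(\mathbb R)$ above its simple ground state $s'$ (positivity of the gap being the content of \cite{Chen}, with essential spectrum starting at $2$), transferred to finite $R$ by a Persson-type argument. Combining the three displays yields \eqref{ineq:lower-q1-out}.

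\textbf{Main obstacle.} The delicate step is the uniform-in-$\ve$ Poincar\'e inequality \emph{with the sharp $\ve^{-2}$ factor}. The weight $\theta_\ve^2$ is concentrated in an $O(\ve)$-window around $r=0$ and decays like $e^{-2\sqrt2/\ve}$ toward $r=\pm1$, so the crude Cauchy--Schwarz bound $g(r)^2\le\big|\int_{r_0}^r\theta_\ve^{-2}\big|\int\theta_\ve^2(\partial_r g)^2$ (using that $g$ changes sign, hence has a zero $r_0$) only yields an $\ve^{-1}$ gain; one genuinely needs the $\sup$-form of the Muckenhoupt condition together with the correct splitting point (the point of symmetry $r=0$), since an off-center split produces a spurious factor $e^{2\sqrt2/\ve}$. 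A secondary point, already present in Lemma~\ref{lem:lower-bound-1}, is that the boundary term in the conjugation identity must be absorbed with the \emph{precise} constant $\tfrac12$, so that only a definite fraction of $\int\theta_\ve^2(\partial_r\bar q_1)^2$ is consumed, leaving room for the Poincar\'e step.
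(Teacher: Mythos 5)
Your proposal is correct, and the first half coincides with the paper's proof: the paper likewise applies the ground-state transform $q_1=\theta_\ve\bar q_1$ and the boundary-term estimate of Lemma \ref{lem:lower-bound-1} to reduce \eqref{ineq:lower-q1-out} to the weighted Poincar\'e inequality $\int_{-1}^1\theta_\ve^2(\partial_r\bar q_1)^2\ud r\ge \frac{C}{\ve^2}\int_{-1}^1\theta_\ve^2\bar q_1^2\ud r$ under $\int_{-1}^1\theta_\ve^2\bar q_1\ud r=0$. Where you diverge is in how this Poincar\'e inequality with the sharp $\ve^{-2}$ is proved. The paper gives a self-contained, elementary argument: splitting at $r=0$, writing $\bar q_1(\tau)=\bar q_1(0)+\int_0^\tau\partial_r\bar q_1$, using Cauchy--Schwarz with the weights $\theta_\ve^{\pm 1}$ (not $\theta_\ve^{\pm 2}$) so that the kernel $I(r)=\int_r^1\theta_\ve^2(\tau)\big(\int_0^\tau\theta_\ve^{-1}\big)\ud\tau\le C\ve^2\theta_\ve(r)$ produces the $\ve^2$, and then exploiting the orthogonality through the exact cancellation $C_1+C_2=0$ of the two half-line terms $\bar q_1(0)C_{1,2}$, which yields $A_1+A_2\le C\ve(A_1+A_2)^{1/2}(B_1+B_2)^{1/2}$. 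You instead rescale to $[-R,R]$ with $R=1/\ve$, subtract the value at the symmetry point, and invoke the classical Hardy--Muckenhoupt characterization, checking that $\sup_{x>0}\big(\int_x^R(s')^2\big)\big(\int_0^x(s')^{-2}\big)=O(1)$ uniformly in $R$ thanks to $(s')^2\sim 2e^{-2\sqrt2|y|}$; this is equivalent in substance (your Muckenhoupt product is exactly the quantity the paper's kernel bound $I(r)\le C\ve^2\theta_\ve(r)$ encodes after rescaling), but it imports a standard external criterion, whereas the paper deliberately keeps the proof elementary and decomposition-based (this is one of the spectral estimates the authors advertise as not relying on comparison principles or known spectral theory). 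Your route buys brevity and makes the structural reason for the uniform constant transparent; the paper's buys self-containedness and the explicit constants used elsewhere. The concluding remark about transferring the spectral gap of $\mathcal{L}_1$ on $L^2(\mathbb R)$ to finite $R$ by a ``Persson-type argument'' is the only loose part of your write-up, but it is redundant: the Muckenhoupt computation already suffices, so no gap results from it.
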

\begin{proof}
Let $\bar{q}_1=q_1/\theta_\ve-\mu=\hat{q}_1/\theta_\ve$. Then it follows from (\ref{ineq:lower-q1}) that
\begin{align*}\nonumber
&\int_{-1}^1\Big((\partial_rq_1)^2+\varepsilon^{-2}\kappa_1(s_\ve)q_1^2\Big)\ud r+O\big(\ve^{-2}e^{-\frac{2\sqrt{2}}{\ve}}\big)
\int_{-1}^1{q}_1^2\ud r\\
&\ge \Big(\frac14+O\big(\ve^{-1}e^{-\frac{\sqrt{2}}{\ve}}\big)\Big)\int_{-1}^1\theta_\ve^2
\big(\partial_r\bar{q}_1\big)^2\ud r.
%\label{ineq:lower-q1-out}
\end{align*}
Now we prove that for $\int_{-1}^1\theta_\ve^2
\bar{q}_1\ud r=0$,
\begin{align*}
  \int_{-1}^1\theta_\ve^2\big(\partial_r\bar{q}_1\big)^2\ud r
  \ge\frac{C}{\ve^2} \int_{-1}^1\theta_\ve^2 \bar{q}_1^2\ud r= \frac{C}{\ve^2} \int_{-1}^1 \hat{q}_1^2\ud r.
\end{align*}
Let
\begin{align*}
A_1=\int_{0}^1\theta_\ve^2(\tau)\bar{q}_1^2(\tau)\ud \tau ,\quad B_1= \int_{0}^1\theta_\ve^2 \big(\partial_r\bar{q}_1\big)^2\ud r, \quad C_1=\int_{0}^1\theta_\ve^2(\tau)\bar{q}_1(\tau)\ud \tau,\\
A_2=\int_{-1}^0\theta_\ve^2(\tau)\bar{q}_1^2(\tau)\ud \tau ,\quad B_2= \int_{-1}^0\theta_\ve^2 \big(\partial_r\bar{q}_1\big)^2\ud r, \quad C_2=\int_{-1}^0\theta_\ve^2(\tau)\bar{q}_1(\tau)\ud \tau.
\end{align*}
Assume that $A_1+A_2>0$. We have
\begin{align}\nonumber
\int_{0}^1\theta_\ve^2(\tau)\bar{q}_1^2(\tau)\ud \tau & =\int_{0}^1 \theta_\ve^2(\tau)\bar{q}_1(\tau)\Big( \bar{q}_1(0)+\int_{0}^\tau \partial_r\bar{q}_1\ud r\Big)\ud\tau \\ \nonumber
 & =\bar{q}_1(0)C_1+\int_{0}^1 \theta_\ve^2(\tau)\bar{q}_1(\tau)\Big(\int_{0}^\tau \partial_r\bar{q}_1\ud r\Big)\ud\tau \\ \nonumber
 & \le\bar{q}_1(0)C_1 +\Big(\int_{0}^1 \theta_\ve^2(\tau)\bar{q}^2_1(\tau)\ud\tau\Big)^{1/2}\Big(\int_{0}^1\theta_\ve^2(\tau)\Big(\int_{0}^\tau \partial_r\bar{q}_1\ud r\Big)^{2}\ud\tau\Big)^{1/2}.
\end{align}
On the other hand, we have
\begin{align*}
  \int_{0}^1\theta_\ve^2(\tau)\Big(\int_{0}^\tau \partial_r\bar{q}_1\ud r\Big)^2\ud\tau
&\le \int_{0}^1\theta_\ve^2(\tau)\Big(\int_{0}^\tau \theta_\ve (\partial_r\bar{q}_1)^2\ud r\Big)
\Big(\int_{0}^\tau \theta_\ve^{-1}(r)\ud r\Big)\ud\tau\\
&= \int_{0}^1 \theta_\ve (\partial_r\bar{q}_1)^2 I(r) \ud r
\end{align*}
with
\begin{align*}
I(r)= \int_{r}^1 \theta_\ve^2(\tau) \Big(\int_{0}^\tau \theta_\ve^{-1}(y)\ud y\Big)\ud\tau=\ve^2\int_{\frac{r}{\ve}}^{\frac{1}{\ve}} \theta^2(z) \Big(\int_{0}^z \theta^{-1}(w)\ud w\Big)\ud z\le C \ve^2\theta_\ve(r).
\end{align*}
Thus
\begin{align*}
  A_1\le\bar{q}_1(0) C_1+C\ve A_1^{1/2}B_1^{1/2}.
\end{align*}
Similarly, we have
\begin{align*}
  A_2\le\bar{q}_1(0) C_2+C\ve  A_2^{1/2}B_2^{1/2}.
\end{align*}
As $C_1+C_2=0$, we get
\begin{align*}
 A_1+ A_2\le C\ve A_1^{1/2}B_1^{1/2}+C\ve A_2^{1/2}B_2^{1/2}\le C\ve ( A_1+ A_2)^{1/2}( B_1+ B_2)^{1/2}
\end{align*}
which concludes our lemma.
\end{proof}\smallskip

As $|\partial_r\theta_\ve|\le \frac{C}{\ve}\theta_\ve$, and $|\partial_r\hat{q}_1|\le|\partial_r\theta_\ve \bar{q}_1|+|\theta_\ve\partial_r\bar{q}_1| $, we have the following
\begin{Corollary}\label{lem:lower-bound-1-deriv-out}
  Let  $q_1=\mu\theta_\ve+\hat{q}_1$ with $\int_{-1}^1\theta_\ve\hat{q}_1\ud r=0$.
  Then  there exists $C>0$ such that for $\ve$ sufficiently small
\begin{align}%\nonumber
&\int_{-1}^1\Big((\partial_rq_1)^2+\varepsilon^{-2}\kappa_1(s_\ve)q_1^2\Big)\ud r+O\big(\ve^{-2}e^{-\frac{2\sqrt{2}}{\ve}}\big)
\int_{-1}^1{q}_1^2\ud r\ge C\int_{-1}^1(\partial_r\hat{q}_1)^2\ud r.
\label{ineq:lower-bound-1-deriv-out}
\end{align}

\end{Corollary}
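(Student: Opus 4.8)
The statement to be proved is Corollary~\ref{lem:lower-bound-1-deriv-out}: with $q_1=\mu\theta_\ve+\hat q_1$ and $\int_{-1}^1\theta_\ve\hat q_1\,\ud r=0$, one has
\[
\int_{-1}^1\Big((\partial_rq_1)^2+\ve^{-2}\kappa_1(s_\ve)q_1^2\Big)\ud r+O\big(\ve^{-2}e^{-2\sqrt2/\ve}\big)\int_{-1}^1 q_1^2\,\ud r\ \ge\ C\int_{-1}^1(\partial_r\hat q_1)^2\,\ud r.
\]
The idea is that this is a derivative-level upgrade of Lemma~\ref{lem:lower-bound-1-out}, which already controls $\ve^{-2}\int\hat q_1^2$ by the left-hand quadratic form. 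First I would write $\hat q_1=\theta_\ve\bar q_1$, so that $\partial_r\hat q_1=\partial_r\theta_\ve\,\bar q_1+\theta_\ve\,\partial_r\bar q_1$, whence by the triangle inequality and $(a+b)^2\le 2a^2+2b^2$,
\[
\int_{-1}^1(\partial_r\hat q_1)^2\,\ud r\ \le\ 2\int_{-1}^1(\partial_r\theta_\ve)^2\bar q_1^2\,\ud r+2\int_{-1}^1\theta_\ve^2(\partial_r\bar q_1)^2\,\ud r.
\]
Using the elementary pointwise bound $|\partial_r\theta_\ve|\le \tfrac{C}{\ve}\theta_\ve$ (which follows from $\theta_\ve=(s')(r/\ve)$ and $s''/s'$ being bounded), the first term is $\le \tfrac{C}{\ve^2}\int\theta_\ve^2\bar q_1^2 = \tfrac{C}{\ve^2}\int\hat q_1^2$. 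So the whole right-hand side is dominated by
\[
C\Big(\ve^{-2}\int_{-1}^1\hat q_1^2\,\ud r+\int_{-1}^1\theta_\ve^2(\partial_r\bar q_1)^2\,\ud r\Big).
\]

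\textbf{Key steps.} It therefore suffices to show that the left-hand quadratic form (plus the $O(\ve^{-2}e^{-2\sqrt2/\ve})\int q_1^2$ correction) controls both $\ve^{-2}\int\hat q_1^2$ and $\int\theta_\ve^2(\partial_r\bar q_1)^2$. The first is exactly the content of Lemma~\ref{lem:lower-bound-1-out}. For the second, I would revisit the proof of that lemma: its intermediate step already establishes, via the substitution $q_1=\theta_\ve\bar q_1$ and the boundary/endpoint estimates of Lemma~\ref{lem:lower-bound-1}, the inequality
\[
\int_{-1}^1\Big((\partial_rq_1)^2+\ve^{-2}\kappa_1(s_\ve)q_1^2\Big)\ud r+O\big(\ve^{-2}e^{-2\sqrt2/\ve}\big)\int_{-1}^1 q_1^2\,\ud r\ \ge\ \Big(\tfrac14+o(1)\Big)\int_{-1}^1\theta_\ve^2(\partial_r\bar q_1)^2\,\ud r,
\]
where $\bar q_1=q_1/\theta_\ve-\mu=\hat q_1/\theta_\ve$ (note $\partial_r\bar q_1$ is insensitive to the constant shift $\mu$). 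This directly gives the needed control of $\int\theta_\ve^2(\partial_r\bar q_1)^2$. Combining with Lemma~\ref{lem:lower-bound-1-out} for the $\ve^{-2}\int\hat q_1^2$ piece and adjusting constants finishes the argument.

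\textbf{Main obstacle.} The only genuinely delicate point is making sure the $O(\ve^{-2}e^{-2\sqrt2/\ve})\int q_1^2$ error term is harmless; since $e^{-2\sqrt2/\ve}$ beats any polynomial in $\ve^{-1}$, it is absorbed for $\ve$ small, but one should be careful that $\int q_1^2$ here is $\int q_1^2$ and not $\int\hat q_1^2$ — decomposing $q_1=\mu\theta_\ve+\hat q_1$ gives $\int q_1^2\le 2\mu^2\int\theta_\ve^2+2\int\hat q_1^2$, and $\mu^2\int\theta_\ve^2\le C\int q_1^2$ by orthogonality, so it closes consistently with the statement as written (the error is on $\int q_1^2$ on both sides). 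I expect this bookkeeping, rather than any new analytic ingredient, to be where care is required; everything else is a reassembly of Lemmas~\ref{lem:lower-bound-1} and~\ref{lem:lower-bound-1-out} together with the crude bound $|\partial_r\theta_\ve|\le C\ve^{-1}\theta_\ve$.
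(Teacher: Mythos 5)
Your proposal is correct and follows essentially the same route as the paper, whose one-line justification is precisely the splitting $|\partial_r\hat q_1|\le|\partial_r\theta_\ve\,\bar q_1|+|\theta_\ve\,\partial_r\bar q_1|$ together with $|\partial_r\theta_\ve|\le C\ve^{-1}\theta_\ve$, Lemma \ref{lem:lower-bound-1-out} for the $\ve^{-2}\int\hat q_1^2$ piece, and the intermediate bound on $\int\theta_\ve^2(\partial_r\bar q_1)^2$ from that lemma's proof. Your extra bookkeeping on the $O(\ve^{-2}e^{-2\sqrt2/\ve})\int q_1^2$ error is consistent with the statement and requires no new ingredient.
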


\subsubsection{Endpoints $L^\infty$ estimates}

\begin{Lemma}[$L^\infty$ control] \label{lem:Linf}
For $i=2,3,4,5$, and any $\nu_0>0$, there exists $C(\nu_0)$ such that
\begin{align*}
  \|q\|_{L^\infty([-1,1])}^2\le \nu_0  \int_{-1}^1\Big((\partial_rq)^2
  +\varepsilon^{-2}\kappa_i(s_\ve)q^2\Big)\ud r+C(\nu_0)\int_{-1}^1q^2\ud r.%\\
%  \|q_3\|_{L^\infty([-1,1])}^2\le \nu_0  \int_{-1}^1\Big((\partial_rq_3)^2
%  +\varepsilon^{-2}\kappa_3(s_\ve)q_3^2\Big)\ud r+C(\nu_0)\int_{-1}^1q_3^2\ud r,\\
  \end{align*}
\end{Lemma}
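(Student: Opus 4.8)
The plan is to separate the five cases into the two trivial ones, $i\in\{4,5\}$, and the substantive ones, $i\in\{2,3\}$. For $i=4$ we have $\kappa_4\equiv 0$ and for $i=5$ we have $\kappa_5\equiv 2>0$, so in either case the quadratic form on the right dominates $\int_{-1}^1(\partial_r q)^2\ud r$, and the assertion follows at once from the one-dimensional interpolation (Gagliardo--Nirenberg) inequality $\|q\|_{L^\infty([-1,1])}^2\le \nu_0\|\partial_r q\|_{L^2([-1,1])}^2+C(\nu_0)\|q\|_{L^2([-1,1])}^2$ on the fixed interval $[-1,1]$ (for $i=5$ the discarded term $2\ve^{-2}q^2$ is nonnegative). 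One should note that for $i=2,3$ this naive route is \emph{not} available: the full form $\int_{-1}^1\big((\partial_r q)^2+\ve^{-2}\kappa_i(s_\ve)q^2\big)\ud r$ does not control $\|\partial_r q\|_{L^2}^2$ modulo $\|q\|_{L^2}^2$, as testing with $q=s_\ve$ shows, so the singular negative part of $\ve^{-2}\kappa_i(s_\ve)$ must be absorbed by a weighted substitution.

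For $i=2$ I would write $q=s_\ve\bar q$ with $\bar q=q/s_\ve$ (legitimate since $s_\ve$ is smooth on $[-1,1]$ and bounded below by $s_\ve(-1)>0$) and first prove the auxiliary estimate $\|q\|_{L^\infty([-1,1])}^2\le \nu_0\int_{-1}^1 s_\ve^2(\partial_r\bar q)^2\ud r+C(\nu_0)\int_{-1}^1 q^2\ud r$ for all $\nu_0>0$ and $\ve$ small. On $[0,1]$, where $s_\ve\ge\tfrac12$, we have $\int_0^1(\partial_r\bar q)^2\ud r\le 4\int_{-1}^1 s_\ve^2(\partial_r\bar q)^2\ud r$ and $\int_0^1\bar q^2\ud r\le 4\int_{-1}^1 q^2\ud r$; together with ordinary interpolation for $\bar q$ and $\|q\|_{L^\infty([0,1])}\le\|\bar q\|_{L^\infty([0,1])}$ this controls $\|q\|_{L^\infty([0,1])}$. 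On $[-1,0]$, where $s_\ve$ may be exponentially small, I write $q(r)=s_\ve(r)\bar q(0)+s_\ve(r)\int_0^r\partial_r\bar q$; the first term is bounded by $|q(0)|\le\|q\|_{L^\infty([0,1])}$ since $s_\ve(r)\le\tfrac12$ and $\bar q(0)=2q(0)$, and Cauchy--Schwarz bounds the second by $s_\ve(r)\big(\int_r^0 s_\ve^{-2}\big)^{1/2}\big(\int_{-1}^0 s_\ve^2(\partial_r\bar q)^2\big)^{1/2}$. The elementary estimate $s_\ve(r)^2\int_r^0 s_\ve(y)^{-2}\ud y=O(\ve)$ for $r\in[-1,0]$, an explicit computation from $s_\ve(r)^{-1}=1+e^{-\sqrt{2}r/\ve}$, then gives $\|q\|_{L^\infty([-1,0])}^2\le 2\|q\|_{L^\infty([0,1])}^2+O(\ve)\int_{-1}^1 s_\ve^2(\partial_r\bar q)^2\ud r$. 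Combining the two ranges, with the interpolation parameter taken proportional to $\nu_0$ and $\ve$ small, yields the auxiliary estimate.

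To finish the case $i=2$ I invoke the coercive estimate of Lemma \ref{lem:lower-bound-23} with its free parameter fixed (say to $\tfrac14$): since $s_\ve^2\bar q^2=q^2$, it gives, for $\ve$ small, $\int_{-1}^1 s_\ve^2(\partial_r\bar q)^2\ud r\le C\int_{-1}^1\big((\partial_r q)^2+\ve^{-2}\kappa_2(s_\ve)q^2\big)\ud r+C\int_{-1}^1 q^2\ud r$ with $C$ absolute; inserting this into the auxiliary estimate and rescaling $\nu_0$ by this $C$ gives the claimed inequality. The case $i=3$ then follows by the reflection $r\mapsto -r$, using $1-s_\ve(r)=s_\ve(-r)$ and $\kappa_3(s)=\kappa_2(1-s)$, which carries the $\mathcal{L}_3$-form to the $\mathcal{L}_2$-form while leaving $\|q\|_{L^\infty}$ and $\|q\|_{L^2}$ unchanged.

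The main obstacle is the estimate for $i\in\{2,3\}$ on the degenerate half-interval ($[-1,0]$ when $i=2$): there the coercivity of $\kappa_i(s_\ve)$ is lost, plain interpolation is useless, and one must use the explicit profile $s_\ve$; the quantitative crux is the weighted bound $s_\ve(r)^2\int_r^0 s_\ve(y)^{-2}\ud y=O(\ve)$, which records exactly how the smallness of $s_\ve$ near $r=-1$ makes up for that loss of coercivity.
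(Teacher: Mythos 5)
Your proposal is correct and takes essentially the same route as the paper's proof: the substitution $q=s_\ve\bar q$, interpolation on the half-interval where $s_\ve\ge\tfrac12$, the explicit weighted bound $s_\ve(r)^2\int_r^0 s_\ve^{-2}\,\ud y=O(\ve)$ on the degenerate half, and then Lemma \ref{lem:lower-bound-23} to convert $\int s_\ve^2(\partial_r\bar q)^2$ back into the quadratic form. Your reflection argument $r\mapsto-r$ with $1-s_\ve(r)=s_\ve(-r)$ and $\kappa_3(s)=\kappa_2(1-s)$ is simply a precise rendering of the paper's remark that the case $i=3$ is similar.
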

\begin{proof}
The inequalities for $i=4,5$ are obvious, since $\kappa_4=0$ and $\kappa_5\ge 0$. For $i=2$, let $q=s_\ve \bar{q}$. Then we have
 \begin{align*}
 \|\bar{q}\|_{L^\infty([0,1])}^2\le \nu_0 \int_{0}^1s_\ve^2(\partial_r\bar{q})^2\ud r+C(\nu_0)\int_{0}^1s_\ve^2\bar{q}^2\ud r.
 \end{align*}
 For $r\in[-1,0]$, we have
 \begin{align*}
 |q(r)|&=s_\ve(r)|\bar{q}(r)|\le s_\ve(r)(|\bar{q}(0)|+\int_{0}^r|\partial_r\bar{q}|\ud r)\\
 &\le \frac12 |\bar{q}(0)|+s_\ve(r)\Big(\int_{0}^rs_\ve^2|\partial_r\bar{q}|^2\ud r\Big)^{1/2}\Big(\int_{0}^rs_\ve^{-2}\ud r\Big)^{1/2}\\
 &\le  \frac12 |\bar{q}(0)|+O(\sqrt{\ve}) \Big(\int_{0}^rs_\ve^2|\partial_r\bar{q}|^2\ud r\Big)^{1/2}.
 \end{align*}
Thus using Lemma \ref{lem:lower-bound-23}, we obtain the claim for $i=2$. The proof for $i=3$ is similar.
\end{proof}

The above lemma is not true for $i=1$. However, as a corollary of (\ref{ineq:endpoint-1}) and Lemma \ref{lem:lower-bound-1}, we have the following endpoint control.
\begin{Lemma}[Endpoints control] \label{lem:Linf-endpoint}
There exists $C>0$ such that
\begin{align*}
  |q(\pm1)|^2\le C\ve \Big\{ \int_{-1}^1\Big((\partial_rq)^2
  +\varepsilon^{-2}\kappa_1(s_\ve)q^2\Big)\ud r+\int_{-1}^1q^2\ud r\Big\}.
  \end{align*}
\end{Lemma}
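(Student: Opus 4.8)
\emph{Proof strategy.} The statement is a corollary of two facts already obtained in the proof of Lemma~\ref{lem:lower-bound-1}: the endpoint inequality \eqref{ineq:endpoint-1} and the weighted coercivity \eqref{ineq:lower-q1}. The plan is to substitute $q=\theta_\ve\bar q$, which is legitimate since $\theta_\ve(r)=\sqrt2\,(s(1-s))(r/\ve)$ is smooth and strictly positive on $[-1,1]$, so $\bar q=q/\theta_\ve$ has the same regularity and $q^2=\theta_\ve^2\bar q^2$; moreover $\int_{-1}^1\theta_\ve^2\bar q^2\,\ud r=\int_{-1}^1 q^2\,\ud r$. With this notation, \eqref{ineq:endpoint-1} (the boundary term handled inside the proof of Lemma~\ref{lem:lower-bound-1}, via Gagliardo--Nirenberg on $[-\ve,\ve]$ and the explicit size $\int_0^1\theta_\ve^{-2}\,\ud r=\tfrac{\ve}{4\sqrt2}e^{2\sqrt2/\ve}(1+O(\ve^{-1}e^{-\sqrt2/\ve}))$) reads, for any fixed $\nu_0>0$,
\begin{align*}
|(\theta_\ve\partial_r\theta_\ve)(\pm1)|\,|\bar q(\pm1)|^2
&\le\Big(\tfrac12+\nu_0+O(\ve^{-1}e^{-\sqrt2/\ve})\Big)\int_{-1}^1\theta_\ve^2(\partial_r\bar q)^2\,\ud r\\
&\quad+O(\ve^{-2}e^{-2\sqrt2/\ve})\,C(\nu_0)\int_{-1}^1\theta_\ve^2\bar q^2\,\ud r .
\end{align*}

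Next I would convert this into a bound for $|q(\pm1)|^2=\theta_\ve^2(\pm1)|\bar q(\pm1)|^2$ using the asymptotics recorded in the same proof, namely $\theta_\ve^2(\pm1)=2e^{-2\sqrt2/\ve}\bigl(1+O(e^{-\sqrt2/\ve})\bigr)$ and $|(\theta_\ve\partial_r\theta_\ve)(\pm1)|=\tfrac{2\sqrt2}{\ve}e^{-2\sqrt2/\ve}\bigl(1+O(e^{-\sqrt2/\ve})\bigr)$, so that the ratio $\theta_\ve^2(\pm1)/|(\theta_\ve\partial_r\theta_\ve)(\pm1)|=\tfrac{\ve}{\sqrt2}\bigl(1+O(e^{-\sqrt2/\ve})\bigr)$ is of size exactly $\ve$. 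Dividing the previous inequality by $|(\theta_\ve\partial_r\theta_\ve)(\pm1)|$, multiplying by $\theta_\ve^2(\pm1)$, and absorbing the exponentially small factor $\ve\cdot O(\ve^{-2}e^{-2\sqrt2/\ve})$ into the constant yields
\[
|q(\pm1)|^2\le C\ve\Big(\int_{-1}^1\theta_\ve^2(\partial_r\bar q)^2\,\ud r+\int_{-1}^1 q^2\,\ud r\Big).
\]

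Finally I would absorb the weighted Dirichlet integral by invoking \eqref{ineq:lower-q1} with, say, $\nu_0=1/4$ and $\ve$ small, which gives
\[
\int_{-1}^1\theta_\ve^2(\partial_r\bar q)^2\,\ud r\le C\Big(\int_{-1}^1\bigl((\partial_r q)^2+\ve^{-2}\kappa_1(s_\ve)q^2\bigr)\,\ud r+\int_{-1}^1 q^2\,\ud r\Big),
\]
again using $\int_{-1}^1\theta_\ve^2\bar q^2\,\ud r=\int_{-1}^1 q^2\,\ud r$; substituting into the display above yields the asserted estimate at $r=+1$, and the argument at $r=-1$ is identical. I do not expect a genuine obstacle here: the proof is essentially bookkeeping of the exponentially small correction factors. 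The one quantitative point that must not be fumbled is that the ratio of endpoint weights is of order \emph{exactly} $\ve$ — a smaller ratio would destroy the factor $\ve$ on the right-hand side — and this $\ve$-scaling is sharp, as one sees by testing with a function equal to $1$ on $[1-\ve,1]$ and decreasing linearly to $0$ over a length $\ve$.
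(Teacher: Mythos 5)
Your argument is correct and is essentially the paper's own proof: the paper likewise writes $q=\theta_\ve\bar q$, observes $\theta_\ve^2(\pm1)\le C\ve\,|(\theta_\ve\partial_r\theta_\ve)(\pm1)|$, and concludes from the endpoint bound \eqref{ineq:endpoint-1} together with the coercivity \eqref{ineq:lower-q1}. Your write-up just spells out the bookkeeping (the ratio of endpoint weights being of size $\ve$ and the absorption of the exponentially small terms) that the paper leaves implicit.
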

\begin{proof}
Let $q=\theta_\ve\bar{q}$. Note that $\theta_\ve^2(\pm 1)\le C \ve |(\theta_\ve\partial_r\theta_\ve)(\pm 1)|$, then we can get the result.
\end{proof}

\subsection{Estimate for cross terms}\label{subsec:estimate-cross}

Now the inequality \eqref{ineq:1-dim-cross-final-2} is a consequence of the following Propositions \ref{prop:cross-45}, \ref{prop:cross-1}
and \ref{prop:cross-23} by letting $a(r)=\EE_\alpha:\partial_r\EE_\beta$ or $\EE_\alpha:\WW \EE_\beta$.

\begin{Proposition}\label{prop:cross-45}Assume $i$ or $j$ $\in \{4, 5\}$.
Then for any $\nu_0>0$ there exists $C_0=C_0(\nu_0, \|a\|_{W^{1,\infty}})>0$ such that
\begin{align}
&\int_{-1}^1\big(\partial_rq_\alpha q_\beta-q_\alpha\partial_rq_\beta\big)a(r)\ud r
\nonumber\\&\leq\nu_0\int_{-1}^1\Big((\partial_rq_\alpha )^2+\varepsilon^{-2}\kappa_i (s_\ve)q_\alpha ^2
+(\partial_rq_\beta )^2+\varepsilon^{-2}\kappa_j (s_\ve)q_\beta ^2\Big)\ud r
+C_0\int_{-1}^1\big(q_\alpha ^2+q_\beta ^2\big)\ud r.
\end{align}
\end{Proposition}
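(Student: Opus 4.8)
\textbf{Proof proposal for Proposition \ref{prop:cross-45}.}
Without loss of generality I will assume $j\in\{4,5\}$ (i.e. the ``good'' factor is $q_\beta$); the case $i\in\{4,5\}$ reduces to this one because the integrand $\partial_rq_\alpha q_\beta-q_\alpha\partial_rq_\beta$ is antisymmetric under $\alpha\leftrightarrow\beta$, so that the integral against $a$ equals minus the integral of the swapped integrand against $a$, equivalently the swapped integrand against $-a$, and the claimed bound is symmetric in $(\alpha,\beta)$ and in $(i,j)$ and unchanged under $a\mapsto-a$ (since $\|-a\|_{W^{1,\infty}}=\|a\|_{W^{1,\infty}}$). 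The key point that makes this case easy is that $\kappa_4=0$ and $\kappa_5=2$, so for $j\in\{4,5\}$ one has $\int_{-1}^1(\partial_rq_\beta)^2\,\ud r\le\int_{-1}^1\big((\partial_rq_\beta)^2+\varepsilon^{-2}\kappa_j(s_\ve)q_\beta^2\big)\,\ud r$; in contrast, $\kappa_1,\kappa_2,\kappa_3$ all change sign, so the corresponding Dirichlet form does \emph{not} dominate $\int(\partial_rq)^2$, and we must arrange that no $r$-derivative ever lands on $q_\alpha$.

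Accordingly, the plan is to integrate by parts to move every $r$-derivative onto $q_\beta$. Writing $\partial_rq_\alpha\,q_\beta\,a=\partial_r(q_\alpha q_\beta a)-q_\alpha\partial_rq_\beta\,a-q_\alpha q_\beta\,a'$ gives
\begin{align}\label{eq:cross45-ibp}
\int_{-1}^1\big(\partial_rq_\alpha q_\beta-q_\alpha\partial_rq_\beta\big)a\,\ud r
=\big[q_\alpha q_\beta a\big]_{-1}^{1}-2\int_{-1}^1 q_\alpha\partial_rq_\beta\,a\,\ud r-\int_{-1}^1 q_\alpha q_\beta\,a'\,\ud r.
\end{align}
The last term is harmless: $\big|\int_{-1}^1q_\alpha q_\beta a'\,\ud r\big|\le\tfrac12\|a'\|_{L^\infty}\int_{-1}^1(q_\alpha^2+q_\beta^2)\,\ud r$. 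For the bulk term in the middle, Young's inequality together with $\kappa_j\ge0$ yields $2\big|\int_{-1}^1 q_\alpha\partial_rq_\beta a\,\ud r\big|\le\nu_0\int_{-1}^1(\partial_rq_\beta)^2\,\ud r+\nu_0^{-1}\|a\|_{L^\infty}^2\int_{-1}^1q_\alpha^2\,\ud r\le\nu_0\int_{-1}^1\big((\partial_rq_\beta)^2+\varepsilon^{-2}\kappa_j(s_\ve)q_\beta^2\big)\,\ud r+C_0\int_{-1}^1q_\alpha^2\,\ud r$.

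It remains to control the boundary term in \eqref{eq:cross45-ibp}, for which I bound $\big|[q_\alpha q_\beta a]_{-1}^{1}\big|\le\tfrac12\|a\|_{L^\infty}\sum_{\pm}\big(|q_\alpha(\pm1)|^2+|q_\beta(\pm1)|^2\big)$ and estimate each endpoint value. Since $j\in\{4,5\}$, Lemma \ref{lem:Linf} applies to $q_\beta$ and gives $|q_\beta(\pm1)|^2\le\|q_\beta\|_{L^\infty([-1,1])}^2\le\nu_0\int_{-1}^1\big((\partial_rq_\beta)^2+\varepsilon^{-2}\kappa_j(s_\ve)q_\beta^2\big)\,\ud r+C(\nu_0)\int_{-1}^1q_\beta^2\,\ud r$. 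For $q_\alpha$: if $i\in\{2,3,4,5\}$ the same Lemma \ref{lem:Linf} applies and gives the analogous bound with $(\partial_rq_\alpha)^2+\varepsilon^{-2}\kappa_i(s_\ve)q_\alpha^2$; if $i=1$, Lemma \ref{lem:Linf} is not available (the weight $\theta_\ve$ degenerates at the interface), so I instead invoke Lemma \ref{lem:Linf-endpoint}, which gives $|q_\alpha(\pm1)|^2\le C\varepsilon\big\{\int_{-1}^1\big((\partial_rq_\alpha)^2+\varepsilon^{-2}\kappa_1(s_\ve)q_\alpha^2\big)\,\ud r+\int_{-1}^1q_\alpha^2\,\ud r\big\}\le\nu_0\int_{-1}^1\big((\partial_rq_\alpha)^2+\varepsilon^{-2}\kappa_1(s_\ve)q_\alpha^2\big)\,\ud r+\nu_0\int_{-1}^1q_\alpha^2\,\ud r$ once $\varepsilon$ is small enough that $C\varepsilon\le\nu_0$. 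Feeding these three estimates back into \eqref{eq:cross45-ibp} and relabelling $\nu_0$ (finitely many times) and the constant $C_0=C_0(\nu_0,\|a\|_{W^{1,\infty}})$ proves the proposition.

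The only genuinely delicate point is the boundary term in the case $i=1$: the naive $L^\infty$ bound fails for $\mathcal{L}_1$, and the substitute Lemma \ref{lem:Linf-endpoint} is exactly what supplies the $O(\varepsilon)$ gain needed to absorb $|q_\alpha(\pm1)|^2$. Everything else is routine integration by parts and Cauchy--Schwarz/Young, using only the sign properties $\kappa_4=0$, $\kappa_5\ge0$.
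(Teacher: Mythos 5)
Your proposal is correct and follows essentially the same route as the paper: integrate by parts so that no derivative lands on $q_\alpha$, use $\kappa_j\ge 0$ for $j\in\{4,5\}$ to absorb $\int(\partial_r q_\beta)^2$, and control the boundary term $(q_\alpha q_\beta a)\big|_{-1}^1$ via Lemma \ref{lem:Linf} (for $i\in\{2,3,4,5\}$) and Lemma \ref{lem:Linf-endpoint} (for $i=1$). Your write-up merely makes explicit the symmetry reduction to $j\in\{4,5\}$ and the endpoint case $i=1$, which the paper leaves implicit in its citation of those two lemmas.
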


\begin{proof}
  Assume $j=4$ or $5$. Then $\kappa_j\ge 0$. Thus
  \begin{align*}
 \int_{-1}^1q_\alpha\partial_rq_\beta a(r)\ud r\le &~ \nu_0\int (\partial_rq_\beta)^2 \ud r+C(\nu_0, |a|_{L^\infty}) \int q_\alpha^2 \ud r,\\
 \int_{-1}^1\partial_rq_\alpha q_\beta a(r)\ud r\le & - \int_{-1}^1(q_\alpha \partial_r q_\beta a(r)+ q_\alpha  q_\beta\partial_r a(r))\ud r
+ (q_\alpha q_\beta a)\Big|_{-1}^1\\
\le & ~\nu_0\int_{-1}^1 (\partial_rq_\beta)^2 \ud r+C(\nu_0, \|a\|_{W^{1,\infty}})
\int_{-1}^1 (q_\alpha^2 + q_\beta^2)\ud r+ (q_\alpha q_\beta a)\Big|_{-1}^1.
  \end{align*}
Then the result follows from Lemmas \ref{lem:Linf} and \ref{lem:Linf-endpoint}.
\end{proof}
\smallskip

Now we turn to crossing terms  involving elements in $\mathbb{V}_1$. Assume $\EE_\beta=\EE_1\in\BV_1$.
The following lemma shows that the variation of $\EE_1$ and $\Phi$ along the normal direction $\nabla d^K$ is very small, which is key to bound the crossing terms involving elements in $\mathbb{V}_1$.
\begin{Lemma}\label{lem:E1-smallness}
There exists constant $C_1$ such that
  For $(x,t)\in\Gamma^K(\delta)$, one has that
  \begin{align*}
  |\partial_r \EE_1| \le C_1(d^K(x,t)+\ve),\qquad  |\Phi^{\mathrm{T}}\partial_r\Phi \EE_1| \le C_1(d^K(x,t)+\ve).
  \end{align*}
\end{Lemma}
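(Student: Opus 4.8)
The idea is to exploit two facts that hold \emph{exactly} on the interface $\Gamma_t$: the homogeneous Neumann condition $\partial_\nu\nn=0$ (equivalently $\partial_\nu(\nn\nn)=0$, cf. Remark~\ref{Remark:lift}), and the identity $\Phi^{\mathrm{T}}\partial_\nu\Phi=\overline\WW\in\BV_4$ from \eqref{inner:proper-PhidPhi}, together with the elementary observation that every element of $\BV_4$ kills $\nn\nn$. Since $\BV_1$ is one–dimensional with $\|\nn\nn\|=1$, we may take $\EE_1=\pm\nn(x,t)\nn(x,t)$ (smoothly in $r$), and for $\WW=\bl\mm-\mm\bl$ with $\bl,\mm\perp\nn$ one computes $\WW(\nn\nn)=\bl(\mm\cdot\nn)\nn-\mm(\bl\cdot\nn)\nn=0$; hence $\overline\WW\,\EE_1=0$ on $\Gamma$.

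First I would compare $\partial_r$ with $\partial_\nu$. Recall $\partial_r f=|\nabla d^K|^{-2}\,\nabla d^K\cdot\nabla f$, with $\nabla d^K=\nabla d_0+\ve\nabla d_1+\cdots$, $\nabla d_0|_{\Gamma_t}=\nu$ and $|\nabla d^K|^2=1+O(\ve^{K+1})$. As $\EE_1=\nn\nn$ does not depend on $z$, on $\Gamma_t$ we get $\partial_r\EE_1=|\nabla d^K|^{-2}\bigl(\partial_\nu(\nn\nn)+\ve\,\nabla d_1\cdot\nabla(\nn\nn)+\cdots\bigr)=O(\ve)$, the first summand vanishing. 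Since $\partial_r\EE_1$ is smooth and bounded and $|d_0|\le|d^K|+C\ve$ in the tubular neighbourhood (where $r=d^K$), a first–order Taylor expansion in the normal variable around $\Gamma_t$ gives $|\partial_r\EE_1|\le C(\ve+|d^K|)$ throughout $\Gamma^K(\delta)$, which is the first estimate.

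For the second estimate, write $\Phi=\Phi(\ve^{-1}d^K(x,t),x,t)$, so that $\partial_r\Phi=\ve^{-1}\partial_z\Phi+|\nabla d^K|^{-2}\,\nabla d^K\cdot\nabla_x\Phi$ (using $\partial_r(d^K)=1$). The term $\ve^{-1}\Phi^{\mathrm{T}}\partial_z\Phi\,\EE_1$ is handled by the quadratic–vanishing decay \eqref{decay:Phi}: $\partial_z\Phi=O(e^{-\alpha_0|z|}d_0^2)$, so evaluated at $z=\ve^{-1}d^K$, and using $|d^K|\ge c|d_0|-C\ve$, one gets $\ve^{-1}\partial_z\Phi=O(\ve)$ since $t\mapsto \ve\,e^{-\alpha_0 t}t^2$ is bounded. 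For the remaining term, on $\Gamma_t$ one has $\nabla d^K\cdot\nabla_x\Phi=\partial_\nu\Phi+\ve\,\nabla d_1\cdot\nabla_x\Phi+\cdots$, while the chain–rule contribution $\propto\ve^{-1}\partial_z\Phi$ vanishes on $\Gamma_t$ (as $d_0=0$ there, and $\nabla d_0\cdot\nabla d_1=0$ by \eqref{inner:dist-01}); hence $|\nabla d^K|^{-2}\Phi^{\mathrm{T}}(\nabla d^K\cdot\nabla_x\Phi)\EE_1=|\nabla d^K|^{-2}\bigl(\overline\WW\,\EE_1+O(\ve)\bigr)=O(\ve)$ on $\Gamma_t$. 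Thus $\Phi^{\mathrm{T}}\partial_r\Phi\,\EE_1=O(\ve)$ on $\Gamma_t$, and a Taylor expansion in the normal variable (the derivative being bounded because the $d_0^2$–weight in \eqref{decay:Phi} absorbs the $\ve^{-1}$'s) yields $|\Phi^{\mathrm{T}}\partial_r\Phi\,\EE_1|\le C(\ve+|d^K|)$.

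The main obstacle is the bookkeeping in the second estimate: one must verify that the two sources of error in replacing $\partial_r$ by $\partial_\nu$ — the higher–order phase corrections $\ve d_1,\ve^2 d_2,\dots$ and the rescaled–argument term $\ve^{-1}\partial_z\Phi$ — are genuinely $O(\ve)$ rather than $O(1)$. This is precisely where the quadratic vanishing $\Phi_+-\Phi_-=O(d_0^2)$ and its consequence \eqref{decay:Phi}, together with $\nabla d_0\cdot\nabla d_1=0$, are indispensable. Everything else reduces to routine Taylor expansion in the tubular coordinate $r=d^K$.
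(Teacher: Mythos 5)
Your argument is correct and is essentially the paper's own proof: both reduce to the vanishing of the relevant quantities on $\Gamma_t$ (via $\partial_\nu\nn|_{d_0=0}=0$ and $\Phi^{\mathrm{T}}\partial_\nu\Phi=\overline\WW\in\BV_4$, which annihilates $\nn\nn$, i.e.\ the Neumann jump condition \eqref{SharpInterfaceSys:Neumann}), combined with $\|d^K-d_0\|_{C^1}\le C\ve$ and a Lipschitz/Taylor comparison in the normal variable to pick up the $O(d^K+\ve)$ bound. Your explicit treatment of the chain-rule term $\ve^{-1}\partial_z\Phi$ via the quadratic vanishing \eqref{decay:Phi} is a welcome detail that the paper's proof leaves implicit, but it is the same mechanism, not a different route.
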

\begin{proof}
Recalling
$\partial_r=(1+O(\ve^{K+1}))\nabla d^K\cdot\nabla$, we can replace $\partial_r$ by $\nabla d^K\cdot\nabla$. On the other hand,
as $\|d^K-d_0\|_{C^1(\Gamma^K(\delta))}\le C \ve $, one has
  \begin{align*}
  |(\nabla d^K\cdot\nabla) \EE_1|& \le |(\nabla d_0\cdot\nabla) \EE_1| + C_1\ve\\
 & \le  |(\nabla d_0\cdot\nabla) \EE_1|\Big|_{d_0=0}+ C_1(d_0+\ve)=C_1(d_0+\ve).
  \end{align*}
In the last equality, we have used the fact that $\partial_\nu\nn|_{d_0=0}=0$. Similarly, we have
\begin{align*}
|\Phi^{\mathrm{T}}( \nabla d^K\cdot\nabla)\Phi \EE_1|&= |\Phi^{\mathrm{T}}( \nabla d_0\cdot\nabla)\Phi \EE_1| + C_1\ve\\
 & \le  |\Phi^{\mathrm{T}}( \nabla d_0\cdot\nabla)\Phi \EE_1|\Big|_{d_0=0}+ C_1(d_0+\ve)=C_1(d_0+\ve),
\end{align*}
as it holds on $\Gamma_0=\{d_0(x,t)=0\}$ that
\begin{align*}
  (\Phi^{\mathrm{T}}( \nabla d_0\cdot\nabla)\Phi \EE_1)\Big|_{d_0=0}=(\A_-^\mathrm{T}\partial_\nu\A_- \nn\nn)\Big|_{d_0=0}=0.
\end{align*}
due to the boundary condition (\ref{SharpInterfaceSys:Neumann}).
The proof is finished.
\end{proof}

\begin{Proposition}\label{prop:cross-1}Assume that $i$ or $j=1$ and $|a(r)|\le C_{a}(r+\ve)$.
Then for any $\nu_0>0$ there exists $C_0=C_0(\nu_0, C_a, \|a\|_{W^{1,\infty}})>0$ such that
\begin{align}
&\int_{-1}^1\big(\partial_rq_\alpha q_\beta-q_\alpha\partial_rq_\beta\big)a(r)\ud r
\nonumber\\&\leq\nu_0\int_{-1}^1\Big((\partial_rq_\alpha )^2+\varepsilon^{-2}\kappa_i (s_\ve)q_\alpha ^2
+(\partial_rq_\beta )^2+\varepsilon^{-2}\kappa_j (s_\ve)q_\beta ^2\Big)\ud r
+C_0\int_{-1}^1\big(q_\alpha ^2+q_\beta ^2\big)\ud r.\label{ineq:cross-1}
\end{align}
\end{Proposition}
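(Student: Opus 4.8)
The plan is to reduce the mixed cross term to a case with only small weights, estimate it by pairing the weight against the surplus in the coercive lower bound for the companion index, and to isolate the non-small piece as a commutator-type integral that can be integrated by parts. Concretely, suppose first that $j=1$, so $\EE_\beta=\EE_1\in\BV_1$ and $q_\beta=q_1$, while $\EE_\alpha$ lies in some $\BV_i$ with $i\in\{2,3,4,5\}$. By hypothesis $|a(r)|\le C_a(r+\ve)$, which by Lemma~\ref{lem:E1-smallness} is exactly the behaviour of $a(r)=\EE_\alpha:\partial_r\EE_1$ or $a(r)=\EE_\alpha:\WW\EE_1$. I split
\[
\int_{-1}^1(\partial_rq_\alpha q_1-q_\alpha\partial_rq_1)\,a(r)\,\ud r
=\int_{-1}^1\partial_rq_\alpha\,q_1\,a(r)\,\ud r-\int_{-1}^1q_\alpha\,\partial_rq_1\,a(r)\,\ud r,
\]
and integrate by parts in the second term to move $\partial_r$ off $q_1$:
\[
-\int_{-1}^1 q_\alpha\partial_rq_1\,a\,\ud r
=\int_{-1}^1 \partial_r(q_\alpha a)\,q_1\,\ud r-\big(q_\alpha q_1 a\big)\big|_{-1}^1
=\int_{-1}^1(\partial_rq_\alpha)q_1 a\,\ud r+\int_{-1}^1 q_\alpha q_1\,\partial_r a\,\ud r-\big(q_\alpha q_1 a\big)\big|_{-1}^1.
\]
So the left side equals $2\int_{-1}^1(\partial_rq_\alpha)q_1 a\,\ud r+\int_{-1}^1 q_\alpha q_1\,\partial_r a\,\ud r-(q_\alpha q_1 a)|_{-1}^1$. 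The boundary term is handled by Lemmas~\ref{lem:Linf} and \ref{lem:Linf-endpoint} (the endpoint control of $q_1$ costs an $O(\ve)$ factor, absorbed into $\nu_0$), and the term with $\partial_r a$ is bounded since $\|a\|_{W^{1,\infty}}<\infty$, giving $\le C_0\int(q_\alpha^2+q_1^2)$.

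The genuinely delicate piece is $2\int_{-1}^1(\partial_rq_\alpha)q_1 a\,\ud r$. Here I exploit the smallness $|a|\le C_a(r+\ve)$ together with the structure $q_1=\theta_\ve\bar q_1$ and the coercive inequality of Lemma~\ref{lem:lower-bound-1}. The point is that $|a(r)|\,|q_1(r)|\le C_a(|r|+\ve)\theta_\ve(r)|\bar q_1(r)|$, and $(|r|+\ve)\theta_\ve(r)\le C\ve$ uniformly on $[-1,1]$ because $\theta_\ve$ decays like $e^{-\sqrt2|r|/\ve}$ away from $r=0$; thus $|a(r)q_1(r)|\le C\ve|\bar q_1(r)|$, and more precisely $|a(r)q_1(r)|\le C\big(\ve+\,|r|\,\mathbf 1_{|r|\le\ve}\big)\theta_\ve|\bar q_1|$ so that one can bound $\int |a q_1|^2\ve^{-2}\ud r\le C\int\theta_\ve^2\bar q_1^2\ud r=C\int q_1^2\ud r$. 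Hence, by Cauchy--Schwarz with a weight $\ve$,
\[
2\Big|\int_{-1}^1(\partial_rq_\alpha)q_1 a\,\ud r\Big|
\le \nu_0\int_{-1}^1(\partial_rq_\alpha)^2\,\ud r+\frac{C}{\nu_0\ve^2}\int_{-1}^1 (aq_1)^2\,\ud r
\le \nu_0\int_{-1}^1(\partial_rq_\alpha)^2\,\ud r+\frac{C}{\nu_0}\int_{-1}^1 q_1^2\,\ud r.
\]
Combining, the whole expression is controlled by $\nu_0\int(\partial_rq_\alpha)^2+C_0\int(q_\alpha^2+q_1^2)$, which is stronger than the claimed bound \eqref{ineq:cross-1} since the $\varepsilon^{-2}\kappa_i$ and $\varepsilon^{-2}\kappa_j$ terms on the right are simply discarded (and $\kappa_1$ is the one operator for which we have no sign, but it no longer appears). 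The symmetric case $i=1$ (so $\EE_\alpha=\EE_1$, $q_\alpha=q_1$) is identical after relabelling, using the antisymmetry of the bracket $\partial_rq_\alpha q_\beta-q_\alpha\partial_rq_\beta$.

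The main obstacle I anticipate is making the estimate $(|r|+\ve)\theta_\ve(r)\le C\ve$ and the resulting bound $\ve^{-2}\int(aq_1)^2\le C\int q_1^2$ fully rigorous near $r=0$, where $\theta_\ve$ is of order $1$ and the naive bound $|a|\le C_a(|r|+\ve)$ only gives $|a|\le 2C_a\ve$ on $|r|\le\ve$; on that window $|aq_1|\le C\ve|q_1|$ directly, while on $|r|\ge\ve$ one uses the exponential decay of $\theta_\ve$ to absorb the growing factor $|r|$. One must also be a little careful that the constant $C_0$ is allowed to depend on $C_a$ and $\|a\|_{W^{1,\infty}}$ (as the statement permits) but not on $\ve$; tracking this through the integration by parts and the Gagliardo--Nirenberg step behind Lemmas~\ref{lem:Linf}--\ref{lem:Linf-endpoint} is routine but needs attention. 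Everything else — the boundary terms, the $\partial_r a$ term, and the reduction to the two cases $i=1$ and $j=1$ — is straightforward given Lemmas~\ref{lem:lower-bound-1}, \ref{lem:Linf}, \ref{lem:Linf-endpoint} and \ref{lem:E1-smallness}.
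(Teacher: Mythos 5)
There is a genuine gap, in two places. First, your central estimate $\ve^{-2}\int_{-1}^1(aq_1)^2\,\ud r\le C\int_{-1}^1 q_1^2\,\ud r$ is false. From $|a(r)|\le C_a(|r|+\ve)$ you only get $\ve^{-1}|a(r)|\le C_a(1+|r|/\ve)$, which is of size $\ve^{-1}$ for $|r|\sim 1$. The attempted rescue through $q_1=\theta_\ve\bar q_1$ and $(|r|+\ve)\theta_\ve(r)\le C\ve$ gives $|aq_1|\le C\ve|\bar q_1|$, but $\int\bar q_1^2=\int q_1^2\,\theta_\ve^{-2}$ is not controlled by $\int q_1^2$ (it can be larger by a factor of order $e^{2\sqrt 2/\ve}$), and the ``more precise'' display $(|r|+\ve)\le C\big(\ve+|r|\mathbf 1_{|r|\le\ve}\big)$ fails for $|r|>\ve$: you are using the decay of $\theta_\ve$ twice, once to tame $(|r|+\ve)/\ve$ and once to keep $|q_1|=\theta_\ve|\bar q_1|$. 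Note also that the bound you end with, namely the left-hand side $\le\nu_0\int(\partial_rq_\alpha)^2+C_0\int(q_\alpha^2+q_1^2)$, already follows from $|a|\le\|a\|_{L^\infty}$ and Cauchy--Schwarz, so the hypothesis $|a|\le C_a(r+\ve)$ has not actually been exploited where it matters --- a warning sign.

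Second, and more fundamentally, that final bound is not ``stronger than \eqref{ineq:cross-1}'': the $\ve^{-2}\kappa_i$ terms cannot be discarded when $i\in\{2,3\}$, which together with $j=1$ are exactly the cases not already covered by Proposition \ref{prop:cross-45}. Since $\kappa_2,\kappa_3$ are negative of order one on a window of width $O(\ve)$, the form $\int\big((\partial_rq_\alpha)^2+\ve^{-2}\kappa_i(s_\ve)q_\alpha^2\big)$ does not dominate $\int(\partial_rq_\alpha)^2$ modulo $C\int q_\alpha^2$: for $i=2$ and $q_\alpha=s_\ve$ the form is $O(\ve^{-1}e^{-2\sqrt2/\ve})$ while $\int(\partial_rq_\alpha)^2\sim\ve^{-1}$, so deducing \eqref{ineq:cross-1} from your bound would force $C_0\sim\nu_0/\ve$. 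The paper's proof integrates by parts in the opposite direction, so the derivative lands on $q_1$, and then decomposes $q_1=\mu_0\theta_\ve+\hat q_1$ with $\int\theta_\ve\hat q_1\,\ud r=0$: the singular factor $\partial_r\theta_\ve=\frac{\sqrt2}{\ve}\theta_\ve(1-2s_\ve)$ is met by the small weight (this is where $|a|\le C_a(r+\ve)$ is genuinely used, via $\|\ve^{-1}\theta_\ve(1-2s_\ve)a\|_{L^2}\le C(C_a)\|\theta_\ve\|_{L^2}$ and $\mu_0\|\theta_\ve\|_{L^2}\le C\|q_1\|_{L^2}$), while $\|\partial_r\hat q_1\|_{L^2}^2$ is controlled by the $\mathcal{L}_1$ quadratic form plus $C\int q_1^2$ through Corollary \ref{lem:lower-bound-1-deriv-out} --- precisely the term carried with coefficient $\nu_0$ on the right of \eqref{ineq:cross-1}. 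Your write-up never invokes this orthogonal decomposition or Corollary \ref{lem:lower-bound-1-deriv-out}, and without them the argument does not close.
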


\begin{proof}
Assume $j=1$. Firstly, we have
  \begin{align*}
 \int_{-1}^1\big(\partial_rq_\alpha q_\beta-q_\alpha\partial_rq_\beta\big) a(r)\ud r\le &~
 q_\alpha q_\beta a\Big|_{-1}^1- \int_{-1}^1\big(2q_\alpha\partial_rq_\beta a(r)+q_\alpha q_\beta \partial_ra(r))\ud r.
 \end{align*}
From Lemmas \ref{lem:Linf} and \ref{lem:Linf-endpoint}, it suffices to estimate $\int_{-1}^1q_\alpha\partial_rq_\beta a(r)\ud r$.
Let
\begin{align*}
q_\beta =\mu_0\theta_\ve+\hat{q}_\beta,\text{ with } \int_{-1}^{1}\theta_\ve\hat{q}_\beta \ud r=0.
\end{align*}
Then using the fact that $\partial_r\theta_\ve =\frac{\sqrt{2}}{\ve}\theta_\ve(1-2s_\ve)$, we get
 \begin{align*}
\int_{-1}^1 q_\alpha\partial_rq_\beta a(r)\ud r=&~\mu_0 \int_{-1}^1 q_\alpha\partial_r\theta_\ve a(r)\ud r +\int_{-1}^1q_\alpha\partial_r\hat{q}_\beta a(r)\ud r\\
= &~\mu_0 \sqrt{2}\int_{-1}^1 q_\alpha\theta_\ve(1-2s_\ve) \frac{a(r)}{\ve}\ud r +\int_{-1}^1q_\alpha\partial_r\hat{q}_\beta a(r)\ud r\\
\le &~C\|q_\alpha\|_{L^2{}}\Big(\mu_0\|\ve^{-1}\theta_\ve(1-2s_\ve)a\|_{L^2{}} +\|\partial_r\hat{q}_\beta\|_{L^2{}} \Big).
  \end{align*}
As $a(r)\le C_a(r+\ve)$, we have
\begin{align*}
\mu_0\|\ve^{-1}\theta_\ve(1-2s_\ve)a\|_{L^2{}}\le C_a\mu_0\big(\|\theta_\ve \|_{L^2{}}+\|\theta_\ve r/\ve\|_{L^2{}}\big)\le C(C_a)\mu_0\|\theta_\ve \|_{L^2{}}\le C(C_a) \|q_\beta\|_{L^2{}}.
\end{align*}
Moreover, due to Corollary \ref{lem:lower-bound-1-deriv-out}, we can control $\|\partial_r\hat{q}_\beta\|^2_{L^2{}}$  by the right hand side of \eqref{ineq:cross-1}.
\end{proof}

The estimate of crossing terms for $\mathbb{V}_2$ and $\mathbb{V}_3$ is more subtle.
\begin{Proposition}\label{prop:cross-23}
For any $\nu_0>0$ there exists $C_0=C_0(\nu_0, |a|_{L^\infty})>0$ such that
\begin{align}
&\int_{-1}^1\big(\partial_rq_2q_3-q_2\partial_rq_3\big)a(r)\ud r
\nonumber\\&\le \nu_0\int_{-1}^1\Big((\partial_rq_2)^2+\varepsilon^{-2}\kappa_2(s_\ve)q_2^2
+(\partial_rq_3)^2+\varepsilon^{-2}\kappa_3(s_\ve)q_3^2\Big)\ud r
+C_0\int_{-1}^1\big(q_2^2+q_3^2\big)\ud r.\label{esti:cross-23}
\end{align}
\end{Proposition}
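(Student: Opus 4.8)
The plan is to pass to the weighted unknowns adapted to the ground states of $\mathcal{L}_2$ and $\mathcal{L}_3$, isolate the single genuinely singular contribution, and absorb everything else into the coercive quantities of Lemma \ref{lem:lower-bound-23} together with the $L^\infty$-bound of Lemma \ref{lem:Linf}. Write $q_2=s_\ve\bar q_2$ and $q_3=(1-s_\ve)\bar q_3$; since $\partial_r(1-s_\ve)=-\partial_rs_\ve$, a direct computation gives the algebraic identity
\begin{align*}
\partial_rq_2\,q_3-q_2\,\partial_rq_3=(\partial_rs_\ve)\,\bar q_2\bar q_3+s_\ve(1-s_\ve)\big(\partial_r\bar q_2\,\bar q_3-\bar q_2\,\partial_r\bar q_3\big).
\end{align*}
For the second group one recombines $s_\ve(1-s_\ve)\,\partial_r\bar q_2\cdot\bar q_3=(s_\ve\partial_r\bar q_2)\,q_3$ and $s_\ve(1-s_\ve)\,\bar q_2\cdot\partial_r\bar q_3=q_2\cdot\big((1-s_\ve)\partial_r\bar q_3\big)$, so that Cauchy--Schwarz bounds it by $\nu_0\big(\|s_\ve\partial_r\bar q_2\|_{L^2}^2+\|(1-s_\ve)\partial_r\bar q_3\|_{L^2}^2\big)+C(\nu_0,\|a\|_{L^\infty})\big(\|q_2\|_{L^2}^2+\|q_3\|_{L^2}^2\big)$; by Lemma \ref{lem:lower-bound-23} the first two norms are controlled by the right-hand side of \eqref{esti:cross-23} (for $\ve$ small the exponentially small remainders there are harmless, since $s_\ve^2\bar q_2^2=q_2^2$ and $(1-s_\ve)^2\bar q_3^2=q_3^2$). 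Thus everything reduces to the singular term $\int_{-1}^1 a\,(\partial_rs_\ve)\,\bar q_2\bar q_3\,\ud r$.

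Recalling $\partial_rs_\ve=\ve^{-1}\theta_\ve$ and $s_\ve(1-s_\ve)=\theta_\ve/\sqrt2$, this equals $\ve^{-1}\int_{-1}^1 a\,\theta_\ve\,\bar q_2\bar q_3\,\ud r$, and the point is that $\ve^{-1}\theta_\ve$ is an approximate identity concentrating at $r=0$, where $s_\ve=1-s_\ve=\tfrac12$, while $\bar q_2=q_2/s_\ve$ is comparable to $q_2$ on $\{r\ge0\}$ and $\bar q_3=q_3/(1-s_\ve)$ is comparable to $q_3$ on $\{r\le0\}$: the two substitutions degenerate on \emph{complementary} half-intervals, which is precisely the reflection symmetry $1-s(z)=s(-z)$, $\kappa_2(s(z))=\kappa_3(s(-z))$ relating $\mathcal{L}_2$ and $\mathcal{L}_3$. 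I would therefore split $[-1,1]=[-1,0]\cup[0,1]$ and integrate by parts on each piece, using $s_\ve$ as antiderivative of $\partial_rs_\ve$ on $[-1,0]$ and $-(1-s_\ve)$ on $[0,1]$. The endpoint contributions at $\pm1$ are $s_\ve(-1)\bar q_2(-1)\bar q_3(-1)=q_2(-1)q_3(-1)/(1-s_\ve(-1))$ and $(1-s_\ve(1))\bar q_2(1)\bar q_3(1)=q_2(1)q_3(1)/s_\ve(1)$, products of boundary values with bounded prefactor, hence controlled by Lemma \ref{lem:Linf}; the two junction terms at $r=0$ add to $a(0)\bar q_2(0)\bar q_3(0)=4a(0)q_2(0)q_3(0)$, again absorbed via Lemma \ref{lem:Linf}. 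The remaining bulk integrals, obtained by expanding $\partial_r(a\bar q_2\bar q_3)$, are of three types on $[-1,0]$: $\int s_\ve(\partial_ra)\bar q_2\bar q_3$ (where $s_\ve\bar q_2\bar q_3=q_2q_3/(1-s_\ve)$ with $1-s_\ve\ge\tfrac12$, hence $\le C\|q\|_{L^2}^2$), pairings of $a\,(s_\ve\partial_r\bar q_2)$ against $\bar q_3\le 2q_3$, and pairings of $a\,q_2$ against $\partial_r\bar q_3$ with $\int_{-1}^0(\partial_r\bar q_3)^2\le4\int(1-s_\ve)^2(\partial_r\bar q_3)^2$; each is closed by Cauchy--Schwarz and Lemma \ref{lem:lower-bound-23}, and symmetrically on $[0,1]$ after interchanging $s_\ve,\bar q_2$ with $1-s_\ve,\bar q_3$ (using $\big((1-s_\ve)\partial_r\bar q_3\big)^2=(1-s_\ve)^2(\partial_r\bar q_3)^2$ and $s_\ve\ge\tfrac12$). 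Collecting the bounds yields \eqref{esti:cross-23}.

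The genuine obstacle is exactly the term $\ve^{-1}\int aq_2q_3$, and it explains why a crude argument cannot work: a pointwise weighted Cauchy--Schwarz $\ve^{-1}|aq_2q_3|\le\nu_0\ve^{-2}(\kappa_2q_2^2+\kappa_3q_3^2)+C(q_2^2+q_3^2)$ would require $\|a\|_{L^\infty}^2\ve^2\lesssim\nu_0^2\kappa_2\kappa_3+\ve^2(\cdots)$, which fails near $r=0$ since $\kappa_2\kappa_3\le0$ there --- neither the $\mathcal{L}_2$- nor the $\mathcal{L}_3$-coercivity dominates on its own, and one loses a full power of $\ve$. The resolution is the cancellation described above: the singularity is supported where $\ve^{-1}\theta_\ve$ concentrates, and near $r=0$ the coercive estimates of Lemma \ref{lem:lower-bound-23} control precisely the deviation of $q_i$ from the ground-state direction $s_\ve$ (resp.\ $1-s_\ve$), so that only the scalar amplitudes $\bar q_2(0),\bar q_3(0)$, equivalently the pointwise values $q_2(0),q_3(0)$, survive, and these are tamed by the interpolation inequality in Lemma \ref{lem:Linf}. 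I expect this splitting and integration-by-parts step, with the bookkeeping of boundary and junction terms, to be the main work; everything else is routine Cauchy--Schwarz against Lemma \ref{lem:lower-bound-23}.
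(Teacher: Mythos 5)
Your proposal is correct in substance and starts exactly as the paper does: the same substitutions $q_2=s_\ve\bar q_2$, $q_3=(1-s_\ve)\bar q_3$, the same algebraic identity isolating $\partial_r s_\ve\,\bar q_2\bar q_3$ as the only singular contribution, Cauchy--Schwarz for the regular commutator part, and Lemma \ref{lem:lower-bound-23} to convert the weighted quantities $\int s_\ve^2(\partial_r\bar q_2)^2$, $\int(1-s_\ve)^2(\partial_r\bar q_3)^2$ back into the coercive terms. Where you diverge is the treatment of the singular term: the paper delegates it to the product estimate of Lemma \ref{lem:cross-estimate-23}, whose proof never differentiates $a$ --- it uses Gagliardo--Nirenberg to control $\|\bar q_3\|_{L^\infty([-1,0])}$ (resp.\ $\|\bar q_2\|_{L^\infty([0,1])}$) on the half-interval where its weight is non-degenerate, and then bounds $\int_{-1}^0\partial_r s_\ve|\bar q_2|$ by $\int s_\ve|\partial_r\bar q_2|$ plus the values $s_\ve(0)\bar q_2(0)$, $s_\ve(-1)\bar q_2(-1)$. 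You instead integrate by parts with the piecewise antiderivatives $s_\ve$ on $[-1,0]$ and $-(1-s_\ve)$ on $[0,1]$, collect the junction term $a(0)\bar q_2(0)\bar q_3(0)=4a(0)q_2(0)q_3(0)$ and the endpoint terms, and absorb them via Lemma \ref{lem:Linf}; your bulk terms are then handled exactly as in the paper. This is a legitimate and arguably more transparent variant, exploiting the same half-interval symmetry between $\mathcal L_2$ and $\mathcal L_3$. The one caveat is that your integration by parts produces $\int s_\ve(\partial_r a)\bar q_2\bar q_3$-type terms, so your constant depends on $\|a\|_{W^{1,\infty}}$ and requires $a$ Lipschitz, whereas the proposition as stated (and Lemma \ref{lem:cross-estimate-23}) only needs $a\in L^\infty$ with $C_0=C_0(\nu_0,|a|_{L^\infty})$. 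This weakening is harmless for the paper's application, since there $a(r)=\EE_\alpha:\partial_r\EE_\beta$ or $\EE_\alpha:\WW\EE_\beta$ is smooth and the companion estimates (Propositions \ref{prop:cross-45}, \ref{prop:cross-1}) already invoke $\|a\|_{W^{1,\infty}}$, but you should state the modified hypothesis explicitly if you keep your route.
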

\begin{proof}
We use the decompositions:
\begin{align}
q_2(r)= s_\ve(r)\bar{q}_2(r),\quad q_3(r)=(1- s_\ve(r))\bar{q}_3(r). \label{2Ddecomposing}
\end{align}
Then we have
\begin{align*}
\partial_rq_2q_3-q_2\partial_rq_3=&~\big(\partial_rs_\ve\bar{q}_2+s_\ve \partial_r\bar{q}_2\big) (1- s_\ve)\bar{q}_3
  -s_\ve\bar{q}_2\big[-\partial_rs_\ve\bar{q}_3+(1-s_\ve)\partial_r\bar{q}_2\big]\\
=&~\partial_rs_\ve\bar{q}_2\bar{q}_3+s_\ve (1- s_\ve)\big(\partial_r\bar{q}_2\bar{q}_3-\bar{q}_3\partial_r\bar{q}_2\big).
\end{align*}
Using the next Lemma \ref{lem:cross-estimate-23}, we have
\begin{align*}
\Big|\int_{-1}^1\partial_rs_\ve\bar{q}_2\bar{q}_3 a(r)\ud r\Big|
\le &~
\frac{\nu_0}{2}\int_{-1}^1\Big(s_\ve^2
(\partial_r\bar{q}_2)^2+(1-s_\ve)^2(\partial_r\bar{q}_3)^2\Big)\ud r\\
&+C_0(\nu_0, |a|_{L^\infty})\int_{-1}^1\Big(s_\ve^2\bar{q}_2^2+(1-s_\ve)^2\bar{q}_3^2\Big)\ud r.
\end{align*}
Using Cauchy-Schwartz inequality, one get
\begin{align*}
\Big|\int_{-1}^1s_\ve (1- s_\ve)\big(\partial_r\bar{q}_2\bar{q}_3-\bar{q}_3\partial_r\bar{q}_2\big) a(r)\ud r\Big|
\le&~
\frac{\nu_0}{2}\int_{-1}^1\Big(s_\ve^2
(\partial_r\bar{q}_2)^2+(1-s_\ve)^2(\partial_r\bar{q}_3)^2\Big)\ud r\\
&+C_0(\nu_0, |a|_{L^\infty})\int_{-1}^1\Big(s_\ve^2\bar{q}_2^2+(1-s_\ve)^2\bar{q}_3^2\Big)\ud r.
\end{align*}
Therefore, we obtain
\begin{align*}
  \int_{-1}^1\big(\partial_rq_2q_3-q_2\partial_rq_3\big)a(r)\ud r
  \le &~\nu_0\int_{-1}^1\Big(s_\ve^2
(\partial_r\bar{q}_2)^2+(1-s_\ve)^2(\partial_r\bar{q}_3)^2\Big)\ud r\\
&+C_0(\nu_0, |a|_{L^\infty})\int_{-1}^1\Big(s_\ve^2\bar{q}_2^2+(1-s_\ve)^2\bar{q}_3^2\Big)\ud r.
\end{align*}
Then the claim follows immediately from Lemma \ref{lem:lower-bound-23}.
\end{proof}

\begin{Lemma} \label{lem:cross-estimate-23}
Let $a(r)\in L^\infty([-1,1])$. Then for any $\nu_0>0$ there exists $C_0=C_0(\nu_0, |a|_{L^\infty})>0$ such that
\begin{align}
\bigg|\int_{-1}^1\partial_rs_\ve\varrho_2\varrho_3 a(r) \ud r\bigg|
\leq &C_0\int_{-1}^1\big(s_\ve^2\varrho_2^2+(1-s_\ve)^2\varrho_3^2\big)\ud r
\nonumber\\&+\nu_0\int_{-1}^1\big(s_\ve^2
(\partial_r\varrho_2)^2+(1-s_\ve)^2(\partial_r\varrho_3)^2\big)\ud r.\label{esti:produc-23}
\end{align}
\end{Lemma}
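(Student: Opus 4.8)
The plan is to exploit the identity $\partial_r s_\ve = \sqrt2\,\ve^{-1}s_\ve(1-s_\ve)$. Writing $q_2:=s_\ve\varrho_2$, $q_3:=(1-s_\ve)\varrho_3$ (exactly the quantities appearing on the right-hand side, since $\int_{-1}^1 s_\ve^2\varrho_2^2 = \int_{-1}^1 q_2^2$, etc.), the integrand becomes $\sqrt2\,\ve^{-1}q_2 q_3\, a$, with \emph{no} division by the exponentially small factors $s_\ve$ (near $-\infty$) or $1-s_\ve$ (near $+\infty$). I will split the integral at $r=0$; by the change of variables $r\mapsto -r$, which preserves $\partial_r s_\ve$ and interchanges the triples $(s_\ve,\varrho_2,q_2)\leftrightarrow(1-s_\ve,\varrho_3,q_3)$ together with the four controlled quantities $A_2:=\int s_\ve^2\varrho_2^2$, $D_2:=\int s_\ve^2(\partial_r\varrho_2)^2$, $A_3:=\int(1-s_\ve)^2\varrho_3^2$, $D_3:=\int(1-s_\ve)^2(\partial_r\varrho_3)^2$, it suffices to treat $I_+:=\int_0^1\partial_r s_\ve\,\varrho_2\varrho_3\,a$. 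There $\partial_r s_\ve\,\varrho_3=\sqrt2\,\ve^{-1}s_\ve q_3$, so $I_+=\sqrt2\,\ve^{-1}\int_0^1 q_2 q_3\, a$.

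The main obstacle, and the reason a naive Cauchy--Schwarz $\ve^{-1}\int|q_2||q_3|\le\ve^{-1}\|q_2\|_{L^2}\|q_3\|_{L^2}$ fails, is precisely the prefactor $\ve^{-1}$: the cross term is concentrated on an $O(\ve)$-window near $r=0$, while the $L^2$ norms see the whole interval, losing a full power of $\ve$. To fix this I would use the first-order ODE satisfied by $q_3$ on $[0,1]$, namely $\partial_r q_3+\sqrt2\,\ve^{-1}s_\ve q_3=(1-s_\ve)\partial_r\varrho_3=:g_3$ with $\|g_3\|_{L^2([0,1])}\le D_3^{1/2}$, and split $q_3=q_3^{(0)}+q_3^{(1)}$ into the homogeneous solution with datum $q_3(0)$ at $r=0$ and the inhomogeneous solution with zero datum. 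Since $\int_0^r s_\ve\ge r-\ve/\sqrt2$ on $[0,1]$, one gets $|q_3^{(0)}(r)|\le e\,e^{-\sqrt2 r/\ve}|q_3(0)|$, hence $\|q_3^{(0)}\|_{L^1([0,1])}\le \tfrac{e\ve}{\sqrt2}|q_3(0)|$; and testing the inhomogeneous equation against $q_3^{(1)}$ and using $s_\ve\ge\tfrac12$ on $[0,1]$ yields the energy bound $\|q_3^{(1)}\|_{L^2([0,1])}\le\sqrt2\,\ve\,D_3^{1/2}$. The $\ve^{-1}$ is then absorbed cleanly: the $q_3^{(0)}$-contribution is $\le \sqrt2\,\ve^{-1}|a|_\infty\|q_2\|_{L^\infty([0,1])}\|q_3^{(0)}\|_{L^1([0,1])}\le C|a|_\infty\|q_2\|_{L^\infty([0,1])}|q_3(0)|$, and the $q_3^{(1)}$-contribution is $\le\sqrt2\,\ve^{-1}|a|_\infty\|q_2\|_{L^2([0,1])}\|q_3^{(1)}\|_{L^2([0,1])}\le 2|a|_\infty A_2^{1/2}D_3^{1/2}$ --- both $\ve$-independent. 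The point is to never pass through the intermediate, too-lossy estimate $\ve^{-1}\|q_3\|_{L^2([0,1])}\sim\ve^{-1/2}(\cdot)$.

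To close, I would bound $\|q_2\|_{L^\infty([0,1])}$ and $|q_3(0)|$ by Gagliardo--Nirenberg, using that $s_\ve\ge\tfrac12$ on $[0,1]$ and $1-s_\ve\ge\tfrac12$ on $[-1,0]$: thus $\|\varrho_2\|_{H^1([0,1])}^2\le 4(A_2+D_2)$ and, for any $\eta>0$, $\|q_2\|_{L^\infty([0,1])}^2\le\|\varrho_2\|_{L^\infty([0,1])}^2\le\eta D_2+C_\eta A_2$, while $q_3(0)^2=\tfrac14\varrho_3(0)^2\le\eta D_3+C_\eta A_3$. Inserting these, choosing $\eta$ small (fixed, independent of $\ve$), and applying Young's inequality converts $|I_+|$ into $\nu_0(D_2+D_3)+C_0(\nu_0,|a|_\infty)(A_2+A_3)$; the $[-1,0]$ half is identical after the reflection noted above. (All steps are legitimate for $\varrho_2,\varrho_3\in H^1([-1,1])$, which is the case in the application.) The only genuinely delicate ingredient is the ODE decomposition of $q_3$ on $[0,1]$ and the matching of the $\ve^{-1}$ prefactor against the $O(\ve)$ gain from either $\|q_3^{(0)}\|_{L^1}$ or the energy estimate for $q_3^{(1)}$; everything else is bookkeeping.
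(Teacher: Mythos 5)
Your proof is correct. The reduction by the exact reflection symmetry $s_\ve(-r)=1-s_\ve(r)$ is legitimate, the ODE $\partial_r q_3+\sqrt2\,\ve^{-1}s_\ve q_3=(1-s_\ve)\partial_r\varrho_3$ is the right identity, the homogeneous part does satisfy $\|q_3^{(0)}\|_{L^1([0,1])}\lesssim\ve|q_3(0)|$ (using $\int_0^r s_\ve\ge r-\ve/\sqrt2$), the energy estimate $\|q_3^{(1)}\|_{L^2([0,1])}\le\sqrt2\,\ve\,D_3^{1/2}$ (using $s_\ve\ge\tfrac12$ on $[0,1]$) is fine, and the closing Gagliardo--Nirenberg and Young steps deliver exactly the stated bound. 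However, your route is genuinely different from the paper's. The paper also splits at $r=0$ and uses the same GN interpolation for $\|\varrho_3\|_{L^\infty([-1,0])}$ (resp. $\|\varrho_2\|_{L^\infty([0,1])}$), but it never makes the $\ve^{-1}$ explicit: on $[-1,0]$ it bounds $\int_{-1}^0\partial_r s_\ve|\varrho_2|\,\ud r$ by moving the derivative off $s_\ve$, i.e.\ $\int_{-1}^0\partial_r s_\ve|\varrho_2|\le\int_{-1}^0 s_\ve|\partial_r\varrho_2|+|s_\ve(0)\varrho_2(0)|+|s_\ve(-1)\varrho_2(-1)|$, and then controls the endpoint term $s_\ve(-1)|\varrho_2(-1)|$ using the monotonicity of $s_\ve$ ($s_\ve(-1)\le s_\ve(r)$), so the whole argument is a one-line integration by parts plus GN. Your Duhamel/energy decomposition of $q_3$ instead tracks the $O(\ve)$ localization of $\partial_r s_\ve$ quantitatively and shows explicitly where the factor $\ve$ is recovered (in $\|q_3^{(0)}\|_{L^1}$ and in the damping term of the energy identity); this is sharper in spirit and would generalize to weights that are not exact derivatives, but it is heavier machinery than the paper needs, while the paper's argument is softer, using only $\partial_r s_\ve\ge0$, monotonicity, and the boundedness of the total variation of $s_\ve$.
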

\begin{proof} Let
\begin{align*}
I_1=\int_{-1}^1\big(s_\ve^2\varrho_2^2+(1-s_\ve)^2\varrho_3^2\big)\ud r,\quad
I_2=\int_{-1}^1\big(s_\ve^2
(\partial_r\varrho_2)^2+(1-s_\ve)^2(\partial_r\varrho_3)^2\big)\ud r.%\label{def:I12}
\end{align*}
By the Gagliardo-Nirenberg inequality, one has for any $\nu_1>0$ there exists $C(\nu_1)>0$ such that
\begin{align*}
\|\varrho_2\|_{L^\infty([0,1])}\leq \frac{\nu_1}{2} \bigg(\int_0^1\big(\partial_r\varrho_2\big)^2\ud r\bigg)^{\frac{1}{2}}
+C(\nu_1)\bigg(\int_0^1\varrho_2^2\ud r\bigg)^{\frac{1}{2}}
\leq  \nu_1 I_2^{\frac{1}{2}}+C(\nu_1)I_1^{\frac{1}{2}}.%\label{mse-1}
\end{align*}
Similarly, there holds
\begin{align*}
\|\varrho_3\|_{L^\infty([-1,0])}
\leq  \nu_1 I_2^{\frac{1}{2}}+C(\nu_1)I_1^{\frac{1}{2}}.%\label{mse-2}
\end{align*}
Then
\begin{align}
&\bigg|\int_{-1}^0\partial_rs_\ve\varrho_2\varrho_3a(r)\ud r\bigg|
\nonumber\\&\leq C\Big(\int_{-1}^0\partial_rs_\ve|\varrho_2|\ud r\Big)\big(\nu_1 I_2^{\frac{1}{2}}+C(\nu_1)I_1^{\frac{1}{2}}\big)
\nonumber\\&\leq C\bigg(\int_{-1}^0s_\ve|\partial_r\varrho_2|\ud r+|s_\ve(0)\varrho_2(0)|+|s_\ve(-1)\varrho_2(-1)|\bigg)\big(\nu_1 I_2^{\frac{1}{2}}+C(\nu_1)I_1^{\frac{1}{2}}\big)
\nonumber\\&\leq C\Big(I_2^{\frac{1}{2}}+\nu_1 I_2^{\frac{1}{2}}+C(\nu_1)I_1^{\frac{1}{2}}+|s_\ve(-1)\varrho_2(-1)|\Big)\Big(\nu_1 I_2^{\frac{1}{2}}+C(\nu_1)I_1^{\frac{1}{2}}\Big).\label{esti:produc-23-1}
\end{align}
On the other hand, we have
\begin{align}
|s_\ve(-1)\varrho_2(-1)|&\leq s_\ve(-1)\Big(|\varrho_2(0)|+\int_{-1}^0|\partial_r\varrho_2|\ud r\Big)\nonumber\\
&\leq s_\ve(-1)|\varrho_2(0)|+\int_{-1}^0s_\ve|\partial_r\varrho_2|\ud r
\nonumber\\
&\leq \nu_1 I_2^{\frac{1}{2}}+C(\nu_1)I_1^{\frac{1}{2}}+ I_2^{\frac{1}{2}}.\label{esti:produc-23-2}
\end{align}

With the help of \eqref{esti:produc-23-1}-\eqref{esti:produc-23-2},  one has for any $\nu_0>0$ there exists $C_0>0$ such that
\begin{align*}
\bigg|\int_{-1}^0\partial_rs_\ve\varrho_2\varrho_3 a(r)\ud r\bigg|\leq
\nu_0 I_2+C_0I_1.
\end{align*}
By a similar argument, we get
\begin{align*}
\bigg|\int_{0}^1\partial_rs_\ve\varrho_2\varrho_3 a(r)\ud r\bigg|\leq
\nu_0 I_2+C_0I_1.
\end{align*}
The proof is completed.
\end{proof}

\subsection{Estimate for correction terms}\label{subsec:estimate-correction}

Recall from \eqref{solution:P1} that
\begin{align*}
\PP_1=s_\ve(r)\PP_{1,2}(x,t)+(1-s_\ve)\PP_{1,3}(x,t)+\PP_{1,4}(x,t)-\LL_1(x,t)d_0(x,t)\eta(r/\ve)+\PP_1^*({r}/{\ve},x,t).
\end{align*}
Using the  exponential decay in $z$ of $\PP_1^*(z,x,t)$, and  the fact that $|\PP_1^*(z,x,t)|_{d_0(x,t)=0}=0 $, we have
\begin{align*}
\Big|\frac1{\ve}\PP_1^*\Big(\frac{r}{\ve},x,t\Big)\Big|\le C\frac{|d_0|}{\ve}e^{-\frac{\alpha_0|r|}{\ve}} \le C.
\end{align*}
Thus, the terms containing $\PP_1^*$ can be controlled by $\int |\BB|^2 J\ud r$.
Note that  $\LL_1(x,t)d_0(x,t)\eta(\frac r \ve)\in \mathbb{V}_4$ and $\pa_r\big(\LL_1(x,t)d_0(x,t)\eta(\frac r \ve)\big)$ is bounded.
Thus, without loss of generality, we only need to consider
\begin{align*}
\PP_1(r)=s_\ve(r)\EE_2(r)+(1-s_\ve)\EE_3(r)+\EE_4(r)\quad \text{with}\quad\EE_i\in\mathbb{V}_i(i=2,3,4).
\end{align*}

Now we calculate $\TT_f(\PP_0,\PP_1,\tilde\BB):\tilde\BB$ in (\ref{ineq:nextorder-decom}). First of all, any term in $\int \ve^{-1}\TT_f(\PP_0,\PP_1,\tilde\BB):\tilde\BB Jdr$ containing
$\mathcal{P}_5\tilde\BB$ can be bounded by  the right hand side of  \eqref{ineq:1-dim-nextorder}.
So, we only need to consider the terms in $\mathbb{V}_i(1\le i\le 4)$. Consider $\tilde\BB=\sum_{i=1}^4\BB_i$ with $\BB_i\in\mathbb{V}_i(1\le i\le 4)$.
By Lemma \ref{lem:bilinear}, the summation of all terms containing $\EE_2$ equals to
\begin{align*}
  2s_\ve(2s_\ve-1)\EE_2:(\BB_3\BB_4+\BB_4\BB_3)+2s_\ve(3-4s_\ve)\EE_2:(\BB_1\BB_2+\BB_2\BB_1)\triangleq L_{34}+L_{12},
\end{align*}
the summation of all terms containing $\EE_3$ equals to
\begin{align*}
2(1-s_\ve)(1-4s_\ve)\EE_3:(\BB_1\BB_3+\BB_3\BB_1)+2(1-s_\ve)(1-2s_\ve)\EE_3:(\BB_2\BB_4+\BB_4\BB_2)\triangleq L_{13}+L_{24},
 \end{align*}
and the summation of all terms containing $\EE_4$ equals to
\begin{align*}
2(1-2s_\ve)\EE_4:(\BB_2\BB_3+\BB_3\BB_2)\triangleq L_{23}.
\end{align*}

Now using the decomposition (\ref{esti:decomp-new}), we obtain
\begin{align*}
  \BB_i=\sum_{\alpha\in\Lambda_i}q_\alpha\EE_\alpha,\qquad i=1,2,3,4.
\end{align*}
We use $\EE_i$ to denote an element in $\mathbb{V}_i(1\le i\le 4)$. Then
the integral  $\frac1\ve\int L_{34}$  can be written as a summation of terms with form
\begin{align}
\frac1\ve\int s_\ve(2s_\ve-1)q_3(r)q_4(r) \QQ_2(r):(\EE_2\EE_3+\EE_3\EE_2)(r)\ud r.
\end{align}
Letting $q_3=(1-s_\ve) \varrho_3$, $q_4=\varrho_4$, and using  Lemma \ref{lem:cross-estimate-34} below and Lemma \ref{lem:lower-bound-23}, we can obtain
\begin{align}\nonumber
&\frac1\ve\int_{-1}^1 s_\ve(2s_\ve-1)q_3(r)q_4(r) \QQ_2(r):(\EE_2\EE_3+\EE_3\EE_2)(r)\ud r\\
&\le
\nu_0  \int_{-1}^1\Big((\partial_rq_3)^2
  +\varepsilon^{-2}\kappa_3(s_\ve)q_3^2+(\partial_rq_4)^2
  +\varepsilon^{-2}\kappa_4(s_\ve)q_4^2\Big)\ud r+C(\nu_0)\int_{-1}^1(q_3^2+q_4^2)\ud r.
\end{align}
Similarly, by using Lemmas  \ref{lem:cross-estimate-24}-\ref{lem:cross-estimate-13} below, Lemma \ref{lem:cross-estimate-23} with Lemmas \ref{lem:lower-bound-23} and \ref{lem:lower-bound-1} , we can bound
\begin{align*}
\frac1\ve\int L_{24}, \quad \frac1\ve\int L_{12}, \quad \frac1\ve\int L_{13},\quad \frac1\ve\int L_{23},
\end{align*}
by the right hand side of (\ref{ineq:1-dim-cross-final-2}).
Then (\ref{ineq:1-dim-nextorder})  follows easily.

\begin{Lemma} \label{lem:cross-estimate-34}
For any $\nu_0>0$ there exists $C_0=C_0(\nu_0, \|a\|_{W^{1,\infty}})>0$ such that
\begin{align}
\frac1\ve\bigg|\int_{-1}^1s_\ve(1-s_\ve)(2s_\ve-1)\varrho_3\varrho_4 a(r) \ud r\bigg|
\leq &~C_0\int_{-1}^1\big((1-s_\ve)^2\varrho_3^2+\varrho_4^2\big)\ud r
\nonumber\\&+\nu_0\int_{-1}^1\big(
(1-s_\ve)^2(\partial_r\varrho_3)^2+(\partial_r\varrho_4)^2\big)\ud r.\label{ineq:cross-estimate-34}
\end{align}
\end{Lemma}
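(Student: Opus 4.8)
The statement to prove is Lemma~\ref{lem:cross-estimate-34}, a singular product estimate: the weight $\frac1\ve s_\ve(1-s_\ve)(2s_\ve-1)$ carries a factor $\ve^{-1}$, which is compensated by the fact that $s_\ve(1-s_\ve)(2s_\ve-1)$ is exponentially localized near $r=0$ and integrates to something $O(\ve)$ against slowly varying functions. The guiding principle is the same as in Lemma~\ref{lem:cross-estimate-23}: put the exponentially decaying weight on one factor and spend an $L^\infty$ bound (via Gagliardo--Nirenberg) on the other, so that the $\ve^{-1}$ is absorbed by $\int_{\BR}|s(1-s)(2s-1)|\,\ud z<\infty$ after rescaling. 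I would introduce $I_1=\int_{-1}^1\big((1-s_\ve)^2\varrho_3^2+\varrho_4^2\big)\ud r$ and $I_2=\int_{-1}^1\big((1-s_\ve)^2(\partial_r\varrho_3)^2+(\partial_r\varrho_4)^2\big)\ud r$ as the target control quantities.

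First I would symmetrize and pick which factor absorbs the $L^\infty$ estimate. Since $\varrho_4$ appears without a weight, I would integrate against $\varrho_4$ in $L^\infty$ on whichever half-interval is convenient, and keep $(1-s_\ve)\varrho_3$ paired against the remaining part of the weight $\frac1\ve s_\ve(2s_\ve-1)\cdot(1-s_\ve)$; note $(1-s_\ve)\le 1$ and $|s_\ve(2s_\ve-1)|\le C e^{-\alpha_0|r|/\ve}$, so $\frac1\ve s_\ve|2s_\ve-1|$ has $L^1$-norm $O(1)$ by the rescaling $r=\ve z$, with $\int_{\BR}|s(z)(2s(z)-1)|\,\ud z<\infty$. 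Concretely: $\frac1\ve\int |s_\ve(1-s_\ve)(2s_\ve-1)\varrho_3\varrho_4 a|\le C\|a\|_{L^\infty}\big(\sup_{[-1,1]}|\varrho_4|\big)\cdot\frac1\ve\int |s_\ve(1-s_\ve)(2s_\ve-1)\varrho_3|\,\ud r$. On the half-line $[-1,0]$ where $(1-s_\ve)$ is bounded below, I would bound $|\varrho_3|$ pointwise by $|\varrho_3(0)|+\int|\partial_r\varrho_3|$ and use $(1-s_\ve)\ge\frac12$ there to convert $\int s_\ve|\partial_r\varrho_3|\le(\int s_\ve^2)^{1/2}I_2^{1/2}=O(\sqrt\ve)I_2^{1/2}$; the pointwise endpoint value $|\varrho_3(0)|$ is controlled by Gagliardo--Nirenberg, $|\varrho_3(0)|\le\nu_1 I_2^{1/2}+C(\nu_1)I_1^{1/2}$, exactly as in the proof of Lemma~\ref{lem:cross-estimate-23}. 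On $[0,1]$, $s_\ve(1-s_\ve)(2s_\ve-1)$ decays like $e^{-\sqrt2 r/\ve}$ times a bounded factor, so the $\frac1\ve$-weighted $L^1$-norm is again $O(1)$ and the same Gagliardo--Nirenberg/Cauchy--Schwarz combination applies directly to $\varrho_3$ and $\varrho_4$. Likewise $\sup|\varrho_4|\le\nu_1 I_2^{1/2}+C(\nu_1)I_1^{1/2}$.

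Combining, the whole quantity is bounded by $C(\nu_1 I_2^{1/2}+C(\nu_1)I_1^{1/2})\cdot(\nu_1 I_2^{1/2}+C(\nu_1)I_1^{1/2})\le C\nu_1^2 I_2 + C(\nu_1)I_1$, and choosing $\nu_1$ small depending on $\nu_0$ gives $\le\nu_0 I_2+C_0 I_1$, which is the claim. The one extra term relative to Lemma~\ref{lem:cross-estimate-23} is the presence of the $W^{1,\infty}$ hypothesis on $a$: I suspect it is needed because in the actual application (controlling $\frac1\ve\int L_{34}$) one first integrates by parts to move a derivative off of $q_4$ or off of the cross-term $(\partial_r q_3 q_4 - q_3\partial_r q_4)$ structure, generating a $\partial_r a$ term and boundary terms $q_3 q_4 a|_{-1}^1$; the boundary terms are then handled by the endpoint $L^\infty$ controls (Lemmas~\ref{lem:Linf}, \ref{lem:Linf-endpoint}). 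So I would also present the integration-by-parts version if the naive pointwise bound above leaves a residual derivative on the unweighted factor.

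\textbf{Main obstacle.} The delicate point is bookkeeping the factor $(2s_\ve-1)$, which vanishes at $r=0$ and changes sign; one must not lose the cancellation at the origin when estimating crudely, but in fact here we only ever use $|2s_\ve-1|\le 1$, so the sign change is harmless and the real work is just confirming that the $\ve^{-1}$ is swallowed by the $O(\ve)$ mass of the weight after rescaling, together with the Gagliardo--Nirenberg endpoint control transferring regularity from the weighted $H^1$ norms $I_2$ to the $L^\infty$ norms. This is entirely parallel to Lemma~\ref{lem:cross-estimate-23}, so I expect no genuinely new difficulty — the proof is essentially the rescaled-weight argument of that lemma with $\partial_r s_\ve$ replaced by the smaller weight $\frac1\ve s_\ve(1-s_\ve)(2s_\ve-1)$.
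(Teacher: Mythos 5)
Your route is genuinely different from the paper's, and as written it has a soft spot. The paper's proof is essentially one line: since $s'=\sqrt2\,s(1-s)$, the singular weight is an exact derivative, $\tfrac1\ve s_\ve(1-s_\ve)(2s_\ve-1)=-\tfrac1{\sqrt2}\,\partial_r\big[s_\ve(1-s_\ve)\big]$, so one integration by parts removes the $\ve^{-1}$ entirely; the resulting bulk terms are handled by Cauchy--Schwarz after distributing the factor $s_\ve(1-s_\ve)\le\min(s_\ve,1-s_\ve)$ onto the $\varrho_3$-factor, and the boundary terms at $r=\pm1$ by the $L^\infty$/endpoint controls (Lemma \ref{lem:Linf}). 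This is exactly where the hypothesis $a\in W^{1,\infty}$ is used (the $\partial_r a$ term), and it is the mechanism highlighted in the remark after Lemma \ref{lem:cross-estimate-13}: all these singular weights are derivatives of products of the relevant eigenfunctions. Your main argument instead tries to absorb the $\ve^{-1}$ directly into the $O(\ve)$ mass of the weight, paying an $L^\infty$ bound on $\varrho_4$; on $[-1,0]$ your bookkeeping (anchor at $r=0$, FTC, and pairing the leftover weight with $s_\ve$ so that $\int_{-1}^0 s_\ve|\partial_r\varrho_3|\le 2I_2^{1/2}$) is fine, and the final Young-type combination closes even though one factor carries an $O(1)$ coefficient of $I_2^{1/2}$ rather than the small one you wrote.

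The gap is on $[0,1]$: the statement that ``the same Gagliardo--Nirenberg/Cauchy--Schwarz combination applies directly to $\varrho_3$'' is not justified there, because $I_1,I_2$ control $\varrho_3$ and $\partial_r\varrho_3$ only with the weight $(1-s_\ve)^2$, which is as small as $e^{-\sqrt2/\ve}$ near $r=1$; an unweighted GN bound for $\sup_{[0,1]}|\varrho_3|$ is simply not available ($\varrho_3$ may be of size $e^{\sqrt2 r/\ve}$ with $I_1$ bounded). Equally, you cannot peel the factor $(1-s_\ve)$ off the weight and attach it to $\varrho_3$, since the remaining weight $\tfrac1\ve s_\ve|2s_\ve-1|$ has $L^1([0,1])$-norm of order $\ve^{-1}$. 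The repair is to mirror your $[-1,0]$ argument anchored at $r=0$: write $|\varrho_3(r)|\le|\varrho_3(0)|+\int_0^r|\partial_r\varrho_3|$, exchange the order of integration, and use
\begin{align*}
\frac1\ve\int_\tau^1 s_\ve(1-s_\ve)|2s_\ve-1|\,\ud r\le C\,(1-s_\ve(\tau)),
\end{align*}
so that the leftover weight re-attaches $(1-s_\ve)$ to $\partial_r\varrho_3$ and one lands on $C\big(|\varrho_3(0)|+\int_0^1(1-s_\ve)|\partial_r\varrho_3|\,\ud\tau\big)\le C\big(|\varrho_3(0)|+I_2^{1/2}\big)$, with $|\varrho_3(0)|$ controlled by GN on $[-1,0]$ where $1-s_\ve\ge\tfrac12$. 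With that step made explicit your direct proof closes (and in fact only needs $\|a\|_{L^\infty}$); but as proposed, the key half-interval is exactly the one you wave through, whereas the paper's integration-by-parts identity avoids the issue altogether.
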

\begin{proof}
  The left side can be written as
  \begin{align*}
&\bigg|\int_{-1}^1\partial_r\big[s_\ve(1- s_\ve)\big]\varrho_3\varrho_4 a(r) \ud r\bigg| \\
&\le   \bigg|\int_{-1}^1s_\ve(1- s_\ve)\big[\partial_r\varrho_3\varrho_4+\varrho_3 \partial_r\varrho_4 a(r)+\varrho_3\varrho_4 \partial_ra(r)\big] \ud r\bigg|+\Big(s_\ve(1- s_\ve)\varrho_3\varrho_4 a(r)\Big)\Big|_{-1}^1.
  \end{align*}
Then the claim follows from  Cauchy-Schwartz inequality and  Lemma \ref{lem:Linf}.
\end{proof}

\begin{Lemma} \label{lem:cross-estimate-24}
For any $\nu_0>0$ there exists $C_0=C_0(\nu_0, \|a\|_{W^{1,\infty}})>0$ such that
\begin{align}
\frac1\ve\bigg|\int_{-1}^1(1-s_\ve)(1-2s_\ve)s_\ve\varrho_2\varrho_4 a(r) \ud r\bigg|
\leq &~C_0\int_{-1}^1\big(s_\ve^2\varrho_2^2+\varrho_4^2\big)\ud r
\nonumber\\&+\nu_0\int_{-1}^1\big(s_\ve^2
(\partial_r\varrho_2)^2+(\partial_r\varrho_4)^2\big)\ud r.\label{ineq:cross-estimate-24}
\end{align}
\end{Lemma}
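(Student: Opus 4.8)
The plan is to prove Lemma~\ref{lem:cross-estimate-24} in the same way that Lemma~\ref{lem:cross-estimate-34} was proved: the apparently singular prefactor $\frac1\ve(1-s_\ve)(1-2s_\ve)s_\ve$ is in fact a total $r$-derivative, so that upon integration by parts the dangerous factor $\ve^{-1}$ disappears. Concretely, recalling $\partial_r s_\ve=\frac{\sqrt2}{\ve}s_\ve(1-s_\ve)$ and $\partial_r\theta_\ve=\frac{\sqrt2}{\ve}\theta_\ve(1-2s_\ve)$ with $\theta_\ve=\sqrt2\,s_\ve(1-s_\ve)$, one has
\[
\frac1\ve(1-s_\ve)(1-2s_\ve)s_\ve=\tfrac{1}{2}\partial_r\theta_\ve=\tfrac{1}{\sqrt2}\partial_r\big(s_\ve(1-s_\ve)\big).
\]
First I would insert this identity and integrate by parts, producing a boundary term $\frac{1}{2}\big[\theta_\ve\varrho_2\varrho_4 a\big]_{-1}^{1}$ and a bulk term $-\frac{1}{2}\int_{-1}^1\theta_\ve\,\partial_r(\varrho_2\varrho_4 a)\,\ud r$.

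For the boundary term, writing $\theta_\ve\varrho_2=\sqrt2\,(1-s_\ve)(s_\ve\varrho_2)$ and using $0\le 1-s_\ve\le1$, it is bounded by $C\,\|s_\ve\varrho_2\|_{L^\infty}\|\varrho_4\|_{L^\infty}\|a\|_{L^\infty}$, which the $L^\infty$ controls of Lemma~\ref{lem:Linf} (applied with $i=2$ to $q_2=s_\ve\varrho_2$ and with $i=4$ to $\varrho_4$, where $\kappa_4\equiv0$) bound by the right-hand side of \eqref{ineq:cross-estimate-24}, exactly as in the proof of Lemma~\ref{lem:cross-estimate-34}. For the bulk term I would expand $\partial_r(\varrho_2\varrho_4 a)=\partial_r\varrho_2\,\varrho_4\,a+\varrho_2\,\partial_r\varrho_4\,a+\varrho_2\varrho_4\,\partial_r a$, rewrite $\theta_\ve=\sqrt2\,s_\ve(1-s_\ve)$, and in each of the three resulting integrals keep $s_\ve$ attached to $\varrho_2$ (or to $\partial_r\varrho_2$) and the harmless factor $(1-s_\ve)\le1$ attached to $\varrho_4$ (or to $\partial_r\varrho_4$). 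Cauchy--Schwarz then bounds the three pieces by $|a|_{L^\infty}\|s_\ve\partial_r\varrho_2\|_{L^2}\|\varrho_4\|_{L^2}$, $|a|_{L^\infty}\|s_\ve\varrho_2\|_{L^2}\|\partial_r\varrho_4\|_{L^2}$ and $\|a\|_{W^{1,\infty}}\|s_\ve\varrho_2\|_{L^2}\|\varrho_4\|_{L^2}$ respectively; Young's inequality absorbs the gradient norms into $\nu_0\int_{-1}^1\!\big(s_\ve^2(\partial_r\varrho_2)^2+(\partial_r\varrho_4)^2\big)\ud r$ and leaves the lower-order terms with a constant depending only on $\nu_0$ and $\|a\|_{W^{1,\infty}}$. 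This establishes \eqref{ineq:cross-estimate-24}; combined with Lemma~\ref{lem:lower-bound-23} it is then used in Section~\ref{subsec:estimate-correction} to control $\frac1\ve\int L_{24}$.

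I do not expect a genuine obstacle: the argument runs in complete parallel to Lemmas~\ref{lem:cross-estimate-23} and~\ref{lem:cross-estimate-34}. The only point requiring care is the allocation of the weight $s_\ve(1-s_\ve)$ in the Cauchy--Schwarz step — one must arrange the surviving weighted gradient to be precisely $\int_{-1}^1 s_\ve^2(\partial_r\varrho_2)^2\,\ud r$, which Lemma~\ref{lem:lower-bound-23} controls in terms of $\int_{-1}^1\!\big((\partial_r q_2)^2+\ve^{-2}\kappa_2(s_\ve)q_2^2\big)\ud r$, while the $\BV_4$-component $\varrho_4$ (with $\kappa_4\equiv0$) needs no weight and its gradient $\int_{-1}^1(\partial_r\varrho_4)^2\,\ud r$ feeds directly into the coercive term for $\mathcal{L}_4$.
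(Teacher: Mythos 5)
Your proposal is correct and follows exactly the route the paper intends: the paper's proof of this lemma is simply "similar to Lemma \ref{lem:cross-estimate-34}, details omitted," and your argument — writing $\tfrac1\ve s_\ve(1-s_\ve)(1-2s_\ve)$ as $\tfrac1{\sqrt2}\partial_r\big(s_\ve(1-s_\ve)\big)$, integrating by parts, handling the boundary term with Lemma \ref{lem:Linf} and the bulk term by Cauchy--Schwarz with the weight $s_\ve$ attached to the $\varrho_2$ factors — is precisely that omitted proof, including the correct weight allocation so that the surviving gradient terms are $\int s_\ve^2(\partial_r\varrho_2)^2$ and $\int(\partial_r\varrho_4)^2$.
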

\begin{proof}
  The proof is similar to Lemma \ref{lem:cross-estimate-34}. We omit the details.
\end{proof}

\begin{Lemma} \label{lem:cross-estimate-12}
For any $\nu_0>0$ there exists $C_0=C_0(\nu_0, \|a\|_{W^{1,\infty}})>0$ such that
\begin{align}
\frac1\ve\bigg|\int_{-1}^1s_\ve^2(3-4s_\ve)\theta_\ve \varrho_1\varrho_2 a(r) \ud r\bigg|
\leq &~C_0\int_{-1}^1\big(\theta_\ve^2\varrho_1^2+ s_\ve^2\varrho_2^2\big)\ud r
\nonumber\\&+\nu_0\int_{-1}^1\big(\theta_\ve^2
(\partial_r\varrho_1)^2+ s_\ve^2(\partial_r\varrho_2)^2\big)\ud r.\label{ineq:cross-estimate-12}
\end{align}
\end{Lemma}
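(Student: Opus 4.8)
The plan is to exploit the null structure already present in Lemma~\ref{lem:inner-null}. Recalling that $\theta_\ve=\ve\,\partial_r s_\ve$ and $\tfrac{\ud}{\ud s}\big(s^3-s^4\big)=s^2(3-4s)$, the singular weight is in fact a perfect derivative of a \emph{bounded} function:
\[
\frac{1}{\ve}\,s_\ve^2(3-4s_\ve)\,\theta_\ve=s_\ve^2(3-4s_\ve)\,\partial_r s_\ve=\partial_r\big(s_\ve^3(1-s_\ve)\big)=\partial_r W,\qquad W:=s_\ve^3(1-s_\ve)=\tfrac{1}{\sqrt2}\,s_\ve^2\theta_\ve .
\]
Thus $|W|\le C$ uniformly in $\ve$, one has the pointwise comparison $0\le W\le\tfrac{1}{\sqrt2}s_\ve\theta_\ve$, and $W$ decays at the endpoints, $|W(1)|\le Ce^{-\sqrt2/\ve}$, $|W(-1)|\le Ce^{-3\sqrt2/\ve}$, because $s_\ve(1)=1-O(e^{-\sqrt2/\ve})$ and $s_\ve(-1)=O(e^{-\sqrt2/\ve})$. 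The first step is to integrate by parts,
\[
\frac1\ve\int_{-1}^1 s_\ve^2(3-4s_\ve)\theta_\ve\,\varrho_1\varrho_2\,a\,\ud r
=\Big[W\varrho_1\varrho_2 a\Big]_{-1}^1-\int_{-1}^1 W\big(\partial_r\varrho_1\,\varrho_2+\varrho_1\,\partial_r\varrho_2\big)a\,\ud r-\int_{-1}^1 W\varrho_1\varrho_2\,\partial_r a\,\ud r ,
\]
which removes the dangerous factor $\ve^{-1}$; it is precisely the coefficient $(3-4s_\ve)$ that makes this cancellation available.

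For the two interior integrals I would use $W\le\tfrac1{\sqrt2}s_\ve\theta_\ve$ to split with matched weights: $W|\partial_r\varrho_1||\varrho_2|\le\tfrac1{\sqrt2}\big(\theta_\ve|\partial_r\varrho_1|\big)\big(s_\ve|\varrho_2|\big)$, $W|\varrho_1||\partial_r\varrho_2|\le\tfrac1{\sqrt2}\big(s_\ve|\partial_r\varrho_2|\big)\big(\theta_\ve|\varrho_1|\big)$, and $W|\varrho_1||\varrho_2||\partial_r a|\le C\|a\|_{W^{1,\infty}}\big(\theta_\ve|\varrho_1|\big)\big(s_\ve|\varrho_2|\big)$. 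Young's inequality (with parameter $\nu_0$ on the first two, plain on the third) then bounds these contributions by $\nu_0\int_{-1}^1\big(\theta_\ve^2(\partial_r\varrho_1)^2+s_\ve^2(\partial_r\varrho_2)^2\big)\ud r+C_0(\nu_0,\|a\|_{W^{1,\infty}})\int_{-1}^1\big(\theta_\ve^2\varrho_1^2+s_\ve^2\varrho_2^2\big)\ud r$, which is exactly the right-hand side of the claimed inequality.

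It then remains to absorb the boundary term $\big|[W\varrho_1\varrho_2 a]_{-1}^1\big|\le C\|a\|_\infty\big(|W(1)||\varrho_1(1)||\varrho_2(1)|+|W(-1)||\varrho_1(-1)||\varrho_2(-1)|\big)$. I would control the endpoint values exactly as in the proofs of Lemmas~\ref{lem:lower-bound-23} and~\ref{lem:lower-bound-1}: write $|\varrho_j(\pm1)|\le|\varrho_j(0)|+\int_{-1}^1|\partial_r\varrho_j|\,\ud r$, estimate $\int_0^1|\partial_r\varrho_1|\le\big(\int_0^1\theta_\ve^{-2}\big)^{1/2}\big(\int_0^1\theta_\ve^2(\partial_r\varrho_1)^2\big)^{1/2}$ and $\int_0^1|\partial_r\varrho_2|\le\big(\int_0^1 s_\ve^{-2}\big)^{1/2}\big(\int_0^1 s_\ve^2(\partial_r\varrho_2)^2\big)^{1/2}$ (and likewise on $[-1,0]$), using $\int_{-1}^0 s_\ve^{-2}=O(\ve e^{2\sqrt2/\ve})$ and $\int_0^1\theta_\ve^{-2}=O(\ve e^{2\sqrt2/\ve})$ together with a (scaled) Gagliardo--Nirenberg inequality on $[-\ve,\ve]$, resp. $[0,1]$, where $\theta_\ve$, resp. $s_\ve$, is bounded below, to bound $|\varrho_1(0)|,|\varrho_2(0)|$. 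The key observation is that $|W(1)|\big(\int_0^1\theta_\ve^{-2}\big)^{1/2}=O(\ve^{1/2})$ — the exponential smallness of $W(1)=\tfrac1{\sqrt2}s_\ve(1)^2\theta_\ve(1)$ exactly compensates the exponential growth of $\int_0^1\theta_\ve^{-2}$, leaving only a power of $\ve$ — while every other combination carries a genuinely exponentially small prefactor against at most a polynomial power of $\ve^{-1}$ (and $|W(-1)|$ is even smaller). Hence, for $\ve$ small, the boundary term is again dominated by $\nu_0\int_{-1}^1\big(\theta_\ve^2(\partial_r\varrho_1)^2+s_\ve^2(\partial_r\varrho_2)^2\big)\ud r+C\int_{-1}^1\big(\theta_\ve^2\varrho_1^2+s_\ve^2\varrho_2^2\big)\ud r$, and combining with the interior estimate finishes the proof. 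The heart of the matter — the identity $\ve^{-1}s_\ve^2(3-4s_\ve)\theta_\ve=\partial_r(s_\ve^3(1-s_\ve))$ and the weight comparison $s_\ve^3(1-s_\ve)=\tfrac1{\sqrt2}s_\ve^2\theta_\ve$ — is elementary; the only mildly delicate point is this bookkeeping of the harmless endpoint contributions, which is routine given the explicit asymptotics of $s_\ve$ and $\theta_\ve$ recorded above.
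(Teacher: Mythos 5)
Your proof is correct and follows essentially the same route as the paper: you identify the same cancellation $\ve^{-1}s_\ve^2(3-4s_\ve)\theta_\ve=\partial_r\big(s_\ve^3(1-s_\ve)\big)$, integrate by parts, and control the interior terms by Cauchy--Schwarz/Young with the matched weights $\theta_\ve$ and $s_\ve$. The only cosmetic difference is that the paper disposes of the boundary term by citing its endpoint estimates (Lemmas \ref{lem:Linf} and \ref{lem:Linf-endpoint}), whereas you re-derive that endpoint control inline via the exponential smallness of $s_\ve^3(1-s_\ve)$ at $r=\pm1$ against $\int\theta_\ve^{-2}$ and $\int s_\ve^{-2}$ — the same mechanism underlying those lemmas.
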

\begin{proof}
  The left side can be written as
  \begin{align*}
&\bigg|\int_{-1}^1\partial_r\big[s_\ve^3(1- s_\ve)\big]\varrho_1\varrho_2 a(r) \ud r\bigg| =
\bigg|\int_{-1}^1\partial_r\big(s_\ve^2\theta_\ve\big)\varrho_1\varrho_2 a(r) \ud r\bigg| \\
&\le   \bigg|\int_{-1}^1s_\ve^2\theta_\ve\big[\partial_r\varrho_1\varrho_2+\varrho_1 \partial_r\varrho_2 a(r)+\varrho_1\varrho_2 \partial_ra(r)\big] \ud r\bigg|+\Big(s_\ve^2\theta_\ve\varrho_1\varrho_2 a(r)\Big)\Big|_{-1}^1.
  \end{align*}
Then the claim follows from  Cauchy-Schwartz inequality and Lemmas \ref{lem:Linf} and \ref{lem:Linf-endpoint}.
\end{proof}

\begin{Lemma} \label{lem:cross-estimate-13}
For any $\nu_0>0$ there exists $C_0=C_0(\nu_0, \|a\|_{W^{1,\infty}})>0$ such that
\begin{align}
\frac1\ve\bigg|\int_{-1}^1(1-s_\ve)^2(1-4s_\ve)\theta_\ve \varrho_1\varrho_3 a(r) \ud r\bigg|
\leq &~C_0\int_{-1}^1\big(\theta_\ve^2\varrho_1^2+(1-s_\ve)^2\varrho_3^2\big)\ud r
\nonumber\\&+\nu_0\int_{-1}^1\big(\theta_\ve^2
(\partial_r\varrho_1)^2+(1-s_\ve)^2(\partial_r\varrho_3)^2\big)\ud r.\label{ineq:cross-estimate-13}
\end{align}
\end{Lemma}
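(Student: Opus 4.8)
The plan is to mimic the proofs of Lemmas \ref{lem:cross-estimate-34}, \ref{lem:cross-estimate-24} and \ref{lem:cross-estimate-12}, exploiting the algebraic identity that bundles the polynomial weight in $s_\ve$ together with $\theta_\ve$ into an exact derivative. The key observation is that
\begin{align*}
  \frac{1}{\ve}(1-s_\ve)^2(1-4s_\ve)\theta_\ve=\partial_r\!\big[(1-s_\ve)^3 s_\ve\big]=\partial_r\!\big[(1-s_\ve)^2\theta_\ve/\sqrt{2}\cdot\sqrt{2}\big],
\end{align*}
or more precisely that the left-hand integrand is, up to a universal constant, $\partial_r\big[(1-s_\ve)^2\theta_\ve\big]\varrho_1\varrho_3 a(r)$ modulo terms of the same structure; the point is simply that $\frac1\ve$ times a degree-$k$ polynomial in $s_\ve$ multiplied by $\theta_\ve=\ve\partial_r s_\ve$ is exactly a total $r$-derivative of the antiderivative of that polynomial, and here the relevant antiderivative is a multiple of $(1-s_\ve)^2\theta_\ve$ plus lower-weight pieces already absorbed by the right-hand side. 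One verifies this by differentiating $(1-s_\ve)^3 s_\ve$: using $\ve\partial_r s_\ve=\theta_\ve=\sqrt{2}\,s_\ve(1-s_\ve)$, one gets $\partial_r[(1-s_\ve)^3 s_\ve]=\ve^{-1}\theta_\ve\big[(1-s_\ve)^3-3s_\ve(1-s_\ve)^2\big]=\ve^{-1}(1-s_\ve)^2\theta_\ve(1-4s_\ve)$, which is exactly the weight appearing in \eqref{ineq:cross-estimate-13}.

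With this identity in hand, the estimate proceeds by integration by parts: the left side of \eqref{ineq:cross-estimate-13} equals
\begin{align*}
  \Big|\int_{-1}^1\partial_r\big[(1-s_\ve)^3 s_\ve\big]\varrho_1\varrho_3 a(r)\,\ud r\Big|
  &\le \Big|\int_{-1}^1 (1-s_\ve)^3 s_\ve\big[\partial_r\varrho_1\,\varrho_3+\varrho_1\,\partial_r\varrho_3\big] a(r)\,\ud r\Big|\\
  &\quad+\Big|\int_{-1}^1 (1-s_\ve)^3 s_\ve\,\varrho_1\varrho_3\,\partial_r a(r)\,\ud r\Big|
  +\Big|(1-s_\ve)^3 s_\ve\,\varrho_1\varrho_3\, a(r)\Big|_{-1}^1.
\end{align*}
In each of the two interior integrals one distributes the weight $(1-s_\ve)^3 s_\ve=(1-s_\ve)^2\cdot\theta_\ve/\sqrt2$ across the two factors by Cauchy--Schwarz: $(1-s_\ve)^3 s_\ve\lesssim\theta_\ve\cdot(1-s_\ve)^2$ and also $(1-s_\ve)^3 s_\ve\lesssim\theta_\ve^{1/2}\cdot(1-s_\ve)$, so one can always write the product of weights as (a bound on $\theta_\ve$) $\times$ ((a bound on $(1-s_\ve)$)), matching $\theta_\ve^2\varrho_1^2$, $(1-s_\ve)^2\varrho_3^2$ and their derivative analogues on the right-hand side; the cross terms $\partial_r\varrho_1\,\varrho_3$ and $\varrho_1\,\partial_r\varrho_3$ are absorbed into $\nu_0\int\big(\theta_\ve^2(\partial_r\varrho_1)^2+(1-s_\ve)^2(\partial_r\varrho_3)^2\big)$ plus $C_0\int\big(\theta_\ve^2\varrho_1^2+(1-s_\ve)^2\varrho_3^2\big)$ after Young's inequality. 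The boundary term is handled by Lemma \ref{lem:Linf} (applied to $\varrho_3$, which carries weight $1-s_\ve\sim1$ near $r=1$ and weight $1-s_\ve$ near $r=-1$, both admissible since $\kappa_3\ge0$ is not required—rather $q_3=(1-s_\ve)\varrho_3$ fits the framework of Lemma \ref{lem:lower-bound-23}) together with Lemma \ref{lem:Linf-endpoint} for the $\theta_\ve$-weighted factor $\varrho_1$, exactly as in the proof of Lemma \ref{lem:cross-estimate-12}; note that $(1-s_\ve)^3 s_\ve$ evaluated at $\pm1$ is exponentially small, which gives even more room.

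I do not expect any genuine obstacle here: the lemma is structurally identical to Lemmas \ref{lem:cross-estimate-34}--\ref{lem:cross-estimate-12}, and the only content is recognizing the polynomial-in-$s_\ve$-times-$\theta_\ve$-over-$\ve$ combination as a perfect $r$-derivative and then splitting weights correctly. The mildly delicate point—which is really the same subtlety as in Lemma \ref{lem:cross-estimate-12}—is the endpoint control of the $\mathbb{V}_1$ factor $\varrho_1$, since $\mathcal{L}_1$ has a kernel and the naive $L^\infty$ bound of Lemma \ref{lem:Linf} fails for $i=1$; this is precisely why the statement invokes $\|a\|_{W^{1,\infty}}$ (to integrate by parts once more and move the derivative off $\varrho_1\varrho_3$) and why Lemma \ref{lem:Linf-endpoint} is available, giving $|\varrho_1(\pm1)|^2\lesssim\ve\,\{\int(\partial_r q_1)^2+\ve^{-2}\kappa_1 q_1^2+\int q_1^2\}$ after writing $q_1=\theta_\ve\varrho_1$. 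So the proof I would write is: (1) state the identity $\partial_r[(1-s_\ve)^3 s_\ve]=\ve^{-1}(1-s_\ve)^2\theta_\ve(1-4s_\ve)$; (2) integrate by parts; (3) Cauchy--Schwarz and Young on the two interior integrals with the weight split $(1-s_\ve)^3 s_\ve=(1-s_\ve)^2\theta_\ve/\sqrt2$; (4) dispose of the boundary and $\partial_r a$ terms via Lemmas \ref{lem:Linf} and \ref{lem:Linf-endpoint}. Finally, one strings Lemmas \ref{lem:cross-estimate-34}, \ref{lem:cross-estimate-24}, \ref{lem:cross-estimate-12}, \ref{lem:cross-estimate-13} together with Lemmas \ref{lem:lower-bound-23}, \ref{lem:lower-bound-1} and \ref{lem:cross-estimate-23} to control $\frac1\ve\int(L_{34}+L_{24}+L_{12}+L_{13}+L_{23})$, which yields \eqref{ineq:1-dim-nextorder-intro} and hence Theorem \ref{thm:main2}.
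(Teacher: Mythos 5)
Your proposal is correct and follows essentially the same route as the paper: the paper's proof of Lemma \ref{lem:cross-estimate-13} consists precisely of the identity $\ve^{-1}(1-s_\ve)^2(1-4s_\ve)\theta_\ve=\partial_r\big[(1-s_\ve)^3 s_\ve\big]$ followed by the integration-by-parts, Cauchy--Schwarz, and endpoint arguments of Lemma \ref{lem:cross-estimate-12}, which is exactly what you carry out (your weight split $(1-s_\ve)^3 s_\ve=(1-s_\ve)^2\theta_\ve/\sqrt{2}$ and the use of Lemmas \ref{lem:Linf} and \ref{lem:Linf-endpoint} for the boundary terms match the paper's template).
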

\begin{proof}
 As $(1-s_\ve)^2(1-4s_\ve)\theta_\ve/\ve=\partial_r((1-s_\ve)^3s_\ve)$, the proof is similar to Lemma \ref{lem:cross-estimate-12}. We omit the details.
\end{proof}
\begin{Remark}
The proof of the above lemmas relies heavily on the facts that, all the weights can be written as derivatives of some good functions.
These functions have factors which consist of production of corresponding eigenfunctions, and thus enable us to use integrating by parts to remove the singularities. The behinded mechanism of such coincidence is the cubic null cancellation Lemma \ref{lem:inner-null}.
\end{Remark}

\section{Uniform error estimates}\label{sec:remainder}

Let $\A^\ve$ be a solution to \eqref{eq:main-intro} and $\A^K$ be the approximate solution construct in Section \ref{sec:construct}. Define
\begin{align*}
\mathbf{\Psi}=\frac{1}{\ve^L}(\A^\ve-\A^K).
\end{align*}
Then we have
\begin{align}
\partial_t\mathbf{\Psi}=\Delta\mathbf{\Psi}-\ve^{-2}\mathcal{H}_{\A^{K}}\Bp
-\frac{\ve^{L-2}}{2}\TT_f(\A^{K},\Bp,\Bp)+\ve^{2L-2}\Bp\Bp^{\mathrm{T}}\Bp-\mathfrak{R}^{\ve},\label{equation:remainder}
\end{align}
where $\mathfrak{R}^{\ve}$ is independent of $\Bp$ which satisfies $\partial^i\mathfrak{R}^{\ve}=O(\ve^{K-L-i-1})$ for $i\ge0$. We choose $L=3([\frac{m}{2}]+1)+3$, and $K\ge L+1$.

Let
\begin{align*}
\bar{\mathcal{E}}(\Bp)= \sum_{i=0}^{[\frac{m}{2}]+1}\ve^{6i}\int_\Omega\|\partial^i\Bp\|^2 \ud x.
\end{align*}
Then one has
\begin{align*}
\ve^{L-2}\|\Bp\|_{L^\infty}\le \ve^{3([\frac{m}{2}]+1)+1}\|\Bp\|_{L^\infty} \le C \ve \bar{\mathcal{E}}(\Bp)^{\frac{1}{2}}.
\end{align*}

With the help of  Theorem \ref{thm:main2}, standard energy estimates yield that
\begin{align*}
  \frac{\ud}{\ud t}\int_\Omega\|\Bp\|^2 \ud x\le C (1+\ve \bar{\mathcal{E}}(\Bp)^{\frac{1}{2}}+\ve^2\bar{\mathcal{E}}(\Bp))\int_\Omega\|\Bp\|^2 \ud x +C\le C\big(1+\bar{\mathcal{E}}(\Bp)+\ve^2\bar{\mathcal{E}}(\Bp)^2\big).
\end{align*}
Applying $\partial^i(0\le i\le [\frac{m}{2}]+1)$ on the equation (\ref{equation:remainder}), we get
\begin{align*}\nonumber
\partial_t\partial^i\mathbf{\Psi}&=\Delta\partial^i\mathbf{\Psi}-\ve^{-2}\mathcal{H}_{\A^{K}}\partial^i\Bp
+\ve^{-2}[ \mathcal{H}_{\A^{K}}, \partial^i]\Bp \\
&\quad-\frac12\ve^{L-2}\partial^i\TT_f(\A^{K},\Bp,\Bp)+\ve^{2L-2}\partial^i(\Bp\Bp^{\mathrm{T}}\Bp)-\partial^i\mathfrak{R}^{\ve},%\label{equation:remainder}
\end{align*}
where
\begin{align*}
\ve^{-2}[ \mathcal{H}_{\A^{K}},
 \partial^i]\Bp=\ve^{-2}\sum_{j+l+k=i, k\le i-1,j\le l} \TT_f(\partial^j\A^K, \partial^l\A^K, \partial^k \Bp),
\end{align*}
whose $L^2$-norm can be bounded by
\begin{align*}
  C\ve^{-2-j-l}\|\partial^k \Bp\|_{L^2}\le \ve^{-3k-j-l-2}\bar{\mathcal{E}}^{1/2}(\Bp)\le \ve^{-3i}\bar{\mathcal{E}}^{1/2}(\Bp).
\end{align*}
Moreover
\begin{align*}
  \|\ve^{L-2}\partial^i\TT_f(\A^{K},\Bp,\Bp)\|_{L^2}&= \|\ve^{L-2}\sum_{j+k+l=i,k\le l}\TT_f(\partial^j\A^{K},\partial^k\Bp,\partial^l\Bp)\|_{L^2}\\
  &\le   C\ve^{1-3i}\bar{\mathcal{E}}(\Bp), \\
  \|\ve^{2L-2}\partial^i(\Bp\Bp^{\mathrm{T}}\Bp)\|_{L^2}&\le \ve^{2L-2}\|\Bp \|_{H^{[\frac{m}{2}]+1}}^2\|\Bp\|_{H^i}\le C\ve^{2-3i}\bar{\mathcal{E}}^{3/2}(\Bp),\\
 \| \partial^i\mathfrak{R}_k^{\ve}\|_{L^2} &\le \ve^{K-L-i-1}.
\end{align*}
Therefore, we have
\begin{align*}
  \frac{\ud}{\ud t}\int_\Omega \ve^{6i}\|\partial^i\Bp\|^2 \ud x
  &\le C (1+\bar{\mathcal{E}}(\Bp)^{\frac{1}{2}}+\ve\bar{\mathcal{E}}(\Bp)+\ve^2\bar{\mathcal{E}}^{3/2}(\Bp))\|\partial^i\Bp\|_{L^2}\\
  &\le C(1+\bar{\mathcal{E}}(\Bp)+\ve^2\bar{\mathcal{E}}(\Bp)^2).
\end{align*}
Summing $i$ from $0$ to $[\frac{m}{2}]+1$, we get
\begin{align*}
  \frac{\ud}{\ud t}\bar{\mathcal{E}}(\Bp)\le C(1+\bar{\mathcal{E}}(\Bp)+\ve^2 \bar{\mathcal{E}}(\Bp)^2).
\end{align*}
Then Theorem \ref{thm:main3} can be concluded by a direct continuous argument.

\appendix
\section{}

\subsection{A key formula of trilinear form $\TT_f$}
We present a lemma, which was used in the proof of Lemma \ref{lem:inner-null} and the estimate for correction terms in Section \ref{subsec:estimate-correction}. Recall that the trilinear form $\TT_f$ is defined by
\beno
\TT_f(\A_1,\A_2,\A_3)=(\A_1\A_2^{\mathrm{T}}+\A_2\A_1^{\mathrm{T}})\A_3
+\A_3(\A_1^{\mathrm{T}}\A_2+\A_2^{\mathrm{T}}\A_1)+(\A_1\A_3^{\mathrm{T}}\A_2+\A_2\A_3^{\mathrm{T}}\A_1).
\eeno
\begin{Lemma}\label{lem:bilinear}
Let $\PP_1=s\EE_2+(1-s)\EE_3+\EE_4$  and $\BB=\sum_{i=1}^4\BB_i$ with $\EE_i,\BB_i\in\mathbb{V}_i$. Then
\begin{align}\nonumber
\TT_f(\PP_0,\PP_1,\BB):\BB = & ~2s\EE_2:\big[(2s-1)(\BB_3\BB_4+\BB_4\BB_3)+(3-4s)(\BB_1\BB_2+\BB_2\BB_1)\big]\\ \nonumber
&+2(1-s)\EE_3:\big[(1-4s)(\BB_1\BB_3+\BB_3\BB_1)+(1-2s)(\BB_2\BB_4+\BB_4\BB_2)\big]\\ \label{eq:bilinear}
&+2(1-2s)\EE_4:(\BB_2\BB_3+\BB_3\BB_2).
\end{align}
\end{Lemma}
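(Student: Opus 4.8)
\textbf{Proof proposal for Lemma~\ref{lem:bilinear}.} The plan is to expand $\TT_f(\PP_0,\PP_1,\BB):\BB$ directly, exploiting the explicit form $\PP_0=\II-2s\nn\nn$ and the algebraic rules satisfied by the subspaces $\BV_i$. First I would split $\PP_0 = \II - 2s\nn\nn$ and note that the $\II$-contribution gives $\TT_f(\II,\PP_1,\BB):\BB = 2(\PP_1\BB^{\mathrm{T}}\BB + \PP_1\BB\BB^{\mathrm{T}}):\,(\text{symmetrized})\ldots$; actually the cleaner route is to compute $\TT_f(\PP_0,\PP_1,\BB):\BB$ as a quadratic form in $\BB$ with the pair $(\PP_0,\PP_1)$ fixed, using the full symmetry of $\TT_f(\A_1,\A_2,\A_3):\A_4$ under all permutations of the four slots (stated just before Lemma~\ref{lem:inner-null}). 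That symmetry lets me write the $\II$-part of $\PP_0$ as contributing $\TT_f(\II,\PP_1,\BB):\BB$, which by \eqref{eq:ABC} reduces to a multiple of $\PP_1:(\BB\BB^{\mathrm{T}}+\BB^{\mathrm{T}}\BB)$ plus $\PP_1:(\BB^2)$-type terms; since $\PP_1$ is symmetric ($\EE_2,\EE_3$ symmetric, $\EE_4$ antisymmetric — wait, $\EE_4\in\BV_4\subset\BA_n$, so $\PP_1$ is \emph{not} symmetric), I would instead keep track of symmetric/antisymmetric parts carefully.

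The key computational steps, in order, are: (i) record the elementary multiplication table for products $\BB_i\BB_j$, in particular that $\nn\nn\,\BB_i$ and $\BB_i\,\nn\nn$ vanish for $i=4$, that $\nn\nn\BB_1 = \BB_1\nn\nn = \BB_1$, and the structure of $\nn\nn\BB_2$, $\BB_2\nn\nn$, $\nn\nn\BB_3$, $\BB_3\nn\nn$ (each half of $\BB_i$, $i=2,3$); (ii) substitute $\PP_0 = \II - 2s\nn\nn$ and $\PP_1 = s\EE_2 + (1-s)\EE_3 + \EE_4$ into the definition of $\TT_f$, obtaining a sum of products of the form $\nn\nn$ (or $\II$) times $\EE_j$ times $\BB$, paired against $\BB$; (iii) for each of the nine $(\EE_j,\BB_i)$ type-combinations, use \eqref{eq:ABC} to move factors around and collapse the contraction against $\BB = \sum \BB_k$ down to the surviving $\BB_k$ determined by the multiplication table; (iv) collect the scalar coefficients in $s$. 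The miracle is that only the cross-terms $\EE_2:(\BB_3\BB_4+\BB_4\BB_3)$, $\EE_2:(\BB_1\BB_2+\BB_2\BB_1)$, $\EE_3:(\BB_1\BB_3+\BB_3\BB_1)$, $\EE_3:(\BB_2\BB_4+\BB_4\BB_2)$, and $\EE_4:(\BB_2\BB_3+\BB_3\BB_2)$ survive, with the stated polynomial weights $2s(2s-1)$, $2s(3-4s)$, $2(1-s)(1-4s)$, $2(1-s)(1-2s)$, $2(1-2s)$ respectively; all ``diagonal'' terms $\EE_j:\BB_i^2$ and the remaining off-diagonal ones must cancel.

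The main obstacle I anticipate is the bookkeeping in step~(iii): there are genuinely many terms (three choices for $\EE_j$, four for $\BB_i$, four for the second $\BB$ factor contracted, and three monomials in $\TT_f$), and one must use the projection identities \eqref{Append:comm-P2} and the fact that $\EE_j\BB_i \perp \nn\nn$ or lies in a specific $\BV_\ell$ to see which contractions are nonzero. A clean way to organize this is to decompose each product $\BB_i\BB_j$ by whether it is symmetric or antisymmetric and by its $\nn$-equivariance under conjugation by $\II-2\nn\nn$, since $\EE_2$ and $\EE_4$ (symmetric-off-diagonal and antisymmetric-off-diagonal in the $\nn^\perp$ directions, plus the $\nn\otimes\nn^\perp$ mixed piece) pair only with products of matching type. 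I would also double-check the result against Lemma~\ref{lem:inner-null}: setting $\BB_i = \lambda_i\,(\text{eigenfunction weight})\,\EE_i$ and integrating, the three integrals $\int s(2s-1)(1-s)$, $\int 2s^2(3-4s)s'$, $\int 2(1-s)^2(1-4s)s'$ over $\BR$ all vanish, which is exactly the consistency check used there; matching that computation confirms the coefficients. Finally I would note that all integrals/contractions here are pointwise-in-$z$ algebraic identities, so no boundary or decay hypotheses enter — the lemma is purely linear algebra once the multiplication table is in hand.
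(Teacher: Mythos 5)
Your proposal is correct and takes essentially the same route as the paper: a direct algebraic expansion of $\TT_f(\PP_0,\PP_1,\BB):\BB$ using $\PP_0=\II-2s\nn\nn$, the explicit form of $\PP_1$, and the multiplication/orthogonality rules of the subspaces $\BV_i$, with the result being pointwise linear algebra requiring no decay hypotheses. The paper only organizes the bookkeeping a bit differently — it first rewrites the quadratic form via \eqref{eq:ABC} as $(\PP_0\PP_1^{\mathrm{T}}+\PP_1\PP_0^{\mathrm{T}}):(\BB\BB^{\mathrm{T}})+(\PP_0^{\mathrm{T}}\PP_1+\PP_1^{\mathrm{T}}\PP_0):(\BB^{\mathrm{T}}\BB)+2(\BB^{\mathrm{T}}\PP_1):(\PP_0\BB)$ and splits $\BB=\DD+\WW$ with $\DD=\BB_1+\BB_2$, $\WW=\BB_3+\BB_4$, and note that the ``diagonal'' contractions such as $\EE_2:\BB_i^2$ drop out because each vanishes individually by orthogonality rather than through cancellations between terms, exactly as your multiplication-table step would show.
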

\begin{proof}
From a direct calculation, we have
\begin{align}\nonumber
&\TT_f(\PP_0,\PP_1,\BB):\BB\\
&=(\PP_0\PP_1^{\mathrm{T}}+\PP_1\PP_0^{\mathrm{T}}):(\BB\BB^{\mathrm{T}})+(\PP_0^{\mathrm{T}}\PP_1+\PP_1^{\mathrm{T}}\PP_0):(\BB^{\mathrm{T}}\BB)
+2(\BB^{\mathrm{T}}\PP_1):(\PP_0\BB).\label{formula:BB1}
\end{align}
Moreover, one has
\begin{align*}
\PP_0\PP_1^{\mathrm{T}}+\PP_1\PP_0^{\mathrm{T}}&=(1-s)(\PP_1^{\mathrm{T}}+\PP_1)+s\Big((\II-2\nn\nn)\PP_1^{\mathrm{T}}+\PP_1(\II-2\nn\nn)\Big)\\
&=2s(1-s)\big(\EE_2-(\II-2\nn\nn)\EE_3\big).
\end{align*}
Here, we remark that  $(\II-2\nn\nn)\EE_3\in\mathbb{V}_2$ is symmetric. Similarly, we have
\begin{align*}
  \PP_0^{\mathrm{T}}\PP_1+\PP_1^{\mathrm{T}}\PP_0=2s(1-s)\big(\EE_2+(\II-2\nn\nn)\EE_3\big).
\end{align*}
Therefore, if we let $\BB=\DD+\WW$ with $\DD$ symmetric and $\WW$ antisymmetric, the first two terms in \eqref{formula:BB1} read as
\begin{align*}
  4s(1-s)\Big(\EE_2:(\DD^2-\WW^2)+(\II-2\nn\nn)\EE_3:(\DD\WW-\WW\DD)\Big).
\end{align*}
The last term in \eqref{formula:BB1} equals to
\begin{align*}
2\PP_1:(\BB\PP_0\BB)&= 2\PP_1:(\DD\PP_0\DD+\DD\PP_0\WW+\WW\PP_0\DD+\WW\PP_0\WW)\\
&=2s\EE_2:(\DD\PP_0\DD+\WW\PP_0\WW)+2((1-s)\EE_3+\EE_4):(\DD\PP_0\WW+\WW\PP_0\DD).
\end{align*}

As $\DD=\BB_1+\BB_2$, $\WW=\BB_3+\BB_4$, we have
\begin{align*}
  \DD^2&=\BB_1^2+\BB_1\BB_2+\BB_2\BB_1+\BB_2^2,\\
  \WW^2&=\BB_3+\BB_4\BB_3+\BB_3\BB_4+\BB_4^2,\\
\DD\WW-\WW\DD &=(\BB_1\BB_3-\BB_3\BB_1)+(\BB_2\BB_3-\BB_3\BB_2) +\BB_2\BB_4-\BB_4\BB_2,\\
  \DD\PP_0\DD&=(\BB_1+\BB_2)(\II-2s\nn\nn)(\BB_1+\BB_2)\\
  &=(1-2s)\BB_1^2+(1-2s)(\BB_1\BB_2+\BB_2\BB_1)+\BB_2^2-2s\BB_2\nn\nn\BB_2,\\
  \WW\PP_0\WW&=(\BB_3+\BB_4)(\II-2s\nn\nn)(\BB_3+\BB_4)\\
  &=\BB_3^2-2s\BB_3\nn\nn\BB_3+\BB_4\BB_3+\BB_3\BB_4+\BB_4^2,\\
\DD\PP_0\WW+\WW\PP_0\DD&=(\BB_1+\BB_2)(\II-2s\nn\nn)(\BB_3+\BB_4)+(\BB_3+\BB_4)(\II-2s\nn\nn)(\BB_1+\BB_2)\\
  &=(1-2s)(\BB_1\BB_3+\BB_3\BB_1+\BB_2\BB_3+\BB_3\BB_2) +\BB_2\BB_4+\BB_4\BB_2,
  \end{align*}
Then (\ref{eq:bilinear}) follows directly as we have
\begin{align*}
 & \EE_2:\BB_i^2=0, \text{ for }i=1,2,3,4; \\ &\EE_3:(\BB_2\BB_3+\BB_3\BB_2)=0,\quad  \EE_3:\big[(\II-2\nn\nn)(\BB_2\BB_3+\BB_3\BB_2)\big]=0;\\
 & \EE_4:\BB_1\BB_3=\QQ_4:\BB_1\BB_4=\QQ_4:\BB_2\BB_4=0.
\end{align*}
\end{proof}

\subsection{Solvability of ODEs}
We collect some results on solving the ODEs:
\begin{align}
\mathcal{L}_is_{i}(z,x,t)=f_i(z,x,t)\quad  (i=1,2,\cdots, 5), \label{ODE:i}
\end{align}
 in $\mathbb{R}$ for fixed $(x,t)\in\Gamma(\delta)$, which have been proved in \cite{ABC} and \cite{FWZZ18}.

\begin{Lemma}\label{lem:s1}(Solvability of (\ref{ODE:i})$(i=1)$)
If the following decay conditions
 \begin{align*}
&\big|\partial_z^j\partial_x^l\partial_t^m\big(f_1(z,x,t)-f_{1}^{+}(x,t)\big)\leq C|z|^{k}\big(1-s(z)\big),\ \ \text{for}\ \ z\to +\infty ,%\label{behavior condition-s1-1}
\\&\big|\partial_z^j\partial_x^l\partial_t^m \big(f_1(z,x,t)-f_{1}^{-}(x,t)\big)\big|\leq
C|z|^{k}s(z), \quad\quad \quad \text{for}  \ z\to-\infty
\end{align*}
with  some $k\in\mathbb{N}$  and the compatibility condition
 \begin{align*}
\int_{-\infty}^{+\infty}f_1(z,x,t)s'(z)dz =0
\end{align*}
hold, then (\ref{ODE:i})$(i=1)$ has a  unique bounded solution such that $s_{1}(0,x,t)=1$ and
 \begin{align*}
& \big|\partial^j_z\partial^l_x\partial^m_t\big(s_{1}(z,x,t)- \frac12{f_{1}^{+}(x,t)}\big)\big|\leq C|z|^{k+1}\big(1-s(z)\big),\ \ \ \ \ \text{for}\ \ z\to +\infty,
\\& \big|\partial^j_z\partial^l_x\partial^m_t\big(s_{1}(z,x,t)- \frac12{f_{1}^{-}(x,t)}\big)\big|\leq
C|z|^{k+1}s(z),\ \qquad\quad\ \ \ \text{for}\ z\to-\infty,
 \end{align*}
where $j,l,m=0,1,\cdots,$ $C>0$  is independent of $z,x,t$.
More concretely, we have
\begin{align*}
s_{1}(z,x,t)&=\frac{s'(z)}{s'(0)}+s'(z)\int_{0}^{z}(s'(\varsigma))^{-2}\int_\varsigma^{+\infty}f_1(\tau,x,t)s'(\tau)d\tau d\varsigma.
\end{align*}
\end{Lemma}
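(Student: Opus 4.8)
The statement to prove is the solvability result for $\mathcal{L}_1 s_1 = f_1$ on $\mathbb{R}$, namely Lemma~\ref{lem:s1}. The plan is to exploit the factorized form of $\mathcal{L}_1$ recorded earlier in the paper, $\mathcal{L}_1 u = -\frac{1}{s'}\partial_z\big((s')^2\partial_z(u/s')\big)$, which immediately identifies $s'$ as a positive element of the kernel on all of $\mathbb{R}$. Writing $u = s' v$, the equation $\mathcal{L}_1 u = f_1$ becomes $-\partial_z\big((s')^2 \partial_z v\big) = s' f_1$, a first-order ODE for $w := (s')^2 \partial_z v$, namely $\partial_z w = -s' f_1$. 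The compatibility condition $\int_{\mathbb{R}} f_1 s'\,\ud z = 0$ is exactly what guarantees that $w(z) = \int_z^{+\infty} f_1(\tau,x,t)s'(\tau)\,\ud\tau$ (equivalently $-\int_{-\infty}^z$) has matching limits $w(\pm\infty)=0$, so a bounded solution exists. Then $\partial_z v = (s')^{-2}w$, and integrating from $0$ fixes $v$ up to the additive constant, which is pinned down by the normalization $s_1(0,x,t)=1$ (i.e. $v(0) = 1/s'(0)$). This yields precisely the closed formula displayed in the lemma.

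The next step is to verify the asymptotic decay estimates as $z\to\pm\infty$. Here I would use the hypothesized decay of $f_1$ toward $f_1^\pm$, together with the exponential behavior $s(z), 1-s(z), s'(z) = O(e^{-\sqrt 2|z|})$. For $z\to+\infty$: split $f_1 = f_1^+ + (f_1-f_1^+)$; the contribution of $f_1^+$ to $w(z)=\int_z^\infty f_1 s'\,\ud\tau$ is $f_1^+\int_z^\infty s'\,\ud\tau = f_1^+(1-s(z)) = O\big((1-s(z))\big)$, and the contribution of the remainder $f_1 - f_1^+ = O(|z|^k(1-s))$ gives $O(|z|^k (1-s)^2)$, hence overall $w(z) = f_1^+(1-s(z)) + O(|z|^k(1-s(z))^2)$. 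Then $\partial_z v = (s')^{-2}w$; since $(s')^{-2}(1-s) \sim$ const as $z\to+\infty$ (both $s'$ and $1-s$ decay like $e^{-\sqrt 2 z}$, and $(1-s)/(s')^2 \to$ const), one integrates to get $v(z) = \tfrac12 f_1^+ (s')^{-1} + \text{(lower order)}$ up to a constant, and multiplying back by $s'$ produces $s_1(z) - \tfrac12 f_1^+ = O(|z|^{k+1}(1-s(z)))$. The symmetric computation handles $z\to-\infty$. Differentiating the integral formula in $z, x, t$ and propagating the hypothesized derivative bounds on $f_1$ through the same manipulations yields the stated estimates for $\partial_z^j\partial_x^l\partial_t^m s_1$; this is bookkeeping of the same structure.

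The uniqueness claim follows because any two bounded solutions differ by a bounded element of $\mathrm{Null}\,\mathcal{L}_1$ on $\mathbb{R}$; from the factorization, $\mathcal{L}_1 u = 0$ forces $(s')^2\partial_z(u/s') = \text{const}$, and boundedness of $u/s' = $ (this constant)$\cdot\int (s')^{-2} + \text{const}'$ forces the first constant to vanish (since $\int_0^z (s')^{-2}$ grows like $e^{\sqrt 2|z|}$), so $u = c\, s'$; the normalization $u(0)=0$ then gives $c=0$, hence $u\equiv 0$.

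\textbf{Main obstacle.} None of the individual steps is deep; the only place requiring genuine care is the asymptotic analysis. The subtlety is that $\mathcal{L}_1$ has a \emph{sign-changing} potential $\kappa_1(s) = 2(1-6s+6s^2)$ and its bounded kernel element $s'$ decays at \emph{both} ends, so the Green's function / variation-of-parameters construction involves the factor $(s')^{-2}$ which blows up exponentially at both infinities. One must check that the compatibility condition, applied from the correct endpoint at each side, kills this exponential growth in $w$ so that $(s')^{-2}w$ remains integrable and the resulting $v$ — and hence $s_1 = s'v$ — stays bounded with the precise polynomial$\times$exponential decay claimed. Matching the constant of integration in $v$ so that the behavior at $+\infty$ and at $-\infty$ are simultaneously correct (while also respecting $s_1(0)=1$) is the one point where the argument could go wrong if done carelessly; the compatibility condition is exactly the one-dimensional solvability (Fredholm) constraint that makes it work, and I would emphasize that in the writeup. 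I would also note that this lemma and its analogues for $i=2,\dots,5$ are essentially contained in \cite{ABC} and \cite{FWZZ18}, so for $i=1$ it suffices to adapt those arguments to the present potential.
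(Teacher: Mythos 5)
Your construction is exactly the intended one: the paper itself gives no proof of Lemma \ref{lem:s1} (it records the explicit formula and refers to \cite{ABC} and \cite{FWZZ18}), and your variation-of-parameters argument based on the factorization $\mathcal{L}_1u=-\frac{1}{s'}\partial_z\big((s')^2\partial_z(u/s')\big)$ is precisely the derivation of that formula, with the compatibility condition playing its Fredholm role at both ends, so the overall approach and conclusions are correct. Two intermediate asymptotic statements need repair, though, before the writeup is sound. First, your claim that $(s')^{-2}\big(1-s(z)\big)\sim\mathrm{const}$ as $z\to+\infty$ is false: since $s'\sim\sqrt2\,(1-s)$ there, one has $(s')^{-2}(1-s)\sim\frac{1}{2(1-s)}\sim\frac12\,e^{\sqrt2 z}$, which grows; the correct route to your (correct) conclusion is to note that $\partial_z\big(1/s'\big)=-s''/(s')^2\sim 1/(1-s)$ as $z\to+\infty$, so $\partial_z v=(s')^{-2}w\sim\frac{f_1^+}{2}\,\partial_z\big(1/s'\big)$, whence $v=\frac{f_1^+}{2}(s')^{-1}+\mathrm{const}+O(|z|^{k+1})$ and $s_1=s'v=\frac{f_1^+}{2}+O\big(|z|^{k+1}(1-s)\big)$; the analogous computation with $\partial_z(1/s')\sim-1/s$ handles $z\to-\infty$. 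Second, in the uniqueness step the growth rate is misquoted: $\int_0^z(s')^{-2}\,\ud\varsigma$ grows like $e^{2\sqrt2|z|}$, and it is the second homogeneous solution $s'(z)\int_0^z(s')^{-2}\,\ud\varsigma$ that grows like $e^{\sqrt2|z|}$; also, what you use is boundedness of $u$ itself (not of $u/s'$) to kill this solution, after which $u=c\,s'$ and $u(0)=0$ gives $c=0$. With these corrections the argument is complete and matches the statement, including the $|z|^{k+1}$ loss coming from integrating the $|z|^k$-weighted remainders.
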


\begin{Lemma}\label{lem:s2} (Solvability of (\ref{ODE:i})$(i=2)$) If the following decay conditions
 \begin{align*}
&\big|\partial_z^j\partial_x^l\partial_t^mf_2(z,x,t)\big|\leq C|z|^{k}\big(1-s(z)\big),\ \ \qquad\qquad\text{for}\ \ z\to +\infty,%\label{behavior-s2-1}
\\&\big|\partial_z^j\partial_x^l\partial_t^m \big(f_2(z,x,t)-f_{2}^{-}(x,t)\big)\big|\leq
C|z|^{k}s(z), \ \quad \ \text{for}\ z\to-\infty%\label{behavior-s2-2}
\end{align*}
with  some $k\in\mathbb{N}$  and the compatibility condition
  \begin{align*}
\int_{-\infty}^{+\infty}f_2(z,x,t)s(z)dz =0
\end{align*}
hold, then for any given $s_2^{+}(x,t)$,  (\ref{ODE:i})$(i=2)$ has a unique bounded solution such that
 \begin{align*}
&\big|\partial^j_z\partial^l_x\partial^m_t\big(s_{2}(z,x,t)-s_2^{+}(x,t)\big)\big|\leq C|z|^{k+1}\big(1-s(z)\big),\ \ \ \ \text{for}\ \ z\to +\infty,
\\&\big|\partial^j_z\partial^l_x\partial^m_t\big(s_{2}(z,x,t)-\frac{1}{2}f_{2}^{-}(x,t)\big)\big|\leq C|z|^{k+1}s(z),\ \ \qquad\ \text{for}\ z\to-\infty,
 \end{align*}
where $j,l,m=0,1,\cdots,$ $C>0$  is independent of $z,x,t$. More concretely, we have
\begin{align*}
s_{2}(z,x,t)&=s(z)s_2^{+}(x,t)-s(z)\int_z^{+\infty}(s(\varsigma))^{-2}\int_\varsigma^{+\infty}f_2(\tau,x,t)s(\tau)d\tau d\varsigma
.
\end{align*}
\end{Lemma}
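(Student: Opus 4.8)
The plan is to solve the scalar ODE $\mathcal{L}_2 s_2 = f_2$ on $\mathbb{R}$ (for fixed $(x,t)$) by reduction of order, exploiting the fact that $s(z)$ is a bounded element of $\mathrm{Null}\,\mathcal{L}_2$: from $s'' = \kappa_2(s)\,s$ one has $\mathcal{L}_2 s = 0$ and, more usefully, the factorization $\mathcal{L}_2 u = -\frac1s\partial_z\big(s^2\,\partial_z(u/s)\big)$ already recorded in Section~\ref{subsec:pre:diagonal}. First I would substitute $s_2 = s\,v$, turning the equation into $\partial_z(s^2 v') = -s f_2$. The function $s f_2$ is integrable on all of $\mathbb{R}$ — near $+\infty$ because $|f_2|\le C|z|^k(1-s)=O(|z|^k e^{-\sqrt2 z})$ while $s$ is bounded, and near $-\infty$ because $s(\tau)=O(e^{\sqrt2\tau})$ while $f_2\to f_2^-$ — so one may integrate from $z$ to $+\infty$; the requirement $s^2v'\to 0$ at $+\infty$, forced by boundedness of $v$, gives $v'(z)=s^{-2}(z)\int_z^{+\infty}s(\tau)f_2(\tau)\,d\tau$, and a second integration with $v(+\infty)=s_2^+$ produces exactly the claimed formula for $s_2$. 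Since neither $\mathcal{L}_2$ nor $s$ depends on $(x,t)$, the functions $\partial_x^l\partial_t^m s_2$ solve the same ODE with source $\partial_x^l\partial_t^m f_2$ (whose compatibility condition $\int s\,\partial_x^l\partial_t^m f_2=0$ follows by differentiating the one for $f_2$), and $\partial_z$-derivatives are read off from the equation; hence it remains only to verify the two endpoint asymptotics.

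The estimate as $z\to+\infty$ is routine: with $g(\varsigma):=\int_\varsigma^{+\infty}sf_2\,d\tau$ the hypothesis gives $|g(\varsigma)|\le C|\varsigma|^k(1-s(\varsigma))$, whence $\big|\int_z^{+\infty}s^{-2}g\big|\le C|z|^{k+1}(1-s(z))$ because $s^{-2}$ is bounded near $+\infty$; combined with $|s(z)-1|=O(1-s(z))$ this yields $|s_2-s_2^+|\le C|z|^{k+1}(1-s(z))$, uniformly in $(x,t)$, and the analogous bound for every derivative. The genuinely delicate point — and the only place the compatibility condition $\int_{\mathbb{R}}sf_2\,dz=0$ enters — is the behaviour as $z\to-\infty$, where $s(z)\to0$ while $\int_z^{+\infty}s^{-2}g$ diverges, so $s_2=s\,v$ is an indeterminate product. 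Here the key is that $g(-\infty)=\int_{\mathbb{R}}sf_2\,d\tau=0$, so $g(\varsigma)=-\int_{-\infty}^\varsigma sf_2\,d\tau$; writing $f_2=f_2^-+(f_2-f_2^-)$ and using $\int_{-\infty}^\varsigma s\,d\tau=\frac{1}{\sqrt2}s(\varsigma)\big(1+o(1)\big)$ (from $s'=\sqrt2 s(1-s)$) gives $g(\varsigma)=-\frac{1}{\sqrt2}f_2^-s(\varsigma)+O(|\varsigma|^k s^2(\varsigma))$. A L'Hôpital computation then shows $s(z)\int_z^{z_0}s^{-2}(\varsigma)g(\varsigma)\,d\varsigma\to\lim_{z\to-\infty}g(z)/s'(z)=-\frac12 f_2^-$, so $s_2(z)\to\frac12 f_2^-$; keeping the remainder terms in the same computation gives the quantitative bound $|s_2-\frac12 f_2^-|\le C|z|^{k+1}s(z)$ and likewise for the derivatives.

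Finally, for uniqueness, two bounded solutions differ by a bounded $w$ with $\mathcal{L}_2 w=0$; then $(w/s)'=c\,s^{-2}$, and $c=0$ is forced since otherwise $w$ is unbounded (as $s(z)\int_0^z s^{-2}$ grows linearly at $+\infty$ and exponentially at $-\infty$), so $w=c's$, and matching the prescribed limit at $+\infty$ forces $c'=0$. I expect the asymptotic bookkeeping at $-\infty$ — pinning down the exact constant $\frac12 f_2^-$ together with a polynomial-times-exponential remainder, uniformly in $(x,t)$ and in the order of differentiation — to be the only technical step; the rest is standard variation of parameters, and the parallel statements are already available in \cite{ABC} and \cite{FWZZ18}, from which the argument can be adapted essentially verbatim with $(\kappa_2, s)$ in place of the corresponding data, just as was done for $\mathcal{L}_1$ in Lemma~\ref{lem:s1}.
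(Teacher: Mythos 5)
Your proposal is correct, and it follows the same route that underlies the paper's statement: the paper itself gives no in-text proof (it defers to \cite{ABC} and \cite{FWZZ18}), and your reduction-of-order argument via the factorization $\mathcal{L}_2 u=-\frac1s\partial_z\big(s^2\partial_z(u/s)\big)$ is exactly the standard variation-of-parameters derivation that produces the explicit formula displayed in the lemma. You also correctly isolate the one delicate point — the $z\to-\infty$ asymptotics, where the compatibility condition $\int_{\mathbb{R}} s f_2\,dz=0$ yields the limit $\tfrac12 f_{2}^{-}$ (consistent with the limiting equation $-u''+2u=f_2^-$) with the $|z|^{k+1}s(z)$ remainder, the same bookkeeping (including the cancellation in $\partial_z s_2$) carrying over to all derivatives — and your uniqueness argument via $w=c's$ and the prescribed limit at $+\infty$ is the standard one.
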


\begin{Lemma}\label{lem:s3} (Solvability of (\ref{ODE:i})$(i=3)$) If the following decay conditions
 \begin{align*}
&\big|\partial_z^j\partial_x^l\partial_t^m\big(f_3(z,x,t)-f_{3}^{+}(x,t)\big)\big|\leq C|z|^{k}\big(1-s(z)\big),\ \ \qquad\text{for}\ \ z\to +\infty,%\label{behavior-s3-1}
\\&\big|\partial_z^j\partial_x^l\partial_t^m f_3(z,x,t)\big|\leq
C|z|^{k}s(z),\ \  \ \qquad \ \text{for}\ z\to-\infty%\label{behavior-s3-2}
\end{align*}
with  some $k\in\mathbb{N}$  and the compatibility condition
  \begin{align*}
\int_{-\infty}^{+\infty}f_3(z)(1-s(z))dz =0%\label{solve condition:s3}
\end{align*}
hold, then for any given $s_3^{-}(x,t)$, (\ref{ODE:i})$(i=3)$ has a unique bounded solution such that
 \begin{align*}
&\big|\partial^j_z\partial^l_x\partial^m_t\big(s_{3}(z,x,t)-\frac{1}{2}f_{3}^{+}(x,t)
\big)\big|\leq C|z|^{k+1}\big(1-s(z)\big),\ \ \ \ \text{for}\ \ z\to +\infty,%\label{behavior conclusion-s3-1}
\\&\big|\partial^j_z\partial^l_x\partial^m_t\big(s_{3}(z,x,t)-s_3^{-}(x,t)\big)\big|\leq C|z|^{k+1}s(z),\ \ \ \ \ \ \ \qquad\ \text{for}\ z\to-\infty,%\label{behavior conclusion-s3-2}
 \end{align*}
where $j,l,m=0,1,\cdots,$ $C>0$  is independent of $z,x,t$. More concretely, we have
\begin{align*}
s_{3}(z,x,t)&=(1-s(z))s_3^{-}(x,t)-(1-s(z))\int_{-\infty}^z(1-s(\varsigma))^{-2}\int_{-\infty}^\varsigma f_3(\tau,x,t)(1-s(\tau))d\tau d\varsigma
.
\end{align*}
\end{Lemma}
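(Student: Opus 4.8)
The plan is to exploit the Sturm--Liouville factorization of $\mathcal{L}_3$ recorded before \eqref{pre:decomp-Op-L}: since $\kappa_3(s)=4s^2-2s=(1-s)''/(1-s)$, one has
\begin{align*}
\mathcal{L}_3 u=-\frac{1}{1-s}\,\partial_z\!\Big((1-s)^2\,\partial_z\big(\tfrac{u}{1-s}\big)\Big).
\end{align*}
First I would substitute $u=(1-s)v$, which turns \eqref{ODE:i}$(i=3)$ into the first-integrable form $\partial_z\big((1-s)^2\partial_z v\big)=-(1-s)f_3$. Near $z\to-\infty$ we have $1-s\to1$ and $s',f_3=O(|z|^k s(z))$ decaying exponentially, so $\int_{-\infty}^z(1-s(\tau))f_3(\tau)\,d\tau$ is well defined and any bounded solution must satisfy $\partial_z v\to0$ there; hence the first integration constant vanishes and $(1-s(z))^2\partial_z v(z)=-\int_{-\infty}^z(1-s)f_3\,d\tau$.

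The second integration is where the compatibility hypothesis enters. For large $\varsigma$ we write $\int_{-\infty}^\varsigma(1-s)f_3=-\int_{\varsigma}^{+\infty}(1-s)f_3$; using $\int_{\mathbb{R}}(1-s)f_3=0$ together with $1-s\sim e^{-\sqrt2\,\varsigma}$ and $f_3\to f_3^+$, this tail is $O(|\varsigma|^{k}e^{-\sqrt2\,\varsigma})$, which exactly compensates the exponential growth $(1-s(\varsigma))^{-2}\sim e^{2\sqrt2\,\varsigma}$. Thus $\int_{-\infty}^z(1-s(\varsigma))^{-2}\int_{-\infty}^\varsigma(1-s)f_3\,d\tau\,d\varsigma$ converges, and one more integration yields precisely the claimed formula with the remaining free constant identified as $v(-\infty)=u(-\infty)=s_3^-(x,t)$. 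An equivalent, cleaner route is to take the displayed formula as an ansatz and verify $\mathcal{L}_3 s_3=f_3$ by direct differentiation, after which only the boundedness and asymptotics need to be checked.

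It then remains to establish the asymptotics and regularity exactly as in Lemmas \ref{lem:s1}--\ref{lem:s2}. At $-\infty$, $1-s\to1$ gives $s_3\to s_3^-$ at rate $O(|z|^{k+1}s)$; at $+\infty$ a short l'Hôpital-type computation shows $(1-s(z))\int_{-\infty}^z(1-s)^{-2}\int_{-\infty}^\varsigma(1-s)f_3\to-\tfrac12 f_3^+$, so $s_3-\tfrac12 f_3^+=O(|z|^{k+1}(1-s))$. Since $(x,t)$ enters only through $f_3$ and $s_3^-$, differentiating the explicit formula under the integral sign propagates all bounds to $\partial_z^j\partial_x^l\partial_t^m s_3$, with constants controlled by those in the hypotheses. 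For uniqueness, the homogeneous equation $\mathcal{L}_3 u=0$ has general solution $c_1(1-s)+c_2(1-s)\int_0^z(1-s)^{-2}\,d\varsigma$, the second term being unbounded at both ends, so a bounded difference of two solutions is a multiple of $1-s$; matching the prescribed limit $s_3^-$ at $-\infty$ (where $1-s\to1$) forces the multiple to be zero.

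The only genuinely delicate point I anticipate is the convergence and sharp decay of the iterated integral at $+\infty$: without the compatibility condition the outer integral diverges, and obtaining the precise rate $O(|z|^{k+1}(1-s))$ rather than mere boundedness requires carefully tracking the polynomial factors $|z|^k$ through both integrations and through the $\partial_z^j\partial_x^l\partial_t^m$ derivatives. Everything else is routine linear-ODE bookkeeping already carried out in \cite{ABC,FWZZ18}.
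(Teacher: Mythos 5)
Your argument is correct and is essentially the standard reduction-of-order computation that underlies the explicit formula recorded in the lemma (the paper itself gives no proof here, only the formula and a citation to \cite{ABC, FWZZ18}): substitute $s_3=(1-s)v$ using $\kappa_3=(1-s)''/(1-s)$, integrate twice from $-\infty$, identify the free constant with $s_3^-$, and read off the asymptotics, with uniqueness following because the second homogeneous solution $(1-s)\int_0^z(1-s)^{-2}d\varsigma$ is unbounded at both ends and the remaining multiple of $1-s$ is killed by the prescribed limit at $-\infty$. One phrasing slip worth fixing: the compatibility condition does not make the integrand $(1-s(\varsigma))^{-2}\int_{-\infty}^{\varsigma}(1-s)f_3\,d\tau$ bounded — it still grows like $e^{\sqrt2\,\varsigma}$ — rather, as your own limit computation at $+\infty$ shows, the resulting $e^{\sqrt2\,z}$ growth of the outer integral is cancelled by the prefactor $1-s(z)$, whereas without the compatibility condition the inner integral tends to a nonzero constant and the uncancelled $e^{2\sqrt2\,\varsigma}$ growth would leave $s_3$ unbounded (growing like $e^{\sqrt2\,z}$), which is exactly why the condition is needed.
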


\begin{Lemma}\label{lem:s4} (Solvability of (\ref{ODE:i})$(i=4)$) If the following decay conditions
 \begin{align*}
&\big|\partial_z^j\partial_x^l\partial_t^mf_4(z,x,t)\big|\leq C|z|^{k}\big(1-s(z)\big),\ \qquad\text{for}\ \ z\to +\infty,%\label{behavior-s4-1}
\\&\big|\partial_z^j\partial_x^l\partial_t^m f_4(z,x,t)\big|\leq
C|z|^{k}s(z),\ \  \ \ \quad\qquad \ \text{for}\ z\to-\infty%\label{behavior-s4-2}
\end{align*}
with  some $k\in\mathbb{N}$  and the compatibility condition
  \begin{align*}
\int_{-\infty}^{+\infty}f_4(z,x,t)dz =0%\label{solve condition:s4}
\end{align*}
hold, then for any given $s_4^{+}(x,t)$,  (\ref{ODE:i})$(i=4)$ has a  unique bounded solution such that
 \begin{align*}
&\big|\partial^j_z\partial^l_x\partial^m_t\big(s_{4}(z,x,t)-s_4^{+}(x,t)\big)\big|\leq C|z|^{k+1}\big|1-s(z)\big|,\ \ \ \ \text{for}\ \ z\to +\infty,%\label{behavior conclusion-s4-1}
\\&\big|\partial^j_z\partial^l_x\partial^m_ts_{4}(z,x,t)\big|\leq C|z|^{k+1}s(z),\ \ \ \ \ \ \ \qquad\ \text{for}\ z\to-\infty,%\label{behavior conclusion-s4-2}
 \end{align*}
where $j,l,m=0,1,\cdots,$ $C>0$  is independent of $z,x,t$. More concretely, we have
\begin{align*}
s_{4}(z,x,t)&=s_4^{+}(x,t)-s(z)\int_z^{+\infty}(s(\varsigma))^{-2}\int_\varsigma^{+\infty}f_4(\tau,x,t)s(\tau)d\tau d\varsigma
.%\label{explicit:s4}
\end{align*}
\end{Lemma}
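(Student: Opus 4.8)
The plan is to exploit that $\kappa_4(s)\equiv 0$, so that the operator in \eqref{ODE:i} with $i=4$ is simply $\mathcal L_4=-\partial_z^2$ and the equation reads $\partial_z^2 s_4=-f_4$, with $\ker\mathcal L_4=\mathrm{span}\{1,z\}$; this is the mildest of the five cases, so the whole argument reduces to two integrations together with a careful tracking of the exponential weights $s$ and $1-s$, exactly as in the proofs of Lemmas~\ref{lem:s1}--\ref{lem:s3}. First I would set $G(z,x,t):=\int_z^{+\infty}f_4(\tau,x,t)\,d\tau$. The decay hypothesis at $+\infty$ together with $1-s(\tau)=O(e^{-\sqrt2\tau})$ makes this convergent, with $|\partial_z^j\partial_x^l\partial_t^m G|\le C|z|^{k+1}(1-s(z))$ as $z\to+\infty$; the compatibility condition $\int_{\mathbb R}f_4\,d\tau=0$ lets me rewrite $G=-\int_{-\infty}^{z}f_4\,d\tau$, and with the decay at $-\infty$ and $s(\tau)=O(e^{\sqrt2\tau})$ this gives $|\partial_z^j\partial_x^l\partial_t^m G|\le C|z|^{k+1}s(z)$ as $z\to-\infty$. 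Thus $G$ decays exponentially at both ends. Then I would define
\[
s_4(z,x,t):=s_4^{+}(x,t)-\int_z^{+\infty}G(\varsigma,x,t)\,d\varsigma ,
\]
which is well defined (exponentially small integrand near $+\infty$), satisfies $\partial_z s_4=G$ and hence $\mathcal L_4 s_4=-\partial_z^2 s_4=f_4$, and has $s_4(+\infty,x,t)=s_4^{+}(x,t)$; this is the closed form in the statement once the $s$-weights are unwound.

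Next I would establish the weighted pointwise bounds. Near $+\infty$, $s_4-s_4^{+}=-\int_z^{+\infty}G\,d\varsigma$, so integrating $|G(\varsigma)|\le C|\varsigma|^{k+1}(1-s(\varsigma))$ and using $\int_z^{+\infty}|\varsigma|^{k+1}e^{-\sqrt2\varsigma}\,d\varsigma\le C|z|^{k+1}e^{-\sqrt2 z}$ gives the stated estimate for $s_4-s_4^{+}$ (absorbing the harmless extra polynomial factor into a slightly smaller exponent, as is done in Lemmas~\ref{lem:s1}--\ref{lem:s3}); the same applies to every $\partial_z^j\partial_x^l\partial_t^m$ because the hypotheses are imposed on all derivatives of $f_4$ and differentiation commutes with the integrals. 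Near $-\infty$ I would write $s_4(z,x,t)=\big(s_4^{+}(x,t)-\int_{\mathbb R}G(\varsigma,x,t)\,d\varsigma\big)+\int_{-\infty}^{z}G(\varsigma,x,t)\,d\varsigma$; the bracketed constant is $s_4(-\infty,x,t)$, and the remaining tail obeys $\big|\int_{-\infty}^{z}G\,d\varsigma\big|\le C|z|^{k+1}s(z)$ by the same computation with $s(\varsigma)=O(e^{\sqrt2\varsigma})$. The role of the compatibility condition is precisely here: it is what forces $G$, and hence $s_4$ minus its limits, to be exponentially small at $-\infty$; without it $G$ would tend to a nonzero constant and $s_4$ would grow linearly.

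Finally, for uniqueness: if $s_4$ and $\tilde s_4$ are two bounded solutions with the prescribed limit at $+\infty$, then $w:=s_4-\tilde s_4\in\ker\mathcal L_4$, so $w(z)=az+b$; boundedness forces $a=0$ and $w(+\infty)=0$ forces $b=0$. (Equivalently, among bounded solutions the only free constant is the additive one, fixed by the normalization at $+\infty$; the value at $-\infty$ is then $s_4^{+}-\int_{\mathbb R}G$, consistent with the claimed decay at $-\infty$ precisely when the data are arranged so that this vanishes, which is how the lemma is used in the inner expansion.) Smoothness of $s_4$ in $(x,t)$ follows by differentiating under the integral sign and invoking the derivative bounds on $f_4$. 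I do not expect a genuine obstacle here: since $\mathcal L_4$ has trivial potential there is no spectral or coercivity issue, and the only step requiring care is the bookkeeping of the $s$ and $1-s$ weights through the double integral, which is routine and identical in spirit to Lemmas~\ref{lem:s2} and \ref{lem:s3}.
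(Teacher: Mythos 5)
Your argument is correct in substance and is the natural direct proof: since $\kappa_4\equiv0$, the equation is $-\partial_z^2 s_4=f_4$ with kernel $\mathrm{span}\{1,z\}$, and your two integrations — using the compatibility condition to rewrite $G(z)=\int_z^{+\infty}f_4\,d\tau=-\int_{-\infty}^{z}f_4\,d\tau$ and hence get exponential smallness of $G$ at both ends — together with the uniqueness argument (boundedness kills the linear part, the normalization at $+\infty$ kills the constant) deliver exactly what the lemma is needed for. Note that the paper does not prove Lemma \ref{lem:s4} at all: it records the statement with a closed formula and cites \cite{ABC,FWZZ18}, so your self-contained derivation is a different and more informative route. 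Two points of friction with the printed statement, neither of which is a gap in your mathematics, should however be stated honestly rather than glossed. First, your formula $s_4=s_4^{+}-\int_z^{+\infty}\int_\varsigma^{+\infty}f_4(\tau,x,t)\,d\tau\,d\varsigma$ is \emph{not} the displayed formula ``once the $s$-weights are unwound'': the displayed expression carries the weights $s(\varsigma)^{-2}$, $s(\tau)$ of the $i=2$ case (compare Lemma \ref{lem:s2}); applied to $\mathcal{L}_4=-\partial_z^2$ it yields $\mathcal{L}_4 s_4=f_4+s''(z)\int_z^{+\infty}s^{-2}(\varsigma)\int_\varsigma^{+\infty}f_4\,s\,d\tau\,d\varsigma$, and it is even unbounded as $z\to-\infty$ unless $\int_{\mathbb{R}}f_4\,s\,dz=0$; so the printed formula is evidently a misprint, and the correct closed form (constant weights, i.e.\ variation of parameters for $-\partial_z^2$) is precisely yours. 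Second, as you observe only parenthetically, with the normalization $s_4(+\infty)=s_4^{+}$ the limit at $-\infty$ equals $s_4^{+}-\int_{\mathbb{R}}\tau f_4\,d\tau$, which is generally nonzero, so the stated decay of $s_4$ itself at $-\infty$ holds only under the additional normalization actually used in Lemma \ref{lem:ODEsys}, where the $\mathbb{V}_4$-component is required to vanish at $-\infty$ and its value at $+\infty$ is then the uniquely determined $\overline{\QQ}$; your reading of how the lemma is applied in the inner expansion is the right one, and it would strengthen the write-up to say explicitly that the $(x,t)$-derivative bounds at $-\infty$ use the compatibility condition differentiated in $(x,t)$.
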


\begin{Lemma}\label{lem:s5}(Solvability of (\ref{ODE:i})$(i=5)$) If  the following decay conditions
 \begin{align*}
&\big|\partial_z^j\partial_x^l\partial_t^m\big(f_5(z,x,t)-f_{5}^{+}(x,t)\big)\big|\leq C|z|^{k}\big(1-s(z)\big),\ \ \text{for}\ \ z\to +\infty,%\label{behavior-21}
\\&\big|\partial_z^j\partial_x^l\partial_t^m \big(f_5(z,x,t)-f_{5}^{-}(x,t)\big)\big|\leq
C|z|^{k}s(z), \quad\quad\quad  \text{for}\ z\to-\infty%\label{behavior-22}
\end{align*}
with some $k\in\mathbb{N}$ hold, then (\ref{ODE:i})$(i=4)$ has a unique bounded solution such that
 \begin{align*}
& \big|\partial^j_z\partial^l_x\partial^m_t\big(s_{5}(z,x,t)- \frac12{f_{5}^{+}(x,t)}\big)\big|\leq C|z|^{k+1}\big(1-s(z)\big),\ \  \text{for}\ \ z\to +\infty, %\label{behavior conclusion-21}
\\& \big|\partial^j_z\partial^l_x\partial^m_t\big(s_{5}(z,x,t)- \frac12{f_{5}^{-}(x,t)}\big)\big|\leq C|z|^{k+1}s(z),\quad\quad\quad \text{for}\ z\to-\infty,%\label{behavior conclusion-22}
 \end{align*}
where $j,l,m=0,1,\cdots $ and   $C>0$  is independent of $z,x,t.$
\end{Lemma}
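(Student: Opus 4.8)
The plan is to exploit the fact that $\mathcal{L}_5 = -\partial_z^2 + 2$ has constant coefficients and admits no bounded element in its kernel (its homogeneous solutions $e^{\pm\sqrt{2}z}$ both being unbounded), so that it can be inverted explicitly and the bounded solution is automatically unique — this is why, in contrast to Lemmas \ref{lem:s1}--\ref{lem:s4}, no compatibility condition on $f_5$ is imposed. First I would introduce the Green's function $G(z,\zeta) = \frac{1}{2\sqrt{2}}e^{-\sqrt{2}|z-\zeta|}$ of $-\partial_z^2 + 2$ on $\BR$ and set
\begin{align*}
  s_5(z,x,t) = \frac{1}{2\sqrt{2}}\int_{\BR} e^{-\sqrt{2}|z-\zeta|}\,f_5(\zeta,x,t)\,\ud\zeta .
\end{align*}
Since $f_5$ is bounded, this converges and defines a bounded function; differentiating under the integral sign, using $\partial_z e^{-\sqrt{2}|z-\zeta|} = -\sqrt{2}\,\mathrm{sgn}(z-\zeta)\,e^{-\sqrt{2}|z-\zeta|}$, gives $\mathcal{L}_5 s_5 = f_5$. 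Uniqueness is immediate: a difference of two bounded solutions solves $-\partial_z^2 u + 2u = 0$ and hence vanishes.

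Next I would extract the limiting values and the decay estimates. Fixing a smooth cutoff $\chi$ with $\chi\equiv 1$ on $[1,\infty)$ and $\chi\equiv 0$ on $(-\infty,-1]$, I split $f_5 = \big(\chi f_5^+ + (1-\chi)f_5^-\big) + r_5$, so that the hypotheses give $r_5 = O(|z|^k e^{-\sqrt{2}|z|})$ at both ends, together with the same bounds for $\partial_z^j\partial_x^l\partial_t^m r_5$. Convolving $G$ with the step part and using $\int_{\BR}e^{-\sqrt{2}|z-\zeta|}\,\ud\zeta = \sqrt{2}$ produces $\tfrac12 f_5^{\pm}$ as $z\to\pm\infty$, with an error that only involves the transition region $|\zeta|\le 1$ and is therefore $O(e^{-\sqrt{2}|z|})$. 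The remaining term $\frac{1}{2\sqrt{2}}\int_{\BR}e^{-\sqrt{2}|z-\zeta|}r_5\,\ud\zeta$ is bounded directly by the elementary convolution estimate
\begin{align*}
  \int_{\BR}e^{-\sqrt{2}|z-\zeta|}\,|\zeta|^k e^{-\sqrt{2}|\zeta|}\,\ud\zeta \le C\,|z|^{k+1}e^{-\sqrt{2}|z|}\qquad\text{for }|z|\ge 1,
\end{align*}
the extra power $|z|^{k+1}$ coming from the resonance between $G$ and $r_5$ sharing the \emph{same} exponential rate $\sqrt{2}$ (on the interval between $0$ and $z$ the two exponentials cancel, leaving a polynomial of one higher degree). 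This yields exactly the stated bounds for $s_5 - \tfrac12 f_5^{\pm}$.

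For the higher derivatives, since $G$ is independent of $(x,t)$, the operators $\partial_x^l\partial_t^m$ pass through the integral onto $f_5$, which by assumption obeys the same decay with these derivatives applied, so the estimates above carry over verbatim; $z$-derivatives of order $\ge 2$ are reduced to $z$-derivatives of $f_5$ plus lower-order terms via the equation $\partial_z^2 s_5 = 2 s_5 - f_5$, and the first $z$-derivative is read off the explicit formula. The only genuine subtlety — and hence the (mild) main obstacle — is the convolution estimate with $r_5$: because the kernel decays at exactly the critical rate $\sqrt{2}$, one must track the resonant contribution with care to obtain the sharp gain of a single power of $|z|$ rather than a worse factor. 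Alternatively, as these solvability facts are classical, one may simply invoke \cite{ABC, FWZZ18} once it is noted that $\mathcal{L}_5$ has trivial kernel among bounded functions.
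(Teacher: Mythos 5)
Your argument is correct, but it is worth noting that the paper does not actually prove this lemma: Lemmas \ref{lem:s1}--\ref{lem:s5} are quoted as known results from \cite{ABC} and \cite{FWZZ18}, and for the other four cases the paper records explicit variation-of-parameters formulas built from the homogeneous solutions, while for $\mathcal{L}_5$ no formula is even displayed. Your Green's-function construction $s_5=G*f_5$ with $G(z,\zeta)=\tfrac{1}{2\sqrt2}e^{-\sqrt2|z-\zeta|}$ is the symmetric form of exactly that variation-of-parameters representation, so it supplies a clean self-contained proof: it makes transparent why no compatibility (orthogonality) condition is needed here (the kernel $e^{\pm\sqrt2 z}$ of $\mathcal{L}_5$ contains no bounded element, which also gives uniqueness at once), it produces the limits $\tfrac12 f_5^{\pm}$ from $\int_{\BR}e^{-\sqrt2|z-\zeta|}\ud\zeta=\sqrt2$, and your resonance computation of $G$ against $|\zeta|^k e^{-\sqrt2|\zeta|}$ correctly accounts for the loss of exactly one power, matching the stated $|z|^{k+1}(1-s(z))$ and $|z|^{k+1}s(z)$ bounds; passing $\partial_x^l\partial_t^m$ through the $(x,t)$-independent kernel and recovering $\partial_z$-derivatives from the equation is also fine. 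Two cosmetic points: the error from the step part $\chi f_5^{+}+(1-\chi)f_5^{-}$ is an integral over the half-line where $1-\chi$ (resp.\ $\chi$) is supported, not just over $|\zeta|\le 1$, but the exponential kernel still makes it $O(e^{-\sqrt2|z|})$ as you claim; and the occurrence of ``$(i=4)$'' in the lemma statement is a typo in the paper for $i=5$, which your proof of course addresses.
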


\subsection{A sketch proof of  Lemma \ref{lem:mini-pair}}\label{App:proof-lem-minipair}
We define the Euclidean distance between a matrix and a compact manifold $\Sigma\subset\mathbb{M}_n$:
\begin{align*}
  \rho(\A, \Sigma)=\min_{\B\in\Sigma } \|\A-\B\|=\sqrt{\|\A-\B\|_2^2}.
\end{align*}
The following lemma is a straightforward consequence of elementary calculations.
\begin{Lemma}\label{lem:appendix}
If $\A\in\mathbb{M}_n$, and $\rho:= \rho(\A, {O}(n))\le 1$, then $F(\A)\ge F(\II-\rho\nn\otimes\nn)=\rho^2(2-\rho)^2$.
\end{Lemma}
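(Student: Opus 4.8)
The plan is to diagonalise $\A$ by the singular value decomposition and reduce the statement to a one-variable estimate on its singular values. Write $\A=U\Sigma V^{\mathrm{T}}$ with $U,V\in O(n)$ and $\Sigma=\mathrm{diag}(\sigma_1,\dots,\sigma_n)$, $\sigma_i\ge 0$. Since $\A^{\mathrm{T}}\A=V\Sigma^2V^{\mathrm{T}}$ and the Frobenius norm $\|\cdot\|$ is invariant under multiplication by orthogonal matrices, the potential from \eqref{intro:def-FullEnergy} becomes
\begin{align*}
F(\A)=\frac14\|\A^{\mathrm{T}}\A-\II\|^2=\frac14\sum_{i=1}^n(\sigma_i^2-1)^2=\frac14\sum_{i=1}^n(\sigma_i-1)^2(\sigma_i+1)^2 .
\end{align*}

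First I would express $\rho:=\rho(\A,O(n))$ through the $\sigma_i$. Again by orthogonal invariance of $\|\cdot\|$, and since $Q\mapsto U^{\mathrm{T}}QV$ is a bijection of $O(n)$, we have $\rho^2=\min_{Q\in O(n)}\|\A-Q\|^2=\min_{R\in O(n)}\|\Sigma-R\|^2$. Expanding the square, this equals $\sum_i\sigma_i^2+n-2\max_{R\in O(n)}\sum_i\sigma_iR_{ii}$; because every column of an orthogonal matrix is a unit vector, $|R_{ii}|\le 1$, so (using $\sigma_i\ge0$) the maximum is $\sum_i\sigma_i$, attained at $R=\II$. This is the standard orthogonal Procrustes computation, and it gives $\rho^2=\sum_{i=1}^n(\sigma_i-1)^2$.

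The core of the argument is then a pointwise bound on each factor $(\sigma_i+1)^2$. Put $t_i=\sigma_i-1\,(\ge-1)$; then $t_i^2\le\sum_j t_j^2=\rho^2$, so $|t_i|\le\rho$. Here the hypothesis $\rho\le1$ is used: it forces $2-\rho>0$, hence $\sigma_i+1=t_i+2\ge 2-\rho>0$, and therefore $(\sigma_i+1)^2\ge(2-\rho)^2$ for every $i$. Substituting into the formula for $F(\A)$,
\begin{align*}
F(\A)=\frac14\sum_{i=1}^n t_i^2(t_i+2)^2\ \ge\ \frac14(2-\rho)^2\sum_{i=1}^n t_i^2=\frac14\,\rho^2(2-\rho)^2 .
\end{align*}
It remains to identify the right-hand side with $F(\II-\rho\,\nn\nn)$: since $\nn\nn$ is symmetric and $(\nn\nn)^2=\nn\nn$ for a unit vector, $(\II-\rho\,\nn\nn)^{\mathrm{T}}(\II-\rho\,\nn\nn)-\II=-\rho(2-\rho)\,\nn\nn$, so $F(\II-\rho\,\nn\nn)=\tfrac14\rho^2(2-\rho)^2$, which equals the lower bound and is attained by a matrix whose singular values are $1-\rho$ (once) and $1$ (with multiplicity $n-1$).

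I do not expect a genuine obstacle: everything is elementary once the SVD reduction is in place. The two points deserving care are (i) the Procrustes identity $\rho^2=\sum_i(\sigma_i-1)^2$ — in particular noting that orthogonal matrices of determinant $-1$ cannot do better, which follows from $|R_{ii}|\le1$ alone and does not need any determinant considerations — and (ii) making explicit where the assumption $\rho\le1$ is used, namely in passing from $t_i+2\ge 2-\rho$ to $(t_i+2)^2\ge(2-\rho)^2$, which requires $2-\rho\ge0$.
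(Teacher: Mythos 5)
Your proof is correct. The paper itself gives no argument for Lemma \ref{lem:appendix} --- it is dismissed as ``a straightforward consequence of elementary calculations'' --- so there is no written proof to compare against; your SVD reduction combined with the orthogonal-Procrustes identity $\rho^2=\sum_i(\sigma_i-1)^2$ (valid over all of $O(n)$, with no determinant restriction, exactly as you note) is precisely the elementary computation intended, and you correctly isolate the only place the hypothesis $\rho\le 1$ enters, namely in guaranteeing $\sigma_i+1\ge 2-\rho>0$ so that squaring preserves the bound. One bookkeeping remark: with the paper's normalization $F(\A)=\frac14\|\A^{\mathrm{T}}\A-\II\|^2$, the common value of the two sides is $\frac14\rho^2(2-\rho)^2$ rather than the $\rho^2(2-\rho)^2$ displayed in the lemma; your evaluation of $F(\II-\rho\,\nn\nn)$ is the correct one, and the discrepancy is a missing factor $\frac14$ in the paper's statement, which is harmless because only the inequality $F(\A)\ge F(\II-\rho\,\nn\nn)$ (both sides computed with the same $F$) is used in the sketch proof of Lemma \ref{lem:mini-pair}.
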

For any curve $\gamma=\{\B(z): a< z< b\}$ in $\mathbb{M}_n$, the quantity
\begin{align*}
e(\gamma)=  \int_{\gamma}\sqrt{F(\BB(z))/2} \|\BB'(z)\|\ud z
\end{align*}
is independent of the parametrization of $\gamma$.
If $\A(z)(z\in\BR)$ is a minimal connecting orbit, then
\begin{align*}
  e(\mathrm{Traj}(\A)) =\min_{\gamma(\pm1)\in\Sigma_\pm } e(\gamma).
\end{align*}
Then  using Lemma \ref{lem:appendix} and the arguments in \cite[Theorem 2.1]{LPW}, we can obtain
\begin{align*}
  \mathrm{Traj}(\A)=\{\tau \A_++(1-\tau)\A_-:\text{for some }\A_-\in{O}^-(n), \A_+\in{O}^+(n), \|\A_--\A_+\|=2\},
\end{align*}
which implies the claim.

\subsection{Formal derivation of Neumann jump condition}\label{App:BC}
 Assume that
\begin{align*}
\A^\ve \rightarrow \A_\pm \text{ strongly in }L^2(\Omega_t^\pm);\quad \partial_t\A^\ve  \rightharpoonup \partial_t\A_\pm, \nabla\A^\ve  \rightharpoonup \nabla\A_\pm \text{ weakly in }L^2(\Omega_t^\pm).
\end{align*}
The equation (\ref{eq:main-intro}) yields that
\begin{align*}
  (\A^{\ve})^{\mathrm{T}} (\partial_t\A^{\ve}-\Delta \A^{\ve})-(\partial_t(\A^{\ve})^{\mathrm{T}} -\Delta (\A^{\ve})^{\mathrm{T}} ) \A^{\ve}=0,
\end{align*}
which gives
\begin{align*}
  (\A^{\ve})^{\mathrm{T}} \partial_t\A^{\ve}- \partial_t(\A^{\ve})^{\mathrm{T}} \A^{\ve}
= \nabla\cdot ((\A^{\ve})^{\mathrm{T}} \nabla \A^{\ve} -\nabla (\A^{\ve})^{\mathrm{T}}  \A^{\ve}).
\end{align*}
Testing the above equation with a smooth matrix-valued function $\Psi$ and taking the limit $\ve\to 0$, one gets
\begin{multline*}
\int_{\Omega_t^+}\Big( (\A_+^{\mathrm{T}}\partial_t\A_+- \partial_t\A_+^{\mathrm{T}} \A_+)\Psi
+ (\A_+^{\mathrm{T}}\nabla \A_+ -\nabla \A_+^{\mathrm{T}}  \A_+)\cdot \nabla\Psi\Big)\ud x\\
+\int_{\Omega_t^-}\Big( (\A_-^{\mathrm{T}}\partial_t\A_-- \partial_t\A_-^{\mathrm{T}} \A_-)\Psi
+ (\A_-^{\mathrm{T}}\nabla \A_- -\nabla \A_-^{\mathrm{T}}  \A_-)\cdot \nabla\Psi\Big)\ud x=0.
\end{multline*}
As $\A_\pm$ obeys the harmonic map heat flow to ${O}^\pm(n)$ on $\Omega_\pm$, one immediately gets the boundary condition
\begin{align*}
\A_+^{\mathrm{T}}\partial_\nu \A_+ -\partial_\nu \A_+^{\mathrm{T}}  \A_+=
\A_-^{\mathrm{T}}\partial_\nu \A_- -\partial_\nu \A_-^{\mathrm{T}}  \A_-.
\end{align*}
Similarly, we have
\begin{align*}
\A_+\partial_\nu \A_+^{\mathrm{T}} -\partial_\nu \A_+  \A_+^{\mathrm{T}}=
\A_-\partial_\nu \A_-^{\mathrm{T}} -\partial_\nu \A_-  \A_-^{\mathrm{T}}.
\end{align*}
These two relations give (\ref{SharpInterfaceSys:Neumann}) under the minimal pair relation (\ref{SharpInterfaceSys:MiniPair}), see Appendix \ref{App:BC}.

Assume that $(\A_-,\A_+)$ is a minimal pair, i.e., $\A_+=\A_-(\II-2\nn\nn)$ for some $\nn\in{S}^{n-1}$.
We prove the following three boundary conditions are equivalent:
\begin{align*}
(i):\quad&\left \{
\begin{array}{ll}
  \A_+^{\mathrm{T}}\partial_\nu \A_+ -\partial_\nu \A_+^{\mathrm{T}}  \A_+=
\A_-^{\mathrm{T}}\partial_\nu \A_- -\partial_\nu \A_-^{\mathrm{T}}  \A_-,\\
\A_+\partial_\nu \A_+^{\mathrm{T}} -\partial_\nu \A_+  \A_+^{\mathrm{T}}=
\A_-\partial_\nu \A_-^{\mathrm{T}} -\partial_\nu \A_-  \A_-^{\mathrm{T}};
\end{array}
\right.\\
(ii):\quad& \partial_\nu \A_+=\partial_\nu \A_-;\\
(iii):\quad& \A_+^{\mathrm{T}}{\partial_\nu \A_+}=\A_-^{\mathrm{T}}{\partial_\nu \A_-}=\WW,\quad \text{for some }\WW\in\mathbb{V}_4.
\end{align*}

{($i$) $\Rightarrow$ ($iii$)}:
As $\A_+^{\mathrm{T}}\partial_\nu \A_+$ and $\A_-^{\mathrm{T}}\partial_\nu \A_-$ are both antisymmetric, the first equation gives
\begin{align*}
 \A_+^{\mathrm{T}}\partial_\nu \A_+=\A_-^{\mathrm{T}}\partial_\nu \A_-=\WW\in\mathbb{A}_n.
\end{align*}
Similarly, we have $\partial_\nu \A_+  \A_+^{\mathrm{T}}=\partial_\nu \A_-  \A_-^{\mathrm{T}}$.
Therefore, we get
\begin{align*}
\A_+  \WW\A_+^{\mathrm{T}}= \partial_\nu \A_+\A_+^{\mathrm{T}}=\partial_\nu \A_-  \A_-^{\mathrm{T}}=\A_- \WW\A_-^{\mathrm{T}}.
\end{align*}
which implies $ (\II-2\nn\nn)\WW(\II-2\nn\nn)=\WW$. Thus, $\nn\WW=0$, i.e. $\WW\in\mathbb{V}_4$.

{($ii$) $\Rightarrow$ ($iii$)}: Assume that
\begin{align*}
  \partial_\nu \A_+=\A_+\WW_+,\quad  \partial_\nu \A_-=\A_-\WW_-,\quad \WW_\pm\in\mathbb{A}_n.
\end{align*}
Then we have
\begin{align*}
\A_+\WW_+=\A_-\WW_-, \text{ or equivalently } (\II-2\nn\nn)\WW_+=\WW_-.
\end{align*}
As $\WW_\pm$ are both antisymmetric, one has
\begin{align*}
\nn\cdot\WW_+=0,\quad \WW_+=\WW_-,
\end{align*}
which implies that $\WW_+=\WW_-\in\BV_4.$

By reversing the above derivation, we immediately obtain {($iii$) $\Rightarrow$ ($i$), ($ii$)}.

\subsection{Derivation of jump condition from solvability condition}\label{sec:deriv-BC}
Firstly, the solvability for $\A_0$ on $\Gamma$ directly implies that
$(\A_+, \A_-)$ forms a minimal pair.

Let $\Psi=\partial_\nu \Phi$, then $\Phi^{\mathrm{T}}\Psi$ is antisymmetric, and so do $\Phi^{\mathrm{T}}\partial_z\Psi$ and $\Phi^{\mathrm{T}}\partial_z^2\Psi$.
As one has $\partial_z\Phi=0$ and $\nabla d_0\cdot\nabla \PP_0=0$ on $\Gamma$, we have
\begin{align*}
\FF_1&= -(\partial_td_0-\Delta d_0)\partial_z\PP_0
+2\Phi^{\mathrm{T}}\nabla d_0\cdot\nabla_x\partial_z(\Psi\PP_0)-(\Phi_1\PP_0+\Phi_2\partial_z\PP_0)(d_1-z)\\
&= -(\partial_td_0-\Delta d_0)\partial_z\PP_0
+2\Phi^{\mathrm{T}}\partial_z(\Psi\PP_0)-(\Phi^{\mathrm{T}}\partial_z^2\Psi\PP_0+2\Phi^{\mathrm{T}}\partial_z\Psi\partial_z\PP_0)(d_1-z).
\end{align*}
Since
\begin{align*}
  \Phi^{\mathrm{T}}\partial_z(\Psi\PP_0):\nn\nn=\Phi^{\mathrm{T}}\partial_z^2\Psi\PP_0:\nn\nn=\Phi^{\mathrm{T}}\partial_z\Psi\partial_z\PP_0:\nn\nn=0,
\end{align*}
we get
\begin{align*}
0=  \int_\mathbb{R}\FF_1:\partial_z\PP_0\ud z=-(\partial_td_0-\Delta d_0)\int_\mathbb{R}|\partial_z\PP_0|^2\ud z,
\end{align*}
which gives on $\Gamma$ that
\begin{align*}
  \partial_td_0-\Delta d_0=0.
\end{align*}
This means $\Gamma_t$ evolves according to the mean curvature flow.

For any $\bl\bot\nn$, from the facts that
\begin{align*}
&  (\nn\bl-\bl\nn)(\II-2s\nn\nn)=\nn\bl+(2s-1)\bl\nn=(1-s)(\nn\bl-\bl\nn)+s(\nn\bl+\bl\nn)\in~\mathrm{Null}~\mathcal{L},\\
&  (\II-2s\nn\nn)(\nn\bl-\bl\nn)=(1-2s)\nn\bl-\bl\nn=(1-s)(\nn\bl-\bl\nn)-s(\nn\bl+\bl\nn)\in~\mathrm{Null}~\mathcal{L},
\end{align*}
we know that $(\nn\bl-\bl\nn)\PP_0$ and $\PP_0(\nn\bl-\bl\nn)$ are orthogonal to $\FF_1$ in $L^2(\mathbb{R})$,
which implies
\begin{align*}
  \int_{\mathbb{R}}\FF_1\PP_0 \ud z :(\nn\bl-\bl\nn)= \int_{\mathbb{R}}\PP_0\FF_1 \ud z :(\nn\bl-\bl\nn)=0\quad \text{on } \Gamma_t.
\end{align*}
Similarly, for any $\bl, ~\mm\bot\nn$,
\begin{align*}
  \int_{\mathbb{R}}\FF_1\PP_0 \ud z :(\bl\mm-\mm\bl)=\int_{\mathbb{R}}\PP_0\FF_1 \ud z :(\bl\mm-\mm\bl)=0\quad \text{on } \Gamma_t.
\end{align*}
Thus, $\int_{\mathbb{R}}\FF_1\PP_0 \ud z$,  $\int_{\mathbb{R}}\FF_1\PP_0 \ud z$ $\bot$ $\mathbb{V}_3\oplus\mathbb{V}_4=\mathbb{A}_n$.  On the other hand,
\begin{align*}
\int_{\mathbb{R}}\FF_1\PP_0 \ud z&=\int_{\mathbb{R}}\Big(2\Phi^{\mathrm{T}}\partial_z(\Psi\PP_0)-(\Phi^{\mathrm{T}}
\partial_z^2\Psi\PP_0+2\Phi^{\mathrm{T}}\partial_z\Psi\partial_z\PP_0)(d_1-z)\Big)\PP_0 \ud z\\
&=\int_{\mathbb{R}}\Big(2\Phi^{\mathrm{T}}\partial_z(\Psi\PP_0)\PP_0
-\Phi^{\mathrm{T}}\partial_z(\partial_z\Psi\PP^2_0)(d_1-z)\Big) \ud z\\
&=\int_{\mathbb{R}}\Big(2\Phi^{\mathrm{T}}\partial_z(\Psi\PP_0)\PP_0
-\Phi^{\mathrm{T}}\partial_z\Psi\PP^2_0\Big) \ud z\\
&=\int_{\mathbb{R}}\Phi^{\mathrm{T}}\partial_z\big(\Psi\PP_0^2\big) \ud z\\
&=\Phi^{\mathrm{T}}_+\Psi_+-\Phi^{\mathrm{T}}_-\Psi_-,
\end{align*}
which is antisymmetric. So, we get
\begin{align}\label{relation:jump1}
  \Phi^{\mathrm{T}}_+\Psi_+-\Phi^{\mathrm{T}}_-\Psi_-=0.
\end{align}

On the other hand, as $\Phi^{\mathrm{T}}\partial_z\Psi$ is anti-symmetric,  we have
\begin{align*}
  \PP_0(\Phi^{\mathrm{T}}\partial_z\Psi)\partial_z\PP_0-\partial_z\PP_0(\Phi^{\mathrm{T}}\partial_z\Psi)\PP_0
\end{align*}
is symmetric. Thus, we get
\begin{align*}
&\int_{\mathbb{R}}\PP_0 \FF_1\ud z\\
&=\int_{\mathbb{R}}\Big(2\PP_0\Phi^{\mathrm{T}}\partial_z(\Psi\PP_0)+\partial_z\big(\PP_0\Phi^{\mathrm{T}}\partial_z\Psi\PP_0\big)(z-d_1)
+\PP_0\Phi^{\mathrm{T}}\partial_z\Psi\partial_z\PP_0-\partial_z\PP_0\Phi^{\mathrm{T}}\partial_z\Psi\PP_0\Big) \ud z\\
&=\int_{\mathbb{R}}\Big(2\PP_0\Phi^{\mathrm{T}}\partial_z(\Psi\PP_0)+\partial_z\big(\PP_0\Phi^{\mathrm{T}}\partial_z\Psi\PP_0\big)(z-d_1)
\Big) \ud z+\MM_{\text{sym}}\\
&=\int_{\mathbb{R}}\Big(2\PP_0\Phi^{\mathrm{T}}\partial_z(\Psi\PP_0)-\PP_0\Phi^{\mathrm{T}}\partial_z\Psi\PP_0
\Big) \ud z+\MM_{\text{sym}}\\
&=\int_{\mathbb{R}}\partial_z\big(\PP_0\Phi^{\mathrm{T}}\Psi\PP_0\big)+\PP_0\Phi^{\mathrm{T}}\Psi\partial_z\PP_0-\partial_z\PP_0\Phi^{\mathrm{T}}\Psi\partial_z\PP_0
 \ud z+\MM_{\text{sym}}\\
&=(\II-2\nn\nn)\Phi_+^{\mathrm{T}}\Psi_+(\II-2\nn\nn)-\Phi_-^{\mathrm{T}}\Psi_-+\MM_{\text{sym}},
\end{align*}
where $\MM_{\text{sym}}$ denotes some symmetric matrix.  Therefore, we also have
\begin{align}\label{relation:jump2}
  (\II-2\nn\nn)\Phi_+^{\mathrm{T}}\Psi_+(\II-2\nn\nn)-\Phi_-^{\mathrm{T}}\Psi_-=0,
\end{align}
which together with (\ref{relation:jump1}) implies $\A_+^{\mathrm{T}}\partial_\nu\A_+=\A_-^{\mathrm{T}}\partial_\nu\A_-\in\mathbb{V}_4$ on $\Gamma$.

\section*{Acknowledgments}
M. Fei is supported by NSF of China under Grant No. 11871075 and 11971357. F.H.Lin is supported by an NSF grant DMS1955249. W. Wang is supported by NSF of China under Grant No. 11922118 and 11871424.

\end{document}